\documentclass[11pt,reqno]{amsart}
\usepackage[T1]{fontenc}
\usepackage{graphicx, amssymb, amsmath, stmaryrd, mathrsfs, wasysym}
\usepackage{cite}
\usepackage{color}
\definecolor{MyLinkColor}{rgb}{0,0,0.4}

\newcommand{\R}{{\mathbb R}}

\newcommand{\bA}{{\mathbb A}}
\newcommand{\cA}{{\mathcal A}}
\newcommand{\bB}{{\mathbb B}}
\newcommand{\bD}{{\mathbb D}}

\newcommand{\mK}{\mathcal{K}}
 
\newcommand{\N}{{\mathbb N}}
\newcommand{\s}{\mathbb S}

\newcommand{\clu}{\mathcal{U}}

\newcommand{\cO}{\mathcal{O}}

\newcommand{\kL}{\mathcal{L}}

\newcommand{\cV}{\mathcal{V}}
\newcommand{\cR}{\mathcal{R}}
\newcommand{\sfB}{\mathsf{B}}
\newcommand{\wh}{\widehat}
\newcommand{\wt}{\widetilde}

\newcommand{\Lip}{\widetilde{\rm Lip}}
\newcommand{\re}{\mathop{\rm Re}\nolimits}

\newcommand{\PV}{\mathop{\rm PV}\nolimits}
\newcommand{\ov}{\overline}
\newcommand{\p}{\partial}

\newcommand{\e}{\varepsilon}

\newcommand{\0}{\Omega}
\newcommand{\G}{\Gamma}

\newcommand{\supp}{\mathop{\rm supp}\nolimits}
\newcommand{\eps}{\varepsilon}

\newcommand{\be}{\begin{equation}}
\newcommand{\ee}{\end{equation}}
\newcommand{\clf}{\mathcal{F}}
\newcommand{\locsim}{\;\;\underset{\rm loc}{{\stackrel{j,\e}{\sim}}}\;\;}
\newtheorem{thm}{Theorem}[section]
\newtheorem{prop}[thm]{Proposition}
\newtheorem{lemma}[thm]{Lemma}
\newtheorem{cor}[thm]{Corollary}
\theoremstyle{remark}

\numberwithin{equation}{section} 
\usepackage[colorlinks=true,linkcolor=MyLinkColor,citecolor=MyLinkColor]{hyperref}

\begin{document}

\title[The gravity driven Muskat problem]{The $N$-dimensional gravity driven Muskat problem}

\thanks{}
\author{Bogdan-Vasile Matioc}
\address{Fakult\"at f\"ur Mathematik, Universit\"at Regensburg \\ D--93040 Regensburg, Deutschland}
\email{bogdan.matioc@ur.de}

\author{Georg Prokert}
\address{Department of Mathematics and Computer Science, Eindhoven University of Technology, The Netherlands.}
\email{g.prokert@tue.nl}

\begin{abstract}
We study the Muskat problem, which describes the motion of two immiscible, incompressible fluids in a homogeneous porous medium  occupying the full space~${\mathbb{R}^{N+1}}$, $N \geq 2$, driven by gravity. 
The interface between the fluids is given as graph of a function over~$\R^N$.
The problem is reformulated as a nonlinear, nonlocal evolution problem for  this function, involving singular integrals arising from potential representations of the velocity and pressure fields.  Using results from harmonic analysis, we demonstrate that the  evolution is of parabolic type  in the open  set identified by   the Rayleigh-Taylor condition.
We  use the abstract theory of such problems to establish that the Muskat problem defines a semiflow on  this set in  all subcritical Sobolev spaces $H^s(\mathbb{R}^N)$, $s>s_c$, where ~${s_c=1+N/2}$ is the critical exponent. We additionally obtain parabolic smoothing up to ${\rm C}^\infty$.
\end{abstract}

%%% NEED TO ADD MSC %%%
\subjclass[2020]{35R37; 35K55; 35Q35; 42B20}
\keywords{Muskat problem;  Rayleigh-Taylor condition;  Singular integrals}

\maketitle

%\tableofcontents

\pagestyle{myheadings}
\markboth{\sc{B.-V.~Matioc \&  G. Prokert }}{\sc{The gravity driven Muskat problem in $\R^N$}}

%%%%%%%%%%%%%%%%%%%%%%%%%%%%%%%%%%%%%%%%%%%%%%%%%%
%%%%%%%%%%%%%%%%%%%%%%%%%%%%%%%%%%%%%%%%%%%%%%%%%%
%%%%%%%%%%%%%%%%%%%%%%%%%%%%%%%%%%%%%%%%%%%%%%%%%%
%%%%%%%%%%%%%%%%%%%%%%%%%%%%%%%%%%%%%%%%%%%%%%%%%%
 \section{Introduction}\label{Sec:1}
%%%%%%%%%%%%%%%%%%%%%%%%%%%%%%%%%%%%%%%%%%%%%%%%%%
%%%%%%%%%%%%%%%%%%%%%%%%%%%%%%%%%%%%%%%%%%%%%%%%%%
%%%%%%%%%%%%%%%%%%%%%%%%%%%%%%%%%%%%%%%%%%%%%%%%%%
%%%%%%%%%%%%%%%%%%%%%%%%%%%%%%%%%%%%%%%%%%%%%%%%%%
 In this paper we study the motion of two immiscible and incompressible  Newtonian fluids occupying a homogeneous porous medium, modeled as the entire space $\mathbb{R}^{N+1}$, with $N \geq 2$. 
The fluids occupy time-dependent domains $\Omega^+(t)$ and $\Omega^-(t)$, respectively, and are separated by a sharp interface $\Gamma(t)$. Gravity is considered the sole driving force, so, in particular, surface tension effects are neglected. The motion within each fluid domain is governed by Darcy's law \cite{Da56}.  
This model is commonly known as the Muskat problem \cite{Mu34}. With $\mu^\pm$, 
$\rho^\pm$ as viscosities and densities of the respective fluids, $k$ as its porosity, and $g$ as gravity, it 
is described by the  system of equations
\begin{subequations}\label{Muskat}
\begin{equation}\label{MuskatEE}
\left.
\arraycolsep=1.4pt
\begin{array}{rclll}
v^\pm(t)+\cfrac{k}{\mu^\pm}\nabla\big(p^\pm(t)+g\rho^\pm y\big)
&=&0&\text{in $\0^\pm(t)$}\\[1ex]
{\rm div\,} v^\pm(t)&=&0&\mbox{in $\Omega^\pm(t)$,}\\[1ex]
[p(t)]&=&0 &\mbox{on $\Gamma(t)$,}\\[1ex]
[v(t)]\cdot\wt\nu(t)&=&0 &\mbox{on $\Gamma(t)$,}\\[1ex]
  (v^\pm(t,x,y),p^\pm(t,x,y)+g\rho^\pm y)&\to&0&\mbox{for $|(x,y)|\to\infty$,}\\[1ex]
V_{\wt \nu}(t)&=& v^\pm(t)\cdot\wt \nu(t)&\mbox{on $\Gamma(t)$}
\end{array}\right\}
\end{equation}
for $t\geq 0$, where  the fluid domains $\0^\pm(t)$ and their common boundary $\G(t)$ are given by
\begin{align*}
\0^\pm(t)&:=\{(x,y)\in\R^N\times\R\,:\, y\gtrless f(t,x)\},\\
\G(t)&:=\p\0^\pm(t):=\{(x, f(t,x))\,:\, x\in\R^N\}.
\end{align*}
Additionally, the  interface $\Gamma(t)$ is assumed to be known at time~${t=0}$,  i.e.
 \begin{align}\label{IC}
 f(0,\cdot) =f_0.
 \end{align}
 \end{subequations}

 In \eqref{MuskatEE}, $v^\pm(t)$ and $p^\pm(t)$ are the velocity and pressure fields of the fluids in the respective domains. The constants $\mu^{\pm}$, $\rho^\pm$, $g$, and $k$ are all assumed positive. 

We set $\wt \nu(t) $ to be  the unit normal at $\G(t)$ pointing into $\0^+(t)$,   $a\cdot b $ is the Euclidean inner product of the vectors~$a$ and~$b$, and $V_{\wt \nu}(t)$ is the normal velocity of~$\G(t)$. 
 
 Moreover, if $u$  is a function defined of $\R^{N+1}\setminus\Gamma(t)$ such that the restrictions~$u^\pm:=u|_{\Omega^\pm(t)}$  have continuous extensions on $\overline{\Omega^\pm(t)}$ then we denote by
 \[
  [u]:=u^+|_{\G(t)}-u^-|_{\G(t)} 
 \] 
 the jump of  $u$  across {$\Gamma(t)$.}

Our approach to solving problem \eqref{Muskat} consists in deriving and investigating a nonlocal, nonlinear evolution equation  for the function $f$ describing the interface $\Gamma(t)$. For this evolution equation, 
 the Rayleigh-Taylor condition identifies an open subset of the state space where the problem is parabolic.

 Let $H^r(\R^N)$, $r\geq 0$, denote the usual Bessel potential spaces.  For integer $r$ these spaces coincide with the usual Sobolev spaces $W^r_2(\R^N)$, and for noninteger $r$ with the Sobolev-Slobodeckii spaces $W^r_2(\R^N)$.
 
 From the point of view of scaling invariance, the  space \(  H^{1+N/2}(\mathbb{R}^N) \), 
can be identified as a critical space for~\eqref{Muskat}; see, e.g.~\cite{FN21}. 
Our goal is to establish the well-posedness of the Muskat problem~\eqref{Muskat}  in all subcritical spaces \( H^{s}(\mathbb{R}^N) \), where 
\begin{equation}\label{eq:s}
s >s_c:=1+\frac{N}{2}.
\end{equation}
  Our  main result, given in Theorem \ref{MT}, improves upon the current knowledge in the sense that we show that \eqref{Muskat} defines a semiflow on the 
  set determined by the Rayleigh-Taylor condition
  in all  subcritical spaces~$H^s(\R^N)$, $s>s_c$. In particular, all equations of \eqref{Muskat} are satisfied pointwise in the classical sense. The proof provides uniqueness under  natural preliminary assumptions on the smoothness and the decay at infinity for  the velocity and pressure fields.

 \subsection*{Summary of known results}\ 
We  introduce the characteristic velocity
 \be\label{lambda}
\Lambda:= \frac{2kg(\rho^--\rho^+)}{\mu^++\mu^-}
\ee
 and the dimensionless number
\be\label{defamu}
a_\mu:=\frac{\mu^+-\mu^-}{\mu^++\mu^-}\in(-1,1).
\ee

 The Rayleigh-Taylor condition \cite{ST58} plays a crucial role in the analysis of the gravity-driven Muskat problem, as it ensures parabolicity \cite{EM11a}. In our geometry and notation it reads
 
\begin{equation}\label{RT}
[\nabla p]\cdot \wt \nu>0 \quad \text{on } \G.
\end{equation}
  For fluids with equal viscosities~(${a_\mu=0}$), this  condition simplifies to the requirement that the constant $\Lambda$ from Equation \eqref{lambda} is positive; see \eqref{RTref}. 
In the  general case where~${a_\mu \in (-1,1)}$, condition \eqref{RT}  is equivalent to a system of two inequalities,  namely,~${\Lambda>0}$  and a more complex inequality involving both $a_\mu$ and (nonlinearly and nonlocally) the function~$f$. To our knowledge, the question whether $\Lambda>0$ alone implies \eqref{RT} in general is still open.

 The Muskat problem with equal viscosities has been extensively studied in the mathematical literature; see, for instance, the surveys \cite{G17, GL20}. 
  However, the case of different viscosities has been significantly less explored in the literature.
  This is primarily due to  more complex nonlinearity  and nonlocality in the mathematical formulation when~${a_\mu \neq 0}$, as well as the  resulting more intricate nature of the Rayleigh-Taylor condition in this setting.

Local well-posedness of \eqref{Muskat} with $N=1$ has been established in several works.  Specifically, \cite{CCG11} proves local well-posedness for $H^3$-initial data, while \cite{A04, BCS16, MBV18, MBV20} address the case of~$H^2$-initial data. The results in \cite{CCG11, A04, BCS16} are obtained via energy methods, whereas \cite{MBV18, MBV20} use an approach similar to ours.

 In addition, for $N=1$, well-posedness with a classical solution concept has been shown in \cite{AM22} for initial data belonging to  the fractional order Sobolev space $W^s_p(\R)$ with $p \in (1,\infty)$ and $s \in (1 + 1/p, 2)$, i.e. in subcritical spaces arbitrarily close to the critical space $W^{1+1/p}_p(\R)$.

Furthermore, the Wiener space $\dot {\mathcal{F}}^{1,1}(\R^N)$ has been identified in \cite{GGPS19} as a critical scaling invariant space for \eqref{Muskat}. For $N \in \{1, 2\}$, the same paper establishes the existence and uniqueness of global strong solutions for initial data in $L_2(\R^N)\cap \dot {\mathcal{F}}^{1,1}(\R^N)$ that satisfy certain size constraints.

 In arbitrary spatial dimensions, \cite{NP20} proves local in time existence and uniqueness of strong solutions to \eqref{Muskat} in subcritical spaces $H^s(\R^N)$,  $s >s_c$, through the application of paradifferential calculus,  with the function $f$ belonging to the regularity class
\[
 L_\infty([0,T]; H^s(\R^N))\cap L_2([0,T]; H^{s+\tfrac{1}{2}}(\R^N))\qquad\text{for some $T>0$,}
\]
and  the Rayleigh-Taylor condition \eqref{RT} holding for the initial interface.
 Under the same assumptions, \cite{FN21} shows that  strong solutions to the Muskat problem with surface tension converge towards solutions  
to the gravity driven Muskat problem as surface tension approaches zero.
Recently, local well-posedness of~\eqref{Muskat} with equal viscosity constants has been shown using modulus of continuity techniques~\cite{CCRNY26}. 

For small data in the  critical homogeneous Besov space~${\dot {B}^1_{\infty,1}(\R^N)}$, $N\geq 1$, global existence and uniqueness of strong solutions is  proved in~\cite{HuN22}.
For \(N=1\), global existence for small data and stability results   have been obtained in~\cite{SCH04, MBV20, BCS16}.
  
  The Muskat problem with  $N=1$, different viscosities, and without surface tension
  in geometries  other than the one considered here  has been studied
  in \cite{EM11a, EMM12a, BM25, EMW18, Y03, Va25, BV14, GS19}. 
 The available results include local well-posedness in various bounded geometries, including settings where the interface separating the fluids has a corner point \cite{BV14} or forms acute corners with the fixed boundaries \cite{Va25}. Further results address local well-posedness and stability/instability of flat or finger-shaped equilibria in periodic strip-like geometries \cite{EM11a, EMM12a, EMW18}, or in the case of discontinuous permeability of the porous medium \cite{GS19}. The existence of global solutions for small initial data in a non-periodic strip-like geometry is established in \cite{Y03}. A generalization of local well-posedness results to the case of three fluid phases with general vorticity and densities is given in \cite{BM25}.

 \subsection*{Structure of the paper and main result} 
Our approach to the Muskat problem \eqref{Muskat} is based on potential theory, harmonic analysis, and abstract parabolic theory.  It relies crucially on the  investigation of a class of singular integral operators generalizing Riesz transforms,  the details of which are { presented in Appendices~\ref{Sec:C} and~\ref{Sec:D}. While their one-dimensional versions have been studied and used before in \cite{AM22,MBV18,MBV19,MM21,MM23}, this approach is new in the multidimensional case.

 We start in Section~\ref{Sec:2} by showing that at each fixed time, the sharp interface $\Gamma(t)$ between the two fluids determines 
the pressure and velocity fields in both layers. This is based on classical potentials. More precisely, the pressure and the velocity are given as integrals over the graph $\G(t)$, with the density function $\beta$ implicitly defined as the solution to the singular integral equation \eqref{resolvent?}, involving the classical double layer potential for the Laplacian. However, our unbounded graph geometry is somewhat nonstandard for these techniques, and we collect the results we need in Appendix A, as they may be of independent interest.

The unique solvability of the integral equation   \eqref{resolvent?} in $L_2(\R^N)$ is established in Section~\ref{Sec:3}, and in~$H^s(\R^N)$ in Section~\ref{Sec:4}. 
The analysis in these sections relies on a Rellich identity and on  mapping properties of  the family of generalized Riesz transforms $B^\phi_{n, \nu}$, detailed in Appendix~\ref{Sec:C}, which are of broader interest.

Building on these results, we then show in Section~\ref{Sec:5} that the Muskat problem \eqref{Muskat} can be formulated as a fully nonlinear and nonlocal evolution problem:
\[
\frac{{\rm d} f}{{\rm d} t}(t) = \Phi(f(t)), \quad t \geq 0, \qquad f(0) = f_0,
\]
where $\Phi: H^s(\R^N) \to H^{s-1}(\R^N)$ is smooth.  Concerning the dependence on the problem parameters, we point out that $\Phi(f)=\Lambda \widetilde\Phi(f)$, where  $\widetilde\Phi(f)$ depends only on $a_\mu$ (but not on both viscosities individually or the other problem parameters).

Moreover, we prove that the Rayleigh-Taylor condition \eqref{RT} is equivalent to the inequality
\begin{equation}\label{RTref}
 \Lambda\big(1-2a_\mu\widetilde\Phi(f)\big) > 0.
\end{equation}
As $\widetilde\Phi(f)$ vanishes as $|x|\to\infty$ for any $f$ and $a_\mu$, 
the condition $\Lambda>0$ is necessary for the Rayleigh-Taylor condition to hold. 
 This necessary condition is equivalent to  the property  that the fluid with the lower density lies above the one with the higher density. Apart from this, for any given $f$, the validity of the Rayleigh-Taylor condition depends only on $a_\mu$.

 We will henceforth assume that $\Lambda>0$.  In this case, the set 
 \be\label{eq:defO}
\cO := \big\{ f \in H^s(\R^N) :  2 a_\mu\widetilde\Phi(f)<1 \big\}
\ee
  consists precisely of the functions that describe interfaces for which the Rayleigh-Taylor condition holds. It is open in $H^s(\R^N)$. As $f=0$ corresponds to a trivial equilibrium, we have $\Phi(0)=\widetilde\Phi(0)=0$, and thus  $\cO$ 
 is nonempty for any $a_\mu$. While obviously $\cO=H^s(\R^N)$ for $a_\mu=0$, we reiterate that  the question whether this also holds in the case of different viscosities seems to be open.
 
With Theorem~\ref{T:GP} we then prove that  the Muskat problem is of parabolic type in $\cO$.
To establish Theorem~\ref{T:GP}, we localize the Fréchet derivative~$\partial \Phi(f)$ for~$f\in\cO$ and prove that  this unbounded operator generates a strongly continuous and analytic semigroup  on~$H^{s-1}(\R^N)$.
This is done using results from Appendix~\ref{Sec:D}, which provide commutator-type estimates and localization  results in the context of the generalized Riesz transforms~$B^\phi_{n, \nu}$. 
It is reasonable to conjecture that the set $\cO$ is the full domain of parabolicity of~ \eqref{Muskat} in the sense that the operator $\p\Phi(f) $ generates a strongly continuous analytic semigroup on $H^{s-1}(\R^N)$ if and only if $f\in \cO$ (although  this does not follow directly from our analysis).

Finally, based on abstract theory for fully nonlinear parabolic problems, we establish the following local well-posedness and parabolic smoothing result for~\eqref{Muskat}.

 \begin{thm}\label{MT}
Assume  $\Lambda>0$, $s>s_c$, and let $f_0\in \cO$. 
Then the following hold true:
\begin{itemize}
\item[{\rm (i)}] {\bf (Well-posedness)} Problem~\eqref{Muskat} has a unique maximal  solution $(f,v^\pm,p^\pm)$  with existence time $T^+:=T^+(f_0)\in(0,\infty]$ such that 
\begin{itemize}
\item[$\bullet$] $f:=f(\cdot;f_0)\in{\rm C}([0,T^+), \cO)\cap {\rm C}^1([0,T^+), H^{s-1}(\R^N))$;\\[-2ex]
\item[$\bullet$]  ${v^\pm(t)\in {\rm C}(\overline{\0^\pm(t)})\cap {\rm C}^1(\0^\pm(t))}$, $p^\pm(t)\in {\rm C}^1( \ov{\0^\pm(t)})\cap {\rm C}^2(\0^\pm(t))$ for~${t\in[0, T^+)}$.\\[-2ex]
\end{itemize}
Moreover, the solution mapping $[(t,f_0)\mapsto f(t;f_0)]$ defines a semiflow on $\cO$.\\[-2ex]
\item[{\rm (ii)}] {\bf (Parabolic smoothing)} We have 
$ [(t,x)\mapsto f(t ;f_0)(x)]\in {\rm C}^\infty((0,T^+)\times\R^N)$.
\end{itemize}
\end{thm}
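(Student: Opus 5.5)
The plan is to reduce Theorem~\ref{MT} to the abstract theory of fully nonlinear parabolic evolution equations in the sense of Lunardi/Da Prato--Grisvard, applied to the reformulation $\tfrac{\mathrm d f}{\mathrm dt}=\Phi(f)$, $f(0)=f_0$, from Section~\ref{Sec:5}. The ingredients from earlier sections are:
\begin{itemize}
\item[(a)] $\Phi\in{\rm C}^\infty(H^s(\R^N),H^{s-1}(\R^N))$, with $\Phi=\Lambda\widetilde\Phi$;
\item[(b)] the set $\cO$ is open;
\item[(c)] Theorem~\ref{T:GP}, which states that $-\partial\Phi(f)$ generates an analytic semigroup on $H^{s-1}(\R^N)$ with domain $H^s(\R^N)$ for every $f\in\cO$.
\end{itemize}
Since $H^s$ is not the domain of a generator acting in a setting with continuous interpolation, I would work as follows. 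Fix $\bar s\in(s_c,s)$ and use the continuous interpolation spaces $(H^{\bar s-1},H^{\bar s})_{\theta,\infty}^0$, or simply the scale $H^{r}$ together with the result of Lunardi / Matioc--Walker for quasilinear-type or fully nonlinear problems in spaces $E_0=H^{\bar s-1}$ and $E_1=H^{\bar s}$, with $H^s=(E_0,E_1)_\theta$ for suitable $\theta$.

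\textbf{Step 1: existence, uniqueness and maximality of $f$.} The abstract result (e.g.\ [Lunardi, Theorem~8.1.1 and 8.2.x]) gives, for $f_0\in\cO$, a unique maximal solution
\[
f\in{\rm C}([0,T^+),\cO)\cap{\rm C}^1([0,T^+),H^{s-1})\cap{\rm C}^{\alpha}_{\alpha}((0,T],H^{s}).
\]
It also gives continuous dependence on $f_0$ and openness of the domain of definition, which together yield the semiflow property. The only delicate point is that the classical theory requires the domain $D(\partial\Phi(f))$ to be independent of $f$. This holds here, since it equals $H^s$ for all $f$. Uniqueness must hold in the stated class ${\rm C}([0,T^+),\cO)\cap{\rm C}^1([0,T^+),H^{s-1})$, without Hölder weights. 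To get this, I would use the trick of lowering the regularity index: any such solution lies in ${\rm C}^{\eta}([0,T],H^{s-\eta})$ by interpolation. Applying the abstract uniqueness in $H^{s'}$ with $s'\in(s_c,s)$, where the solution has the required Hölder regularity, then gives uniqueness.

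\textbf{Step 2: velocity and pressure.} Given $f(t)$, Section~\ref{Sec:2} together with the solvability of \eqref{resolvent?} in Sections~\ref{Sec:3}--\ref{Sec:4} yields $\beta(t)\in H^{s-1}$ and potential representations of $v^\pm(t)$ and $p^\pm(t)$. Since $s-1>N/2$, $f(t)\in{\rm C}^{1+\gamma}$ and $\beta(t)\in{\rm C}^{\gamma}$. The potential estimates from Appendix~A then give the stated regularity: harmonic pressures are ${\rm C}^2$ inside each domain and ${\rm C}^1$ up to the boundary, and $v^\pm=-\tfrac{k}{\mu^\pm}\nabla(p^\pm+g\rho^\pm y)$. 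The boundary conditions of \eqref{MuskatEE} hold by the jump relations, and the kinematic condition is exactly the equation $\tfrac{\mathrm df}{\mathrm dt}=\Phi(f)$. Uniqueness of $(v^\pm,p^\pm)$ for given $f$ follows from the uniqueness part of Section~\ref{Sec:2} under the stated decay and regularity.

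\textbf{Step 3: parabolic smoothing.} I would use the parameter trick of Angenent (translations and dilations). For $\lambda$ near $1$ and $\xi\in\R^N$, set
\[
f_{\lambda,\xi}(t,x):=f(\lambda t,x+t\xi).
\]
This function solves
\[
\partial_t u=\lambda\Phi(u)+\xi\cdot\nabla u,
\]
because $\Phi$ commutes with translations. This equation is again fully nonlinear parabolic, smooth in $(u,\lambda,\xi)$, and the perturbation $\xi\cdot\nabla$ is of lower order, so it does not destroy generation. The implicit function theorem in the maximal-regularity solution spaces then shows that $(\lambda,\xi)\mapsto f_{\lambda,\xi}$ is smooth. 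Differentiating at $(\lambda,\xi)=(1,0)$ gives $t\partial_t f$ and $t\partial_{x_i}f$ in $H^s$ for $t>0$. Iterating yields $f\in{\rm C}^\infty((0,T^+),H^{r})$ for all $r$, and Sobolev embedding gives ${\rm C}^\infty$ jointly in $(t,x)$.

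The main obstacle is Step 3, specifically verifying that the solution map is smooth in the parameters uniformly enough to iterate. This includes checking that the translated problem keeps $\cO$ invariant, which holds because $\cO$ is translation invariant. The uniqueness issue in Step 1 is a secondary difficulty.
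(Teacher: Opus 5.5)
Your route is the paper's: \cite[Theorem~8.1.1]{L95} applied to \eqref{NNEP} with $X=H^{s-1}(\R^N)$ and $D(\p\Phi(f))=H^s(\R^N)$; the uniqueness upgrade via $f\in{\rm C}^{s-s'}([0,T],H^{s'}(\R^N))$ and the abstract uniqueness with $s$ replaced by $s'$; Proposition~\ref{P:1} for $(v^\pm,p^\pm)$; and the dilation/translation trick with smooth parameter dependence \cite[Corollary~8.3.8]{L95}. The last gives $\lambda\mapsto f_\lambda(t_0)\in{\rm C}^\infty(\bB_\e(e_1),H^s(\R^N))$ directly, so no separate iteration is needed. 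Two small corrections:
\begin{itemize}
\item Drop your detour through $E_0=H^{\bar s-1}$, $E_1=H^{\bar s}$. For $\bar s<s$ the space $H^s$ is not intermediate between them, and Theorem~8.1.1 needs nothing of the sort.
\item It is $\p\Phi(f)$, not $-\p\Phi(f)$, that generates the semigroup.
\end{itemize}

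Two justifications are wrong as stated.

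\emph{Step~3.} You say $\xi\cdot\nabla$ is of lower order and therefore harmless. It is not: $\p\Phi(f)$ is itself of order one (its localizations have symbols $-m_0(z)|z|+i\sum_k m_k(z)z_k$), so $\xi\cdot\nabla$ has the same order as the principal part. The correct reason is that its symbol $i\xi\cdot z$ is purely imaginary. Since $\pi_j^\e(\xi\cdot\nabla h)-\xi\cdot\nabla(\pi_j^\e h)=-(\xi\cdot\nabla\pi_j^\e)h$ is of order zero, the localized operators of $\lambda_1\p\Phi(f)+\xi\cdot\nabla$ are again Fourier multipliers of the form \eqref{gens}--\eqref{gesy}. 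Hence \eqref{estFour1} and the proof of Theorem~\ref{T:GP} go through for every $\xi\in\R^N$, which is exactly the paper's argument. Alternatively, for small $|\xi|$ you can use the small relative bound $\|\xi\cdot\nabla h\|_{H^{s-1}}\le C|\xi|(\|\p\Phi(f)h\|_{H^{s-1}}+\|h\|_{H^{s-1}})$ and perturbation theory for sectorial operators. In that case you must check that the resulting parameter set is open in $(f,\lambda)$.

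\emph{Step~2.} Theorem~\ref{T:51} gives $\beta(t)\in H^s(\R^N)$, not $H^{s-1}(\R^N)$, and this matters. With merely $\beta\in{\rm C}^\gamma$, the gradient of the double layer potential need not extend continuously to $\Gamma$. Proposition~\ref{P:1} obtains $p^\pm\in{\rm C}^1(\ov{\0^\pm})$ from $v=\cV(f)[\beta]$ and Proposition~\ref{P:W1} applied to $\nabla\beta\in{\rm BUC}^{s-s_c}(\R^N)\cap L_2(\R^N)$.

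With these two repairs your argument coincides with the paper's proof.
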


 We point out that the integral operators $B_{n,\nu}^\phi$ are flexible tools that we expect to be useful for the treatment of other moving boundary problems in the same geometric setting, whenever the underlying elliptic problems have constant coefficients and are therefore amenable to solutions by classical layer potentials. This includes e.g. quasistationary Stokes flow problems.

\subsection*{Notation  and preliminaries} 
Given  Banach spaces $E,\,E_1,\ldots,E_n,\,F$, $n\in\N$,  we denote by~$\kL^n\big(E_1,\ldots,E_n,F\big)$  the 
Banach space of bounded $n$-linear maps from $\prod_{i=1}^n E_i$ to~$F$  (we simplify the notation to~$\kL^n(E,F)$   if $E_1=\ldots=E_n=E$). Similarly,
 $\kL^n_{\rm sym}(E,F)$ stands for the space of $n$-linear, bounded, and symmetric maps $A: E^n\to F$.
Furthermore, the sets of all locally Lipschitz continuous mappings  and of all smooth mappings from an open set $\clu\subset E$ 
to~$F$ are denoted by ${\rm C}^{1-}(\clu,F)$  and~${{\rm C}^{\infty}(\clu,F)}$, respectively.
 We also write~${\p \Phi:\clu\to \kL(E,F)}$ for the Fr\'echet derivative of a Fr\'echet differentiable map~${\Phi:\clu\to F}$.
  Given \(r\geq 0\),  ${\rm BUC}^{r}(\R^N) $ denotes  the Banach space of functions with bounded continuous derivatives of order less or equal to $\lfloor r\rfloor :=\max\{k\in\N,\ k\leq r\}$ and uniformly $(r-\lfloor r\rfloor)$-H\"older continuous derivatives of order $\lfloor r\rfloor$.  We also define ${\rm BUC}^{\infty}(\R^N) $ as the intersection of all spaces ${\rm BUC}^{r}(\R^N) $ with $r\geq0$, and  ${\rm C}_0^{\infty}(\R^N)$ is its subspace  consisting of functions with compact support.
  Moreover, we write $e_j$ for the standard basis vectors of $\R^n$  whenever $n\geq j$, with components $(e_j)_i=\delta_{ij}$,  $1\leq i,j\leq n$ (where $\delta_{ij}$ is the Kronecker delta). 
 
 To economize notation, we fix  the function 
\begin{equation}\label{deefphi}
\bar{\phi} \in \mathrm{C}^\infty([0, \infty)) \quad \text{given by} \quad \bar{\phi}(x) = (1 + x)^{-(N+1)/2} \quad \text{for} \, x \geq 0.
\end{equation}

 In our arguments we will use the interpolation property
  \begin{align}\label{IP}
[H^{r_0}(\mathbb{R}^N),H^{r_1}(\mathbb{R}^N)]_\theta=H^{(1-\theta)r_0+\theta r_1}(\mathbb{R}^N),\qquad\theta\in(0,1),\,  0\leq r_0\leq r_1<\infty,
\end{align}
where $[\cdot,\cdot]_\theta$ denotes the complex interpolation functor of exponent $\theta$;  see, e.g., \cite{BL76}.

 Furthermore,  throughout the paper we make repeated use of the following norm equivalences on the spaces $H^r(\R^N)$.
 
 For any $r\geq 1$, there is a constant $C_0=C_0(r)>1$ such that
\begin{align}
\label{equivgrad}
C_0^{-1}\|h\|_{H^r}&\leq \|h\|_2+\|\nabla h\|_{H^{r-1}}\leq C_0\|h\|_{H^r},\qquad h\in H^r(\R^N).
\end{align}

 Moreover, for any  $k\in\N$ and $\alpha\in(0,1)$, there is a constant $C_1=C_1(k,\alpha)>1$ such that
\begin{align}
\label{equivHs}
C_1^{-1}\|h\|_{H^{k+\alpha}}&\leq \|h\|_2+\sum_{i=1}^N\big[\p_i^{k}h\big]_{H^\alpha}
\leq C_1\|h\|_{H^{k+\alpha}},\qquad h\in H^{k+\alpha}(\R^N),
\end{align}
  where the seminorm $[\cdot]_{H^\alpha}$ is given by 
\be\label{semiHs}
[u]_{H^\alpha}^2:=\int_{\R^N}\frac{\|\tau_\zeta u-u\|_2^2}{|\zeta|^{N+2\alpha}}\,{\rm d}\zeta 
\qquad\text{ with}\quad \tau_\zeta u:=u(\cdot+\zeta).
\ee

We will denote by $|\s^N|$ the $N$-dimensional (hyper)surface area of the unit sphere $\s^N$ in~${\R^{N+1}}$.
Where no confusion is likely, summation is carried out over indices occurring twice in a product without indicating this.

Where appropriate, we will  shorten notation by writing $\llbracket A,B\rrbracket$ for the commutator of two linear operators $A$ and $B$, and $\llbracket \varphi,A\rrbracket$ for the commutator of $A$ and the multiplication with a function $\varphi$, i.e. 
\begin{align*}
    \llbracket A,B\rrbracket &:=AB-BA,\\
    \llbracket \varphi,A\rrbracket  [h]&:=\varphi A[h]-A[\varphi h].
\end{align*}

%%%%%%%%%%%%%%%%%%%%%%%%%%%%%%%%%%%%%%%%%%%%%%%%%%
%%%%%%%%%%%%%%%%%%%%%%%%%%%%%%%%%%%%%%%%%%%%%%%%%%
%%%%%%%%%%%%%%%%%%%%%%%%%%%%%%%%%%%%%%%%%%%%%%%%%%
%%%%%%%%%%%%%%%%%%%%%%%%%%%%%%%%%%%%%%%%%%%%%%%%%%
 \section{Unique solvability  for  the fixed-time problem}\label{Sec:2}
%%%%%%%%%%%%%%%%%%%%%%%%%%%%%%%%%%%%%%%%%%%%%%%%%%
%%%%%%%%%%%%%%%%%%%%%%%%%%%%%%%%%%%%%%%%%%%%%%%%%%
%%%%%%%%%%%%%%%%%%%%%%%%%%%%%%%%%%%%%%%%%%%%%%%%%%
%%%%%%%%%%%%%%%%%%%%%%%%%%%%%%%%%%%%%%%%%%%%%%%%%%
In  this section we prove that the interface between the fluids  determines  the velocity and the pressure in the fluid layers  at each fixed time~${t\geq0}$. This is a consequence of the unique solvability result for the boundary value problem \eqref{MuskatBVP'}; see Proposition~\ref{P:1}.

In the following we fix $f \in H^{s}(\R^N)$,  with $s$ satisfying \eqref{eq:s}, and set 
\begin{align}\label{opm}
\0^\pm&:=\{(x,y)\in\R^N\times\R\,:\, y\gtrless f(x)\}\quad\text{and}\quad\G:=\p\0^\pm:=\{(x, f(x))\,:\, x\in\R^N\}.
\end{align}
Then $\G$ is the image of the diffeomorphism $\Xi:=\Xi_f:=({\rm id\,}_{\R^N},f):\R^N\to\G.$
Let further
\begin{equation}\label{omeganu}
\omega:=1+|\nabla f|^2\qquad\text{and}\qquad\nu:=\wt \nu \circ\Xi=(\nu^1,\ldots,\nu^{N+1})=\Big(-\frac{\nabla f}{\sqrt{\omega}},\frac{1}{\sqrt{\omega}}\Big).
\end{equation}
We also set
 \begin{equation}\label{rzx}
 z_\xi :=(\xi,f(\xi))\in\G\qquad\text{for $\xi\in\R^N$.}
 \end{equation}

Let 
\begin{equation}\label{gamma}
\gamma^\pm:=k g\rho^\pm\in\R  \qquad\text{and}\qquad q^\pm(x,y):=\cfrac{k}{\mu^\pm}  p^\pm(x,y)+ \cfrac{\gamma^\pm}{\mu^\pm} y,\quad(x,y)\in\0^\pm.
\end{equation}
With this substitution the boundary value problem
\begin{equation}\label{MuskatBVP}
\left.
\arraycolsep=1.4pt
\begin{array}{rclll}
v^\pm+\cfrac{k}{\mu^\pm}\nabla \big(p^\pm+g\rho^\pm y\big)&=&0&\text{in $\0^\pm$}\\[1ex]
{\rm div\,} v^\pm&=&0&\mbox{in $\Omega^\pm$,}\\[1ex]
[p]&=&0 &\mbox{on $\Gamma$,}\\[1ex]
[v]\cdot\wt\nu&=&0 &\mbox{on $\Gamma$,}\\[1ex]
 (v^\pm,p^\pm) (x,y)+(0,\rho^\pm gy)&\to&0&\mbox{for $|(x,y)|\to\infty$,}
\end{array}\right\}
\end{equation} 
whose solution determines the motion of the interface  via the kinematic boundary condition~\eqref{MuskatEE}$_1$, may be recast, setting $\varphi:= [\gamma]f\in H^{s}(\R^N)$, as
\begin{equation}\label{MuskatBVP'}
\left.
\arraycolsep=1.4pt
\begin{array}{rclll}
v^\pm+\nabla q^\pm&=&0&\text{in $\0^\pm$}\\[1ex]
{\rm div\,} v^\pm&=&0&\mbox{in $\Omega^\pm$,}\\[1ex]
[\mu q]&=&\varphi\circ \Xi^{-1}&\mbox{on $\Gamma$,}\\[1ex]
[v]\cdot\wt\nu&=&0 &\mbox{on $\Gamma$,}\\[1ex]
(v^\pm,q^\pm) (x,y)&\to&0&\mbox{for $|(x,y)|\to\infty$.}
\end{array}\right\}
\end{equation}

We are going to solve this problem by representing $q^\pm$ as a double-layer potential generated by a suitable density $\beta\circ \Xi^{-1}$ on the interface $\Gamma$. The corresponding integral operator~${\cV :=\cV(f)[\beta]}$  with $\cV=(\cV_1,\ldots,\cV_{N+1})$ for the representation of $v^\pm$ is given by
\be\label{defcV}
\cV_i(f)[\beta](z):=\frac{1}{|\s^N|}\int_{\R^N}K_{ij}(z,\xi)\p_j\beta(\xi)\, {\rm d}\xi,\qquad 1\leq i\leq N+1,
\ee
for  $z=(x,y)\in(\R^N\times\R)\setminus\Gamma$,  where, given $\xi\in\R^N$, we set (recalling~\eqref{rzx})
\begin{subequations}\label{Kij}
\begin{align}\label{Kija}
K_{ij}(z,\xi):=&K_{f,ij}(z,\xi)
  :=\frac{\big(-(x-\xi)\cdot\nabla f(\xi)+y-f(\xi)\big)\delta_{ij}+(x_j-\xi_j)\p_if(\xi)}{|z-z_\xi|^{N+1}}
  \end{align}
 for $1\leq i, j \leq N$,  and
  \begin{align}\label{Kijb}
   K_{(N+1)\,j}(z,\xi):=K_{f,(N+1)\,j}(z,\xi)
  :=\frac{-(x_j-\xi_j)}{|z-z_\xi|^{N+1}},\qquad 1\leq j\leq N.
\end{align}
\end{subequations}
Note that if $\beta \in {\rm BUC}^{1+\alpha}(\mathbb{R}^N) \cap  W^1_p(\mathbb{R}^N)$ for some $\alpha \in (0,1)$ and $p \in (1,\infty)$  (so, in particular, if $\beta\in H^{s}(\R^N)$), then, by Proposition \ref{P:W1}, it follows that
\[\cV^\pm:=\cV(f)[\beta]|_{\0^\pm}\in {\rm C}(\overline{\0^\pm}),\]
and the  limits of $\cV^\pm$  on $\Gamma$ are given by 
\begin{equation}\label{forvelo'}
  \begin{aligned}&\cV^\pm\circ\Xi(x)\\
  &=\frac{1}{|\s^N|}\PV\int_{\R^N}  
\frac{1}{|z_x-z_\xi|^{N+1}}
\begin{pmatrix}
(z_x-z_\xi)\cdot(-\nabla f(\xi),1)\nabla\beta(\xi)+(x-\xi)\cdot\nabla\beta(\xi)\nabla f(\xi)\\[1ex]
-(x-\xi)\cdot\nabla\beta(\xi)
\end{pmatrix}^\top
\,{\rm d}\xi\\[1ex]
&\quad\pm\frac{1}{2}
\begin{pmatrix}
\displaystyle\nabla \beta-\frac{(\nabla f\cdot\nabla \beta) \nabla f}{1+|\nabla f|^2},
\displaystyle \frac{ \nabla f\cdot\nabla \beta }{1+|\nabla f|^2}
\end{pmatrix}(x),\qquad x\in\R^N.
  \end{aligned}
\end{equation}

The  results on the boundary value problem \eqref{MuskatBVP'} 
(with general inhomogeneity~$\varphi$) are summarized in the following  proposition.

\begin{prop}\label{P:1}
 Assume \eqref{eq:s} and let $f,\varphi\in H^s(\R^N)$.
\begin{itemize}
\item[\rm (i)] {\bf\em(The integral equation)}
Let $\bD(f)$ be the double-layer potential defined in \eqref{dlpot} and~$a_\mu$ as in \eqref{defamu}. Then the singular integral equation 
\begin{equation}\label{resolvent?}
 \frac{\beta}{2}+a_\mu\bD(f)[\beta]=-\frac{\varphi}{\mu^++\mu^-}
  \end{equation}
  has precisely one solution $\beta=\beta_\varphi\in H^s(\R^N)$. 
  \item [\rm (ii)] {\bf\em (Representation of the solution)}  Let $\beta_\varphi\in H^s(\R^N)$ denote the unique solution to \eqref{resolvent?} and define $(v,q): \R^{N+1}\setminus\Gamma\to\R^{N+1}\times\R$ by
  \begin{align*}
  v(z)&:=\cV(f)[\beta_\varphi](z),\\
  q(z)&:=-\frac{1}{|\s^N|}\int_{\R^N}G(z,\xi)\beta_\varphi(\xi)\, {\rm d}\xi
  \end{align*}
   for  $z=(x,y)\in(\R^N\times\R)\setminus\Gamma$, where, given $\xi\in\R^N$, we set
  \[G(z,\xi):=G_f(z,\xi) :=\frac{-(x-\xi)\cdot\nabla f(\xi)+y-f(\xi)}{|z-z_\xi|^{N+1}}.\]
  Then $(v^\pm,q^\pm):=(v,q)|_{\0^\pm}$ is a solution  to  \eqref{MuskatBVP'}
such  that
\begin{equation}\label{cond}
{v^\pm\in {\rm C}(\overline{\0^\pm})\cap {\rm C}^1(\0^\pm)},\qquad q^\pm\in {\rm C}^1( \ov{\0^\pm})\cap {\rm C}^2(\0^\pm).
\end{equation}
\item[\rm (iii)] {\bf\em (Uniqueness)}
The solution given in {\rm (ii)} is the only solution to \eqref{MuskatBVP'} in the space indicated in \eqref{cond}.
\end{itemize}
\end{prop}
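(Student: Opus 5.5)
Part (i) is the heart of the proposition, and the paper says it is proved in Sections~\ref{Sec:3} and~\ref{Sec:4}. I would first solve \eqref{resolvent?} in $L_2(\R^N)$ and then lift the solution to $H^s(\R^N)$.

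For the $L_2$ step I would show that $\lambda\mapsto \lambda-\bD(f)$ is an isomorphism on $L_2(\R^N)$ for every real $|\lambda|\geq 1/2$. Writing $\tfrac{\beta}{2}+a_\mu\bD(f)[\beta]=a_\mu(\tfrac{1}{2a_\mu}+\bD(f))[\beta]$ with $|1/(2a_\mu)|>1/2$ (and the case $a_\mu=0$ trivial) then gives unique solvability.
\begin{itemize}
\item[(a)] A Rellich identity, obtained by integrating $\operatorname{div}$ of $(\nabla u\cdot e)\nabla u-\tfrac12|\nabla u|^2e$ with $e=e_{N+1}$ over $\0^\pm$ for the single-layer potential $u$, gives estimates of the form $\|(\tfrac12\pm\bD(f)^*)\theta\|_2\ge c\|\theta\|_2$. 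Here $c$ depends only on $\|\nabla f\|_\infty$. The fact that $e\cdot\nu=\omega^{-1/2}\ge c>0$ is what replaces compactness in the unbounded graph setting.
\item[(b)] The operator $\bD(f)$ belongs to the class $B^\phi_{n,\nu}$, so it is bounded on $L_2$ by Appendix~\ref{Sec:C}.
\item[(c)] A continuity argument in $\tau\in[0,1]$, applied to $\tau f$ with $\bD(0)=0$, combined with the a priori bound from (a) and the method of continuity, yields invertibility on $L_2$ uniformly in $|\lambda|\ge1/2$.
\end{itemize}

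For the $H^s$ regularity I would differentiate the equation. Using \eqref{equivgrad} and \eqref{equivHs}, it suffices to control $\nabla\beta$ in $H^{s-1}$. Formally, $\p_i\bD(f)[\beta]=\bD(f)[\p_i\beta]+R_i(f)[\beta]$, where $R_i$ is a commutator built from the operators $B^\phi_{n,\nu}$ with one factor $\p_i\nabla f\in H^{s-2}$. Because $s>1+N/2$, the mapping properties in Appendix~\ref{Sec:C} should give $R_i(f)\in\kL(H^{s-1},H^{s-1})$ with lower-order gain, and the estimate then closes by interpolation \eqref{IP} and a bootstrap from $L_2$ to $H^1$ to $H^{s}$. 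For fractional $s$, the difference-quotient seminorm \eqref{semiHs} would be used together with the same $L_2$ invertibility applied to translated graphs. I expect this step to be the main obstacle: estimating the commutators at the low regularity $s-1>N/2$ with constants depending only on $\|f\|_{H^s}$.

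For part (ii) I would use Appendix~A, in particular Proposition~\ref{P:W1}. The double-layer potential $q$ is harmonic off $\G$, and one checks $v=-\nabla q$ by integrating by parts in $\xi$ the gradient of the kernel $G$ to obtain $\cV(f)[\beta]$. Then $v^\pm\in{\rm C}(\ov{\0^\pm})$ with traces \eqref{forvelo'}, and the normal components coincide, so $[v]\cdot\wt\nu=0$. The jump relation for the double layer gives $q^\pm\circ\Xi=\mp\tfrac{\beta}{2}+\bD(f)[\beta]$ up to sign conventions. Hence $[\mu q]=-(\mu^++\mu^-)\tfrac{\beta}{2}+(\mu^+-\mu^-)\bD(f)[\beta]$, which equals $\varphi$ exactly by \eqref{resolvent?}. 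Decay at infinity follows from $\beta\in H^s$ and the kernel decay, and the ${\rm C}^1(\ov{\0^\pm})$ regularity of $q^\pm$ follows from $\beta\in{\rm BUC}^{1+\alpha}$.

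For part (iii) I would take the difference of two solutions, which solves the homogeneous problem with $\varphi=0$. Green's identity on the truncated domains $\0^\pm\cap B_R$ gives $\sum_\pm\mu^\pm\int_{\0^\pm\cap B_R}|\nabla q^\pm|^2=\sum_\pm\pm\int_{\G\cap B_R}\mu^\pm q^\pm\,\p_\nu q^\pm+\text{(terms on }\p B_R)$. The interface term vanishes because $[\mu q]=0$ and $[\p_\nu q]=0$. The spherical terms tend to $0$ by the decay conditions; if merely $q\to0$ is assumed, I would first upgrade this to $|\nabla q|=O(R^{-N-1})$ via the Kelvin transform or harmonic estimates. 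Hence $\nabla q^\pm=0$, and $q^\pm\to0$ then gives $q^\pm=0$ and $v^\pm=0$.
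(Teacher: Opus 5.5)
Part (ii) is essentially the paper's argument, apart from a sign slip noted at the end. Parts (i) and (iii), however, contain genuine gaps.

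\textbf{The $L_2$ step of (i).} A one-sided Rellich identity only yields the endpoint bounds $\|(\tfrac12\pm\bD(f)^*)[\theta]\|_2\ge c\|\theta\|_2$. To run the method of continuity in $\tau$, you need an a priori bound at the value you actually use, $\lambda=-1/(2a_\mu)$ with $|\lambda|>\tfrac12$. Bounds at $\lambda=\pm\tfrac12$ say nothing about $\lambda-T$ for $|\lambda|>\tfrac12$ in general; take $T=\tfrac34\,{\rm id}$. The missing idea is the paper's combination step:
\begin{itemize}
\item substitute $\pm\beta-2\bD(f)^*[\beta]=-a^{-1}\big((1+2a\bD(f)^*)[\beta]-(1\pm a)\beta\big)$ into both Rellich identities;
\item multiply the resulting identities by $1-a$ and $-(1+a)$, respectively, and add; this gives \eqref{rellich4};
\item combine this with $\|\beta\|_2\lesssim\|F\|_2$, which follows from the endpoint identities because the tangential part $F$ is common to both sides.
\end{itemize}
This yields $\|(1+2a\bD(f)^*)[\beta]\|_2\ge C\|\beta\|_2$ uniformly for $a\in[-1,1]$.

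\textbf{The $H^s$ step of (i).} Expanding $\llbracket\p_i,\bD(f)\rrbracket$ by Lemma~\ref{L:MP1b} produces terms such as $\sfB^{\bar\phi}_{0,e_j}(f)[\p_{ij}f\,\beta]$ and $B^{\bar\phi}_{1,0}(f)[\p_if,\beta]$. Each of these lies only in $H^{s-2}$, even for smooth $\beta$, because $\p_{ij}f\in H^{s-2}$ cannot be improved. So your claim $R_i\in\kL(H^{s-1}(\R^N))$ is false. Termwise estimates from Appendix~\ref{Sec:C} can therefore neither close the last step $H^{s-1}\to H^s$ nor even show $\bD(f)\in\kL(H^s(\R^N))$.

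What is needed is the cancellation identity $\nabla(\bD(f)[\beta])=\cA(f)[\nabla\beta]$ of Lemma~\ref{L:51}. It comes from writing the $x_k$-derivative of the kernel as a $\xi$-divergence and integrating by parts. It gives $\bD(f)\in\kL(H^s(\R^N))$ and $\llbracket\p_k,\bD(f)\rrbracket=-\sum_i\llbracket\p_kf,\sfB^{\bar\phi}_{0,e_i}(f)\rrbracket\p_i$. By Lemma~\ref{commphi}~(ii), this commutator maps $H^{s'}$ with $s'<s$ (but not $H^{s-1}$) into $H^{s-1}$, which is enough to close your argument by interpolation. The paper instead steps by two derivatives, using $\llbracket\p_i^2,\bD(f)\rrbracket=\llbracket\p_i,\cA_i(f)\rrbracket\nabla+\llbracket\p_i,\bD(f)\rrbracket\p_i$. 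Then, by Lemma~\ref{ADcomm}, commutators are only ever estimated in $H^{k+\alpha-2}$ with $k+\alpha\le s$.

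\textbf{Part (iii).} Your Green identity on $B_R$ needs $\int_{\p B_R}|q||\nabla q|\,{\rm d}S\to0$, i.e.\ decay rates. But \eqref{MuskatBVP'}$_5$ only gives $(v^\pm,q^\pm)\to0$, with no rate. The proposed upgrade via a Kelvin transform is not available: $q$ is not harmonic in any neighbourhood of infinity, since $\G$ is unbounded and $q^\pm$ are coupled across it by transmission conditions with different weights $\mu^\pm$. (Moreover, even a harmonic function decaying at infinity generally has $|\nabla q|\sim R^{-N}$, not $O(R^{-N-1})$.)

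The paper needs no rates. The function $w:=\mu^\pm q^\pm$ on $\0^\pm$ is continuous, belongs to $H^1_{\rm loc}(\R^{N+1})$, and is a weak solution of a divergence-form equation with piecewise constant coefficients. Test with $\max\{w-\eps,0\}$, which has compact support precisely because $q\to0$. This gives $\nabla w=0$ on $\{w>\eps\}$, hence $w\le\eps$, and $w\equiv0$ follows by symmetry.

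\textbf{Sign slip in (ii).} The traces are $q^\pm\circ\Xi=-\bD(f)[\beta]\mp\beta/2$. With your signs you get $[\mu q]=-(\mu^++\mu^-)(\beta/2-a_\mu\bD(f)[\beta])$, which is not \eqref{resolvent?}.
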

\begin{proof}
{\noindent (i):} This follows directly from Theorem \ref{T:51} below. \medskip

{\noindent (ii):}  Set  $K(z,\xi):=(K_{ij}(z,\xi))\in\R^{(N+1)\times N}$ for $z=(x,y)\in\R^{N+1}\setminus\Gamma$ and $\xi\in\R^N$; see \eqref{Kij}.
As $f\in {\rm BUC}^{s-N/2}(\R^N)\hookrightarrow {\rm BUC}^1(\R^N)$, we have

\[ (K,G)(\cdot,\xi)\in {\rm C}^\infty\big(\R^{N+1}\setminus\Gamma,\R^{(N+1)\times N}\times\R\big)\]
for each fixed $\xi\in\R^N$. Moreover, for each $\alpha\in\N^N$, it holds that $\p_z^\alpha (K,G)(z,\xi)=O(|\xi|^{-N})$ as~$|\xi|\to \infty$, locally uniformly in
$z\in\R^{N+1}\setminus\Gamma$.  In view of $\beta_\varphi\in H^1(\R^N)$, the theorem on the differentiation of parameter integrals ensures that  $v$ and $q$ are both well-defined and smooth in $\R^{N+1}\setminus\Gamma$. 

Noticing that for each fixed $z\in\R^{N+1}\setminus\Gamma$ all mappings $K_{ij}(z,\cdot)$ belong to~$H^1(\R^N)$, integration by parts leads to
\be
\cV_i(f)[\beta_\varphi](z)=-\frac{1}{|\s^N|}\int_{\R^N}\p_{\xi_j}K_{ij}(z,\xi) \beta_\varphi(\xi)\, {\rm d}\xi,\qquad 1\leq i\leq N+1,
\ee
 and  \eqref{MuskatBVP'}$_1$  is a consequence of the  identities
\[\p_{\xi_j}K_{ij}(z,\xi)+\p_{z_i}G(z,\xi)=0, \qquad 1 \leq i\leq N+1,\quad\xi\in\R^N.\]
Eq. \eqref{MuskatBVP'}$_2$ immediately follows from the identity
\[\sum_{i=1}^{N+1}\p_{z_i}K_{ij}(z,\xi)=0,\qquad  1\leq j\leq N,
\quad z\in\R^{N+1}\setminus\Gamma,\quad \xi\in\R^N.\]

Since   $f, \beta_\varphi\in {\rm BUC}^{s-N/2}(\R^N)$ and $ \beta_\varphi,\,\p_j\beta_\varphi\in {\rm BUC}^{s- s_c}(\R^N)
\cap L_2(\R^N)$, ~${1\leq j\leq N}$,  by~\eqref{eq:s},  we may apply Proposition \ref{P:W1} to verify the boundary conditions~\eqref{MuskatBVP'}$_3$-\eqref{MuskatBVP'}$_4$. 
Indeed, by this proposition and the definition of $\beta_\varphi$,
\begin{align*}
[\mu q]\circ \Xi&=(\mu^+q^+-\mu^-q^-)\circ \Xi=-\big(\mu^+(\bD(f)[\beta_\varphi]+ \beta_\varphi/2)
 +\mu^-(\bD(f)[ \beta_\varphi]-\beta_\varphi/2)\big)\\[1ex]
&=-(\mu^++\mu^-)( \beta_\varphi/2+a_\mu\bD(f)[\beta_\varphi])=\varphi
\end{align*}
in $\R^N$, and,  recalling \eqref{forvelo'},
\[[v]\circ\Xi=\left(\nabla \beta_\varphi-\frac{\nabla f\cdot\nabla \beta_\varphi}{1+|\nabla f|^2}\nabla f,
\frac{\nabla f\cdot\nabla \beta_\varphi}{1+|\nabla f|^2}\right)\quad\text{ in $\R^N$,}\]
hence the jump conditions \eqref{MuskatBVP'}$_3$ and \eqref{MuskatBVP'}$_4$ are satisfied.

Finally, Proposition \ref{P:W2} ensures the validity of the far-field condition \eqref{MuskatBVP'}$_5$.\medskip

{\noindent (iii):} We prove that if $(v^\pm,q^\pm)$ satisfies \eqref{cond} and solves the boundary value problem
\begin{equation}\label{Muskathom}
\left.
\arraycolsep=1.4pt
\begin{array}{rclll}
v^\pm+\nabla q^\pm&=&0&\text{in $\0^\pm$}\\[1ex]
{\rm div\,} v^\pm&=&0&\mbox{in $\Omega^\pm$,}\\[1ex]
[\mu q]&=&0&\mbox{on $\Gamma$,}\\[1ex]
[v]\cdot\wt\nu&=&0 &\mbox{on $\Gamma$,}\\[1ex]
 (v^\pm,q^\pm) (x,y)&\to&0&\mbox{for $|(x,y)|\to\infty$,}
\end{array}\right\}
\end{equation} 
then $(v^\pm,q^\pm)\equiv 0$.  To this end, we define
\[w:= \mu^+q^+{\bf 1}_{\bar\0^+}+ \mu^-q^-{\bf 1}_{\0^-}\]
 and note that, due to \eqref{Muskathom}$_3$, $w$ is continuous
and furthermore 
$w\in H^1_{\rm loc}(\R^{N+1})$. 

For any $\psi \in H^1(\R^{N+1})$ with compact support we have from \eqref{Muskathom}$_{1,2,4}$
\begin{equation}
\begin{aligned}\label{muharm}
&\int_{\0^+}\mu^-\nabla w\cdot\nabla\psi\,{\rm d}z + \int_{\0^-}\mu^+\nabla w\cdot \nabla\psi\,{ \rm d}z\\
&= \mu^-\mu^+\int_\Gamma( \nabla q^-- \nabla q^+)\cdot\tilde\nu\psi\,{\rm d}\Gamma
=\mu^-\mu^+\int_\Gamma [v]\cdot\tilde\nu\psi\,{ \rm d}\Gamma=0.
\end{aligned}
\end{equation}
Let now $\eps>0$ be chosen arbitrary and set $\psi:=\max\{w-\eps,0\}$. By \eqref{Muskathom}$_5$, $\psi$ has compact support. Furthermore,  $\psi\in H^1(\R^{N+1})$ with
\[\nabla\psi={\bf 1}_{\{w>\eps\}}\nabla w\;\text{a.e.,}\]
where $\{w>\eps\}:=\{z\in\R^{N+1}\,:\,w(z)>\eps\}$; see, e.g. 
\cite[Theorem II.A.1]{KiSt00}.     
Applying \eqref{muharm} with this choice of $\psi$ yields 
\[\int_{\0^+\cap\{w>\eps\}}\mu^-|\nabla w|^2\,{\rm d}z
+\int_{\0^-\cap\{w>\eps\}}\mu^+|\nabla w|^2\,{ \rm d}z=0.\]
Suppose the set $\{w>\eps\}$ is nonempty. Then $w$ is constant on each of its connected components, contradicting $w=\eps$ on $\p\{w>\eps\}$ as $w$ is continuous. 

Thus $w\leq\eps$, and, as $\eps>0$ was arbitrary, $w\leq 0$. Upon replacing $(v^\pm,q^\pm)$
by $-(v^\pm,q^\pm)$ we obtain $w\equiv 0$. This proves the statement.
\end{proof}

 %%%%%%%%%%%%%%%%%%%%%%%%%%%%%%%%%%%%%%%%%%%%%%%%%%
%%%%%%%%%%%%%%%%%%%%%%%%%%%%%%%%%%%%%%%%%%%%%%%%%%
%%%%%%%%%%%%%%%%%%%%%%%%%%%%%%%%%%%%%%%%%%%%%%%%%%
%%%%%%%%%%%%%%%%%%%%%%%%%%%%%%%%%%%%%%%%%%%%%%%%%%
 \section{On the resolvent of  the double layer potential  $\bD(f)\in\kL(L_2(\R^N))$}\label{Sec:3}
%%%%%%%%%%%%%%%%%%%%%%%%%%%%%%%%%%%%%%%%%%%%%%%%%%
%%%%%%%%%%%%%%%%%%%%%%%%%%%%%%%%%%%%%%%%%%%%%%%%%%
%%%%%%%%%%%%%%%%%%%%%%%%%%%%%%%%%%%%%%%%%%%%%%%%%%
%%%%%%%%%%%%%%%%%%%%%%%%%%%%%%%%%%%%%%%%%%%%%%%%%%
 
  In this section, we define the double layer potential $\bD(f)\in\kL(L_2(\R^N))$ 
 for the Laplace operator associated with the unbounded graph ${\Gamma = \{y = f(x)\}}$, where $f:\R^N\to\R$ is a Lipschitz continuous function, 
 and investigate the intersection of its resolvent set with the real line.

 \subsection*{Generalized Riesz transforms}
We start by introducing some notation and a class of generalized Riesz transforms used throughout the paper, which may prove useful also in other contexts where layer potentials in a noncompact graph geometry are considered.

Given  $1\leq p\in\N$  and $x=(x_1,\ldots,x_p)\in\R^p$,   we set
\[
x^{\ov 2}:=(x_1^2,\ldots,x_p^2).
\]
For each $\phi\in {\rm C}^\infty([0,\infty)^p)$, $\nu \in\N^N$, and $n\in\N$ with $n+|\nu|$ odd,
  we define the singular integral operator
  \[B^\phi_{n,\nu}:=B^\phi_{n,\nu}(a)[b,\cdot]:=B^\phi_{n,\nu}(a)[b_1,\ldots,b_n,\cdot]\]
by
\begin{equation}\label{operatorsB}
B^\phi_{n,\nu}(a)[b,\beta](x):=\frac{1}{|\s^N|} \PV\int_{\R^N}\phi\Big((D_{[x,\xi]} a)^{\ov 2} \Big)\bigg[\prod_{i=1}^n D_{[x,\xi]} b_i\bigg]\frac{\xi^\nu}{|\xi|^{|\nu|}}
\frac{\beta(x-\xi)}{|\xi|^N}\,{\rm d}\xi,
\end{equation}
  where  $a=(a_1,\ldots,a_p):\R^N\to\R^p$ and $b=(b_1,\ldots,b_n):\R^N\to\R^n$ are Lipschitz continuous functions,~$\beta\in L_2(\R^N),$ and~$x\in\R^N$.
  We use the shorthand notation 
  \[
\delta_{[x,\xi]} u:=u(x)-u(x-\xi),\qquad 
D_{[x,\xi]}u:=\frac{\delta_{[x,\xi]}u}{|\xi|}.
  \]
The operators $B^\phi_{n,\nu}$ are generalized Riesz transforms with a singular integral kernel that depends nonlinearly on $a$ and linearly on $b_i$ for $1\leq i\leq n$.  
To simplify notation when repeated linear arguments $b:\R\to\R$ occur we will write 
\begin{equation*} 
    b^{[k]}:=(b,\ldots,b):\R\to \R^k,\qquad k\in\N,
\end{equation*}
with the additional definition $B_{n,\nu}^\phi(a)[b^{[0]},b',\beta]:=B_{n,\nu}^\phi(a)[b',\beta]$.

We emphasize that the  operators involved in the analysis of the Muskat problem can be expressed using (a particular version of) the operators $B^\phi_{n,\nu}$ with $p=1$.  
Specifically, for~$p=1$, $a = b_1 = \ldots = b_n = f$,
$\phi \in {\rm C}^\infty( [0,\infty))$, and~${\nu \in \mathbb{N}^N}$ with $n + |\nu|$ being odd, we define 
\begin{equation}\label{trueRO}  
\sfB^\phi_{n,\nu}(f) := B^\phi_{n,\nu}(f)[f^{[n]}, \cdot].  
\end{equation}
  However, since it will be useful to represent the difference of two operators~$B^\phi_{n,\nu}(a)$ and~$B^\phi_{n,\nu}(\wt a)$, with~$a=(a_1,\ldots, a_p)$ and~$\wt a=(\wt a_1,\ldots,\wt a_p),$   as
  \begin{equation}\label{difference}
 \big(B^\phi_{n,\nu}(a)-B^\phi_{n,\nu}(\wt a)\big)[b,\beta]=\sum_{i=1}^p B^{\phi^i}_{n+2,\nu}(a,\wt a )[a_i-\wt a_i, a_i+\wt a_i,b,\beta],
  \end{equation}
  with~$\phi^i\in {\rm C}^\infty([0,\infty)^{2p})$ given by the formula
   \begin{equation}\label{gis}
\phi^i(x,y)=\int_0^1\p_i\phi(sx +(1-s)y)\, \,{\rm d}s,\qquad x,y\in [0,\infty)^{p}, \quad 1\leq i\leq p, 
  \end{equation}
  it is natural to consider $p\geq 1$.
Let us point out that the classical  Riesz transforms \cite{St93}
  \begin{equation*} 
R_k[\beta](x):=2B^1_{0,e_k}(0)[\beta](x):=\frac{2}{|\s^N|} \PV\int_{\R^N} \frac{\xi_k}{|\xi|}\frac{\beta(x-\xi)}{|\xi|^N}\,{\rm d}\xi,\qquad 1\leq k\leq N,
\end{equation*}
 belong to the class of operators introduced in~\eqref{operatorsB}.

  In Appendix~\ref{Sec:C} we establish the following results which are used  in the analysis below.
   \begin{lemma}\label{L:MP0}
  Given $M>0$, there is a  constant $C>0$ such that for all Lipschitz continuous functions $a:\R^N\to\R^p$  and $b:\R^N\to\R^n$
   with $\|\nabla a\|_\infty\leq M$  we have
  \begin{equation}\label{E:MP0} 
\|B^\phi_{n,\nu}(a)[b,\cdot]\|_{\kL(L_2(\R^N))}\leq C\prod_{i=1}^n\|\nabla b_i\|_\infty .
  \end{equation}
  \end{lemma}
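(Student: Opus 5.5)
The plan is to reduce the estimate \eqref{E:MP0} to the $L_2$-boundedness of Calderón commutators, via the Calderón–Coifman–McIntosh–Meyer theorem. We first use multilinearity in $b$ together with the scaling $b_i\mapsto b_i/\|\nabla b_i\|_\infty$, so it suffices to prove a uniform bound for all $b$ with $\|\nabla b_i\|_\infty\le1$. All estimates will be done for truncated integrals over $\{\varepsilon<|\xi|<1/\varepsilon\}$, uniformly in $\varepsilon$, and the principal value is then recovered by a standard limiting argument.

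The first reduction removes the nonlinear dependence on $a$. On the relevant range the argument $(D_{[x,\xi]}a)^{\ov 2}$ lies in the compact set $Q:=[0,M^2]^p$. We extend $\phi|_Q$ to a smooth function $\tilde\phi$ with compact support in a slightly larger cube, which does not change the operator. Expanding $\tilde\phi$ in a Fourier series on a cube of side $L>2M^2$ gives
\[
\tilde\phi(t)=\sum_{m\in\mathbb{Z}^p}c_m e^{2\pi i m\cdot t/L},\qquad |c_m|\le C_k(1+|m|)^{-k}\ \text{for all } k.
\]
This shows it suffices to bound the operators with kernel
\[
e^{2\pi i m\cdot (D_{[x,\xi]}a)^{\ov 2}/L}\prod_i D_{[x,\xi]}b_i\,\frac{\xi^\nu}{|\xi|^{|\nu|}}\frac{1}{|\xi|^N}
\]
by $C(1+|m|)^{K}$ for some fixed $K$, and then sum against $c_m$.

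The exponential is expanded further as a Taylor series in the quantities $(D_{[x,\xi]}a_l)^2$. A term of order $k$ is a multilinear Calderón-type commutator with $n+2k$ Lipschitz arguments $b$ and $a_l$ (each $a_l$ appearing twice). Its coefficient is of size $(2\pi|m|/L)^k/k!$ and its Lipschitz data are bounded by $M^{2k}$. The resulting sum converges, with a bound of order $e^{C|m|}$, provided each commutator obeys a bound of the form $C_0^{\,n+2k}$. That exponential growth in $|m|$ is not compatible with merely polynomial decay of $c_m$. To avoid this we do not expand the exponential at all. Instead we use directly the fact that operators with kernels $F\big(D_{[x,\xi]}A\big)\,\Omega(\xi)/|\xi|^N$ are $L_2$-bounded, with bounds polynomial in $\|\nabla A\|_\infty$ and in finitely many derivatives of $F$ (Coifman–McIntosh–Meyer, or the T(1)/T(b) theorem). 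This is applied with vector-valued $A=(a,b)$ and
\[
F(u,w)=\tilde\phi(u^{\ov2})\prod_i w_i,
\]
localized to the bounded range of $w$ and extended smoothly. It is this rough-but-smooth-function version that we would cite.

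The remaining ingredient is the angular factor $\xi^\nu/|\xi|^{|\nu|}$, which has to be decomposed into spherical harmonics. The odd parity of $n+|\nu|$ makes the kernel odd under $\xi\mapsto-\xi$, because $D_{[x,\xi]}$ is odd in $\xi$ (after the appropriate substitution). This oddness gives the cancellation needed for the principal value and for the T(1) hypotheses. Since $\xi^\nu/|\xi|^{|\nu|}$ is a polynomial restricted to the sphere, it is a finite sum of spherical harmonics, and each piece is handled by the method of rotations: the estimate reduces to one-dimensional Calderón commutators along lines, whose bounds are uniform in direction. This directional reduction is the step we expect to be the main obstacle. The key point is that the uniform one-dimensional bound must hold for kernels $F(D a)$ with Lipschitz $a$, uniformly in direction, while the bound stays controlled by $\|\nabla a\|_\infty\le M$. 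We would first rely on the one-dimensional theorem, in which $\|F(D_{[x,\xi]}A)\|$ is bounded polynomially in the Lipschitz constant, and then integrate over directions.
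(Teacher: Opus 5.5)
Your final argument is essentially the paper's. After normalizing $\|\nabla b_i\|_\infty\le 1$, both write the kernel as $F\big((A(x)-A(y))/|x-y|\big)|x-y|^{-N}$, with $F$ smooth and odd (because $n+|\nu|$ is odd) and $A$ Lipschitz with $\|\nabla A\|_\infty\le C(M)$, and both invoke the Coifman--Meyer theorem, whose proof is the method of rotations plus one-dimensional Calder\'on commutator bounds. The only difference is that the paper absorbs $\xi^\nu/|\xi|^{|\nu|}$ into $A$ by appending the coordinate functions $x_k$ (each $\nu_k$ times), since $(x_k-y_k)/|x-y|$ is itself a difference quotient; this makes your spherical-harmonic step unnecessary, and your Fourier-series/Taylor detour plays no role.
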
 
 
It follows directly from Lemma~\ref{L:MP0} that:
  \begin{cor}\label{C:MP0} \,
  \begin{itemize}
  \item[(i)]   Given $a\in {W^1_\infty(\R^N)}^ p$, we have $B^\phi_{n,\nu}(a)\in\kL^n_{ \rm sym}(W^1_\infty(\R^n), \kL(L_2(\R^N)))$.
   \item[(ii)]  $[ a\mapsto B^\phi_{n,\nu}(a)]\in{\rm C}^{1-}({W^1_\infty(\R^N)}^p,\kL^n_{ \rm sym}(W^1_\infty(\R^n), \kL(L_2(\R^N)))).$
  \end{itemize}
  \end{cor}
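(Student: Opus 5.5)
The plan is to read off both parts of Corollary~\ref{C:MP0} from the estimate \eqref{E:MP0} of Lemma~\ref{L:MP0}, using only the structure of the kernel in \eqref{operatorsB}. (Note that $b_i:\R^N\to\R$, so the space $W^1_\infty(\R^n)$ in the statement should be read as $W^1_\infty(\R^N)$.)

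For (i), fix $a\in W^1_\infty(\R^N)^p$ and put $M:=\|\nabla a\|_\infty$.
\begin{itemize}
\item \emph{Multilinearity.} The kernel depends on $b$ only through the factor $\prod_{i=1}^n D_{[x,\xi]}b_i$. Each $D_{[x,\xi]}b_i$ is linear in $b_i$, so the map $b\mapsto B^\phi_{n,\nu}(a)[b,\beta]$ is $n$-linear, with the principal value limits behaving linearly.
\item \emph{Symmetry.} The product is invariant under permutations of $(b_1,\dots,b_n)$, so the map is symmetric.
\item \emph{Boundedness.} Lemma~\ref{L:MP0} with this $M$ gives $\|B^\phi_{n,\nu}(a)[b,\cdot]\|_{\kL(L_2)}\le C\prod_i\|\nabla b_i\|_\infty\le C\prod_i\|b_i\|_{W^1_\infty}$. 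This is exactly boundedness in $\kL^n_{\rm sym}(W^1_\infty(\R^N),\kL(L_2(\R^N)))$.
\end{itemize}

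For (ii), fix $M>0$ and take $a,\wt a$ in the ball $\{\|a\|_{W^1_\infty}\le M\}$, which suffices for local Lipschitz continuity.
\begin{itemize}
\item Apply the difference formula \eqref{difference} with the functions $\phi^i$ from \eqref{gis}. The identity is checked pointwise in the kernel. Write $X=(D_{[x,\xi]}a)^{\ov 2}$ and $Y=(D_{[x,\xi]}\wt a)^{\ov 2}$. Then $\phi(X)-\phi(Y)=\sum_i\phi^i(X,Y)(X_i-Y_i)$, and
\[
X_i-Y_i=D_{[x,\xi]}(a_i-\wt a_i)\,D_{[x,\xi]}(a_i+\wt a_i).
\]
\item The $\phi^i$ are smooth on $[0,\infty)^{2p}$, since $\phi$ is smooth and $[0,\infty)^p$ is convex. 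Hence each $B^{\phi^i}_{n+2,\nu}(a,\wt a)$ belongs to the admissible class. Here the parameter is $(a,\wt a):\R^N\to\R^{2p}$, and the parity of $n+2+|\nu|$ equals that of $n+|\nu|$, so it is still odd.
\item Lemma~\ref{L:MP0}, applied with $2p$ in place of $p$ and $\|\nabla(a,\wt a)\|_\infty\le 2M$, then gives
\[
\|(B^\phi_{n,\nu}(a)-B^\phi_{n,\nu}(\wt a))[b,\cdot]\|_{\kL(L_2)}\le C\sum_i\|\nabla(a_i-\wt a_i)\|_\infty\|\nabla(a_i+\wt a_i)\|_\infty\prod_j\|\nabla b_j\|_\infty .
\]
\item Using $\|\nabla(a_i+\wt a_i)\|_\infty\le 2M$ and taking the supremum over $\|b_j\|_{W^1_\infty}\le1$ yields
\[
\|B^\phi_{n,\nu}(a)-B^\phi_{n,\nu}(\wt a)\|_{\kL^n_{\rm sym}}\le C(M)\,\|a-\wt a\|_{W^1_\infty}.
\]
\end{itemize}

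There is no real obstacle here. The only care needed is that the constant in Lemma~\ref{L:MP0} depends on $\phi^i$, which is fixed once $\phi$ is fixed, and on the bound for $\nabla(a,\wt a)$, which is uniform on bounded sets. Since the principal values are taken with a common truncation, the difference identity passes to the PV limit.
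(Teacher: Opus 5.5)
Your proposal is correct and is the paper's argument: (i) comes directly from Lemma~\ref{L:MP0}, and (ii) comes from the difference identity \eqref{difference} combined with Lemma~\ref{L:MP0} applied to the enlarged parameter $(a,\wt a)$. You spell out the details the paper leaves implicit, namely multilinearity, symmetry, smoothness of the $\phi^i$, and preservation of the parity of $n+|\nu|$.
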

  \begin{proof}
  The claim (i) is a direct consequence of Lemma~\ref{L:MP0} and (ii) follows from Lemma~\ref{L:MP0} and~\eqref{difference}.
  \end{proof}

   For Lipschitz continuous functions $f:\R^N\to\R$, consider the equivalence classes ``up to constants'', i.e. 
   \[[f]:=\{f+c\,:\,c\in\R\}.\]
   The space of these equivalence classes will be denoted by $\Lip(\R^N)$ and   given the norm $\big[[f]\mapsto\|\nabla f\|_\infty\big]$. It is natural to consider the operators $B_{n,\nu}^\phi$ with $a$ and $b$ replaced by their equivalence classes. 
   With some abuse of notation, we reformulate Corollary \ref{C:MP0} as
   \be\label{Lipclasses}
   \big[\, [a]\mapsto  B^\phi_{n,\nu}(a)\,\big]\in{\rm C}^{1-}({\Lip(\R^N)}^p,\kL^n_{\rm sym}(\Lip(\R^n), \kL(L_2(\R^N)))).
   \ee

\subsection*{The double layer potential}
The double layer $\bD(f)$ for the Laplace operator associated with the Lipschitz graph ${\Gamma = \{y = f(x)\}}$ is defined by the formula
 \begin{align}\label{dlpot}
\bD(f)[\beta](x)&:=\frac{1}{|\s^N|}\PV\int_{\R^N}\frac{\delta_{[x,\xi]} f-\xi\cdot\nabla f(x-\xi)}{\big(|\xi|^2+(\delta_{[x,\xi]} f)^2 \big)^{\frac{N+1}{2}}}\beta(x-\xi)\,{\rm d}\xi
\end{align}
for $\beta\in L_2(\R^N)$ and $x\in\R^N$. 
Observe that whenever  $f\in {\rm BUC}^{r}(\R^N)$ for some $r>1$, the integral operator is weakly singular only.
 Since $\bD(f)$ can be expressed in terms of the operators defined in \eqref{trueRO},  that is, with $\bar\phi$ from \eqref{deefphi},
\begin{equation}\label{sionsD} 
\bD(f)[\beta]=\sfB_{1,0}^{\bar\phi}(f)[\beta]-\sum_{i=1}^N \sfB_{0, e_i}^{ \bar\phi}(f)[\beta\p_i f],
\end{equation}
 Lemma~\ref{L:MP0} ensures that $\bD(f)\in \kL(L_2(\R^N))$.
We  note that its $L_2$-adjoint~$\bD(f)^*$ is  given by
 \begin{align*}
\bD(f)^*[\beta](x)&:=\frac{1}{|\s^N|}\PV\int_{\R^N}\frac{-\delta_{[x,\xi]} f+\xi\cdot\nabla f(x)}{\big(|\xi|^2+(\delta_{[x,\xi]} f)^2 \big)^{\frac{N+1}{2}}}\beta(x-\xi)\,{\rm d}\xi 
\end{align*}
for $\beta\in L_2(\R^N)$ and $x\in\R^N$. 
It holds  that
\begin{equation}\label{sionsD*} 
\bD(f)^*[\beta]=-\sfB_{1,0}^{\bar\phi}(f)[\beta]+\sum_{i=1}^N \p_i f\sfB^{\bar\phi}_{0, e_i}(f)[\beta].
\end{equation}

The main goal of this section is to establish the following theorem.

\begin{thm}\label{T:1}
Given $M>0$, there exists a constant $C=C(M)\in(0,1)$ such that for all~$a\in[-2,2]$,   $\beta\in L_2(\R^N)$, 
and~$f\in {\rm BUC}^1(\R^N)$  with~${\|\nabla f\|_\infty\leq M}$  we have 
\begin{equation}\label{resolvent}
\|(1-a\bD(f))[\beta]\|_2\geq C\|\beta\|_2.
\end{equation}
Moreover, $1-a\bD(f)\in\kL(L_2(\R^N))$ is an isomorphism for all $f\in {\rm BUC}^1(\R^N) $ and~${a\in[-2,2]}$.
\end{thm}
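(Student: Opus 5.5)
We will follow the classical Rellich-identity route (Verchota, Escauriaza–Fabes–Verchota), adapted to the unbounded graph $\Gamma$ and to the generalized Riesz transforms of Lemma~\ref{L:MP0}. The argument has three stages: a priori bounds for $\lambda\pm\bD(f)$ with $|\lambda|\ge 1/2$, where we expect uniformity in the Lipschitz constant alone; a reduction to smooth $f$ by density; and invertibility via a continuity argument in $f$ and in $a$.

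\textbf{Step 1: reduction to smooth $f$ with compact support of $\nabla f$.} By \eqref{sionsD} and Corollary~\ref{C:MP0}, the map $[f]\mapsto\bD(f)\in\kL(L_2(\R^N))$ is locally Lipschitz on $\Lip(\R^N)$. For $f\in{\rm BUC}^1$ we approximate by mollified and truncated functions $f_\e$ with $\|\nabla f_\e\|_\infty\le M$. Note that $\nabla f_\e\to\nabla f$ only locally uniformly, not in $L_\infty$. We therefore prove \eqref{resolvent} for $f_\e$ and pass to the limit using strong convergence $\bD(f_\e)\beta\to\bD(f)\beta$ for fixed $\beta$. That convergence follows from \eqref{difference} together with dominated convergence on the kernel, after first reducing to $\beta\in{\rm C}_0^\infty$.

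\textbf{Step 2: Rellich estimates.} For smooth $f$ and $\beta\in{\rm C}_0^\infty$, let $u=$ the double-layer potential of $\beta\circ\Xi^{-1}$, harmonic in $\0^\pm$ and decaying at infinity. Its traces are $u^\pm=(\pm\tfrac12+\bD(f))\beta$ (or the analogous formula with $\bD(f)^*$ for the single layer, as in Appendix~A). The normal derivative has no jump, and the tangential gradient jumps by $\nabla\beta$ in graph coordinates, cf.\ \eqref{forvelo'}.

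The Rellich identity comes from integrating ${\rm div}\big(e_{N+1}|\nabla u|^2-2\p_{y}u\nabla u\big)=0$ over $\0^\pm$. Since $e_{N+1}\cdot\wt\nu=1/\sqrt\omega\ge(1+M^2)^{-1/2}$, this gives the equivalence
\[
\|\p_{\wt\nu} u^\pm\|_{L_2(\Gamma)}\simeq\|\nabla_{\rm tan}u^\pm\|_{L_2(\Gamma)},
\]
with constants depending only on $M$. Decay at infinity justifies dropping boundary terms at infinity.

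The natural estimates act on gradients, i.e.\ in the $H^1$-type setting, or equivalently on $\bD(f)^*$ acting on $L_2$ via the single layer potential $S$. For $S\psi$ the normal-derivative traces are $(\mp\tfrac12+\bD(f)^*)\psi$, the tangential gradient is continuous, and
\[
\|\psi\|_2\le\|(\tfrac12-\bD^*)\psi\|_2+\|(\tfrac12+\bD^*)\psi\|_2 .
\]
Combining this with the Rellich equivalences on both sides yields
\[
\|\psi\|_2\le C(M)\,\|(\lambda-\bD(f)^*)\psi\|_2 \quad\text{for } \lambda=\pm\tfrac12 .
\]
For general real $\lambda$ with $|\lambda|\ge1/2$, we would use the standard Escauriaza–Fabes–Verchota trick. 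Write $\psi=(\tfrac12+\bD^*)\psi+(\tfrac12-\bD^*)\psi$ and bound
\[
\|(\tfrac12\pm\bD^*)\psi\|_2
\]
by $C\|(\lambda-\bD^*)\psi\|_2$ using the Rellich inequalities in $\0^+$ and $\0^-$ respectively. The key point is that $\int_\Gamma\p_\nu S\psi\,S\psi\ge0$ on both sides by Green's identity in the unbounded domains; this is where decay (at least for $N\ge2$) is used.

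Taking adjoints gives \eqref{resolvent} for $1-a\bD(f)$ with $|a|\le2$ in place of $\lambda-\bD$. Here $a=0$ is trivial, and $C<1$ can always be arranged.

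\textbf{Step 3: isomorphism.} The lower bound is uniform in $a\in[-2,2]$. By Corollary~\ref{C:MP0}(i), $a\mapsto1-a\bD(f)$ is norm continuous, and at $a=0$ it is the identity. The method of continuity, using that the operator has closed range and that the Fredholm index is locally constant, gives surjectivity along $[-2,2]$. This requires the lower bound for the adjoint too, which Step 2 gives symmetrically.

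The main obstacle is Step 2. The Rellich identities must be justified in the unbounded graph setting, including integrability and decay of $\nabla S\psi$ at infinity and the nontangential convergence to the boundary traces. The lower bound is also needed in the full range $|\lambda|\ge1/2$, not merely for $\lambda=\pm1/2$. I would first try to carry everything out for $\beta,\psi$ compactly supported and smooth and $f$ with $\nabla f$ compactly supported, where the potentials decay like $|z|^{-N}$ and their gradients like $|z|^{-N-1}$, and then extend by density.
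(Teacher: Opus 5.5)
Your overall architecture is the paper's. You use Rellich identities for a single-layer potential, derived from the divergence-free field $2\p_{N+1}w\nabla w-|\nabla w|^2e_{N+1}$. With a constant transversal field there are no volume terms, and the decay needed comes from compactly supported densities. You then pass to general data by density and conclude by continuity in $a$ from $a=0$. Your endpoint step is also fine: the two identities give $\|n_\pm\|_2\simeq\|\nabla_{\rm tan}S\psi\|_2$, hence the bound for $\lambda=\pm\tfrac12$, i.e.\ $|a|=2$ in \eqref{resolvent}.

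\textbf{The gap.} It lies in the passage to intermediate $a$, which is the real content of the theorem (small $|a|$ is a Neumann series). Write $n_\pm:=(\mp\tfrac12+\bD(f)^*)\psi$ for the two normal traces in your normalization. Then $\psi=n_--n_+$ and
\[
(\lambda-\bD(f)^*)\psi=(\lambda-\tfrac12)n_--(\lambda+\tfrac12)n_+ .
\]
Suppose all you use are two-sided bounds $c\|\nabla_{\rm tan}S\psi\|_2\le\|n_\pm\|_2\le C\|\nabla_{\rm tan}S\psi\|_2$. Then nothing excludes $n_+=\kappa n_-$ with $\kappa=(\lambda-\tfrac12)/(\lambda+\tfrac12)$ as soon as $\kappa\in[c/C,C/c]$, and this would make $(\lambda-\bD(f)^*)\psi=0$. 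So bounding $\|(\tfrac12\pm\bD(f)^*)\psi\|_2$ by $C\|(\lambda-\bD(f)^*)\psi\|_2$ ``using the Rellich inequalities in $\0^+$ and $\0^-$ respectively'' does not go through.

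The energy identity does not repair this. It excludes exact proportionality only qualitatively, by forcing $\nabla S\psi\equiv0$. It pairs $S\psi|_\Gamma$, which is not controlled in $L_2$ by $\|\psi\|_2$, with a normal derivative, so it yields no $L_2$ lower bound. In the bounded-domain Escauriaza--Fabes--Verchota argument the energy only absorbs the volume terms created by a nonconstant transversal field; with $e_{N+1}$ there are none.

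\textbf{The missing idea.} Keep the two Rellich identities as exact identities and combine them with $a$-dependent weights. Use the paper's notation: $w$ is the single layer of Lemma~\ref{L:31}, $F$ is the rescaled tangential trace (common to both sides), and $g:=(1+2a\bD(f)^*)[\beta]$ with $a\in[-1,1]$, so that $1+2a\bD(f)^*=1-a'\bD(f)^*$ with $a'=-2a$. One has $g=\sqrt\omega\big((1-a)\nabla w^+\cdot\wt\nu-(1+a)\nabla w^-\cdot\wt\nu\big)\circ\Xi$. Take $(1-a)$ times the identity on the $+$ side minus $(1+a)$ times the one on the $-$ side. This gives
\[
\int_{\R^N}\frac{g^2}{4\omega}\,{\rm d}\xi=\int_{\R^N}\Big[\frac{1-a^2}{4\omega}\beta^2+\frac{a}{\sqrt\omega}\,g\,F\cdot e_{N+1}+\frac{a^2}{\omega}|F|^2\Big]\,{\rm d}\xi .
\]
Every term on the right except the cross term is nonnegative. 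Young's inequality and $1\le\omega\le1+M^2$ then give $\|g\|_2^2\ge C(M)\big((1-a^2)\|\beta\|_2^2+a^2\|F\|_2^2\big)$. Your endpoint bound $\|F\|_2\ge C(M)\|\beta\|_2$ turns this into $\|g\|_2\ge C(M)\|\beta\|_2$, uniformly in $a\in[-1,1]$.

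\textbf{Two smaller points.}

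First, truncating $\nabla f$ in Step~1 is unnecessary and costs you norm convergence. For $f\in{\rm BUC}^1(\R^N)$, mollification alone gives $\nabla f_\e\to\nabla f$ uniformly, so Corollary~\ref{C:MP0}(ii) yields convergence in $\kL(L_2(\R^N))$. With only locally uniform convergence, your strong-convergence claim is not covered by \eqref{difference} and Lemma~\ref{L:MP0}. The decay needed at infinity comes from $\beta\in{\rm C}_0^\infty(\R^N)$ alone, cf.\ \eqref{estimate}.

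Second, the lower bound just obtained is for the adjoint. The method of continuity needs only this a priori bound and invertibility at $a=0$. Estimate \eqref{resolvent} for $1-a\bD(f)$ follows only after invertibility is known, from $\|(1-a\bD(f))^{-1}\|=\|(1-a\bD(f)^*)^{-1}\|$.
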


Before establishing Theorem~\ref{T:1}, we prove  the following preparatory result.
\begin{lemma}\label{L:31}
Given $f\in {\rm BUC}^\infty(\R^N)$ and $\beta\in{\rm C}_0^\infty(\R^N)$, let $w:\R^{N+1}\setminus\G\to\R$ be given by
 \begin{equation}\label{defv}
w(z)=-\frac{1}{(N-1)|\s^N|}\int_{\G}\frac{1}{|z-\ov z|^{N-1}}\wt\beta(\ov z)\wt\nu^{N+1}(\ov z)\,{\rm d}\G(\ov z)
\end{equation}
 for $z\in\R^{N+1}\setminus\G$, where $\wt\beta=\beta\circ\Xi^{-1}$ and $\wt \nu^{N+1}$ is the $(N+1)$-th component of $\wt \nu$; see~\eqref{omeganu}.
 Then $w^{\pm}:=w|_{\0^\pm}\in{\rm C}^\infty(\0^\pm)$ and~${\nabla w^\pm\in{\rm C}(\overline{\0^\pm})}$.
Moreover,  there exist constants $C, \, R\geq 1$ such that 
  \begin{equation}\label{estimate}
|\nabla w(z)|\leq C\frac{\|\beta\|_1}{|z|^N}\qquad \text{for all $z\in\R^{N+1}\setminus\G$ with $|z|\geq  R$.}
  \end{equation}
\end{lemma}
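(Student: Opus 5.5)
Rewrite $w$ as a parameter integral over $\R^N$. Since ${\rm d}\G=\sqrt{\omega}\,{\rm d}\xi$ and $\wt\nu^{N+1}\circ\Xi=1/\sqrt\omega$, the product $\wt\nu^{N+1}{\rm d}\G$ is just ${\rm d}\xi$, so
\[
w(z)=-\frac{1}{(N-1)|\s^N|}\int_{\R^N}\frac{\beta(\xi)}{|z-z_\xi|^{N-1}}\,{\rm d}\xi ,\qquad z\in\R^{N+1}\setminus\G .
\]
Thus $w$ is a single layer potential with compactly supported smooth density $\beta$ on the smooth graph $\G$; here $N\ge2$ is used. Smoothness of $w^\pm$ in $\0^\pm$ is immediate. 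Indeed, for $z$ in a compact subset of $\R^{N+1}\setminus\G$ and $\xi\in\supp\beta$, the kernel $|z-z_\xi|^{1-N}$ and all its $z$-derivatives are bounded. Differentiation under the integral sign is therefore allowed and gives $w^\pm\in{\rm C}^\infty(\0^\pm)$. Moreover,
\[
\nabla w(z)=\frac{1}{|\s^N|}\int_{\R^N}\frac{z-z_\xi}{|z-z_\xi|^{N+1}}\beta(\xi)\,{\rm d}\xi .
\]

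For the decay estimate \eqref{estimate}, fix $R_0$ with $\supp\beta\subset\{|\xi|\le R_0\}$. Then $|z_\xi|\le R_1:=R_0(1+\|\nabla f\|_\infty)+|f(0)|$ on the support. For $|z|\ge R:=2R_1$ we get $|z-z_\xi|\ge|z|/2$. Hence $|\nabla w(z)|\le |\s^N|^{-1}2^N|z|^{-N}\|\beta\|_1$, which is the claimed bound.

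The only real work is continuity of $\nabla w^\pm$ up to $\G$, i.e.\ the classical jump relations for the gradient of a single layer potential. I would reduce this to the velocity operator already treated. Integrating by parts in $\xi$, as in the proof of Proposition \ref{P:1}~(ii), $\p_{z_i}$ of the single layer kernel should be rewritten through the kernels $K_{ij}$ of \eqref{Kij} and $G$. The goal is an identity of the form
\[
\nabla w(z)=\cV(f)[\beta_1](z)+\text{(terms given by integrals of }G\text{ against smooth compactly supported densities)},
\]
with suitable auxiliary densities built from $\beta$ and $f$; for instance, the vertical component should reproduce $\cV_{N+1}$ with density $\beta$, up to a double layer term. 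For the double layer potential with smooth density and the operator $\cV(f)$, continuity up to the boundary is supplied by Proposition \ref{P:W1}, since $\beta,f$ are smooth with compact support or bounded derivatives, so $\beta\in{\rm BUC}^{1+\alpha}\cap W^1_p$.

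Should that algebraic reduction be awkward, the fallback is a direct local argument. Near a point $z_0\in\G$, write $\beta(\xi)=\beta(x_0)+(\beta(\xi)-\beta(x_0))$ with $z_0=z_{x_0}$. The difference part has a kernel that is weakly singular uniformly as $z\to z_0$, so dominated convergence gives continuity. The constant part reduces to the flat-type computation of the solid angle, using $f\in{\rm BUC}^\infty$ to control the curvature error by $O(|\xi-x_0|)$ relative to the singular kernel.

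I expect this boundary continuity step to be the main obstacle. Smoothness in the open sets and the far-field bound are routine.
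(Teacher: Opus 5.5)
\textbf{There is a genuine gap at the boundary-continuity step.} Your smoothness argument, the formula for $\nabla w$, and the decay bound are fine; the paper omits exactly the elementary decay computation you give. The gap is the step you flag yourself: continuity of $\nabla w^\pm$ up to $\G$ is never established, and neither of your routes works as written.

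\emph{Main route.} The identity you aim for cannot hold. For $f=0$ one has $G(z,\xi)=y\,|z-z_\xi|^{-N-1}$ and, for $1\le i\le N$, $\cV_i(0)[\beta_1](z)=|\s^N|^{-1}\int_{\R^N}y\,\p_i\beta_1(\xi)\,|z-z_\xi|^{-N-1}\,{\rm d}\xi$. So the horizontal components of your right-hand side are odd in $y$. By contrast, $\p_iw$, $1\le i\le N$, are even in $y$ and do not all vanish unless $\beta=0$.

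Your sample for the vertical component is also wrong. For $f=0$ one simply has $\p_{N+1}w=-q$, where $q$ is the double layer potential of Proposition~\ref{P:1}~(ii) built from $\beta$. The term $\cV_{N+1}(0)[\beta]$, which is even in $y$, does not appear. In general, integrating by parts as in Lemma~\ref{L:W5} leads to double layer potentials plus single layer potentials of derivatives of $\beta$, not to $\cV$.

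\emph{Fallback.} This route has a different flaw. With the density frozen at $\beta(x_0)$, the bound $|\beta(\xi)-\beta(x_0)|\,|z-z_\xi|^{-N}\le C|\xi-x_0|\,|z-z_\xi|^{-N}$ is uniformly weakly singular only for nontangential approach. Suppose $z=(x,y)\to z_0$ with ${\rm dist}(z,\G)\ll|x-x_0|$. Then the integrand can be of size $|x-x_0|\,{\rm dist}(z,\G)^{-N}$ near $\xi=x$. The integral of its absolute value over a ball of radius $|x-x_0|/2$ around $x$ is then of order $|x-x_0|\log\big(|x-x_0|/{\rm dist}(z,\G)\big)$, which need not tend to zero. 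Hence there is no integrable majorant, and dominated convergence does not give continuity on $\ov{\0^\pm}$. Repairing this requires two things: freezing the density at the nearest boundary point, and controlling the tangential components of the constant-density integral separately (they are not a solid angle). This is exactly what Lemmas~\ref{L:W5} and~\ref{L:W6} do.

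\textbf{What you are missing is that no reduction is needed.} Since ${\rm d}\G=\sqrt{\omega}\,{\rm d}\xi$, your own formula $\nabla w(z)=|\s^N|^{-1}\int_{\R^N}(z-z_\xi)\,|z-z_\xi|^{-N-1}\beta(\xi)\,{\rm d}\xi$ says that $\nabla w$ is precisely the field $(V_1,\ldots,V_{N+1})$ of \eqref{Vei}. The only change is that the density there is replaced by $\beta\nu^{N+1}$ (on $\G$: $\wt\beta\wt\nu^{N+1}$). The hypotheses of Appendix~A hold: $f\in{\rm BUC}^\infty(\R^N)$ gives $\nabla f\in{\rm BUC}^\alpha(\R^N)^N$, and $\beta\nu^{N+1}\in{\rm C}_0^\infty(\R^N)\subset{\rm BUC}^\alpha(\R^N)\cap L_p(\R^N)$. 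So Proposition~\ref{P:W1} applies verbatim. It yields $\nabla w^\pm\in{\rm C}(\ov{\0^\pm})$ together with the trace formula \eqref{gradw}, i.e.\ jump $\pm\nu\nu^{N+1}\beta/2$. This is the paper's one-line proof. The trace formula is what is used afterwards in the Rellich argument of Theorem~\ref{T:1}, so any completed version of your route would have to deliver it as well.
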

\begin{proof}
The function $w$ is obviously smooth in $\R^{N+1}\setminus\G$ with
\begin{equation}\label{gradw0}
\nabla w(z)=\frac{1}{|\s^N|}\int_{\G}\frac{z-\ov z}{|z-\ov z|^{N+1}}\wt\beta(\ov z)\wt\nu^{N+1}(\ov z)\,{\rm d}\G(\ov z).
\end{equation}
In now follows from Proposition~\ref{P:W1} that $ {\nabla w^\pm\in{\rm C}(\overline{\0^\pm})}$  and, for $ x\in\R^N,$ we have
\begin{equation}\label{gradw}
(\nabla w^\pm)|_\G\circ\Xi(x)=\frac{1}{|\s^N|}\PV\int_{\G}\frac{z_x-\ov z}{|z_x-\ov z|^{N+1}}\wt\beta(\ov z)\wt\nu^{N+1}(\ov z)\,{\rm d}\G(\ov z)\pm\frac{\nu\nu^{N+1}\beta}{2}(x).
\end{equation}
The claim \eqref{estimate} is now a direct consequence  of \eqref{gradw0} (we omit the elementary details). 
\end{proof}

We conclude this section with the proof of Theorem~\ref{T:1}.
\begin{proof}[Proof of Theorem~\ref{T:1}]
Let $M>0$. 
We first  prove that there is a constant $C=C(M)\in(0,1)$ such that for all~$a\in[-2,2]$,   $\beta\in{\rm C}_0^\infty(\R^N)$, 
and~$f\in {\rm BUC}^\infty(\R^N)$  with~${\|\nabla f\|_\infty\leq M}$  we have 
\be\label{adresolvent}
\|(1-a\bD(f)^\ast)[\beta]\|_2\geq C\|\beta\|_2.
\ee
  Fix such $f$ and $\beta$, let $w^\pm$ be the functions defined in Lemma~\ref{L:31}, and set
\[W^\pm:=2(\p_{N+1} w^{\pm})\nabla w^{\pm}-|\nabla w^{\pm}|^2 e_{N+1}.\]
Since ${\rm div \,} \nabla w^\pm=0$ in $\0^\pm$,  it follows that ${\rm div\,} W^\pm=0$ in $\0^\pm$.
Using  Stokes' formula  together with the estimate~\eqref{estimate}, we derive the Rellich identities
\begin{equation}\label{rellich}
\int_\G W^\pm  \cdot\wt \nu\,{\rm d}\G=
\int_{\G}  2\p_{N+1}w^{\pm }\nabla w^\pm\cdot \wt \nu-|\nabla w^\pm|^2\wt \nu^{N+1}\,{\rm d}\G =0.
\end{equation} 
Further, transforming \eqref{gradw} to $\Gamma$ and taking the normal component we obtain
\[\nabla w^\pm \cdot\wt \nu
=\wt\nu^{N+1}\Big(\pm\frac{\beta}{2}-\bD(f)^*[\beta]\Big)\circ\Xi^{-1}.
\]
We also define $F\in L_2(\R^N)^N$ as the (transformed and rescaled) tangential part of $\nabla w^\pm|_\G$, that is
\[
 \wt\nu^{N+1}F\circ\Xi^{-1}:=\nabla w^\pm|_\G-( \nabla w^\pm|_\G\cdot\wt \nu)\wt\nu.
\]
We then have
\[|\nabla w^\pm|^2=(\wt\nu^{N+1})^2\Big(\Big|\pm\frac{\beta}{2}-\bD(f)^*[\beta]\Big|^2+|F|^2\Big)\circ\Xi^{-1}\qquad\text{on $\G$}\]
and
\[\p_{N+1} w^\pm =(\wt\nu^{N+1})^2 \Big(\pm\frac{\beta}{2}-\bD(f)^*[\beta]\Big) \circ\Xi^{-1}+\wt\nu^{N+1}(F\cdot e_{N+1})\circ\Xi^{-1}\qquad\text{on $\G$.}\]
 Using these representations for $|\nabla w^\pm|^2$, $\p_{N+1} w^\pm$, and 
$\nabla w^\pm \cdot\wt \nu$ in
 \eqref{rellich}  we obtain, recalling~\eqref{omeganu}
\begin{equation}\label{rellich'}
\begin{aligned}
&\int_{\R^N} \Big[ \frac{1}{4\omega} (\pm\beta-2\bD(f)^*[\beta])^2 
+\frac{1}{\sqrt\omega} \big(\pm \beta -2\bD(f)^*[\beta]\big) (F\cdot e_{N+1})-\frac{1}{\omega} |F|^2\Big]\,{\rm d}\xi =0.
\end{aligned}
\end{equation}  
Consequently, there exists a   constant $C=C(M)\in(0,1)$ such that 
\begin{equation*}
C\|(\pm1 -2\bD(f)^*)[\beta]\|_2\leq\|F\|_2.
\end{equation*}
In view of $2\beta=(1 -2\bD(f)^*)[\beta]-(-1 -2\bD(f)^*)[\beta]$ we then get
\begin{equation}\label{usest}
C\|\beta\|_2\leq\|F\|_2 .
\end{equation}
For $a\in[-1,1]\setminus\{0\}$ we substitute
\[
 \pm\beta -2\bD(f)^*[\beta]=-\frac{(1 +2a\bD(f)^*)[\beta]-(1\pm a)\beta}{a}
\]
 in \eqref{rellich'} and  obtain
\begin{equation}\label{rellich3}
\begin{aligned}
\int_{\R^N}&\Big[\frac{1}{4\omega}\big[|(1 +2a\bD(f)^*)[\beta]|^2-2(1\pm a)\beta(1 +2a\bD(f)^*)[\beta]+|(1\pm a)\beta|^2\big]\\
&-\frac{a}{\sqrt\omega}(1 +2a\bD(f)^*)[\beta]F\cdot e_{N+1}+\frac{a(1\pm a)}{\sqrt\omega}\beta F\cdot e_{N+1}-\frac{a^2}{\omega} |F|^2\Big]\,{\rm d}\xi =0.
\end{aligned}
\end{equation}

We now multiply the identity \eqref{rellich3} with $+$ by $(1-a)$ and the identity \eqref{rellich3} with $-$ by~${-(1+a)}$ to find, after summing up the resulting identities, that
\begin{equation}\label{rellich4}
\begin{aligned}
&\int_{\R^N} \frac{1}{ 4\omega} |(1 +2a\bD(f)^*)[\beta]|^2\,{\rm d}\xi\\
&=\int_{\R^N}\Big[\frac{(1- a^2)}{ 4\omega}|\beta|^2 +\frac{a}{\sqrt\omega}(1 +2a\bD(f)^*)[\beta]F\cdot e_{N+1}+\frac{a^2}{\omega} |F|^2\Big]\,{\rm d}\xi.
\end{aligned}
\end{equation}
H\"older's inequality, Young's inequality, \eqref{usest}, and \eqref{rellich4} combined imply  there exists a  constant $C=C(M) \in(0,1)$ such that for all $a\in[-1,1]$ we have
 \[
\|(1 +2a\bD(f)^*)[\beta]\|_2^2\geq C((1-a^2)\|\beta\|_2^2+a^2\|F\|_2^2) \geq C\|\beta\|_2^2.
 \]
 Using a standard density argument, \eqref{sionsD*}, and Corollary~\ref{C:MP0}~(ii), we infer from the latter inequality  that   estimate~\eqref{adresolvent} holds for all~$a\in[-2,2]$,   $\beta\in L_2(\R^N)$, 
and~$f\in {\rm BUC}^1(\R^N)$  with~${\|\nabla f\|_{\infty} \leq M}$.
Moreover, since  $1-a\bD(f)^*\in\kL(L_2(\R^N))$ is an isomorphism for~${a=0}$, the method of continuity; see, e.g. \cite[Proposition I.1.1.1]{Am95}, together with \eqref{adresolvent} implies that, for each~$a\in[-2,2]$ and $f\in {\rm BUC}^1(\R^N) $, the operator
$1-a\bD(f)^*\in\kL(L_2(\R^N))$ (hence also~${1-a\bD(f)}$) is an isomorphism.
The claim~\eqref{resolvent} is now a straightforward consequence of~\eqref{adresolvent}.
\end{proof}

 %%%%%%%%%%%%%%%%%%%%%%%%%%%%%%%%%%%%%%%%%%%%%%%%%%
%%%%%%%%%%%%%%%%%%%%%%%%%%%%%%%%%%%%%%%%%%%%%%%%%%
%%%%%%%%%%%%%%%%%%%%%%%%%%%%%%%%%%%%%%%%%%%%%%%%%%
%%%%%%%%%%%%%%%%%%%%%%%%%%%%%%%%%%%%%%%%%%%%%%%%%%
 \section{on the resolvent of  the double layer potential $\bD(f)\in\kL(H^s(\R^N))$}\label{Sec:4}
%%%%%%%%%%%%%%%%%%%%%%%%%%%%%%%%%%%%%%%%%%%%%%%%%%
%%%%%%%%%%%%%%%%%%%%%%%%%%%%%%%%%%%%%%%%%%%%%%%%%%
%%%%%%%%%%%%%%%%%%%%%%%%%%%%%%%%%%%%%%%%%%%%%%%%%%
%%%%%%%%%%%%%%%%%%%%%%%%%%%%%%%%%%%%%%%%%%%%%%%%%%

We now assume $f \in H^s(\R^N)$,  with $ s$  satisfying \eqref{eq:s}.
Building on Theorem~\ref{T:1},   we  will obtain in Theorem~\ref{T:51} a parallel result on the resolvent set of $\bD(f)$ in $\kL(H^s(\R^N))$.

 \subsection*{Generalized Riesz transforms in $\kL(H^{s-1}(\R^N))$} 
To show that $\bD(f) \in \kL(H^s(\R^N))$, additional mapping properties for the generalized Riesz transforms ${B^\phi_{n,\nu}(a)[b,\cdot]}$ are required. 
These properties are presented in Lemma~\ref{L:MP1}-Lemma~\ref{L:MP1b} below, with their proofs provided in Appendix~\ref{Sec:C}  (Lemma~\ref{L:MP1} is actually a particular case of  the more general result stated in Lemma~\ref{Bestgen}).

\begin{lemma}\label{L:MP1}
Given $M>0$, there exists a   constant $C>0$ such that for all
 $a \in  H^s(\R^N)^p$ with $\|a \|_{H^{s}}\leq M$,     $b=(b_1,\ldots,b_n)\in H^{s}(\R^N)^n,$  and~${\beta\in H^{s-1}(\R^N)}$,
   the mapping~${B^\phi_{n,\nu}(a)[b,\beta]}$ belongs to $H^{s-1}(\R^N)$ and
  \begin{equation}\label{F:MP1} 
\|B^\phi_{n,\nu}(a)[b,\beta]\|_{H^{s-1}}\leq C\|\beta\|_{H^{s-1}}  \prod_{i=1}^n \|b_i\|_{H^{s}}.
  \end{equation}
  \end{lemma}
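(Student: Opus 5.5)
We argue by induction on $k:=\lfloor s-1\rfloor$ and treat the fractional part with the seminorm characterisation \eqref{equivHs}, using Lemma~\ref{L:MP0} as the base $L_2$ estimate. Throughout we use that $H^s(\R^N)\hookrightarrow {\rm BUC}^{1+(s-s_c)}(\R^N)$ because $s>s_c$. Hence $\|\nabla a\|_\infty\le CM$ and $\|\nabla b_i\|_\infty\le C\|b_i\|_{H^s}$. By Lemma~\ref{L:MP0} this already gives the $L_2$ part of \eqref{F:MP1},
\[
\|B^\phi_{n,\nu}(a)[b,\beta]\|_2\le C\|\beta\|_2\prod_{i=1}^n\|b_i\|_{H^s}.
\]

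\emph{Integer derivatives.} We first take $\beta$, $a$, $b$ smooth and pass to the general case by density together with Corollary~\ref{C:MP0}. For such data we differentiate under the integral sign; the kernel is written in the variable $\xi$, so only $x$ is differentiated. This gives the product-rule identity
\begin{align*}
\p_j B^\phi_{n,\nu}(a)[b,\beta]={}&B^\phi_{n,\nu}(a)[b,\p_j\beta]+\sum_{i=1}^n B^\phi_{n,\nu}(a)[b_1,\ldots,\p_jb_i,\ldots,b_n,\beta]\\
&+\sum_{l=1}^p B^{\p_l\phi}_{n+2,\nu}(a)[a_l,\p_ja_l,b,\beta],
\end{align*}
where the last sum comes from $\p_j\big((D_{[x,\xi]}a_l)^2\big)=2D_{[x,\xi]}a_l\,D_{[x,\xi]}\p_ja_l$, with the factor $2$ absorbed into $\phi$. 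Each term has the same structure, but some derivative now sits inside a $D_{[x,\xi]}$ slot instead of on $\beta$.

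The terms with $D_{[x,\xi]}\p_jb_i$ or $D_{[x,\xi]}\p_ja_l$ cannot be handled by Lemma~\ref{L:MP0}, since $\p_jb_i\in H^{s-1}$ need not be Lipschitz. For these we prove an auxiliary estimate in which one slot $b_n$ is only in $H^{s-1}$ and $\beta$ is in $L_\infty$:
\[
\|B^\phi_{n,\nu}(a)[b_1,\ldots,b_{n-1},b_n,\beta]\|_2\le C\|\beta\|_\infty\|b_n\|_{H^{1}}\prod_{i<n}\|\nabla b_i\|_\infty,
\]
together with higher-order versions of it. For the $L_2$ part we move $b_n$ into the density role: the kernel depends on $\delta_{[x,\xi]}b_n$, and $b_n(x)$ times a Lemma~\ref{L:MP0}-type operator applied to $\beta$ is controlled in $L_2$. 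The troublesome piece $\int K\,b_n(x-\xi)\beta(x-\xi)\,{\rm d}\xi/|\xi|$ carries one power of $|\xi|^{-1}$ too many, and we handle it with the Hardy-type bound $\|D_{[x,\xi]}b_n\|_{L_2(dx)}\lesssim\|\nabla b_n\|_2$ for small $|\xi|$ combined with Minkowski's inequality. In the induction, $\beta$ lies in $H^{s-1}\hookrightarrow L_\infty$ only when $s-1>N/2$, which holds. When $s-1$ is small we instead put $\beta$ in $L_2$ and $b_n$ in $H^{s-1}\hookrightarrow W^1_{q}$, and use Hölder's inequality. Iterating the differentiation identity up to order $k$ then expresses $\p^\alpha B$ as a sum of operators of the same class, with derivatives distributed over $a$, $b$ and $\beta$.

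\emph{Fractional part.} For $s-1=k+\theta$ with $\theta\in(0,1)$, we estimate $[\p^\alpha_{}B\ldots]_{H^\theta}$ through $\tau_\zeta$. We have $\tau_\zeta B(a)[b,\beta]=B(\tau_\zeta a)[\tau_\zeta b,\tau_\zeta\beta]$, so the difference with $B(a)[b,\beta]$ telescopes by multilinearity and by \eqref{difference}. This produces terms in which exactly one argument is replaced by $\tau_\zeta u-u$, and integrating against $|\zeta|^{-N-2\theta}$ gives the $H^\theta$ seminorms. Alternatively, complex interpolation \eqref{IP} between the integer cases handles the linear dependence on $\beta$. The nonlinear dependence on $a$ and $b$ is treated by the telescoping argument.

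\emph{Main obstacle.} The hard step is the top-order terms, where all $k$ derivatives (plus $\theta$) fall on some $b_i$ or $a_l$ while $\beta$ is merely in $L_\infty$ or $L_2$. Here the trade-off between the Sobolev embedding exponents and the singular kernel must be balanced exactly. I would attack it first with the $L_\infty$–$H^1$ estimate above, combined with Gagliardo–Nirenberg interpolation of intermediate derivatives, which is where the condition $s>s_c$ is used.
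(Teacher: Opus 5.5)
\textbf{There is a genuine gap.} Your scaffolding matches the paper: the differentiation formula \eqref{F:MP2a}, the telescoping of $\tau_\zeta B^\phi_{n,\nu}(a)[b,\beta]-B^\phi_{n,\nu}(a)[b,\beta]$ via \eqref{shiftid} and \eqref{difference}, and induction on the regularity (Lemma~\ref{BestHa} and the proof of Lemma~\ref{Bestgen}). What is missing are the $L_2$ estimates this machinery must be fed with, and your sketch does not establish the one you propose.

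\textbf{Your auxiliary estimate is not proved.} Splitting $\delta_{[x,\xi]}b_n=b_n(x)-b_n(x-\xi)$ gives two hypersingular integrals with kernels homogeneous of degree $-(N+1)$. These are not operators covered by Lemma~\ref{L:MP0}. If you instead bound $\int K\,D_{[x,\xi]}b_n\,\beta(x-\xi)|\xi|^{-N}\,{\rm d}\xi$ by Minkowski's inequality and $\|D_{[\cdot,\xi]}b_n\|_2\le\|\nabla b_n\|_2$, you are left with $\int_{\{|\xi|<1\}}|\xi|^{-N}\,{\rm d}\xi=\infty$. The cancellation of the kernel must be used, and with $\beta$ only in $L_\infty$ your argument has no mechanism for that. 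The paper's Lemma~\ref{L:MP1a} splits off $\beta(x)$ instead of $b_n(x)$:
\begin{itemize}
\item The remainder contains $\delta_{[x,\xi]}\beta$, which gains $|\xi|^{s-s_c}$ because $H^{s-1}\hookrightarrow{\rm BUC}^{s-s_c}$.
\item In the part with $\beta(x)$ factored out, an integration by parts in $\xi$ based on \eqref{gradD} moves the derivative onto $b_1$ in the density slot. This produces $\beta\sum_jB^\phi_{n-1,\nu+e_j}(a)[b_2,\ldots,b_n,\p_jb_1]$, to which Lemma~\ref{L:MP0} applies.
\item The remaining weakly singular terms are controlled by the H\"older continuity of $\nabla a$ and $\nabla b_i$.
\end{itemize}
The result is $\|B^\phi_{n,\nu}(a)[b,\beta]\|_2\le C\|\beta\|_{H^{s-1}}\|b_1\|_{H^1}\prod_{i\ge 2}\|b_i\|_{H^s}$.

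\textbf{The top-order terms remain open even granting that estimate.} They cannot be handled by two endpoint bounds alone. Take $N=3$, $s=3$, $n=1$. Then $\p_k\p_jB^\phi_{1,\nu}(a)[b_1,\beta]$ contains $B^\phi_{1,\nu}(a)[\p_jb_1,\p_k\beta]$. Here $\p_jb_1\in H^2(\R^3)$ need not be Lipschitz, and $\p_k\beta\in H^1(\R^3)$ need not be bounded. So neither Lemma~\ref{L:MP0} nor your $L_\infty$--$H^1$ bound applies; what is needed is $\|B^\phi_{1,\nu}(a)[u,\gamma]\|_2\le C\|u\|_{H^2}\|\gamma\|_{H^1}$. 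Your ``H\"older's inequality with $b_n\in W^1_q$'' presupposes $L_q\times L_r\to L_2$ bounds for these nonlinear singular integrals, which you do not have. Gagliardo--Nirenberg inequalities for functions do not produce operator bounds of this kind.

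\textbf{The missing idea is multilinear interpolation.} You need the whole family $\|B^\phi_{n,\nu}(a)[b,\beta]\|_2\le C\|\beta\|_{H^{s-1-\sigma_0}}\prod_i\|b_i\|_{H^{s-\sigma_i}}$ with $\sigma_0+\ldots+\sigma_n\le s-1$ (Lemma~\ref{BestL2}); the example above is the case $\sigma_0=\sigma_1=1$. The paper obtains this family by multilinear complex interpolation between the endpoints Lemma~\ref{L:MP0} and Lemma~\ref{L:MP1a}, using Lemma~\ref{le:multint}, which exploits the symmetry in the $b_i$. Interpolating in $\beta$ alone, as you suggest, cannot transfer regularity between $\beta$ and the $b_i$. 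Once this family is available, your translation and difference-quotient steps go through.
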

  
We now extend this statement to establish the smooth dependence of
$B^\phi_{n,\nu}$ on $a$. 
  Since we only require this result for $p=1$, we restrict ourselves to this case (although the result also holds for $p \geq 2$).
  \begin{lemma}\label{L:MP1d}
We have $[ a\mapsto B^\phi_{n,\nu}(a)]\in {\rm C}^{\infty}(H^s(\R^N),\kL^n_{\rm sym} (H^{s}(\R^n) , \kL(H^{s-1 }(\R^N))))$.
  \end{lemma}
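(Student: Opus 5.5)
The plan is to follow the standard scheme used for the one-dimensional analogues: first local Lipschitz continuity via the difference formula \eqref{difference}, then an explicit candidate for the Fréchet derivative, then induction on the order of derivatives. Throughout, $p=1$ and $\phi\in{\rm C}^\infty([0,\infty))$.

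\textbf{Step 1 (Lipschitz continuity).} By \eqref{difference} with $p=1$,
\[
B^\phi_{n,\nu}(a)-B^\phi_{n,\nu}(\wt a)=B^{\phi^1}_{n+2,\nu}(a,\wt a)[a-\wt a,a+\wt a,\cdot,\cdot],
\]
where $\phi^1\in{\rm C}^\infty([0,\infty)^2)$ is given by \eqref{gis}. The operator on the right is again of the form \eqref{operatorsB}, now with $p=2$, and $n+2+|\nu|$ is still odd. Lemma~\ref{L:MP1} applies for general $p$, so for $\|a\|_{H^s},\|\wt a\|_{H^s}\leq M$ we get
\[
\|B^\phi_{n,\nu}(a)-B^\phi_{n,\nu}(\wt a)\|_{\kL^n(H^s,\kL(H^{s-1}))}\leq C\|a-\wt a\|_{H^s}\big(\|a\|_{H^s}+\|\wt a\|_{H^s}\big).
\]
Symmetry and $n$-linearity in $b$ are clear from the definition, and boundedness is Lemma~\ref{L:MP1}. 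Hence $a\mapsto B^\phi_{n,\nu}(a)$ is locally Lipschitz into $\kL^n_{\rm sym}(H^s(\R^N),\kL(H^{s-1}(\R^N)))$.

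\textbf{Step 2 (Fréchet derivative).} The candidate derivative comes from differentiating the kernel. Since $\partial_\varepsilon\phi\big((D_{[x,\xi]}(a+\varepsilon h))^2\big)=2\phi'\big((D_{[x,\xi]}a)^2\big)D_{[x,\xi]}a\,D_{[x,\xi]}h$ at $\varepsilon=0$, we set
\[
\partial B^\phi_{n,\nu}(a)[h][b,\beta]:=2B^{\phi'}_{n+2,\nu}(a)[h,a,b,\beta].
\]
This is bounded in $h$ by Lemma~\ref{L:MP1}. To show it is the derivative, apply \eqref{difference} to $B^\phi_{n,\nu}(a+h)-B^\phi_{n,\nu}(a)$ and compare with this candidate. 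The remainder is
\[
B^{\phi^1}_{n+2,\nu}(a+h,a)[h,2a+h,b,\beta]-2B^{\phi'}_{n+2,\nu}(a)[h,a,b,\beta].
\]
One key observation is that $\phi^1(x,x)=\phi'(x)$. Write the remainder as
\[
\big(B^{\phi^1}_{n+2,\nu}(a+h,a)-B^{\phi^1}_{n+2,\nu}(a,a)\big)[h,2a+h,\dots]+B^{\phi^1}_{n+2,\nu}(a,a)[h,h,\dots].
\]
The second term is $O(\|h\|_{H^s}^2)$ by Lemma~\ref{L:MP1}. For the first term we need the Lipschitz estimate for the $p=2$ operator in its first argument. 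This is \eqref{difference} once more, now with $p=2$, and it yields $O(\|h\|^2_{H^s})$. Finally, $B^{\phi^1}_{n+2,\nu}(a,a)$ with $\phi^1(x,x)=\phi'(x)$ is exactly $B^{\phi'}_{n+2,\nu}(a)$, since the kernel only sees $\phi^1$ on the diagonal.

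\textbf{Step 3 (Induction).} The derivative has the same structure as the original map: it is the map $a\mapsto B^{\phi'}_{n+2,\nu}(a)[\cdot,a,\cdot,\cdot]$, which depends smoothly on $a$ linearly through the slot $a$ and nonlinearly through $\phi'$. By Steps 1–2, applied to $\phi'$ and $n+2$, together with the product rule for the linear slot, this map is ${\rm C}^1$. An induction on $k$ then shows that the map is ${\rm C}^k$ for every $k$, with derivatives given by sums of operators $B^{\phi^{(j)}}_{n+2j,\nu}(a)[\ldots]$. Strictly, Steps 1–2 should be carried out for general $p$, so that the inductive class is closed, since \eqref{difference} produces $p=2$ operators.

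\textbf{Main obstacle.} The main obstacle is this last point. The Lipschitz and derivative statements must hold uniformly for $p\geq1$, using Lemma~\ref{L:MP1} for all $p$. One must also check that restricting the $2p$-variable $\phi^i$ to the diagonal reproduces the correct lower-$p$ operator. Beyond this, the argument is pure bookkeeping.
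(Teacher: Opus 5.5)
Your proposal is correct and follows essentially the paper's argument: write the increment via \eqref{difference}, identify the derivative $2B^{\phi'}_{n+2,\nu}(a)[h,a,b,\cdot]$, show the remainder is quadratic in $h$ using Lemma~\ref{L:MP1} for a higher-$p$ operator (the paper gets this via the fundamental theorem of calculus, you via a second application of \eqref{difference}), and conclude by induction. Your ``main obstacle'' is not really one: the derivative is again a $p=1$ operator with one slot filled by $a$, so the induction closes within $p=1$, and general $p$ enters only through the boundedness in Lemma~\ref{L:MP1}, which is already stated for all $p$.
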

  As a straightforward consequence of Lemma~\ref{L:MP1d}, the operators defined in \eqref{trueRO} satisfy
\begin{equation}\label{regtruero}
[ f\mapsto \sfB^\phi_{n,\nu}(f)]\in {\rm C}^{\infty}(H^s(\R^N),  \kL(H^{s-1}(\R^N))).
\end{equation}

 The proof of Lemma \ref{L:MP1} will also provide representations for spatial derivatives (a ``chain rule'') for (weak) spatial derivatives of $B^\phi_{n,\nu}(a)[b,\beta]$, whenever these exist in~$L_2(\R^N)$. In particular, this implies that the class of singular integral operators $B^\phi_{n,\nu}$ is closed under differentiation.   For the case $p=1$, which is sufficient for our purposes, we make this explicit in the following lemma.

\begin{lemma}\label{L:MP1b}
    Let $a\in H^s(\R^n)$, $b_i\in H^{s-\sigma_i}(\R^N)$, $1\leq  i\leq n$, and $\beta\in H^{s-1-\sigma_0}(\R^N)$  with
    \[\sigma_0,\ldots,\sigma_n\in[0,s-1] \qquad\text{and}\qquad \sigma_0+\ldots+\sigma_n=:\sigma\leq s-2.\]
    Then  $B^\phi_{n,\nu}(a)[b,\beta]\in H^{s-1-\sigma}(\R^N)$ and
    \begin{equation}\label{F:MP2a} 
 \begin{aligned}
 \p_j\big(B^\phi_{n,\nu}(a)[b,\beta]\big)&=B^\phi_{n,\nu}(a)[b,\p_j\beta]
 +\sum_{i=1}^n B^\phi_{n,\nu}(a)[ b_1,\ldots,b_{i-1}, \p_j b_i,  b_{i+1},\ldots,  b_n,   \beta]\\
 &\quad+2 B^{\phi'}_{n+2,\nu}(a)[\p_j a, a ,  b,  \beta],\qquad 1\leq j\leq N.
 \end{aligned}
  \end{equation}
\end{lemma}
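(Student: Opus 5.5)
Since weak derivatives are unique, I would prove the formula \eqref{F:MP2a} first for smooth, rapidly decaying data and then pass to the general case by density. The regularity claim $B^\phi_{n,\nu}(a)[b,\beta]\in H^{s-1-\sigma}(\R^N)$ would then be obtained from \eqref{F:MP2a} by induction on the number of derivatives, with Lemma~\ref{L:MP1} (or its general version Lemma~\ref{Bestgen}) supplying the base estimates.

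\emph{Step 1 (smooth data).} Take $a,b_i,\beta\in{\rm C}_0^\infty(\R^N)$. Substituting $\xi\mapsto\xi$ in \eqref{operatorsB}, the $x$-dependence enters only through $D_{[x,\xi]}a$, $D_{[x,\xi]}b_i$ and $\beta(x-\xi)$. Differentiating the truncated integral over $\{|\xi|>\e\}$ in $x$ is then legitimate, and one uses $\p_{x_j}D_{[x,\xi]}u=D_{[x,\xi]}\p_ju$. The derivative of $\beta(x-\xi)$ gives $B(a)[b,\p_j\beta]$. The derivative of the $i$-th factor $D_{[x,\xi]}b_i$ gives the $i$-th term of the sum. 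For the nonlinear factor,
\[
\p_{x_j}\phi\big((D_{[x,\xi]}a)^{\ov 2}\big)=\phi'\big((D_{[x,\xi]}a)^{\ov2}\big)\,2\,D_{[x,\xi]}a\,D_{[x,\xi]}\p_ja ,
\]
which is exactly the kernel of $2B^{\phi'}_{n+2,\nu}(a)[\p_ja,a,b,\beta]$. The parity condition is preserved, since $n+2+|\nu|$ is still odd. Letting $\e\to0$, the truncated operators converge uniformly on compacts. This convergence holds because each truncated integrand is odd up to an $O(|\xi|^{1-N})$ correction near $0$, which is where the parity assumption is used. Hence \eqref{F:MP2a} holds classically.

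\emph{Step 2 (density).} Approximate $a,b_i,\beta$ by smooth functions in $H^s$, $H^{s-\sigma_i}$ and $H^{s-1-\sigma_0}$ respectively. The key requirement is continuity of each term in \eqref{F:MP2a} into some common space, say $H^{-1}$ or $L_2$ in weak form, under these convergences. Since $s>1+N/2$, we have $\nabla a\in{\rm BUC}^{s-1-N/2}$, and $b_i\in H^{s-\sigma_i}$. When $\sigma_i\le s-1-N/2$ is not guaranteed, I would instead use the $L_2$-bounds of Lemma~\ref{L:MP0} in the tested form
\[
\langle \p_j B[b,\beta],\psi\rangle=-\langle B[b,\beta],\p_j\psi\rangle ,
\]
together with the multilinear mapping properties of Lemma~\ref{Bestgen} in intermediate Sobolev spaces, where the indices satisfy $\sigma\le s-2$. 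Continuity in $a$ follows from \eqref{difference} and Corollary~\ref{C:MP0}(ii).

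\emph{Step 3 (regularity).} Once \eqref{F:MP2a} holds distributionally, each term on its right is of the same type, with one entry's regularity index increased by one: either $\sigma_0$, some $\sigma_i$, or a new entry $\p_ja\in H^{s-1}$, i.e.\ $\sigma=1$. Iterating up to $\lfloor s-1-\sigma\rfloor$ times, and interpolating via \eqref{IP} for the fractional part, reduces everything to the base estimate of Lemma~\ref{L:MP1}-type. In the base estimate, the total loss stays $\le s-2$, so at least one ``$H^s$-like'' slot remains for absorbing $L_\infty$ Lipschitz bounds.

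The main obstacle is the rough case of Step 2 and Step 3, where some $b_i$ or $\beta$ has low regularity (large $\sigma_i$). There Lemma~\ref{L:MP0}'s requirement $\nabla b_i\in L_\infty$ fails, so one needs the refined estimate with one argument measured only in $H^{s-1-\sigma}$-type norms. I expect this to rely on the general Lemma~\ref{Bestgen} of the Appendix, which I would invoke there.
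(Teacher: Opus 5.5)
Your route is correct in outline but differs from the paper's. The paper never differentiates a principal value integral. Instead, \eqref{F:MP2a} is extracted from the inductive proof of Lemma~\ref{Bestgen}: shift equivariance \eqref{shiftid} and \eqref{difference} give the exact identity \eqref{dqrep} for the difference quotient $D^j_\eps B^\phi_{n,\nu}(a)[b,\beta]$ with rough data. The induction hypothesis with one index raised by one, together with the Lipschitz dependence \eqref{eq:acont} on $a$, then shows that each term of \eqref{dqrep} converges in $H^{s-2-\sigma}(\R^N)$ to the corresponding term of \eqref{F:MP2a}. Since convergence of difference quotients characterizes the next Sobolev level, membership in $H^{s-1-\sigma}(\R^N)$ and the chain rule come out simultaneously, with no smooth approximation.

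You instead differentiate the truncated integrals classically for ${\rm C}_0^\infty$ data and then argue by density, using Lemma~\ref{Bestgen} as a black box. Your justification of the $\e\to0$ limit via the odd leading part of the kernel is fine. The argument is also not circular, since Lemma~\ref{Bestgen} is proved without Lemma~\ref{L:MP1b}. But once you invoke it, the regularity claim is literally its conclusion, so your Step~3 is superfluous. Step~2 is then immediate: the left side converges in $H^{s-1-\sigma}(\R^N)$ and every term on the right in $H^{s-2-\sigma}(\R^N)$ by Lemma~\ref{Bestgen}, the raised indices being admissible because $\sigma+1\leq s-1$.

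Two corrections. First, continuity in $a$ cannot come from Corollary~\ref{C:MP0}(ii) once some $b_i\notin W^1_\infty(\R^N)$. You need \eqref{difference} combined with Lemma~\ref{Bestgen} (with $2p$ in place of $p$ and $\phi^i$ in place of $\phi$), i.e.\ \eqref{eq:acont}. Second, a self-contained Step~3 could not rest on Lemma~\ref{L:MP1}. Its base case would have to be the rough $L_2$ bounds of Lemma~\ref{BestL2} plus a separate fractional argument, which amounts to reproving the appendix.

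In short, the paper's approach obtains estimate and formula in a single induction and handles rough data directly. Yours makes the formula transparent as a chain rule on the kernel, but presupposes the full multilinear estimates.
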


In particular, we point out for further reference that,   given $f\in H^s(\R^N)$,
\be\label{commsfb}
\llbracket\p_j,\sfB_{n,\nu}^\phi(f)\rrbracket
=nB_{n,\nu}^\phi[\p_jf,f^{[n-1]},\cdot]+2B_{n+2,\nu}^{\phi'}[\p_jf,f^{[n+1]},\cdot],\qquad  1\leq j\leq N.
\ee

\subsection*{The double layer potential in $\kL(H^s(\R^N))$}\  Due to  \eqref{sionsD}, it is straightforward to deduce from Lemma~\ref{L:MP1} that $\bD(f) \in \kL(H^{s-1}(\R^N))$.
 Furthermore, we prove below that a stronger property holds, namely that $\bD(f) \in \kL(H^{s}(\R^N))$. 
 With Theorem~\ref{T:51}, we provide a  result
 which plays a crucial role in reformulating the Muskat problem as an evolution problem for the free interface
  between the fluids in Section~\ref{Sec:5}.
\begin{thm}\label{T:51}
Given $M>0$, there exists a constant $C=C(M)\in(0,1)$ such that for all~$a\in[-2,2]$ and   $f, \beta\in H^s(\R^N)$ 
  with~${\|f\|_{H^s}\leq M}$ we have $\bD(f)[\beta]\in H^s(\R^N) $ and
\begin{equation}\label{resolvents}
\|(1-a\bD(f))[\beta]\|_{H^s}\geq C\|\beta\|_{H^s}.
\end{equation}
Moreover, $1-a\bD(f)\in\kL(H^s(\R^N))$ is an isomorphism for all $f\in H^s(\R^N) $ and~${a\in[-2,2]}$.
\end{thm}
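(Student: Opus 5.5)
Theorem~\ref{T:51} will be deduced from the $L_2$-result Theorem~\ref{T:1} by lifting it to $H^s$ one derivative at a time, using the commutator formula \eqref{commsfb}. By \eqref{equivgrad} it suffices to control $\|\beta\|_2$ and $\|\nabla\beta\|_{H^{s-1}}$.

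\textbf{Step 1: the derivative of $\bD(f)[\beta]$.} First I show $\bD(f)[\beta]\in H^s$. By \eqref{sionsD} and Lemma~\ref{L:MP1b} (with $\sigma_i=0$ for the $f$-arguments and $\sigma_0=0$ for $\beta\in H^{s}\subset H^{s-1}$), the weak derivatives are given by \eqref{commsfb}:
\[
\p_j\bD(f)[\beta]=\bD(f)[\p_j\beta]+R_j(f)[\beta],
\]
where $R_j(f)$ is a finite sum of operators $B^\phi_{n,\nu}(f)[\p_jf,f^{[k]},\cdot]$ applied to $\beta$ or to $\beta\p_if$, plus the term $-\sum_i\sfB^{\bar\phi}_{0,e_i}(f)[\beta\,\p_i\p_jf]$. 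This last term is the delicate one, since $\p_i\p_jf\in H^{s-2}$ only. I handle it by using the second form $\bD(f)[\beta]=\sfB^{\bar\phi}_{1,0}(f)[\beta]-\sum_i\sfB^{\bar\phi}_{0,e_i}(f)[\beta\p_if]$. Differentiating the product $\beta\p_if$, the part $\beta\,\p_j\p_if$ lies in $H^{s-2}$, because $\beta\in H^s$ is a multiplier on $H^{s-2}$ when $s>1+N/2$. I then apply Lemma~\ref{L:MP1b} with $\sigma_0=1$, which requires $s-2\geq 1$. If $s<3$, I instead rewrite this term, before differentiating, as $B^{\bar\phi}_{1,e_i}$-type operators acting with $b=f$, by integrating by parts in $\xi$ (the kernel $\xi\cdot\nabla f(x-\xi)$ equals $-\xi\cdot\nabla_\xi f(x-\xi)$). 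Either way the goal is
\[
\|R_j(f)[\beta]\|_{H^{s-1}}\leq C(M)\|\beta\|_{H^{s-1}},
\]
via Lemma~\ref{L:MP1}. This step carries the real technical risk, namely keeping every term within the hypotheses of Lemma~\ref{L:MP1}/\ref{L:MP1b} without losing a derivative on $f$. It also shows $\bD(f)\in\kL(H^s)$.

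\textbf{Step 2: the a priori estimate.} With $\beta\in H^s$ and $g:=(1-a\bD(f))[\beta]$, Step~1 gives
\[
(1-a\bD(f))[\p_j\beta]=\p_jg+aR_j(f)[\beta].
\]
I first treat $H^{s-1}$-invertibility. For $s-1\leq1$, I interpolate between the $L_2$ estimate of Theorem~\ref{T:1} (with $\|\nabla f\|_\infty\leq CM$ by Sobolev embedding) and an $H^1$ estimate obtained by the same commutator argument at level $L_2$. For larger $s$, I iterate the argument, inducting on $\lfloor s\rfloor$ and interpolating via \eqref{IP}, to get
\[
\|(1-a\bD(f))[h]\|_{H^{s-1}}\geq C\|h\|_{H^{s-1}}.
\]
Applying this to $h=\p_j\beta$ gives
\[
\|\nabla\beta\|_{H^{s-1}}\leq C\bigl(\|g\|_{H^s}+\|\beta\|_{H^{s-1}}\bigr).
\]
The lower-order term $\|\beta\|_{H^{s-1}}$ is absorbed by interpolating between $L_2$ and $H^s$ with Young's inequality, and $\|\beta\|_2\leq C\|g\|_2$ by Theorem~\ref{T:1}. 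This yields \eqref{resolvents}. I expect the induction bookkeeping, with uniform constants in $\|f\|_{H^s}\leq M$, to be the main obstacle after Step~1.

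\textbf{Step 3: isomorphism.} For $a=0$ the operator is the identity. Continuity of $a\mapsto1-a\bD(f)$ in $\kL(H^s)$, together with the uniform bound \eqref{resolvents}, lets the method of continuity \cite[Proposition I.1.1.1]{Am95} give surjectivity for all $a\in[-2,2]$, exactly as in the proof of Theorem~\ref{T:1}.
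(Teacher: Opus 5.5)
\textbf{Gap in Step~1.} The main difficulty of the theorem sits in Step~1, and your plan does not resolve it. Estimating the pieces of $R_j(f)[\beta]=\llbracket\p_j,\bD(f)\rrbracket[\beta]$ separately cannot give $H^{s-1}$. The piece $\sum_i\sfB^{\bar\phi}_{0,e_i}(f)[\beta\,\p_i\p_jf]$ is a zero-order operator applied to a function that is only in $H^{s-2}$. Lemma~\ref{L:MP1b} with $\sigma_0=1$ returns it only in $H^{s-2}$, so it can never place $\p_j\bD(f)[\beta]$ in $H^{s-1}$. The same holds for the other pieces carrying $\p_jf$ in a $b$-slot, e.g.\ $B^{\bar\phi}_{1,0}(f)[\p_jf,\beta]$. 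That the full sum lies in $H^{s-1}$ is due to a cancellation among these terms, which you must exhibit. Your fallback for $s<3$ does not supply it: integrating by parts ``before differentiating'' into ``$B^{\bar\phi}_{1,e_i}$-type'' operators leaves the $B^\phi_{n,\nu}$ class, since $n+|\nu|$ is even.

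\textbf{Missing idea.} The missing ingredient is Lemma~\ref{L:51}. Its key identity is
\[
\p_{x_k}\Bigg(\frac{\delta_{[x,\xi]}f-\xi\cdot\nabla f(x-\xi)}{\big(|\xi|^2+(\delta_{[x,\xi]}f)^2\big)^{\frac{N+1}{2}}}\Bigg)=-{\rm div}_\xi\Bigg(\frac{\xi\,\delta_{[x,\xi]}\p_kf}{\big(|\xi|^2+(\delta_{[x,\xi]}f)^2\big)^{\frac{N+1}{2}}}\Bigg).
\]
Integrating by parts in $\xi$ moves this divergence onto $\beta(x-\xi)$ and yields $\nabla(\bD(f)[\beta])=\cA(f)[\nabla\beta]$, where $\cA(f)$ involves only $\nabla f$. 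Comparing \eqref{sionsD} with \eqref{sionscA}, this is equivalent to
\[
\llbracket\p_j,\bD(f)\rrbracket[\beta]=-\sum_{i=1}^N\llbracket\p_jf,\sfB^{\bar\phi}_{0,e_i}(f)\rrbracket[\p_i\beta].
\]
Lemma~\ref{commphi}~(ii) then gives $\|\llbracket\p_j,\bD(f)\rrbracket[\beta]\|_{H^{s-1}}\leq C\|\beta\|_{H^{s'}}$ for $s'\in(\max\{s_c,s-1\},s)$, which also yields $\bD(f)\in\kL(H^s(\R^N))$.

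\textbf{Your target bound must be weakened.} The bound $\|R_j(f)[\beta]\|_{H^{s-1}}\leq C\|\beta\|_{H^{s-1}}$ should not be expected for $s_c<s<s_c+1$. Already its linearization at $f=0$, namely $-\sum_i\llbracket\p_jf,\sfB^{\bar\phi}_{0,e_i}(0)\rrbracket\p_i$ (Riesz transforms), is unbounded on $H^{s-1}$. To see this, take $\p_jf$ concentrated at frequency $\lambda$ and $\beta$ at frequency $\mu\ll\lambda$ in a direction orthogonal to $e_j$, both normalized in $H^{s-1}$. The output then has $H^{s-1}$-norm of order $\mu^{s_c+1-s}$; this high--low interaction needs $\nabla\beta\in L_\infty$. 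So you need the lossy bound above, and similarly at the lower levels, where your claimed order-zero estimates hold only with $\|h\|_{H^{r-\delta}}$ on the right (cf.\ Lemma~\ref{ADcomm}). With these corrections, your one-derivative bootstrap plus the interpolation/absorption you already plan, and Step~3, would go through.

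\textbf{How the paper proceeds.} The paper avoids the top-level commutator by stepping two derivatives at a time, via
\[
\llbracket\p_i^2,\bD(f)\rrbracket[\beta]=\llbracket\p_i,\cA_i(f)\rrbracket[\nabla\beta]+\llbracket\p_i,\bD(f)\rrbracket[\p_i\beta].
\]
This way, commutators are only ever measured in $H^{k+\alpha-2}\subset H^{s-2}$, where $\p_i\p_jf\in H^{s-2}$ is an admissible multiplier (Lemma~\ref{ADcomm}). The resulting $\|\beta\|_{H^{k+\alpha'}}$ term is absorbed by interpolation and Theorem~\ref{T:1}.
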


As a first step we prove that  $\bD(f)[\beta]\in H^s(\R^N) $. 
This regularity issue has  been considered  in the special case $N=1$ in  \cite[Proposition 2.3]{MM23}.
In  Lemma~\ref{L:51} below we establish  its counterpart for the case $N\geq 2$ considered here. 
 To this end we introduce a matrix-type singular integral operator $\cA(f)$ by setting, for $b=(b_1,\ldots,b_N) \in L_2(\R^N)^N$ and~$x\in\R^N$,
\begin{equation}\label{aof}
 \cA(f)[b](x)
 :=\frac{1}{|\s^N|}\PV\int_{\R^N} \frac{(z_x-z_\xi)\cdot(-\nabla f(\xi),1)b(\xi)-(x-\xi)\cdot b(\xi)\big(\nabla f(x)-\nabla f(\xi))}{|z_x-z_\xi|^{N+1}}\,{\rm d}\xi.
 \end{equation}
This operator can be expressed in terms of the generalized Riesz transforms defined in~\eqref{trueRO}.  
Indeed, with $\bar\phi$  from \eqref{deefphi} and  $\cA(f)=:(\cA_1(f),\ldots,\cA_N(f))$, for $1\leq k\leq N$ we have
\be\label{sionscA}
\cA_k(f)[b]=\sfB_{1,0}^{\bar\phi}(f)[b_k]+\sum_{i=1}^N  \Big(\sfB^{\bar\phi}_{0,e_i}(f)[\p_kfb_i-\p_i fb_k]-\p_kf \sfB^{\bar\phi}_{0,e_i}(f)[b_i]\Big).
\ee
 The representation~\eqref{sionscA} together with Lemma~\ref{L:MP1d} implies that
\begin{equation}\label{reg:cla}
    \mathcal{A}\in {\rm C}^{\infty}(H^s(\R^N),  \kL(H^{s-1}(\R^N),H^{s-1}(\R^N)^N)).
\end{equation}

The next results provides  a correlation between the double layer potential $\bD(f)$ and~$\cA(f)$.

\begin{lemma}\label{L:51}  Given $f\in H^s(\R^N)$ and $\beta\in H^{1}(\R^N)$, we have $\bD(f)[\beta]\in  H^{1}(\R^N)$ and
\begin{equation}\label{cder}
 \nabla (\bD(f)[\beta])=\cA(f)[\nabla \beta].
 \end{equation}
 Moreover,   $\bD(f)\in\kL(H^s(\R^N) )$ for $f\in H^s(\R^N)$.
\end{lemma}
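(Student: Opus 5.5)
The plan is to prove \eqref{cder} first for smooth data and then pass to the limit. Take $f\in{\rm BUC}^\infty(\R^N)\cap H^s(\R^N)$ and $\beta\in{\rm C}_0^\infty(\R^N)$. For such $f$ the kernel of $\bD(f)$ is only weakly singular, because
\[
\delta_{[x,\xi]}f-\xi\cdot\nabla f(x-\xi)=O(|\xi|^2).
\]
Consequently, differentiation under the integral is legitimate after a cutoff near $\xi=0$. In the original variables,
\[
\bD(f)[\beta](x)=\frac{1}{|\s^N|}\int_{\R^N}G_f(z_x,\xi)\beta(\xi)\,{\rm d}\xi .
\]
I compute $\p_{x_k}$ of this integral, where $x$ enters only through $z_x=(x,f(x))$:
\[
\p_{x_k}\big[G(z_x,\xi)\big]=\big(\p_{z_k}G+\p_kf(x)\,\p_{z_{N+1}}G\big)(z_x,\xi).
\]
Next I use the identities from the proof of Proposition~\ref{P:1}, namely $\p_{z_i}G=-\p_{\xi_j}K_{ij}$. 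This gives
\[
\p_k\bD(f)[\beta](x)=-\frac{1}{|\s^N|}\int\big(\p_{\xi_j}K_{kj}+\p_kf(x)\,\p_{\xi_j}K_{(N+1)j}\big)(z_x,\xi)\,\beta(\xi)\,{\rm d}\xi,
\]
understood as a principal value limit over $\{|x-\xi|>\eps\}$. Integrating by parts in $\xi$ turns this into
\[
\frac{1}{|\s^N|}\PV\int\big(K_{kj}+\p_kf(x)K_{(N+1)j}\big)(z_x,\xi)\,\p_j\beta(\xi)\,{\rm d}\xi
\]
plus a boundary term on the sphere $|x-\xi|=\eps$. Inserting \eqref{Kij}, the integrand should be exactly the $k$-th component of the kernel in \eqref{aof}. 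The term $(x_j-\xi_j)\p_kf(\xi)$ in $K_{kj}$ combines with $-\p_kf(x)(x_j-\xi_j)$ to give $-(x-\xi)\cdot b\,(\nabla f(x)-\nabla f(\xi))$. I will check this algebra carefully.

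The main obstacle is the boundary term, together with the fact that the differentiated kernel is no longer weakly singular. I plan to handle both by working with the truncated integral $\int_{|x-\xi|>\eps}$ from the start. Write $I_\eps(x)$ for this truncation of $\bD(f)[\beta]$. I differentiate $I_\eps$ in $x$; this produces an extra sphere term because the domain depends on $x$. I then check that this term cancels the boundary term from the integration by parts, or that both tend to zero. The key point is that $G(z_x,\xi)=O(|x-\xi|^{1-N})$, so the sphere integrals are $O(\eps)$. Meanwhile $\cA(f)[\nabla\beta]$ exists as a principal value by \eqref{sionscA} and Lemma~\ref{L:MP0}, and $I_\eps\to\bD(f)[\beta]$ uniformly. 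Hence $\nabla I_\eps\to\cA(f)[\nabla\beta]$ in the distributional sense, which yields \eqref{cder} for smooth data. As a fallback I would compute the odd-symmetry cancellation on spheres explicitly, as in the $N=1$ case of \cite{MM23}.

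Next I extend by density. Let $f_n\to f$ in $H^s$ with $f_n$ smooth, and $\beta_n\to\beta$ in $H^1$. By \eqref{sionsD}, \eqref{sionscA} and Corollary~\ref{C:MP0}(ii), $\bD(f_n)\to\bD(f)$ in $\kL(L_2)$ and $\cA(f_n)\to\cA(f)$ in $\kL(L_2,L_2^N)$, since $H^s\hookrightarrow W^1_\infty$. Therefore $\bD(f_n)[\beta_n]\to\bD(f)[\beta]$ and $\nabla\bD(f_n)[\beta_n]\to\cA(f)[\nabla\beta]$ in $L_2$. Closedness of the weak gradient then gives $\bD(f)[\beta]\in H^1$ together with \eqref{cder}.

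Finally, for $\beta\in H^s$ we have $\nabla\beta\in H^{s-1}$. By \eqref{reg:cla}, $\cA(f)[\nabla\beta]\in H^{s-1}$ with norm at most $C\|\beta\|_{H^s}$. Also $\bD(f)[\beta]\in L_2$ by Lemma~\ref{L:MP0}. The norm equivalence \eqref{equivgrad} then yields $\bD(f)\in\kL(H^s(\R^N))$.
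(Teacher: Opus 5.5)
Your overall route is sound and differs from the paper's, but one justification is wrong as written. The bound $G(z_x,\xi)=O(|x-\xi|^{1-N})$ does \emph{not} make the sphere integrals $O(\eps)$. The sphere $\{|\xi-x|=\eps\}$ has area of order $\eps^{N-1}$, so size alone gives only $O(1)$, both for the moving-domain term and for the integration-by-parts term. If you combine the two, the $G$-parts cancel exactly, since
\[
\sum_{j=1}^N\big(K_{kj}+\p_kf(x)K_{(N+1)j}\big)(z_x,\xi)(\xi_j-x_j)=G(z_x,\xi)(\xi_k-x_k)-\frac{|x-\xi|^2\big(\p_kf(\xi)-\p_kf(x)\big)}{|z_x-z_\xi|^{N+1}}.
\]
What remains is
\[
-\frac{\eps}{|\s^N|}\int_{\{|\xi-x|=\eps\}}\frac{\beta(\xi)\big(\p_kf(\xi)-\p_kf(x)\big)}{|z_x-z_\xi|^{N+1}}\,{\rm d}S(\xi),
\]
which is genuinely of order one by size. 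It tends to $0$ only because of a cancellation: with $\xi=x+\eps\omega$, its leading part is a constant multiple of $\beta(x)\int_{\s^{N-1}}\omega\cdot\nabla\p_kf(x)\,(1+(\omega\cdot\nabla f(x))^2)^{-(N+1)/2}\,{\rm d}S(\omega)$. This vanishes by oddness in $\omega$, and the remainder is $O(\eps)$ for $f\in{\rm BUC}^\infty$, $\beta\in{\rm C}_0^\infty$. So the parity argument you list as a fallback is in fact mandatory. With it the smooth case is complete, and your density step and the final $\kL(H^s(\R^N))$ conclusion match the paper's.

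The paper instead works in the convolution variable of \eqref{dlpot}. There the $x$-derivative of the kernel stays weakly singular for smooth $f$. It obtains $\p_k\bD(f)[\beta]=\bD(f)[\p_k\beta]+\frac{1}{|\s^N|}\int\p_{x_k}(\cdots)\,\beta(x-\xi)\,{\rm d}\xi$ from \eqref{sionsD} and Lemma~\ref{L:MP1b}. It then uses the identity $\p_{x_k}(\cdots)=-{\rm div}_\xi\big(\xi\,\delta_{[x,\xi]}\p_kf\,(|\xi|^2+(\delta_{[x,\xi]}f)^2)^{-(N+1)/2}\big)$ and one integration by parts to produce the rest of $\cA_k(f)[\nabla\beta]$. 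After the substitution $\xi\mapsto x-\xi$, the small-sphere term of that Stokes step coincides with the one displayed above, so the paper tacitly uses the same odd-symmetry cancellation.

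Your version differentiates through $z_x$ at fixed $\xi$. This creates a kernel of order $-N$ and hence the truncation bookkeeping. In exchange it needs no chain rule for the operators $B^\phi_{n,\nu}$. It also obtains $\cA(f)$ in one stroke from the identity $\p_{z_i}G=-\p_{\xi_j}K_{ij}$ of Proposition~\ref{P:1}. That identity is stated there for $z\notin\Gamma$, but it is a pointwise computation valid whenever $z\neq z_\xi$, so you may use it at $z=z_x$, $\xi\neq x$. This makes transparent that $\cA(f)[\nabla\beta]$ is the tangential gradient of the double layer potential, built from the same kernel as the velocity $\cV(f)[\beta]$. Your identification of $\sum_j(K_{kj}+\p_kf(x)K_{(N+1)j})\p_j\beta$ with the integrand of \eqref{aof} is correct.
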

\begin{proof} Let us first assume that $f,\,\beta\in {\rm C}^\infty_0(\R^N)$.
The   representation \eqref{sionsD} of $\bD(f)$ together  with  Lemma~\ref{L:MP1b} leads  us to the conclusion  that~$ \bD(f)[\beta]\in H^{1}(\R^N)$ with 
\begin{align*}
\p_k(\bD(f)[\beta])(x)&=\bD(f)[\p_k \beta](x)\\
&\quad+\frac{1}{|\s^N|}\int_{\R^N}\p_{x_k}\Bigg(\frac{ \delta_{[x,\xi]}f-\xi\cdot\nabla f(x-\xi)}{\big(|\xi|^2+(\delta_{[x,\xi]} f)^2 \big)^{\frac{N+1}{2}}}\Bigg)\beta(x-\xi)\,{\rm d}\xi
\end{align*}
for $1\leq k\leq N$ and $x\in\R^N$.
In view of the relation
\begin{equation*}
\p_{x_k}\Bigg(\frac{  \delta_{[x,\xi]}f-\xi\cdot\nabla f(x-\xi)}{\big(|\xi|^2+(\delta_{[x,\xi]} f)^2 \big)^{\frac{N+1}{2}}}\Bigg)
=-{\rm div}_\xi\Bigg(\frac{\xi\delta_{[x,\xi]}\p_k f}{\big(|\xi|^2+(\delta_{[x,\xi]} f)^2 \big)^{\frac{N+1}{2}}}\Bigg),\qquad x,\xi\in\R^N,\,\xi\neq 0,
\end{equation*}
we further have
\begin{align*}
&\int_{\R^N}\p_{x_k}\Bigg(\frac{  \delta_{[x,\xi]}f-\xi\cdot\nabla f(x-\xi)}{\big(|\xi|^2+(\delta_{[x,\xi]} f)^2 \big)^{\frac{N+1}{2}}}\Bigg)\beta(x-\xi)\,{\rm d}\xi\\
&=-\int_{\R^N}\nabla _\xi\Bigg(\frac{\xi\delta_{[x,\xi]}\p_k f}{\big(|\xi|^2+(\delta_{[x,\xi]} f)^2 \big)^{\frac{N+1}{2}}}\Bigg)\beta(x-\xi)\,{\rm d}\xi\\
&=-\int_{\R^N}\frac{\xi\delta_{[x,\xi]}\p_k f}{\big(|\xi|^2+(\delta_{[x,\xi]} f)^2 \big)^{\frac{N+1}{2}}}\nabla\beta(x-\xi)\,{\rm d}\xi ,
\end{align*}
where Stokes' theorem was applied in the last step.
Recalling the definition \eqref{aof} of $\cA(f)$, we have proved \eqref{cder} for $f,\,\beta\in {\rm C}^\infty_0(\R^N)$.
Since  ${\rm C}^\infty_0(\R^N)$ is dense in  $H^s(\R^N)$ and  in~$H^{1}(\R^N)$, the identity~\eqref{cder} follows  for general
~${f\in H^s(\R^N)}$  and  $\beta\in H^{1}(\R^N)$
by a standard density argument from  \eqref{sionsD}, \eqref{sionscA},  and Corollary~\ref{C:MP0}~(ii). %, and  Lemma~\ref{Bestgen}.

Finally, under the assumption $f\in H^s(\R^N)$, it follows from~\eqref{sionsD}, \eqref{regtruero}, \eqref{sionscA}, and~\eqref{cder} that indeed~${\bD(f) \in\kL(H^s(\R^N) )}$.
\end{proof}

 The proof of Theorem~\ref{T:51}  uses continuity results on pointwise multiplication in the scale~${\{H^r(\R^N)\}_{ r\geq0}}$ given in the following lemma. For simplicity, with $r,r_1,r_2\geq 0$ we will write 
\[H^{r_1}(\R^N)\cdot H^{r_2}(\R^N)\hookrightarrow H^r(\R^N)\]
iff the pointwise multiplication
\[[(a,b)\mapsto ab]: \,H^{r_1}(\R^N)\times H^{r_2}(\R^N)\to
H^r(\R^N)\]
is a continuous bilinear map.

\begin{lemma}\label{L:mult}
Let $r_1,r_2,r\geq 0$ such that $r_1,r_2\geq r$, $r_1+r_2>N/2+r$. Then
\[H^{r_1}(\R^N)\cdot H^{r_2}(\R^N)\hookrightarrow H^r(\R^N).\]
\end{lemma}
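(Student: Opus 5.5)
The plan is to reduce the statement to the case $r=0$ and to the case of integer-plus-fractional smoothness, using the Fourier characterization $\|h\|_{H^r}\simeq\|(1+|\xi|^2)^{r/2}\hat h\|_2$. We may assume without loss of generality that $r_1\le r_2$.

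\emph{Case $r_1>N/2$.} Then $H^{r_1}(\R^N)\hookrightarrow L_\infty(\R^N)$, and the classical Kato--Ponce type estimate
\[
\|ab\|_{H^r}\lesssim \|a\|_{L_\infty}\|b\|_{H^r}+\|a\|_{H^r}\|b\|_{L_\infty}
\]
does not apply directly, since $b$ need not be bounded. Instead I would prove the estimate on the Fourier side. Write $\langle\xi\rangle:=(1+|\xi|^2)^{1/2}$ and use Peetre's inequality
\[
\langle\xi\rangle^r\lesssim\langle\xi-\eta\rangle^r+\langle\eta\rangle^r .
\]
This gives
\[
\langle\xi\rangle^r|\widehat{ab}(\xi)|\lesssim \big(\langle\cdot\rangle^r|\hat a|\big)*|\hat b|+|\hat a|*\big(\langle\cdot\rangle^r|\hat b|\big).
\]
Each term is then bounded in $L_2$ by Young's inequality $L_2*L_1\subset L_2$ and Cauchy--Schwarz.

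\emph{General case.} The same splitting works, but must be done more carefully. Split the convolution region into the parts $|\eta|\le|\xi-\eta|$ and $|\eta|>|\xi-\eta|$. On the first part $\langle\xi\rangle\lesssim\langle\xi-\eta\rangle$, so, using $r\le r_1$,
\[
\langle\xi\rangle^r\le C\langle\xi-\eta\rangle^{r_1}\langle\xi-\eta\rangle^{r-r_1}\le C\langle\xi-\eta\rangle^{r_1}\langle\eta\rangle^{r-r_1}.
\]
Here the last step uses $r-r_1\le0$ together with $\langle\eta\rangle\le\langle\xi-\eta\rangle$. Hence that part is bounded by $\big(\langle\cdot\rangle^{r_1}|\hat a|\big)*\big(\langle\cdot\rangle^{r-r_1}|\hat b|\big)$. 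The second part is symmetric, with the roles of $a,b$ and $r_1,r_2$ interchanged.

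It then remains to bound $\|F*G\|_2$ with $F=\langle\cdot\rangle^{r_1}|\hat a|\in L_2$ and $G=\langle\cdot\rangle^{r-r_1}|\hat b|=\langle\cdot\rangle^{r-r_1-r_2}\,\langle\cdot\rangle^{r_2}|\hat b|$. If $r_1+r_2-r>N/2$, Cauchy--Schwarz gives $G\in L_1$, and Young's inequality concludes.

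\emph{Main obstacle: the borderline situation.} The step above uses the full weight in a single factor, whereas the decay should be distributed as $\langle\xi-\eta\rangle^{-s_1}\langle\eta\rangle^{-s_2}$. Concretely, on $|\eta|\le|\xi-\eta|$ I would write
\[
\langle\xi\rangle^{r}\lesssim\langle\xi-\eta\rangle^{r_1}\langle\eta\rangle^{r_2}\cdot\langle\xi-\eta\rangle^{r-r_1}\langle\eta\rangle^{-r_2}
\]
and bound the factor $\langle\xi-\eta\rangle^{r-r_1}\langle\eta\rangle^{-r_2}\le \langle\eta\rangle^{r-r_1-r_2}$, again using $r-r_1\le 0$ on this region. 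Then Schur's test, or Cauchy--Schwarz in $\eta$ followed by integration in $\xi$, gives
\[
\|\,\cdot\,\|_2\le\|a\|_{H^{r_1}}\|b\|_{H^{r_2}}\sup\Big(\int\langle\eta\rangle^{2(r-r_1-r_2)}\,{\rm d}\eta\Big)^{1/2},
\]
and the integral is finite exactly when $r_1+r_2-r>N/2$. So the hypothesis is used precisely here, and the two regions combine symmetrically.

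Finally, continuity of the bilinear map follows from this bilinear bound.
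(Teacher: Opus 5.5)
Your proof is correct, but it takes a different route from the paper's.

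\textbf{The paper's route.} Set $t:=r_1+r_2-r>N/2$. The paper uses $H^t(\R^N)\hookrightarrow L_\infty(\R^N)$ and the algebra property of $H^t(\R^N)$ to get $H^t\cdot H^0\hookrightarrow H^0$ and $H^t\cdot H^t\hookrightarrow H^t$. It interpolates in the second factor to obtain $H^t\cdot H^r\hookrightarrow H^r$, and its mirror image $H^r\cdot H^t\hookrightarrow H^r$. Finally it applies bilinear complex interpolation between these two endpoints. This reaches $H^{r_1}\times H^{r_2}$ because $r_1,r_2\in[r,t]$ and $r_1+r_2=t+r$.

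\textbf{Your route.} You prove the bilinear bound directly on the Fourier side. You split into $|\eta|\le|\xi-\eta|$ and $|\eta|>|\xi-\eta|$, and move the excess weight $\langle\cdot\rangle^{r-r_1}$ (resp.\ $\langle\cdot\rangle^{r-r_2}$) onto the lower-frequency factor. Young's inequality together with $\|\langle\cdot\rangle^{r-r_1-r_2}\|_2<\infty$ then finishes the argument. Your ``general case'' paragraph alone is already a complete proof.

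\textbf{Comparison.} The paper's argument is shorter once the algebra property and the multilinear interpolation theorem are taken as black boxes, and it fits the interpolation machinery used throughout the paper. Yours is self-contained and contains the algebra property itself as the special case $r_1=r_2=r>N/2$. It also shows exactly where each hypothesis enters:
\begin{itemize}
\item $r\ge0$ is needed for $\langle\xi\rangle^r\lesssim\langle\xi-\eta\rangle^r$;
\item $r\le r_1,r_2$ is needed to move the weight;
\item $r_1+r_2-r>N/2$ is needed for square-integrability of $\langle\eta\rangle^{r-r_1-r_2}$.
\end{itemize}

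\textbf{Parts you can drop.} The preliminary case $r_1>N/2$ and the ``main obstacle'' paragraph are superfluous, because there is no borderline situation. The Cauchy--Schwarz/Fubini bound you derive there is the same estimate as $\|F*G\|_2\le\|F\|_2\|G\|_1$ with $\|G\|_1\le\|\langle\cdot\rangle^{r-r_1-r_2}\|_2\|b\|_{H^{r_2}}$. The announced reduction to integer-plus-fractional smoothness is never used either.
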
 

\begin{proof}
As $H^{r_1+r_2-r}(\R^N)\hookrightarrow L_\infty(\R^N)$ and as $H^{r_1+r_2-r}(\R^N)$ is a Banach algebra with respect to pointwise multiplication, we have 
\[H^{r_1+r_2-r}(\R^N)\cdot H^0(\R^N)\hookrightarrow H^0(\R^N),
\quad
H^{r_1+r_2-r}(\R^N)\cdot H^{r_1+r_2-r}(\R^N)\hookrightarrow H^{r_1+r_2-r}(\R^N).\]
 From $r\in[0,r_1+r_2-r]$, we get by interpolation and by symmetry of the pointwise product
\[H^{r_1+r_2-r}(\R^N)\cdot H^r(\R^N)\hookrightarrow H^r(\R^N),
\quad
H^r(\R^N)\cdot H^{r_1+r_2-r}(\R^N)\hookrightarrow H^r(\R^N).\]
Since $r_1,r_2\in[r,r_1+r_2-r]$, the statement follows from  \eqref{IP} and the multilinear interpolation 
result \mbox{\cite[Theorem 4.4.1]{BL76}}.
\end{proof}

As a further preparation for the proof of Theorem \ref{T:51} we prove the following commutator estimates:

\begin{lemma}\label{ADcomm}
    Let  $ \alpha\in[0,1)$,  $ \alpha'\in(\alpha-\min\{1,s-s_c\},\alpha)$, $k\in\N$ with  
    $ 2\leq k\leq s-\alpha$, and~$M\geq 0$.
   Then there is a constant $C>0$ such that for all~$ 1\leq i\leq N$, $f\in H^s(\R^N)$ with~${\|f\|_{H^s}\leq M}$, and $\beta\in H^{k+\alpha-1}(\R^N)$,  $ \underline\beta=(\beta_1,\ldots,\beta_N)\in H^{k+\alpha-1}(\R^N)^N$, we have
    \begin{align*}
        \|\llbracket\p_i,\bD(f)\rrbracket [\beta]\|_{H^{k+\alpha-2}}&\leq C\|\beta\|_{H^{k+\alpha'-1}},\\
        \|\llbracket\p_i,\cA_i(f)\rrbracket[\underline\beta]\|_{H^{k+\alpha-2}}&\leq C|\underline\beta\|_{H^{k+\alpha'-1}}.
    \end{align*}
\end{lemma}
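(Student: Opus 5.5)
The plan is to write both commutators explicitly as sums of generalized Riesz transforms by means of the chain rule \eqref{commsfb}, and then to estimate each summand in $H^{k+\alpha-2}$ with Lemma~\ref{L:MP1b} at a reduced regularity index. The gain of $\alpha-\alpha'$ will come from the fact that in every summand one linear slot carries a derivative of $f$, which costs at most one derivative instead of two.

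\textbf{Step 1: explicit formulas.} By \eqref{sionsD},
\[
\llbracket\p_i,\bD(f)\rrbracket[\beta]=\llbracket\p_i,\sfB^{\bar\phi}_{1,0}(f)\rrbracket[\beta]-\sum_{l}\Big(\llbracket\p_i,\sfB^{\bar\phi}_{0,e_l}(f)\rrbracket[\beta\p_lf]+\sfB^{\bar\phi}_{0,e_l}(f)[\beta\,\p_i\p_lf]\Big).
\]
Applying \eqref{commsfb} gives a sum of terms of the following two types:
\begin{itemize}
\item[(a)] $B^{\phi}_{m,\nu}(f)[\p_if,f^{[m-1]},\gamma]$ with $\gamma\in\{\beta,\ \beta\p_lf\}$;
\item[(b)] $\sfB^{\bar\phi}_{0,e_l}(f)[\beta\p_i\p_lf]$.
\end{itemize}
The same expansion applies to $\cA_i(f)$ via \eqref{sionscA}. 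Here extra products such as $\p_kf\,\sfB(\cdot)$ and $\sfB[\p_kf\,b_l]$ appear. Their commutators with $\p_i$ produce either terms of type (a) or (b), or multiplier terms such as $\p_i\p_kf\,\sfB^{\bar\phi}_{0,e_l}(f)[b_l]$.

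\textbf{Step 2: the estimates.} Set $r:=k+\alpha'-1$ and $\sigma:=s-1-(k+\alpha-2)=s-k-\alpha+1\ge1$. For type (a), use Lemma~\ref{L:MP1b} with target space $H^{s-1-\sigma}=H^{k+\alpha-2}$. The slot $\p_if$ lies in $H^{s-1}$, so its index is $\sigma_1=1$. The density slot requires $\gamma\in H^{s-1-\sigma_0}$, i.e.\ $\sigma_0=\sigma-1=s-k-\alpha$. Since $\beta\in H^{k+\alpha'-1}\hookrightarrow H^{k+\alpha-2}=H^{s-1-\sigma_0}$ (recall $\alpha'>\alpha-1$), this requirement is met. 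For $\gamma=\beta\p_lf$, use Lemma~\ref{L:mult} with $r_1=k+\alpha'-1$, $r_2=s-1$ and $r=k+\alpha-2$. This needs $r_1+r_2>N/2+k+\alpha-2$, i.e.\ $\alpha'>\alpha-(s-s_c)$, which is exactly the hypothesis on $\alpha'$.

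Where Lemma~\ref{L:MP1b} imposes side conditions ($\sigma\le s-2$, $\sigma_j\le s-1$), I expect to have to handle the case $k=2$ separately. If $\sigma>s-2$, I would instead estimate directly in $L_2$ or $H^{\alpha}$ using Lemma~\ref{L:MP0}, Lemma~\ref{L:MP1}, and interpolation \eqref{IP}.

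For type (b) and for the multiplier terms: $\p_i\p_lf\in H^{s-2}$, and Lemma~\ref{L:mult} gives $\beta\p_i\p_lf\in H^{k+\alpha-2}$ provided $(k+\alpha'-1)+(s-2)>N/2+k+\alpha-2$. This is again $\alpha'>\alpha-(s-s_c)$, and it also needs $s-2\ge k+\alpha-2$, which holds because $k\le s-\alpha$. The operator $\sfB^{\bar\phi}_{0,e_l}(f)$ is then bounded on $H^{k+\alpha-2}$. To see this, I would interpolate between $L_2$ (Lemma~\ref{L:MP0}) and $H^{s-1}$ (Lemma~\ref{L:MP1}) using \eqref{IP}, since $k+\alpha-2\le s-2<s-1$. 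The multiplier term $\p_i\p_kf\cdot\sfB[b_l]$ is handled in the same way, by estimating $\sfB[b_l]$ in $H^{k+\alpha'-1}$ first.

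\textbf{Main obstacle.} The delicate point is the bookkeeping of regularity indices. Lemma~\ref{L:MP1b} is stated only for $\sigma\le s-2$ and with each $\sigma_j\le s-1$, and I need it at fractional, low-regularity levels, where the target index $k+\alpha-2$ may be close to $0$. I expect to resolve this by interpolation in \eqref{IP} between the $L_2$ bound of Lemma~\ref{L:MP0} and the bounds of Lemma~\ref{L:MP1b}, treating the trilinear dependence on $(\p_if,\gamma)$ via multilinear interpolation as in the proof of Lemma~\ref{L:mult}. Uniformity in $\|f\|_{H^s}\le M$ follows from the uniform constants in these lemmas.
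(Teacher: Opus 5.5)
Your overall route is the paper's: expand both commutators via \eqref{sionsD}, \eqref{sionscA} and \eqref{commsfb} into generalized Riesz transforms, then estimate termwise with the mapping lemmas and Lemma~\ref{L:mult}. Your treatment of the type (b) terms and of the multiplier terms $\p_i\p_kf\,\sfB^{\bar\phi}_{0,e_l}(f)[\beta_l]$ is fine.

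The type (a) estimate, however, rests on an index slip. With $\sigma_0=s-k-\alpha$ one has $H^{s-1-\sigma_0}=H^{k+\alpha-1}$, not $H^{k+\alpha-2}$. Once this is corrected, the argument fails. At level $s$ the slot $\p_if\in H^{s-1}$ forces $\sigma_1\ge 1$, hence $\sigma_0\le\sigma-1=s-k-\alpha$. So the density must lie in $H^{k+\alpha-1}$, both for $\gamma=\beta$ and for $\gamma=\beta\p_lf$. That is exactly the $\alpha-\alpha'$ derivatives the lemma is meant to save. A warning sign: in your argument the hypothesis $\alpha-\alpha'<s-s_c$ never enters the estimate of $B^{\bar\phi}_{1,0}(f)[\p_if,\beta]$.

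Your interpolation fallback does not help. Every estimate of Lemma~\ref{Bestgen} at the fixed level $s$, with indices $r_g=s-\sigma_1$, $r_\gamma=s-1-\sigma_0$, $t=s-1-\sigma$, satisfies $r_g+r_\gamma-t\ge s$. This linear constraint is preserved by multilinear complex interpolation. The bound you need has $r_g+r_\gamma-t=(s-1)+(k+\alpha'-1)-(k+\alpha-2)=s-(\alpha-\alpha')<s$.

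The missing idea is to apply Lemma~\ref{Bestgen} with $s$ replaced by the lower level $s':=s+\alpha'-\alpha$. This lies in $(\max\{s_c,s-1\},s)$ precisely because $\alpha-\alpha'<\min\{1,s-s_c\}$. Since $f\in H^s(\R^N)\subset H^{s'}(\R^N)$, the slot $\p_if\in H^{s-1}(\R^N)=H^{s'-\sigma_1}(\R^N)$ now costs only $\sigma_1:=1-(s-s')<1$. Take $\sigma_0:=s-k-\alpha$ and $\sigma:=\sigma_0+\sigma_1=s'+1-k-\alpha$. Then the target is $H^{s'-1-\sigma}=H^{k+\alpha-2}$ and the density space is $H^{s'-1-\sigma_0}=H^{k+\alpha'-1}$. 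This gives $\|B^{\bar\phi}_{1,0}(f)[\p_if,\beta]\|_{H^{k+\alpha-2}}\le C\|\p_if\|_{H^{s-1}}\|\beta\|_{H^{k+\alpha'-1}}$.

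The remaining type (a) terms follow the same way together with Lemma~\ref{L:mult}. This covers those with $\gamma=\beta\p_lf$, using $\|\beta\p_lf\|_{H^{k+\alpha'-1}}\le C\|\beta\|_{H^{k+\alpha'-1}}$, and those carrying a prefactor $\p_kf$ from \eqref{sionscA}. The surplus regularity of $f$ above $s'$ is what pays for the gain. Using Lemma~\ref{Bestgen}, which allows $\sigma\le s'-1$, instead of Lemma~\ref{L:MP1b} also removes the need to treat $k=2$ separately.
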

\begin{proof}
    The representations \eqref{sionsD} and \eqref{sionscA} together with Lemma \ref{L:MP1b} yield  for~${1\leq i\leq N}$
    \begin{align*}
        \llbracket\p_i,\bD(f)\rrbracket [\beta]&=B^{\bar\phi}_{1,0}(f)[\p_if,\beta]+2B^{\bar\phi'}_{3,0}(f)[\p_if,f^{[2]},\beta]\\
&\quad\,-\sum_{j=1}^N\big(\sfB^{\bar\phi}_{0,e_j}(f)[\p_{ij}f\beta]+2B^{\bar\phi'}_{2,e_j}(f)[\p_if,f,\p_jf\beta]\big),\\
        \llbracket\p_i,\cA_i(f)\rrbracket [\underline\beta]&=B^{\bar\phi}_{1,0}(f)[\p_if,\beta_i]+2B^{\bar\phi'}_{3,0}(f)[\p_if,f^{[2]},\beta_i]\\
        &\quad\,+\sum_{j=1}^N\bigg(\sfB_{0,e_j}^{\bar\phi}(f)[\p_{ii}f\beta_j-\p_{ij}f\beta_i]
        -\p_{ii}f \sfB_{0,e_j}^{\bar\phi}(f)[\beta_j]\\
        &\hspace{1.55cm}+2B_{2,e_j}^{\bar\phi'}(f)[\p_if,f,\p_if\beta_j-\p_jf\beta_i]
        -2\p_i fB_{2,e_j}^{\bar\phi'}(f)[\p_if,f,\beta_j]
        \bigg).
    \end{align*}
    We estimate all terms on the right separately, using Lemma \ref{L:mult} and  Lemma~\ref{Bestgen} in appropriate order and with appropriate choice of the regularity parameters 
    $r,r_1,r_2, \sigma,\sigma_0,\ldots,\sigma_n$.

    1.  For $1\leq i,j\leq N$  we obtain
\begin{align*}
    \|\p_{ii}f \sfB_{0,e_j}^{\bar\phi}(f)[\beta_j]\|_{H^{k+\alpha-2}}&\leq C\|\p_{ii}f\|_{H^{s-2}}
    \|B_{0,e_j}^{\bar\phi}(f)[\beta_j]\|_{H^{k+\alpha'-1}}
    \leq C \|\beta_j\|_{H^{k+\alpha'-1}},\\
    \|\sfB_{0,e_j}^{\bar\phi}(f)[\p_{ii}f\beta_j]\|_{H^{k+\alpha-2}}&\leq C
    \|\p_{ii}f\beta_j\|_{H^{k+\alpha-2}}\leq C\|\p_{ii}f\|_{H^{s-2}} 
    \|\beta_j\|_{H^{k+\alpha'-1}}\\
    &\leq C\|\beta_j\|_{H^{k+\alpha'-1}},
\end{align*}
and proceed analogously for the remaining terms involving $\sfB_{0,e_i}^{\bar\phi}$, $1\leq i\leq N$.

2.  Set $s':=s+\alpha'-\alpha$ and note that  $s'\in ( \max\{s_c,s-1\},s)$. We apply Lemma \ref{Bestgen} with~$s$ replaced by $s'$, $\sigma:=s'+1-k-\alpha$, $\sigma_0:=s-k-\alpha$, and
$ \sigma_1=1-(s-s')$ to obtain
\[\|B^{\bar\phi}_{1,0}(f)[\p_if,\beta]\|_{H^{k+\alpha-2}}\leq C 
\|\p_if\|_{H^{s-1}}\| \beta\|_{H^{k+\alpha'-1}}\leq C\|\beta\|_{H^{k+\alpha'-1}},\qquad 1\leq i\leq N.
\]
Similarly, using also Lemma \ref{L:mult}, for $1\leq i,j\leq N$ we have
\begin{align*}
    \|B_{2,e_j}^{\bar\phi'}(f)[\p_if,f,\p_if\beta_j\|_{H^{k+\alpha-2}}
    &\leq C\|\p_if\|_{H^{s-1}}\|\p_if\beta_j\|_{H^{k+\alpha'-1}}\leq C \|\beta_j\|_{H^{k+\alpha'-1}},\\
    \|\p_i fB_{2,e_j}^{\bar\phi'}(f)[\p_if,f,\beta_j]\|_{H^{k+\alpha-2}}
    &\leq C\|\p_if\|_{H^{s-1}}\|B_{2,e_j}^{\bar\phi'}(f)[\p_if,f,\beta_j]\|_{H^{k+\alpha-2}}\\
    &\leq C\|\p_if\|_{H^{s-1}}^2\|\beta_j\|_{H^{k+\alpha'-1}}\leq C
    \|\beta_j\|_{H^{k+\alpha'-1}},\\
    \|B^{\bar\phi'}_{3,0}(f)[\p_if, f^{[2]},\beta]\|_{H^{k+\alpha-2}}
    &\leq C\|\p_if\|_{H^{s-1}}\|\beta_j\|_{H^{k+\alpha'-1}}\leq C\|\beta_j\|_{H^{k+\alpha'-1}}.
\end{align*}
The remaining terms are estimated analogously, which proves the lemma.
\end{proof}

 We conclude this section with the proof of Theorem~\ref{T:51}.

\begin{proof}[Proof of  Theorem~\ref{T:51}]  
Fix  $M>0$ and $\alpha\in[0,1)$.
We show the following more general statement: 

For all integers  $0\leq k\leq s-\alpha$ the following holds:
\[\left.\text{
\begin{minipage}{0.85\textwidth}  
 There is a constant  $C>1$ such that for all $a\in[-2,2]$,   $f\in H^s(\R^N)$ with~$\|f\|_{H^s}\leq M$, and  $\beta\in H^{k+\alpha}(\R^N)$ we have
\[\|\beta\|_{H^{k+\alpha}}\leq C\|(1-a\bD(f))[\beta]\|_{H^{k+\alpha}}.\]
Moreover, the map $1-a\bD(f)$ is an isomorphism on $H^{k+\alpha}(\R^N)$.
\end{minipage}
}\right\}\eqno{\textnormal{(H)$_{k,\alpha}$}}\]
We show first that for any  $2\leq k\leq s-\alpha$ the implication 
\be\label{imphka}
{\rm(H)}_{k-2,\alpha}\quad\Rightarrow\quad{\rm(H)}_{k,\alpha}
\ee
holds.
Indeed, assuming ${\rm(H)}_{k-2,\alpha}$,    we have
\begin{align*}
    \|\beta\|_{H^{k+\alpha}}&\leq C\bigg(\|\beta\|_{H^{k+\alpha-2}}+\sum_{i=1}^N\|\p_i^2\beta\|_{H^{k+\alpha-2}}\bigg)\\
    &\leq C\bigg(\|(1-a\bD(f))[\beta]\|_{H^{k+\alpha-2}}+\sum_{i=1}^N\|(1-a\bD(f))[\p_i^2\beta]\|_{H^{k+\alpha-2}}\bigg)\\
    &\leq C\bigg(\|(1-a\bD(f)) [\beta]\|_{H^{k+\alpha}}
    +\sum_{i=1}^N\|\llbracket\p_i^2,\bD(f)\rrbracket [\beta]\|_{H^{k+\alpha-2}}
    \bigg).
\end{align*}
 Fix some arbitrary  $\alpha'\in(\alpha-\min\{1,s-s_c\},\alpha)$. Using Lemma~\ref{L:51} and Lemma~\ref{ADcomm}, we obtain for  $1\leq i\leq N$   
\[\|\llbracket\p_i^2,\bD(f)\rrbracket [\beta]\|_{H^{k+\alpha-2}}
\leq \|\llbracket\p_i,\cA_i(f)\rrbracket[\nabla\beta]\|_{H^{k+\alpha-2}}
+\|\llbracket\p_i,\bD(f)\rrbracket
[\p_i\beta]\|_{H^{k+\alpha-2}}\leq C\|\beta\|_{H^{k+\alpha'}}.\]
Hence,
\[\|\beta\|_{H^{k+\alpha}}\leq C\big(\|(1-a\bD(f))\beta\|_{H^{k+\alpha}}
+\|\beta\|_{H^{k+\alpha'}}\big),\]
and the estimate in (H)$_{k,\alpha}$ follows by interpolation;  cf.~\eqref{IP}, and Theorem \ref{T:1}. The isomorphism property follows in the same way as in the proof of Theorem \ref{T:1}.

As (H)$_{0,0}$ holds by Theorem \ref{T:1}, we conclude (H)$_{2,0}$, 
and by interpolation (H)$_{0,\alpha}$  and~(H)$_{1,\alpha}$. 
Now the general result is obtained by using the implication \eqref{imphka} repeatedly.
\end{proof}

%%%%%%%%%%%%%%%%%%%%%%%%%%%%%%%%%%%%%%%%%%%%%%%%%%
%%%%%%%%%%%%%%%%%%%%%%%%%%%%%%%%%%%%%%%%%%%%%%%%%%
%%%%%%%%%%%%%%%%%%%%%%%%%%%%%%%%%%%%%%%%%%%%%%%%%%
%%%%%%%%%%%%%%%%%%%%%%%%%%%%%%%%%%%%%%%%%%%%%%%%%%
\section{The nonlinear and nonlocal evolution equation for $f$}\label{Sec:5}
%%%%%%%%%%%%%%%%%%%%%%%%%%%%%%%%%%%%%%%%%%%%%%%%%%
%%%%%%%%%%%%%%%%%%%%%%%%%%%%%%%%%%%%%%%%%%%%%%%%%%
%%%%%%%%%%%%%%%%%%%%%%%%%%%%%%%%%%%%%%%%%%%%%%%%%%
%%%%%%%%%%%%%%%%%%%%%%%%%%%%%%%%%%%%%%%%%%%%%%%%%%
In the following  $s$ is fixed according to \eqref{eq:s}.
Based on the results from Sections~\ref{Sec:2}–\ref{Sec:4}, we first reformulate the Muskat problem \eqref{Muskat} as an evolution problem for $f$ only; see~\eqref{NNEP}.
We then show that the Rayleigh-Taylor condition is equivalent to the positivity of a function involving the right  side of \eqref{NNEP},
 and that this condition identifies an open subset~${\cO \subset H^s(\R^N)}$,  cf. \eqref{eq:defO}.
Moreover, we prove that the Muskat problem is of parabolic type  within $\cO$;
see Theorem~\ref{T:GP}, and  conclude the section with the proof of the main result as stated in Theorem~\ref{MT}.

\subsection*{Reformulation of the Muskat problem}
In view of \eqref{forvelo'}, another singular integral operator will arise in  our reformulation~\eqref{NNEP} of the Muskat problem, as detailed  below; see~\eqref{NEP}.
Given $f\in H^s(\R^N)$, $ b\in  L_2(\R^N)^N$, and $x\in\R^N$, we set, using~\eqref{rzx},
 \begin{equation*}
  \begin{aligned}
  \bA(f) [b](x)&:=  \frac{1}{|\s^N|}\PV\int_{\R^N}  
\frac{ \big[(x-\xi)\cdot\nabla f(\xi)-(f(x)-f(\xi))\big]\nabla f(x)\cdot b(\xi) }{
 |z_x-z_\xi|^{N+1}}\,{\rm d}\xi\\
&\,\,\quad-\frac{1}{|\s^N|}\PV\int_{\R^N}  
\frac{ (x-\xi)\cdot b(\xi)\big(1+\nabla f(x)\cdot\nabla f(\xi)\big)}{
 |z_x-z_\xi|^{N+1}}\,{\rm d}\xi.
  \end{aligned}
\end{equation*}
In the notation introduced in \eqref{trueRO} and with $\bar\phi$ from \eqref{deefphi}  and~$b=(b_1,\ldots,b_N)$, we have
\begin{equation}\label{forA}
  \bA(f) [b]
  =  \sum_{i,\, k =1}^N\p_k f \sfB^{\bar\phi}_{0,e_i}(f)[b_k\p_i  f-b_i\p_k f] 
  - \sum_{ i =1}^N\big(\sfB^{\bar\phi}_{0,e_i}(f)[b_i]+\p_if \sfB^{\bar\phi}_{1,0}(f)[b_i]\big).
\end{equation}

Let $(f,v^\pm,p^\pm)$ denote a solution to \eqref{Muskat}  on some interval $[0,T)$, such that for all $t\in[0,T) $  we have $f(t)\in H^s(\R^N)$, 
\[
v^\pm(t)\in {\rm C}(\overline{\0^\pm(t)})\cap {\rm C}^1(\0^\pm(t)),\qquad  p^\pm(t)\in {\rm C}^1(\overline{\0^\pm(t)})\cap {\rm C}^2(\0^\pm(t)).
\]

It then follows from the equivalence of the boundary value problems \eqref{MuskatBVP} and \eqref{MuskatBVP'}, 
Proposition \ref{P:1}, and the kinematic boundary condition \eqref{MuskatEE}$_6$ that $f$ solves the evolution problem
\begin{equation}\label{NEP}
\frac{{\rm d}f}{{\rm d}t}(t)=\Lambda\bA(f(t))[\nabla \beta(t)], \quad t\geq 0,\qquad f(0)=f_0,
\end{equation}
where $\beta(t)\in H^s(\R^N)$ denotes the unique solution to the equation
  \begin{equation}\label{solved}
 \beta(t)+2a_\mu\bD(f(t))[\beta(t)]= f(t)
  \end{equation}
and $\Lambda>0$ is given in \eqref{lambda}.

\begin{lemma}\label{L:61} \phantom{a}
\begin{itemize}
\item[(i)] Given $f\in H^s(\R^N)$, let $\beta(f):=(1+2a_\mu\bD(f))^{-1}[f]\in H^{s}(\R^N)$.
Then
\begin{equation}\label{spbeta}
[f\mapsto \beta(f)]\in {\rm C}^{\infty}(H^s(\R^N), H^s(\R^N)).
\end{equation}
\item[(ii)]  Given $f\in H^s(\R^N)$, let $\Phi(f):=\Lambda\bA(f)[\nabla \beta(f)]\in H^{s-1}(\R^N)$.
Then
\begin{equation}\label{spphi}
\Phi\in {\rm C}^{\infty}(H^s(\R^N), H^{s-1}(\R^N)).
\end{equation}
\end{itemize}
\end{lemma}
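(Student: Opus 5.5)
The plan is to obtain both statements from the smooth dependence of the generalized Riesz transforms on $f$, stated in \eqref{regtruero} (a consequence of Lemma~\ref{L:MP1d}). This is combined with the uniform invertibility from Theorem~\ref{T:51} and with the smoothness of inversion in the Banach algebra $\kL(H^s(\R^N))$.

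For (i), I first show that
\[
[f\mapsto \bD(f)]\in {\rm C}^\infty(H^s(\R^N),\kL(H^s(\R^N))).
\]
By \eqref{sionsD} and \eqref{regtruero}, together with the fact that multiplication by $\p_i f\in H^{s-1}(\R^N)$ is smooth (indeed bilinear) as a map into $\kL(H^{s-1}(\R^N))$ by Lemma~\ref{L:mult}, we get $[f\mapsto\bD(f)]\in {\rm C}^\infty(H^s(\R^N),\kL(H^{s-1}(\R^N)))$. The same argument, using \eqref{sionscA} and \eqref{reg:cla}, gives $[f\mapsto \cA(f)]\in{\rm C}^\infty(H^s(\R^N),\kL(H^{s-1}(\R^N),H^{s-1}(\R^N)^N))$.

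To upgrade to $\kL(H^s(\R^N))$, I use the norm equivalence \eqref{equivgrad} together with Lemma~\ref{L:51}. For $\beta\in H^s(\R^N)$ we have
\[
\|\bD(f)[\beta]\|_{H^s}\simeq \|\bD(f)[\beta]\|_2+\|\cA(f)[\nabla\beta]\|_{H^{s-1}}.
\]
The map $\beta\mapsto(\beta,\nabla\beta)$ is an isometric-type embedding of $H^s(\R^N)$ into $L_2(\R^N)\times H^{s-1}(\R^N)^N$. Hence $\bD(f)=\mathcal J^{-1}\circ\big(\bD(f)|_{L_2},\cA(f)\big)\circ\mathcal J$, where $\mathcal J$ denotes this embedding and the image lies in the range of $\mathcal J$. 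Since $[f\mapsto\bD(f)]$ is smooth into $\kL(L_2(\R^N))$ (by \eqref{sionsD} and Corollary~\ref{C:MP0}, even smooth because of the polynomial structure in \eqref{difference}, or alternatively via the $H^{s-1}$ result composed with $H^s\hookrightarrow H^{s-1}$), smoothness of $f\mapsto \bD(f)\in\kL(H^s(\R^N))$ follows. Derivatives in $f$ commute with $\nabla$ on fixed $\beta$, because \eqref{cder} holds for all $f$.

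With this in hand, Theorem~\ref{T:51} with $a=-2a_\mu\in(-2,2)$ shows that $1+2a_\mu\bD(f)$ lies in the open set of invertible operators in $\kL(H^s(\R^N))$. Inversion is real-analytic on that set, so
\[
[f\mapsto (1+2a_\mu\bD(f))^{-1}]\in{\rm C}^\infty(H^s(\R^N),\kL(H^s(\R^N))).
\]
Evaluation at $f$ is bilinear and continuous, which gives \eqref{spbeta}.

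For (ii), the representation \eqref{forA} together with \eqref{regtruero} and Lemma~\ref{L:mult} shows that $\bA\in{\rm C}^\infty(H^s(\R^N),\kL(H^{s-1}(\R^N)^N,H^{s-1}(\R^N)))$. Here multiplication by $\p_k f\in H^{s-1}(\R^N)$ preserves $H^{s-1}(\R^N)$ since $s-1>N/2$. Then $\Phi(f)=\Lambda\,\bA(f)[\nabla\beta(f)]$ is the composition of smooth maps with the continuous bilinear evaluation pairing, so \eqref{spphi} follows.

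The main obstacle I expect is the smoothness of $\bD$ into $\kL(H^s(\R^N))$, as opposed to $\kL(H^{s-1}(\R^N))$, since Lemma~\ref{L:MP1d} only controls the $H^{s-1}$ level. This step is handled through the derivative identity \eqref{cder}, which transfers the question to $\cA$ at the $H^{s-1}$ level.
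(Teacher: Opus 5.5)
Your proposal is correct and follows the paper's proof. As in the paper, smoothness of $\bD$ into $\kL(H^s(\R^N))$ comes from \eqref{sionsD}, \eqref{regtruero} and the derivative identity \eqref{cder} of Lemma~\ref{L:51}; Theorem~\ref{T:51} and smoothness of operator inversion then give \eqref{spbeta}, and \eqref{forA} gives \eqref{spphi}. You spell out the upgrade from the $H^{s-1}$ level to the $H^s$ level via $\beta\mapsto(\beta,\nabla\beta)$, which the paper leaves implicit; if you use the $H^{s-1}$ result for the zeroth-order component, take that component of the embedding in $H^{s-1}(\R^N)$ rather than $L_2(\R^N)$.
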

\begin{proof}  In view  of the representation formulas \eqref{sionsD} and \eqref{forA}  and of Lemma~\ref{L:51}, we deduce  from \eqref{regtruero} that
\begin{align}
&\bD\in {\rm C}^{\infty}(H^s(\R^N),\kL( H^s(\R^N)))\,\label{regD}\\
 &\bA\in {\rm C}^{\infty}(H^s(\R^N),\kL( H^{s-1}(\R^N)^N, H^{s-1}(\R^N))).\label{regA}
\end{align}
Since the operator that maps an isomorphism onto its inverse is smooth,  it now follows from Theorem~\ref{T:51} and \eqref{regD} that the smoothness property \eqref{spbeta} holds true.
The assertion (ii) follows by combining  \eqref{spbeta} and  \eqref{regA}.
\end{proof}

In view of Lemma~\ref{L:61} we may thus formulate \eqref{NEP} as the following evolution problem 
\begin{equation}\label{NNEP}
\frac{{\rm d}f}{{\rm d} t}(t)=\Phi(f(t)), \quad t\geq 0,\qquad f(0)=f_0,
\end{equation}
where the nonlinear and nonlocal operator $\Phi: H^s(\R^N)\to H^{s-1}(\R^N)$ is smooth; see \eqref{spphi}. 
We are thus in a setting where we may apply the abstract parabolic theory from \cite[Section~8]{L95}, provided that we identify an open set $\cO\subset H^s(\R^N)$  such that for each $f\in\cO$, the Fr\'echet derivative $\p\Phi(f)$ 
generates  an analytic semigroup of operators on $H^{s-1}(\R^N)$. 
This is the main task of the subsequent analysis.
We compute for $f\in H^s(\R^N)$ that
\begin{equation}\label{derPhi}
\p\Phi(f)[h]=\Lambda\p\bA(f)[h][\nabla \beta(f)]+\Lambda\bA(f)[\nabla\big(\p \beta(f)[h]\big)],\qquad h\in H^s(\R^N),
\end{equation}
where $\p \beta(f)[h]\in H^s(\R^N)$ satisfies 
\begin{equation}\label{derBeta}
(1+2a_\mu\bD(f))[\p \beta(f)[h]]=h-2a_\mu\p\bD(f)[h][\beta(f)],\qquad h\in H^s(\R^N).
\end{equation}

Moreover, in view of \eqref{cder},  we have 
\begin{equation}\label{cder'}
 \nabla (\p\bD(f)[h][\beta])=\p\cA(f)[h][\nabla \beta],\qquad f, h, \beta\in H^s(\R^N),
 \end{equation}
 hence
 \begin{equation}\label{cder''}
 (1+2a_\mu\cA(f))[\nabla\p \beta(f)[h]]=\nabla h-2a_\mu\p\cA(f)[h][\nabla \beta(f)],\qquad f, h\in H^s(\R^N),
 \end{equation}
and, using  the formula \eqref{derB} together with   \eqref{sionscA} and~\eqref{forA},  we compute  
\begin{equation}\label{derforA}
\begin{aligned}
 \p\bA(f)[h] [\nabla \beta]&=  \sum_{i,\, k =1}^N\Big[\p_k h\sfB^{\bar\phi}_{0,e_i}(f)[\p_k\beta\p_i  f-\p_i\beta\p_k f]+\p_k f\sfB^{\bar\phi}_{0,e_i}(f)[\p_k\beta\p_i  h-\p_i\beta\p_k h]\\
 &\hspace{1.5cm}+2\p_kf B^{\bar\phi'}_{2,e_i}(f) [f,h,\p_k\beta\p_i  f-\p_i\beta\p_k f]\Big] \\
 &\quad - \sum_{ i =1}^N\Big[2B^{\bar\phi'}_{2,e_i}(f)[f,h,\p_i\beta]+\p_ih \sfB^{\bar\phi}_{1,0}(f)[\p_i\beta]\\
  &\hspace{1.5cm}+\p_if B^{\bar\phi}_{1,0}(f)[h,\p_i\beta]+2\p_if B^{\bar\phi'}_{3,0}(f)[f^{[2]},h,\p_i\beta]\Big]
\end{aligned}
\end{equation}
and 
  \begin{equation}\label{dersions} 
\begin{aligned}
\p\cA_k(f)[h][\nabla  \beta]&=B_{1,0}^{\bar\phi}(f)[h,\p_k\beta]+2B_{3,0}^{\bar\phi'}(f)[f^{[2]},h,\p_k\beta]\\
&\quad +\sum_{i=1}^N\Big[ \sfB^{\bar\phi}_{0,e_i}(f)[\p_kh\p_i\beta-\p_i h\p_k\beta]+2B^{\bar\phi'}_{2,e_i}(f)[f,h,\p_kf\p_i\beta-\p_i f\p_k\beta]\Big]\\
&\quad - \sum_{i=1}^N \Big[ \p_kh\sfB^{\bar\phi}_{0,e_i}(f)[ \p_i\beta]+2\p_kfB^{\bar\phi'}_{2,e_i}(f)[f,h, \p_i\beta]\Big]
\end{aligned}
\end{equation}
 for $1\leq k\leq N$ and~${f, h,\beta\in H^s(\R^N)}$.

\subsection*{The Rayleigh-Taylor condition}
In view of  \eqref{MuskatEE}$_1$, Proposition~\ref{P:1}, and    with the notation  \eqref{lambda} and \eqref{gamma},  the Rayleigh-Taylor condition~\eqref{RT}  is equivalent to a fully nonlinear and nonlocal condition  on $f$ which reads
\begin{equation}\label{eq:rt2}
 \Lambda(1-2a_\mu\bA(f)[\nabla\beta(f)])=\Lambda\big(1-2a_\mu\widetilde\Phi(f)\big)>0\qquad\text{in $\R$,}
\end{equation}
where $\beta(f)$ is defined in Lemma~\ref{L:61}~(i),  and
\be\label{deftilPhi}
\widetilde\Phi(f):=\Lambda^{-1}\Phi(f)=\bA(f)[\nabla\beta(f)].
\ee
Since $\widetilde\Phi$ is smooth by Lemma~\ref{L:61}~(ii), $\widetilde\Phi(0)=0$,  $\Lambda>0$, and~${H^{s-1}(\R^N)\hookrightarrow {\rm BUC}(\R^N)}$, it follows that
\begin{equation}\label{dop}
\cO:=\big\{f\in H^s(\R^N): 
\text{$\Lambda\big(1-2a_\mu\widetilde\Phi(f)\big)>0$ in $\R$}\big\}
\end{equation}
 is a nonempty open subset of $H^s(\R^N)$.

 \subsection*{The Fourier multipliers $D^{\phi,A}_{n,\nu}$} Below, we demonstrate that the evolution problem \eqref{NNEP} is of parabolic type in $\cO$ by analyzing the Fréchet derivative $\p\Phi(f)$ for $f \in \cO$ and showing that it generates a strongly continuous analytic semigroup  of operators on $H^{s-1}(\R^N)$.

A critical step in this proof involves the localization of $\p\Phi(f)$. This technique parallels the method of freezing the coefficients of  differential operators.  Localizing  $\p\Phi(f)$ will be reduced to the localization of operators from the class $B^{\phi}_{n,\nu}(f)$, defined in \eqref{trueRO}, by operators 
$D^{\phi,A}_{n,\nu}$, with $A\in\R^N$, 
which form a subclass of the $B^{\phi}_{n,\nu}(f)$. 
 They are introduced below and discussed in Appendix~\ref{Sec:D}.

\medskip
 
Given $\phi\in {\rm C}^\infty([0,\infty))$, ${\nu \in\N^N,\,n\in\N}$ with~$n+|\nu|$ odd, and $A\in\R^N$ 
we  define the singular integral operator $D^{\phi,A}_{n,\nu}$ by setting
 \begin{equation}\label{dfnb}
D^{\phi,A}_{n,\nu}[\beta](x):=\frac{1}{|\s^N|} \PV\int_{\R^N}\phi\left(\Big(\frac{A\cdot \xi}{|\xi|}\Big)^{ 2} \right)  \Big(\frac{A\cdot \xi}{|\xi|}\Big)^n\frac{\xi^\nu}{|\xi|^{|\nu|}}
\frac{\beta(x-\xi)}{|\xi|^N}\,{\rm d}\xi
  \end{equation}
  for $\beta\in L_2(\R^N)$ and $x\in\R^N$.
Defining the Lipschitz function  $\bar f_A:\R^N\to\R $ 
by~$ \bar f_A(x):=A\cdot x$,  we have
   \begin{equation}\label{identif}
D^{\phi,A}_{n,\nu}= \sfB^{\phi}_{n,\nu}( \bar f_A), 
  \end{equation}
 and Lemma~\ref{L:MP0} ensures that $D^{\phi,A}_{n,\nu}\in\kL(L_2(\R^N))$. 
 In  Appendix~\ref{Sec:D}  we prove that~$D^{\phi,A}_{n,\nu}$ are Fourier multipliers and that  near any $x_0\in\R^N$, the operator $\sfB^{\phi}_{n,\nu}(f)$ can be localized
  in a suitable sense by the operator $D^{\phi,\nabla\phi(x_0)}_{n,\nu}$.
  
 Observe, moreover, that the operators $D^{\phi,A}_{n,\nu}$ satisfy the identity
  \be\label{Dn-1rep}
  D^{\phi,A}_{n,\nu}=\sum_{k=1}^N  A_k D^{\phi,A}_{n-1,\nu+e_k},\quad n\geq 1.
  \ee

\subsection*{Parabolicity under the Rayleigh-Taylor condition}
The main goal of this section is to demonstrate that \eqref{NNEP} is of parabolic type within $\cO$, as stated in the following result.

\begin{thm}\label{T:GP}
Given $f\in\cO$, the Fr\'echet derivative $\p\Phi(f)$ generates a strongly continuous  analytic semigroup of operators  on $H^{s-1}(\R^N)$.
\end{thm}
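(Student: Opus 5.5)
We argue by localization and a perturbation/continuity argument, in the spirit of the one-dimensional treatment in \cite{MBV18,AM22}. The first step is to identify the principal part of $\p\Phi(f)$. In \eqref{derPhi} the first term $\Lambda\p\bA(f)[h][\nabla\beta(f)]$ contains derivatives of $h$ of order one only through the products $\p_k h\,\sfB^{\bar\phi}_{0,e_i}(f)[\ldots]$, $\p_i h\,\sfB^{\bar\phi}_{1,0}(f)[\p_i\beta]$ and the terms $\sfB^{\bar\phi}_{0,e_i}(f)[\p_k\beta\p_i h-\p_i\beta\p_k h]$; see \eqref{derforA}. These are first-order operators with coefficients depending on $f$ and $\beta(f)$, but they carry no $|\xi|$-type symbol. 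The second term $\Lambda\bA(f)[\nabla\p\beta(f)[h]]$ is, by \eqref{cder''}, equal to $\Lambda\bA(f)(1+2a_\mu\cA(f))^{-1}[\nabla h]$ up to an operator that is bounded from $H^{s-\varepsilon}(\R^N)$ into $H^{s-1}(\R^N)$ for some small $\varepsilon>0$. This follows from Lemma~\ref{Bestgen}, since $\p\cA(f)[h][\nabla\beta(f)]$ involves only $h$ and $\nabla h$ multiplied by smoother quantities. The genuine principal part is therefore
\[
\mathbb{P}(f)[h]:=\Lambda\bA(f)(1+2a_\mu\cA(f))^{-1}[\nabla h]+\Lambda\sum_k \mathfrak{a}_k(f)\p_k h
\]
for suitable coefficients $\mathfrak{a}_k(f)\in H^{s-1}(\R^N)\hookrightarrow{\rm BUC}(\R^N)$. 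For this, the invertibility of $1+2a_\mu\cA(f)$ on $H^{s-1}(\R^N)^N$ is needed. It should follow as in Theorem~\ref{T:51} via \eqref{cder}, because $\cA(f)$ intertwines with $\bD(f)$ on gradients.

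The second step is localization. We choose a finite $\varepsilon$-partition of unity $\{\pi_j^\varepsilon\}$, together with points $x_j$ in the supports and a point ``at infinity''. Using the commutator and localization results of Appendix~\ref{Sec:D}, we freeze coefficients: near $x_j$ we replace $\sfB^\phi_{n,\nu}(f)$ by $D^{\phi,\nabla f(x_j)}_{n,\nu}$, and each multiplication by a function by multiplication with its value at $x_j$. Near infinity we use $A=0$, where $f$ and $\beta$ vanish. The aim is an estimate of the form
\[
\|\pi_j^\varepsilon\p\Phi(f)[h]-\mathbb{A}_j[\pi_j^\varepsilon h]\|_{H^{s-1}}\le \mu\|\pi_j^\varepsilon h\|_{H^s}+K(\mu)\|h\|_{H^{s-\frac12}},
\]
for arbitrary $\mu>0$. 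Here each $\mathbb{A}_j$ is a Fourier multiplier built from the $D^{\phi,A}_{n,\nu}$ with $A=\nabla f(x_j)$, plus a transport term $c_j\cdot\nabla$ with real constant $c_j$.

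The third step, which I expect to be the main obstacle, is computing the symbols of the frozen operators and verifying uniform ellipticity. On the model graph $\bar f_A$ with constant data, one must show that $\mathbb{A}_j=-\lambda_j(-\Delta)^{1/2}$ (possibly anisotropic, e.g.\ $\sqrt{|\xi|^2+(A\cdot\xi)^2}$-type) plus $ic_j\cdot\xi$. The key requirement is that the coefficient $\lambda_j$ equals, up to positive factors depending on $|A|$, the Rayleigh--Taylor quantity $\Lambda(1-2a_\mu\widetilde\Phi(f))(x_j)$. I would first compute the symbols of $D^{\phi,A}_{n,\nu}$ via \eqref{Dn-1rep} and the flat-graph potential theory, where the velocity of a tilted plane under gravity is explicit. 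I would then identify the constant using the boundary traces \eqref{forvelo'}: the jump of $\nabla p$ across the interface gives exactly the factor $1-2a_\mu\widetilde\Phi(f)$. Since $f\in\cO$ and this quantity tends to $\Lambda>0$ at infinity, $\lambda_j\ge\eta>0$ uniformly in $j$. Consequently each $\lambda-\mathbb{A}_j$ satisfies the standard resolvent bound $|\lambda|\|h\|_{H^{s-1}}+\|h\|_{H^s}\le C\|(\lambda-\mathbb{A}_j)h\|_{H^{s-1}}$ for $\re\lambda\ge1$, with $C$ independent of $j$.

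Finally, we patch the local estimates. Combining them with interpolation \eqref{IP} to absorb the $H^{s-1/2}$ term yields, for some $\omega_0$ large, the bound $|\lambda|\|h\|_{H^{s-1}}+\|h\|_{H^s}\le C\|(\lambda-\p\Phi(f))h\|_{H^{s-1}}$ for all $\re\lambda\ge\omega_0$. By the method of continuity, joining $\p\Phi(f)$ to $-(-\Delta)^{1/2}$, we get that $\lambda-\p\Phi(f)$ is onto. This gives $-\p\Phi(f)\in\mathcal{H}(H^s,H^{s-1})$, i.e.\ $\p\Phi(f)$ generates a strongly continuous analytic semigroup on $H^{s-1}(\R^N)$.
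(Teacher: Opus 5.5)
\textbf{The gap: the principal symbol is misidentified.} This identification is the whole substance of the theorem, and your Step~1 gets it wrong. Both reductions made there are false. The operator $\p\bA(f)[h][\nabla\beta]$ does carry a $|z|$-type symbol, and $\p\cA(f)[h][\nabla\beta]$ is not bounded from $H^{s-\e}(\R^N)$ to $H^{s-1}(\R^N)$. Both contain singular integrals applied to first derivatives of $h$. Examples are $\p_kf\,\sfB^{\bar\phi}_{0,e_i}(f)[\p_k\beta\,\p_ih]$ in \eqref{derforA}, and, in \eqref{dersions}, $\sfB^{\bar\phi}_{0,e_i}(f)[\p_kh\,\p_i\beta]$ and $B^{\bar\phi}_{1,0}(f)[h,\p_k\beta]$; by Lemma~\ref{L:Mix} the latter equals $\p_k\beta\sum_i\sfB^{\bar\phi}_{0,e_i}(f)[\p_ih]$ up to lower order. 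Frozen at a point, these become operators like $D^{\bar\phi,A}_{0,e_i}\p_k$. Their symbol $(im_i(z))(iz_k)=-m_i(z)z_k$ is real and homogeneous of degree one; for $A=0$ it is a positive multiple of $z_iz_k/|z|$. So they are neither transport terms $\mathfrak a_k\p_k$ nor lower order; compare the nonzero localizations \eqref{locderforA} and \eqref{locdersions}.

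\textbf{Why this is decisive.} Since $\cA(f)\circ\nabla\locsim(0,0)$ by \eqref{locsionscA}, your $\mathbb P(f)$ localizes near $x_j^\e$ to $-\Lambda(1+|\nabla f(x_j^\e)|^2)\sum_kD^{\bar\phi,\nabla f(x_j^\e)}_{0,e_k}\p_k$ plus transport. Its symbol has real part $-\Lambda(1+|\nabla f(x_j^\e)|^2)m_T(z)$, with $m_T$ as in Lemma~\ref{L:SGRT}. This is negative for every $f$, so the Rayleigh--Taylor condition would never enter. The true real part is $-\Lambda\big(1-2a_\mu\widetilde\Phi(f)\big)(x_j^\e)\,m_T(z)$. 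By \eqref{eq:wow}, the discrepancy $\Lambda\big[\nabla f\cdot\nabla\beta-2a_\mu(1+|\nabla f|^2)\sum_k\sfB^{\bar\phi}_{0,e_k}(f)[\p_k\beta]\big](x_j^\e)\,m_T(z)$ is supplied exactly by the two pieces you discarded. The first term comes from $\p\bA(f)[\,\cdot\,][\nabla\beta]$. The second comes from $\p\cA(f)[\,\cdot\,][\nabla\beta]$, entering through $\p\beta(f)$ (cf.\ Lemma~\ref{le:estdbeta}~(iii)). Even for $a_\mu=0$, where $\beta=f$, your coefficient $\Lambda(1+|\nabla f(x_j^\e)|^2)$ should be $\Lambda$.

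\textbf{Step~3 does not repair this.} The frozen coefficients $\nabla\beta(x_j^\e)$, $\sfB^{\bar\phi}_{0,e_i}(f)[\p_i\beta](x_j^\e)$ and $\sfB^{\bar\phi}_{1,0}(f)[\p_i\beta](x_j^\e)$ are nonlocal data of the actual interface, which no tilted-plane computation produces. Knowing the Rayleigh--Taylor function from the traces \eqref{forvelo'} does not show that it is the coefficient of the frozen symbol. What is needed is the following.
\begin{itemize}
\item[(a)] The identity \eqref{eq:wow}, obtained by differentiating $f=(1+2a_\mu\bD(f))[\beta]$ via \eqref{cder} and taking the inner product with $\nabla f$.
\item[(b)] The multiplier identity \eqref{idAB}, an integration by parts between two kernels with equal divergence. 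With $A=\nabla f(x_j^\e)$, it converts the real-symbol part $-\sum_{i,k}(\p_kf\,\p_i\beta)(x_j^\e)D^{\bar\phi,A}_{0,e_i}\p_k-2\sum_{i,k}[\p_i\beta(1+|\nabla f|^2)](x_j^\e)D^{\bar\phi',A}_{1,e_i+e_k}\p_k$ of the frozen $\p\bA$-term into $(\nabla f\cdot\nabla\beta)(x_j^\e)\sum_kD^{\bar\phi,A}_{0,e_k}\p_k$ (Lemma~\ref{L:devvor}).
\item[(c)] A check that every remaining frozen term, e.g.\ $D^{\bar\phi,A}_{0,e_k}D^{\bar\phi,A}_{0,e_i}\p_k$, has a purely imaginary symbol bounded by $C|z|$. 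This imaginary part has the general form $i\sum_km_k(z)z_k$ of \eqref{gens}, not a constant transport $ic_j\cdot z$.
\end{itemize}
Once this is done, your patching and continuity argument goes through. A straight homotopy to $-(-\Delta)^{1/2}$ would serve as well as the paper's path $\Psi(\tau)$ together with Proposition~\ref{P:inv}, since real parts $\le-\eta|z|$ survive convex combination.

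\textbf{Two minor corrections.}
\begin{itemize}
\item Take the remainder in $H^{s'}$ with $s'\in(\max\{s_c,s-1\},s)$. The space $H^{s-1/2}$ lies below the critical space when $s\le s_c+\tfrac12$, where the commutator lemmas are unavailable.
\item \eqref{cder} together with Theorem~\ref{T:51} inverts $1+2a_\mu\cA(f)$ only on $\nabla H^s(\R^N)$, via $\nabla h\mapsto\nabla(1+2a_\mu\bD(f))^{-1}h$. This suffices, because $\p\cA(f)[h][\nabla\beta]=\nabla(\p\bD(f)[h][\beta])$ by \eqref{cder'}.
\end{itemize}
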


The proof of Theorem~\ref{T:GP} will be deferred to the end of this section, as it necessitates some preliminary work. 
The key step in the proof is outlined in Proposition~\ref{P:Local}, where we  in particular localize~$\p\Phi(f)$. 
To achieve this, we employ appropriate partitions of unity, which we next introduce.

For each~${\varepsilon\in(0,1)}$ we  fix a finite {\em $\varepsilon$-localization family}, that is,  a family  $$\{(\pi_j^\varepsilon, x_j^\e)\,:\, 0\leq j\leq  m(\e)\}\subset  {\rm C}^\infty(\R^N,[0,1])\times\R^N,$$
with $ m(\e)\in\mathbb{N} $ sufficiently large, such that 
\begin{align*}
\bullet\,\,\,\, \,\,  & \mbox{$\supp \pi_j^\varepsilon =\overline{\bB}_\e(x_j^\e)$ for $1\leq j\leq m(\e)$, $\supp \pi_0^\varepsilon  =  \R^N\setminus{\bB}_{\e^{-1}}(0)$, and $x_0^\e:=0$,} \\
\bullet\,\,\,\, \,\, &\mbox{ $\displaystyle\sum_{j=0}^{m(\e)} \pi_j^\varepsilon=1$ in $\R^N$.} 
\end{align*} 
Here and below $\bB _r(x)$ denotes the ball centered at $x\in\R^N$ with radius $r>0$, and $\overline{\bB}_r(x)$ is its closure.
 With such an $\varepsilon$-localization family we associate  a second family~${\{\chi_j^\varepsilon\,:\, 0\leq j\leq m(\e)\}}$  
satisfying
\begin{equation}\label{chije}
\begin{aligned}
\bullet\,\,\,\, \,\,  &\mbox{$\chi_j^\varepsilon\in {\rm C}^\infty(\R^N,[0,1])$ and  $\chi_j^\varepsilon=1$ on $\supp \pi_j^\varepsilon$,} \\[1ex]
\bullet\,\,\,\, \,\,  &\mbox{$\supp \chi_j^\varepsilon =\overline{\bB}_{2\e}(x_j^\e)$ for $1\leq j\leq  m(\e)$ and $\supp \chi_0^\varepsilon  = \R^N\setminus{\bB}_{\e^{-1}-\e}(0)$.} 
\end{aligned} 
\end{equation}
It readily follows  {from the above properties} that, for each $r\ge 0$, the map 
\begin{align}\label{eqnorm}
\bigg[f\mapsto \sum_{j=0}^{ m(\e)}\|\pi_j^\varepsilon f\|_{H^r}\bigg]: H^r(\R^N)\to[0,\infty)
\end{align}
defines a norm on $H^r(\R^N)$ which is equivalent to the standard norm.

In order  to establish Theorem~\ref{T:GP} we fix in the following $f\in\cO$ and set
\[
 \beta:=\beta(f)\in H^s(\R^N);
\]
see Lemma~\ref{L:61}~(i).
Moreover, we define the  path $\Psi\in{\rm C}([0,1], \kL(H^s(\R^N), H^{s-1}(\R^N)))$ by 
\begin{equation}\label{Psi}
\Psi(\tau)=\tau\p\Phi(f)-(1-\tau)\Lambda\big(1-2a_\mu\widetilde\Phi(f)\big)\sum_{k=1}^N \sfB^{\bar\phi}_{0,e_k}(f) \frac{\p}{\p x_k},\qquad \tau\in[0,1],
\end{equation}
 which connects  the Fr\'echet 
derivative $\p\Phi(f)=\Psi(1)$ to the operator~$\Psi(0)$  which has a considerably simpler structure.
 In the proof of Theorem~\ref{T:GP}
it is crucial to establish the invertibility of $\omega-\p\Phi(f)\in\kL(H^s(\R^N), H^{s-1}(\R^N))$ for sufficiently large $\omega>0$. This will rely on the continuity method together with the invertibility of~$\omega-\Psi(0)$ which is provided in Proposition~\ref{P:inv}. For this, the Rayleigh-Taylor condition \eqref{eq:rt2} will be essential, as the positive function 
$\Lambda\big(1-2a_\mu\widetilde\Phi(f)\big)$
appears as pointwise multiplier in  the definition of~$\Psi(0)$.
 Further advantages of our choice for the homotopy~$\Psi$  will become apparent  when we carry out the localization.
We prepare for this by establishing the following identity: 

 \begin{lemma}  Given $f \in H^s(\mathbb{R}^N)$, let $\beta = \beta(f)$ be as defined in Lemma~\ref{L:61}~(i).  Then
 \be\label{eq:wow}
 1-2a_\mu\widetilde\Phi(f)=-\nabla f\cdot\nabla \beta+(1+|\nabla f|^2)\bigg(1+2a_\mu \sum_{k=1}^N \sfB^{\bar\phi}_{0,e_k}(f)[ \p_k\beta]\bigg).
 \ee
 \end{lemma}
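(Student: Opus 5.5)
We prove \eqref{eq:wow} by a pointwise algebraic computation. The three ingredients are the explicit formula \eqref{forA} for $\bA(f)[\nabla\beta]$, the derivative identity \eqref{cder} for the double layer potential, and the defining equation $\beta+2a_\mu\bD(f)[\beta]=f$ of $\beta=\beta(f)$.

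The first step is to differentiate the equation for $\beta$. Since $\beta\in H^s(\R^N)\subset H^1(\R^N)$, Lemma~\ref{L:51} lets us apply $\nabla$ to $\beta+2a_\mu\bD(f)[\beta]=f$, which gives
\begin{equation*}
\nabla\beta+2a_\mu\cA(f)[\nabla\beta]=\nabla f .
\end{equation*}
Componentwise, by \eqref{sionscA}, this reads for $1\le k\le N$
\begin{equation*}
\p_k\beta+2a_\mu\Big(\sfB^{\bar\phi}_{1,0}(f)[\p_k\beta]+\sum_{i=1}^N\big(\sfB^{\bar\phi}_{0,e_i}(f)[\p_kf\p_i\beta-\p_if\p_k\beta]-\p_kf\,\sfB^{\bar\phi}_{0,e_i}(f)[\p_i\beta]\big)\Big)=\p_kf .
\end{equation*}

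The second step is to multiply the $k$-th identity by $\p_kf$ and sum over $k$. Set $S:=\sum_i\sfB^{\bar\phi}_{0,e_i}(f)[\p_i\beta]$. On the right we obtain $|\nabla f|^2$. On the left we obtain
\begin{equation*}
\nabla f\cdot\nabla\beta+2a_\mu\Big(\sum_k\p_kf\,\sfB^{\bar\phi}_{1,0}(f)[\p_k\beta]+\sum_{i,k}\p_kf\,\sfB^{\bar\phi}_{0,e_i}(f)[\p_kf\p_i\beta-\p_if\p_k\beta]-|\nabla f|^2S\Big).
\end{equation*}
Compare this with \eqref{forA} at $b=\nabla\beta$:
\begin{equation*}
\widetilde\Phi(f)=\sum_{i,k}\p_kf\,\sfB^{\bar\phi}_{0,e_i}(f)[\p_k\beta\p_if-\p_i\beta\p_kf]-S-\sum_i\p_if\,\sfB^{\bar\phi}_{1,0}(f)[\p_i\beta].
\end{equation*}
The mixed double sum in $\widetilde\Phi(f)$ is exactly the negative of the one in the summed identity. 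The term $\sum\p_if\,\sfB^{\bar\phi}_{1,0}(f)[\p_i\beta]$ appears with the opposite sign in the two expressions as well. Hence the parenthesis in the summed identity equals $-\widetilde\Phi(f)-S-|\nabla f|^2S$, and the summed identity becomes
\begin{equation*}
\nabla f\cdot\nabla\beta-2a_\mu\widetilde\Phi(f)-2a_\mu(1+|\nabla f|^2)S=|\nabla f|^2 .
\end{equation*}

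Rearranging gives
\begin{equation*}
1-2a_\mu\widetilde\Phi(f)=1+|\nabla f|^2-\nabla f\cdot\nabla\beta+2a_\mu(1+|\nabla f|^2)S,
\end{equation*}
which is precisely \eqref{eq:wow}.

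The main obstacle is the sign bookkeeping in the mixed double sum $\sum_{i,k}\p_kf\,\sfB_{0,e_i}[\p_kf\p_i\beta-\p_if\p_k\beta]$, where one has to verify that it cancels exactly against its counterpart in \eqref{forA}. Everything else is justified: the identities hold in $H^{s-1}(\R^N)$, and multiplication by $\p_kf\in H^{s-1}(\R^N)$ is admissible by Lemma~\ref{L:mult}. No density argument is needed, since Lemma~\ref{L:51} already covers $f\in H^s(\R^N)$.
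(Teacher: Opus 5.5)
Your proposal is correct and follows the paper's proof. Like the paper, you differentiate $f=(1+2a_\mu\bD(f))[\beta]$ using \eqref{cder}, expand $\cA(f)[\nabla\beta]$ via \eqref{sionscA}, take the inner product with $\nabla f$, and compare with \eqref{mmm1}; your sign bookkeeping for the mixed double sum and for the $\sfB^{\bar\phi}_{1,0}$ term is right.
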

 \begin{proof}
     We recall from \eqref{forA} and \eqref{deftilPhi}   that 
     \be\label{mmm1}
     \widetilde\Phi(f)=\sum_{i,\, k =1}^N\p_k f \sfB^{\bar\phi}_{0,e_i}(f)[\p_k\beta\p_i  f-\p_i\beta\p_k f] - \sum_{ k =1}^N\sfB^{\bar\phi}_{0,e_k} (f)[\p_k \beta]- \sum_{ i =1}^N\p_if \sfB^{\bar\phi}_{1,0}(f)[\p_i\beta].
     \ee
     Taking the gradient on both sides of the equation
$ f=(1+2a_\mu\bD(f))[\beta],$
applying \eqref{cder}, and taking the inner product with $\nabla f$ yields via \eqref{sionscA}
\begin{align*}
    \nabla f\cdot\nabla \beta&=|\nabla f|^2-2a_\mu \sum_{i=1}^N\p_i f \sfB_{1,0}^{\bar\phi}(f)[\p_i\beta]+2a_\mu|\nabla f|^2 \sum_{k=1}^N \sfB^{\bar\phi}_{0,e_k}(f)[ \p_k\beta]\\
&\quad+2a_\mu\sum_{i,\,k=1}^N\p_k f \sfB^{\bar\phi}_{0,e_i}(f)[\p_k\beta\p_i f-\p_i\beta\p_kf].
\end{align*}
Using \eqref{mmm1} and the last equation to rewrite the sum $1-2a_\mu\widetilde\Phi(f)+\nabla f\cdot\nabla\beta$,
yields the identity \eqref{eq:wow}.
 \end{proof}

In view of the structure of  the operator $\Psi(0)$, we define for any $x_0\in\R^N$ the Fourier multiplier 
\begin{equation}\label{ax0}
{\sf A}(x_0):=\big(1-2a_\mu\widetilde\Phi(f)\big)(x_0)\sum_{k=1}^N D^{\bar\phi,\nabla f(x_0)}_{0,e_k}\frac{\p}{\p x_k},
\end{equation}
as well as its counterpart at infinity
\begin{equation}\label{a0}
{\sf A}_0=\sum_{k=1}^N D^{\bar\phi,0}_{0,e_k}\frac{\p}{\p x_k}.
\end{equation}

 On the level of these Fourier multipliers, \eqref{eq:wow} implies the following identity:
\begin{lemma}\label{L:devvor} With $x_0\in\R^N$, $f\in H^s(\R^N)$, and $\beta=\beta(f)$ as defined in Lemma \ref{L:61} (i), we have
\begin{equation}\label{devvor}
\begin{aligned}
    {\sf A}(x_0)=&\sum_{k=1}^N\Bigg\{
\sum_{i=1}^N(\p_k f\p_i\beta)(x_0)  D^{\bar\phi,\nabla f(x_0)}_{0,e_i}+2\sum_{i=1}^N\big[\big(1+|\nabla f|^2\big)\p_i\beta\big](x_0) D^{\bar\phi',\nabla f(x_0)}_{1,e_i+e_k} \\
&\hspace{1cm}+\big(1+|\nabla f|^2\big)(x_0)\bigg(1+2 a_\mu\sum_{i=1}^NB^{\bar\phi}_{0,e_i}(f)[\p_i\beta](x_0)\bigg)
 D_{0,e_k}^{\bar\phi,\nabla f(x_0)} \Bigg\}\frac{\p}{\p x_k}.
\end{aligned}
\end{equation}
\end{lemma}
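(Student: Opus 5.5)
The plan is to derive \eqref{devvor} from the pointwise identity \eqref{eq:wow}, evaluated at $x_0$, together with an identity between the Fourier multipliers $D^{\phi,A}_{n,\nu}$ for fixed $A\in\R^N$. Set $A:=\nabla f(x_0)$ and $b:=\nabla\beta(x_0)$. Inserting \eqref{eq:wow} at $x_0$ into the definition \eqref{ax0}, the term $\big(1+|\nabla f|^2\big)(x_0)\big(1+2a_\mu\sum_i B^{\bar\phi}_{0,e_i}(f)[\p_i\beta](x_0)\big)\sum_k D^{\bar\phi,A}_{0,e_k}\p_k$ already appears on both sides of \eqref{devvor}. What remains to prove is the $f$-independent identity
\begin{equation*}
-(A\cdot b)\sum_{k=1}^N D^{\bar\phi,A}_{0,e_k}\p_k
=\sum_{i,k=1}^N A_k b_i\, D^{\bar\phi,A}_{0,e_i}\p_k+2(1+|A|^2)\sum_{i,k=1}^N b_i\, D^{\bar\phi',A}_{1,e_i+e_k}\p_k .
\end{equation*}
Since this is linear in $b$, it suffices to prove, for each fixed $1\le i\le N$,
\begin{equation}\label{planid}
2(1+|A|^2)\sum_{k=1}^N D^{\bar\phi',A}_{1,e_i+e_k}\p_k=-A_i\sum_{k=1}^N D^{\bar\phi,A}_{0,e_k}\p_k-\sum_{k=1}^N A_k D^{\bar\phi,A}_{0,e_i}\p_k .
\end{equation}

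Next I note that the left side of \eqref{planid} is the derivative of a known operator with respect to $A$. By \eqref{dfnb}, differentiating the kernel $\bar\phi\big((A\cdot\xi/|\xi|)^2\big)\xi_k|\xi|^{-N-1}$ of $D^{\bar\phi,A}_{0,e_k}$ in $A_i$ gives exactly $2\bar\phi'\big((A\cdot\xi/|\xi|)^2\big)(A\cdot\xi/|\xi|)(\xi_i/|\xi|)\xi_k|\xi|^{-N-1}$, which is the kernel of $2D^{\bar\phi',A}_{1,e_i+e_k}$. Hence, with $T(A):=\sum_k D^{\bar\phi,A}_{0,e_k}\p_k$, \eqref{planid} reads
\[
(1+|A|^2)\,\p_{A_i}T(A)=-A_iT(A)-\Big(\sum_k A_k\p_k\Big) D^{\bar\phi,A}_{0,e_i}.
\]
I would verify this on the level of symbols, using the Fourier multiplier description of $D^{\phi,A}_{n,\nu}$ from Appendix~\ref{Sec:D}. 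The operator $T(A)$ is the principal part of the (scaled) Dirichlet--Neumann type operator for the flat graph $y=A\cdot x$. I therefore expect its symbol to be $-c_N\sqrt{(1+|A|^2)|\eta|^2-(A\cdot\eta)^2}/(1+|A|^2)$. By the same computation, the symbol of $D^{\bar\phi,A}_{0,e_i}$ should be
\[
-{\rm i}\,c_N\big(\eta_i(1+|A|^2)-A_i(A\cdot\eta)\big)\Big/\Big((1+|A|^2)\sqrt{(1+|A|^2)|\eta|^2-(A\cdot\eta)^2}\Big).
\]
With these symbols, the identity becomes an elementary algebraic check: differentiate the square root in $A_i$ and compare.

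If the explicit symbols are not directly available from Appendix~\ref{Sec:D}, I would instead argue in physical space. I would write $D^{\bar\phi,A}_{0,e_k}[\p_k\beta]$ as a principal value integral and use $\p_k\beta(x-\xi)=-\p_{\xi_k}[\beta(x-\xi)]$. I would then integrate by parts away from a small ball and pass to the limit, showing that both sides of \eqref{planid} have the same (homogeneous of degree $-N-1$, finite part) kernel. The key pointwise tool is Euler's relation $\xi\cdot\nabla_\xi$ applied to functions of $A\cdot\xi/|\xi|$, combined with $\bar\phi'(x)=-\tfrac{N+1}{2}(1+x)^{-1}\bar\phi(x)$, i.e. $2(1+x)\bar\phi'(x)=-(N+1)\bar\phi(x)$. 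This relation is what produces the factor $1+|A|^2$ after the decomposition $|\xi|^2+(A\cdot\xi)^2=|\xi|^2\big(1+(A\cdot\xi/|\xi|)^2\big)$.

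The main obstacle I expect is this kernel identity: bookkeeping the boundary terms on the small sphere in the principal value, which must cancel by oddness, and confirming that the term $(1+(A\cdot\hat\xi)^2)$ from the $\bar\phi'$-relation recombines into the constant $1+|A|^2$ together with the $A_i$ and $A\cdot\xi$ terms. I would do that recombination by splitting $\xi$ into its components along $A$ and orthogonal to $A$.
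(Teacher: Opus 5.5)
Your reduction coincides with the paper's. Inserting \eqref{eq:wow} at $x_0$ into \eqref{ax0} is exactly \eqref{prelimA}, and the $f$-independent identity you isolate is exactly \eqref{idAB} with $B=\nabla\beta(x_0)$.

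Your physical-space fallback is essentially the paper's proof of \eqref{idAB}. The paper proceeds in three steps:
\begin{itemize}
\item For smooth, odd, $(-N)$-homogeneous $K$ it shows $\PV\int K(\xi)\cdot\nabla h(x-\xi)\,{\rm d}\xi=-\PV\int(\delta_{[x,\xi]}h)\,{\rm div}\,K(\xi)\,{\rm d}\xi$. It writes $\nabla h(x-\xi)=\nabla_\xi\delta_{[x,\xi]}h$, i.e.\ it subtracts $h(x)$ first; this is what makes the small-sphere term vanish by oddness.
\item It picks two explicit kernels $K_1,K_2$ with ${\rm div}\,K_1={\rm div}\,K_2$.
\item It uses $2(1+x)\bar\phi'(x)=-(N+1)\bar\phi(x)$ to collapse $K_2$ to a multiple of $-2(1+|A|^2)\bar\phi'$. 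No splitting of $\xi$ along $A$ is needed.
\end{itemize}

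Your primary route is genuinely different, and it works. The identity $\p_{A_i}D^{\bar\phi,A}_{0,e_k}=2D^{\bar\phi',A}_{1,e_i+e_k}$ is correct: differentiate the spherical-integral symbol from the proof of Proposition~\ref{P:D1} under the integral. It reduces \eqref{idAB} to the scalar identity $(1+|A|^2)\p_{A_i}m_T+A_im_T+(A\cdot z)\p_{z_i}m_T=0$ for the symbol $m_T$ of $T(A)$, and your closed form satisfies it.

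However, the closed forms are only guessed from the Dirichlet--Neumann heuristic, so they still need proof. Start from $m_T(z)=\frac{\pi}{2|\s^N|}\int_{\s^{N-1}}|\omega\cdot z|\,(1+(A\cdot\omega)^2)^{-(N+1)/2}\,{\rm d}\omega$ (proof of Lemma~\ref{L:SGRT}). Substitute $\theta=M\omega/|M\omega|$ with $M=(I+AA^\top)^{1/2}$; the Jacobian is $\det M/|M\omega|^N$ and $|M\omega|^2=1+(A\cdot\omega)^2$. This gives $m_T(z)=\sqrt{(1+|A|^2)|z|^2-(A\cdot z)^2}\big/\big(2(1+|A|^2)\big)$.

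Then get the symbol of $D^{\bar\phi,A}_{0,e_k}$ as $-{\rm i}\,\p_{z_k}m_T$, using $\omega_k\,{\rm sgn}(\omega\cdot z)=\p_{z_k}|\omega\cdot z|$, rather than computing it separately. Your two displayed symbols are mutually consistent only under the convention $\p_k\leftrightarrow-{\rm i}z_k$. With the paper's convention $\p_k\leftrightarrow{\rm i}z_k$ their relative sign is off, and the algebraic check fails (try $A=z=e_1$).

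What each approach buys: the symbol route gives explicit symbols, hence explicit and better ellipticity constants in Lemma~\ref{L:SGRT}, and turns the lemma into algebra, at the price of evaluating a spherical integral. The paper's kernel-level argument needs no symbol at all, only a divergence computation.
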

\begin{proof}
    Let $K:\R^N\setminus\{0\}\to\R^N$ be smooth, odd, and homogeneous of degree $-N$. 
    Then ${\rm div\,} K$ is even and homogeneous of degree $-N-1$.
    Consequently, for any $x\in\R^N$, $h\in {\rm C}_0^\infty(\R^N)$,  and~$r>0$   integration by parts  yields
\begin{align*}
    &\int_{\{|\xi|>r\}}K(\xi)\cdot\nabla h(x-\xi)\,{\rm d}\xi
    =\int_{\{|\xi|>r\}}K(\xi)\cdot\nabla_\xi (\delta_{[x,\xi]}h)\,{\rm d}\xi\\
    &=-\int_{\{|\xi|>r\}}(\delta_{[x,\xi]}h){\rm div\,}K(\xi)\,{\rm d}\xi
    -\int_{\{|\xi|=r\}} (\delta_{[x,\xi]}h)K(\xi)\cdot\frac{\xi}{|\xi|}\,{\rm d} S(\xi),
\end{align*}
 where ${\rm d} S$ denotes the surface measure of the sphere $\{|\xi|=r\}$.
  Recalling that $K$ is odd and  using the asymptotics
$\delta_{[x,\xi]}h=\xi\cdot\nabla h(x)+O(|\xi|^2)$ for $\xi\to0$,
in the limit $r\to 0$  the latter integral identity leads to
\[{\rm PV}\int_{\R^N}K(\xi)\cdot\nabla h(x-\xi)\,{\rm d}\xi
=-{\rm PV}\int_{\R^N}(\delta_{[x,\xi]}h){\rm div\,}K(\xi) \,{\rm d}\xi.\]
Choosing for $K$ now in particular 
\begin{align*}
    K_1(\xi):=&\frac{1}{|\xi|^N}\frac{B\cdot\xi}{|\xi|}\,\psi\left(\frac{A\cdot\xi}{|\xi|}\right)A,\\
    K_2(\xi):=& \frac{ \xi}{|\xi|^{N+1}}\bigg[\psi\left(\frac{A\cdot\xi}{|\xi|}\right)
    \left((N+1)\frac{A\cdot\xi}{|\xi|}\frac{B\cdot\xi}{|\xi|}-A\cdot B\right)\\
   & \hspace{1.25cm}+\psi'\left(\frac{A\cdot\xi}{|\xi|}\right)\frac{B\cdot\xi}{|\xi|}
    \bigg(\left(\frac{A\cdot\xi}{|\xi|}\right)^2-|A|^2\bigg)\bigg]
\end{align*}
with $A,\,B\in\R^N$ fixed, $\psi:\R\to\R$ smooth and even, we observe 
${\rm div\,} K_1={\rm div\,} K_2$ and hence
\be\label{K1K2id}
{\rm PV}\int_{\R^N}K_1(\xi)\cdot\nabla h(x-\xi)\,{\rm d}\xi
={\rm PV}\int_{\R^N}K_2(\xi)\cdot\nabla h(x-\xi)\,{\rm d}\xi.
\ee
Specifying further $\psi(z):=\bar\phi(z^2)$, $z\in\R$, we  may recast $K_2$ as
\[K_2(\xi)=\frac{\xi}{|\xi|^{N+1}}\bigg(-\bar\phi\bigg(\bigg(\frac{A\cdot\xi}{|\xi|}\bigg)^2\bigg)A\cdot B
-2(1+|A|^2)\bar\phi'\bigg(\bigg(\frac{A\cdot\xi}{|\xi|}\bigg)^2\bigg)
\frac{A\cdot\xi}{|\xi|}\frac{B\cdot\xi}{|\xi|}\bigg),\]
and \eqref{K1K2id},  together with Lemma~\ref{L:MP0} and \eqref{identif},  implies
\be\label{idAB}\sum_{i,k=1}^N A_kB_iD^{\bar\phi,A}_{0,e_i}\frac{\p}{\p x_k}
=\sum_{k=1}^N\bigg(-2(1+|A|^2)\sum_{i=1}^NB_iD^{\bar\phi',A}_{1,e_i+e_k}
-A\cdot B\,D^{\bar\phi,A}_{0,e_k}\bigg)\frac{\p}{\p x_k}.
\ee
By \eqref{eq:wow} we have
\begin{equation}\label{prelimA}
\begin{aligned}
  {\sf A}(x_0)&=\bigg(-\nabla f(x_0)\cdot\nabla\beta(x_0)\\
  &\qquad+(1+|\nabla f(x_0)|^2)
\bigg(1+2a_\mu\sum_{i=1}^N\sfB^{\bar\phi}_{0,e_i}(f)[\p_i\beta](x_0)\bigg)
\bigg)\sum_{k=1}^N D^{\bar\phi,\nabla f(x_0)}_{0,e_k}\frac{\p}{\p x_k}. 
\end{aligned}
\end{equation}
Using \eqref{idAB} with $A:=\nabla f(x_0)$, $B:=\nabla\beta(x_0)$ to replace the operator
\[-\nabla f(x_0)\cdot\nabla\beta(x_0)\sum_{k=1}^N D^{\bar\phi,\nabla f(x_0)}_{0,e_k}\frac{\p}{\p x_k}\]
in \eqref{prelimA}, we obtain \eqref{devvor}.
\end{proof}

 Let $s'\in(\max\{ s_c,s-1\},s)$ be fixed in the following.
 In order to formulate our localization results, we will use the following notation: 
 With the $\e$-localization family chosen above for given operators 
\[ T,\, T_0,\, T_j^\e\in \kL(H^s(\R^N),H^{s-1}(\R^N)),\qquad \e\in(0,1), \,1\leq j\leq  m(\e),\]
we will write 
\be\label{eq:locsim}
T\locsim (T_0,T_j^\e)
\ee
for the following statement:\medskip

{\em For any $\theta>0$, there exists an $\eps_0\in(0,1]$ such that for all $\e\in(0,\e_0)$ there exists a positive constant $K=K(\theta,\e,s')$ such that for all $ 0\leq j\leq m(\e)$ and $ h\in H^s(\R^N)$
\[\|\pi^\e_jT[h]-T^\e_j[\pi^\e_j h]\|_{H^{s-1}}\leq\theta \|\pi^\e_j h\|_{H^s}+K\|h\|_{H^{s'}},\]
where $T_0^\e:=T_0$. }

 Thus, \eqref{eq:locsim} encodes the estimates ensuring that $T_0$ and $T_j^\eps$ are ``suitable localizations'' of~$T$ at infinity and near $x_j^\e$, respectively.

The relation $\locsim$  is obviously ``linear'' in the sense that for $\lambda\in\R$, we have 
\[T+\lambda S\locsim (T_0+\lambda S_0,T_j^\e+\lambda S_j^\e)\]
 provided that $T\locsim (T_0,T_j^\e)$ and $S\locsim (S_0,S_j^\e)$.

In view of the structure of the operators we are going to localize, we note the following observation on compositions:
\begin{lemma}\label{le:loccomp}
Let 
\[T\in\kL(H^s(\R^N),H^{s-1}(\R^N)),\quad
S\in\kL(H^s(\R^N))\cap\kL(H^{s'}(\R^N)),\]
and assume there is a constant  $C_1>0$
such  that for  each $\e\in(0,1)$
there is  $K=K(\e)>0$ such that for all $h\in H^s(\R^N)$  and $0\leq j\leq  m(\e)$
\[\|\pi_j^\e Sh\|_{H^s}\leq  C_1\|\pi_j^\e h\|_{H^s}+K\|h\|_{H^{s'}}.\]

Then:
\begin{itemize}
    \item[\rm (i)] If $T\locsim(0,0)$, then $ T S\locsim (0,0)$.
    \item[\rm (ii)] If  $\p_kS\locsim\big(S_{0,k},S_{j,k}^\e\big)$,
    $1\leq k\leq N,$ and
\[T\locsim\bigg(\sum_{k=1}^NT_{0,k}\frac{\p}{\p x_k},
\sum_{k=1}^NT_{j,k}^\e\frac{\p}{\p x_k}\bigg)\]
    with $T_{0,k},T_{j,k}^\e\in\kL(H^{s-1}(\R^N))$,
and there exists a constant  $C_2>0$ such that for all~${\e\in(0,1)}$,
  $0\leq j\leq m(\e)$, and $1\leq k\leq N$
\[\|T_{j,k}^\e\|_{\kL(H^{s-1}(\R^N))}\leq C_2,\]
 where $T^\e_{0,k}:=T_{0,k}$, then
\[ T S\locsim\bigg(\sum_{k=1}^N  T_{0,k} S_{0,k},
\sum_{k=1}^N  T_{j,k}^\e S_{j,k}^\e\bigg).\]
\end{itemize}
\end{lemma}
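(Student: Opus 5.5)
The plan is to unwind the definition of $\locsim$ for the product $TS$. In both parts the error $\pi_j^\e TS[h]-(\text{localized operator})[\pi_j^\e h]$ is split into a term controlled by the hypothesis on $T$, applied to the function $Sh$, and a remainder controlled by the hypotheses on $S$. Throughout, $\theta>0$ is given. We choose $\e_0$ from the hypotheses with $\theta$ replaced by a suitable multiple $\theta/C$. The constant $C$ depends only on $C_1$, $C_2$, $N$, and $\|S\|_{\kL(H^{s'})}$, never on $\e$.

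\emph{Part (i).} For $h\in H^s(\R^N)$ the hypothesis $T\locsim(0,0)$, applied to $Sh\in H^s(\R^N)$, gives
\[
\|\pi_j^\e T[Sh]\|_{H^{s-1}}\leq \theta'\|\pi_j^\e Sh\|_{H^s}+K\|Sh\|_{H^{s'}}.
\]
The hypothesis on $S$ bounds the first term by $\theta'(C_1\|\pi_j^\e h\|_{H^s}+K(\e)\|h\|_{H^{s'}})$. Since $S\in\kL(H^{s'}(\R^N))$, the second term is at most $K\|S\|_{\kL(H^{s'})}\|h\|_{H^{s'}}$. Choosing $\theta'=\theta/C_1$ gives the claim with a new $\e$-dependent constant $K$.

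\emph{Part (ii).} Write $T=\sum_k T_{j,k}^\e\p_k + R$ locally. Apply the hypothesis on $T$ to $Sh$. This gives
\[
\Big\|\pi_j^\e T[Sh]-\sum_k T_{j,k}^\e\p_k[\pi_j^\e Sh]\Big\|_{H^{s-1}}\leq \theta'\|\pi_j^\e Sh\|_{H^s}+K\|Sh\|_{H^{s'}},
\]
which is handled exactly as in (i). Next use $\p_k(\pi_j^\e Sh)=\pi_j^\e\p_kSh+(\p_k\pi_j^\e)Sh$. The commutator term $T_{j,k}^\e[(\p_k\pi_j^\e)Sh]$ is bounded in $H^{s-1}$ by $C_2\|\p_k\pi_j^\e\|_{{\rm BUC}^{\lceil s\rceil}}\|Sh\|_{H^{s-1}}\le K(\e)\|h\|_{H^{s'}}$, using $s-1<s'$. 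The main term is
\[
T_{j,k}^\e[\pi_j^\e \p_kSh]=T_{j,k}^\e[S_{j,k}^\e\pi_j^\e h]+T_{j,k}^\e\big[\pi_j^\e\p_kSh-S_{j,k}^\e\pi_j^\e h\big].
\]
By $\p_kS\locsim(S_{0,k},S_{j,k}^\e)$ with parameter $\theta''$, together with the uniform bound $\|T_{j,k}^\e\|_{\kL(H^{s-1})}\le C_2$, the last term is at most $C_2(\theta''\|\pi_j^\e h\|_{H^s}+K\|h\|_{H^{s'}})$. Summing over $k$ and taking $\theta'=\theta/(2C_1)$, $\theta''=\theta/(2NC_2)$ yields the estimate. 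The case $j=0$ is identical with $T_{0,k}$ and $S_{0,k}$; the fact that $\pi_0^\e$ is not compactly supported is harmless, since $\p_k\pi_0^\e$ is still smooth with bounded derivatives.

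The only step that is more than bookkeeping is making $\theta$ uniform. The constants multiplying $\theta'$ and $\theta''$, namely $C_1$, $C_2$, and $N$, must not depend on $\e$ or $j$. This is exactly why the lemma assumes the uniform bounds $C_1$ and $C_2$. All $\e$-dependent quantities, such as $\|\p_k\pi_j^\e\|$ and $K(\e)$, only ever multiply the lower-order norm $\|h\|_{H^{s'}}$, and there they may be absorbed into the final constant $K(\theta,\e,s')$. One also has to take $\e_0$ as the minimum of the thresholds coming from the two hypotheses.
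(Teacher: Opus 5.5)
Your proposal is correct and follows the paper's proof essentially step for step. It uses the same split of $\pi_j^\e TS[h]-\sum_k T_{j,k}^\e S_{j,k}^\e[\pi_j^\e h]$ into three pieces: the $T$-localization error applied to $Sh$, the commutator term $T_{j,k}^\e[(\p_k\pi_j^\e)Sh]$ absorbed into $K\|h\|_{H^{s'}}$, and the $\p_kS$-localization error weighted by $C_2$, with the same choices $\theta/(2C_1)$ and $\theta/(2NC_2)$; your argument for (i), which the paper calls straightforward, is also fine.
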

\begin{proof} The proof of (i) is straightforward. To show (ii),  fix 
  $\theta>0$. Then, if $\e\in(0,1)$ is small enough, we estimate for $0\leq j\leq m(\e)$  and $h\in H^s(\R^N)$
\begin{align*}
    &\bigg\|\pi^\e_j TS[h]-\sum_{k=1}^NT^\e_{j,k}S^\e_{j,k}[\pi^\e_j h]\bigg\|_{H^{s-1}}\\
    &\leq \bigg\|\pi^\e_j T[S[h]]
    -\sum_{k=1}^NT^\e_{j,k} \frac{\p}{\p x_k}[\pi^\e_j S[h]]\bigg\|_{H^{s-1}}\\
    &\quad+\sum_{k=1}^N\|T_{j,k}^\e\|_{\kL(H^{s-1}(\R^N))}\Big(  \|
     (\p_k\pi^\e_j)S[h] \|_{H^{s-1}}
    +\big\|\pi^\e_j\p_kS[h]-S^\e_{j,k}[\pi^\e_j h]\big\|_{H^{s-1}}\Big)\\
    &\leq \frac{\theta}{2 C_1}\|\pi^\e_j S[h]\|_{H^s}+NC_1\frac{\theta}{2NC_2}\|\pi^\e_j h\|_{H^s}
    +K\|h\|_{H^{s'}}\leq \theta \|\pi^\e_j h\|_{H^s}+K\|h\|_{H^{s'}}.
\end{align*}
\end{proof}

 As a further preparation,  in the next lemma we gather localizations by Fourier multipliers for the operators that essentially constitute $\Psi(\tau)$:
\begin{lemma}\label{le:locAop}
 Given $f,\, \beta\in H^s(\R^N)$,   it holds that
\begin{align}
\bA(f)\circ\nabla&\locsim
\bigg(-{\sf A}_0,\;-(1+|\nabla f(x_j^\e)|^2)\sum_{k=1}^N
D^{\bar\phi,\nabla f(x^\e_j)}_{0,e_k} \frac{\p}{\p x_k}\bigg),\label{locforA}\\
\p\bA(f)[\,\cdot\,][\nabla\beta]&\locsim(0,T_{j}^\e),\label{locderforA}\\
\cA_k(f)\circ\nabla&\locsim(0,0),\label{locsionscA}\\
\p\cA_k(f)][\,\cdot\,][\nabla\beta]&\locsim
\bigg(0,\sum_{i=1}^N\Big(\p_i\beta(x_j^\e)D^{\bar\phi,\nabla f(x_j^\e)}_{0,e_i}
 \frac{\p}{\p x_k}-\sfB_{0,e_i}^{\bar\phi} (f)[\p_i\beta](x_j^\e) \frac{\p}{\p x_k}
\Big)\bigg),\label{locdersions}
\end{align}
where, for $1\leq j\leq  m(\e)$,
\begin{align*}
    T_{j}^\e&:= \sum_{i,\, k =1}^N \sfB^{\bar\phi}_{0,e_i}(f)[\p_k\beta\p_i  f-\p_i\beta\p_k f](x_j^\e)\frac{\p}{\p x_k}-\sum_{ i =1}^N \sfB^{\bar\phi}_{1,0}(f)[\p_i\beta](x_j^\e) \frac{\p}{\p x_i}\\
   &\,\,\quad-\sum_{i,\, k =1}^N(\p_k f\p_i\beta)(x_j^\e)D^{\bar\phi,\nabla f(x_j^\e)}_{0,e_i}\frac{\p}{\p x_k}
-2\sum_{i,\,k =1}^N\big[\p_i\beta\big(1+|\nabla f|^2\big)\big](x_j^\e) D^{\bar\phi',\nabla f(x_j^\e)}_{1,e_i+e_k} \frac{\p}{\p x_k}.
\end{align*}
\end{lemma}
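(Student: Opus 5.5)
We reduce each of the four relations to localization statements for the basic building blocks, then assemble them via the linearity of $\locsim$. The building blocks we need are the following, which we take from Appendix~\ref{Sec:D}.

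\begin{itemize}
\item[(a)] \emph{Multipliers.} For $\omega\in H^{s-1}(\R^N)$ (or, more generally, $\omega\in {\rm BUC}^{s-s_c}$ decaying at infinity), multiplication by $\omega$ localizes as $(0,\omega(x_j^\e))$. This follows from $\|\pi_j^\e(\omega-\omega(x_j^\e))\chi_j^\e g\|_{H^{s-1}}\le \theta\|g\|_{H^{s-1}}+K\|g\|_{H^{s'-1}}$, using H\"older continuity on $\overline{\bB}_{2\e}(x_j^\e)$ and smallness of $\omega$ outside $\bB_{\e^{-1}-\e}(0)$.
\item[(b)] \emph{Frozen operators.} The operator $\sfB^{\phi}_{n,\nu}(f)\circ\p_k$ localizes to $\big(D^{\phi,0}_{n,\nu}\p_k,\,D^{\phi,\nabla f(x_j^\e)}_{n,\nu}\p_k\big)$. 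At infinity, $n\ge1$ factors of $f$ make the frozen operator vanish.
\item[(c)] \emph{Derivative-type operators.} Operators such as $B^\phi_{n,\nu}(f)[h,\dots,\cdot]$, where $h$ is the variable and the last slot carries a fixed $H^{s-1}$ function like $\p_i\beta$, localize as $\p_k h$ times a frozen value. The precise model is $B^\phi_{n,\nu}(f)[h,f^{[n-1]},\omega]\approx \big(\sum_k$ of frozen coefficient $\big)$; I expect Appendix~\ref{Sec:D} to contain these, the leading term coming from $\delta_{[x,\xi]}h\approx \xi\cdot\nabla h(x)$.
\end{itemize}

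\textbf{Relation \eqref{locforA}.} Insert $b=\nabla h$ into \eqref{forA}. The terms $\p_kf\,\sfB^{\bar\phi}_{0,e_i}(f)[\p_kh\p_if-\p_ih\p_kf]$ are handled by Lemma~\ref{le:loccomp}: apply (a) to the outer factor $\p_kf$ and to the inner coefficients (after commuting them past $\sfB$ using a commutator estimate, whose cost lies in lower order $H^{s'}$), and (b) for the $\sfB\circ\p$ part. The terms $\p_if\,\sfB^{\bar\phi}_{1,0}(f)[\p_ih]$ localize to $\p_if(x_j^\e)D^{\bar\phi,A}_{1,0}\p_i$ with $A=\nabla f(x_j^\e)$. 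Rewriting with \eqref{Dn-1rep} gives $A_iA_kD_{0,e_k}\p_i$, and the antisymmetric bracket gives $A_kA_iD_{0,e_i}\p_k-|A|^2D_{0,e_i}\p_i$. Summing, the $A_iA_k$ terms cancel and we obtain $-(1+|A|^2)\sum_kD^{\bar\phi,A}_{0,e_k}\p_k$. At infinity every factor of $\nabla f$ vanishes, which leaves $-{\sf A}_0$.

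\textbf{Relation \eqref{locsionscA}.} Applying \eqref{sionscA} to $\nabla h$, the frozen version is $A_k\sum_iD_{0,e_i}\p_i$ from the first term minus $A_k\sum_iD_{0,e_i}\p_i$ from the last. The middle term $\sfB_{0,e_i}[\p_kf\p_ih-\p_if\p_kh]$ freezes to $A_kD_{0,e_i}\p_i-A_iD_{0,e_i}\p_k$, which is $0$ by \eqref{Dn-1rep}, since $D_{1,0}=\sum A_iD_{0,e_i}$. Hence the whole expression localizes to zero.

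\textbf{Relations \eqref{locderforA} and \eqref{locdersions}.} These come from \eqref{derforA} and \eqref{dersions}. Terms of the form $\p_kh\cdot(\text{fixed function})$ localize by (a) to multiplication by the frozen value times $\p_k$. Terms $\sfB(f)[\omega\p_ih]$ with $\omega=\p_k\beta$ follow from (a) and (b). Terms $B(f)[h,\dots,\omega]$ follow from (c), which produces the $D^{\bar\phi',A}_{1,e_i+e_k}$ contributions after using \eqref{Dn-1rep}. Collecting these gives $T_j^\e$, respectively the stated right-hand side. All operators vanish at infinity because $\beta$ and $\nabla f$ decay.

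\textbf{Main obstacle.} The main obstacle is (c), the localization of $B^\phi_{n,\nu}(f)[h,\ldots,\omega]$ with the variable $h$ in a Lipschitz slot. I would expand $\delta_{[x,\xi]}h/|\xi|=\frac{\xi}{|\xi|}\cdot\nabla h(x)+{}$remainder and show that the remainder operator is controlled by $\theta\|\pi_jh\|_{H^s}+K\|h\|_{H^{s'}}$, using Lemma~\ref{Bestgen} with shifted regularity indices as in Lemma~\ref{ADcomm}.
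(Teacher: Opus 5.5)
Your decomposition is the paper's. Its proof rewrites everything via \eqref{sionscA}, \eqref{forA}, \eqref{derforA}, \eqref{dersions} and combines Lemma~\ref{commphi}, Lemma~\ref{L:Mix}, Proposition~\ref{T:LOC}, Lemma~\ref{T:prod} and \eqref{Dn-1rep}; your (a) and (b) are Lemma~\ref{T:prod} and Proposition~\ref{T:LOC}. There is, however, a genuine gap in (c), the step you yourself flag as the crux, because your model for it is wrong.

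Expanding $\delta_{[x,\xi]}h\approx\xi\cdot\nabla h(x)$ pulls $\nabla h(x)$ out of the integral. It replaces $B^\phi_{n,\nu}(f)[f^{[n-1]},h,\omega]$ by the multiplication operator $\sum_l\p_l h\,\sfB^\phi_{n-1,\nu+e_l}(f)[\omega]$, and the remainder is then a full first-order operator, not a small one. Take already $f=0$, $\phi\equiv1$, $n=1$, $\nu=0$:
\begin{itemize}
\item $B^1_{1,0}(0)[h,\omega]$ equals $\tfrac12\,\omega\,(-\Delta)^{1/2}h$ up to an error bounded by $C\|\omega\|_{H^{s-1}}\|h\|_{H^{s'}}$;
\item your leading term is $\tfrac12\sum_l R_l[\omega]\,\p_l h$;
\item near a point $x_j^\e$ with $\omega(x_j^\e)\neq0$, their difference has principal symbol $\tfrac12\big(\omega(x_j^\e)|z|-i\,z\cdot R[\omega](x_j^\e)\big)$, of modulus at least $\tfrac12|\omega(x_j^\e)|\,|z|$.
\end{itemize}
So the remainder cannot be bounded by $\theta\|\pi_j^\e h\|_{H^s}+K\|h\|_{H^{s'}}$ with $\theta$ small.

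What is needed is Lemma~\ref{L:Mix}: $B^\phi_{n,\nu}(f)[f^{[n-1]},h,\omega]-\omega\sum_l\sfB^\phi_{n-1,\nu+e_l}(f)[\p_l h]$ is bounded in $H^{s-1}$ by $C\|\omega\|_{H^{s-1}}\|h\|_{H^{s'}}$. That is, you must freeze the fixed function $\omega$ at $x$ and keep $h$ inside the singular integral. Equivalently, expand $h$ about $x-\xi$; this is the integration by parts against the divergence-free field $\frac{\xi^\nu}{|\xi|^{|\nu|}}\frac{\xi}{|\xi|^N}$ in Lemma~\ref{L:MP1a}, followed by a commutator induction. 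Proposition~\ref{T:LOC} with $g=\omega$ then gives the localization $\big(0,\ \omega(x_j^\e)\sum_l D^{\phi,\nabla f(x_j^\e)}_{n-1,\nu+e_l}\p_l\big)$.

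This affects the formulas, not just the estimates. Write $A:=\nabla f(x_j^\e)$. With your model, every term of \eqref{derforA} and \eqref{dersions} carrying $h$ in a Lipschitz slot would localize to a constant-coefficient differential operator, such as $2\sfB^{\bar\phi'}_{1,e_i+e_l}(f)[\p_i\beta](x_j^\e)\p_l$. Consequences:
\begin{itemize}
\item No $D^{\bar\phi',A}_{1,e_i+e_k}$ could appear in $T_j^\e$, so your claim that these arise ``after using \eqref{Dn-1rep}'' contradicts your own expansion.
\item In \eqref{locdersions}, the contribution $-\p_k\beta(x_j^\e)\sum_i D^{\bar\phi,A}_{0,e_i}\p_i$ of $\sfB^{\bar\phi}_{0,e_i}(f)[\p_k h\,\p_i\beta-\p_i h\,\p_k\beta]$ would be left over.
\end{itemize}
With Lemma~\ref{L:Mix} the bookkeeping closes:
\begin{itemize}
\item In \eqref{locdersions}, that leftover is cancelled by $B^{\bar\phi}_{1,0}(f)[h,\p_k\beta]\approx\p_k\beta\sum_l\sfB^{\bar\phi}_{0,e_l}(f)[\p_l h]$. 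The three $\bar\phi'$-terms cancel once $D^{\bar\phi',A}_{2,e_l}=\sum_m A_m D^{\bar\phi',A}_{1,e_l+e_m}$ is used.
\item In \eqref{locderforA}, the three $\bar\phi'$-terms combine to $-2\sum_{i,k}[\p_i\beta(1+|\nabla f|^2)](x_j^\e)D^{\bar\phi',A}_{1,e_i+e_k}\p_k$.
\item Also in \eqref{locderforA}, the $\p_k f\,\sfB^{\bar\phi}_{0,e_i}(f)[\cdots]$ and $\p_i f\,B^{\bar\phi}_{1,0}(f)[h,\p_i\beta]$ terms combine to $-\sum_{i,k}(\p_k f\,\p_i\beta)(x_j^\e)D^{\bar\phi,A}_{0,e_i}\p_k$.
\end{itemize}

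Separately, your cancellation for \eqref{locsionscA} is mislabeled, although the conclusion is right.
\begin{itemize}
\item $\sfB^{\bar\phi}_{1,0}(f)[\p_k h]$ freezes to $D^{\bar\phi,A}_{1,0}\p_k=\sum_i A_i D^{\bar\phi,A}_{0,e_i}\p_k$, not to $A_k\sum_i D^{\bar\phi,A}_{0,e_i}\p_i$.
\item The middle term freezes to $A_k\sum_i D^{\bar\phi,A}_{0,e_i}\p_i-D^{\bar\phi,A}_{1,0}\p_k$, which is not zero. For $A=e_1$, $k=1$, $z=e_2$, the first symbol is positive by Lemma~\ref{L:SGRT}, while the second vanishes because $z_1=0$.
\item The total vanishes because the first term cancels the second half of the middle term, and the last term cancels its first half.
\end{itemize}
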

\begin{proof}
    We use  the identities \eqref{sionscA}, \eqref{forA}, \eqref{derforA}, and \eqref{dersions}
    to represent  the  operators on the right of~\eqref{locderforA}-\eqref{locdersions} by operators of the class $B_{n,\nu}^\phi(f)$.
    The statements follow from the  commutator and localization results given in Lemma~\ref{commphi}, Lemma \ref{L:Mix},  Proposition~\ref{T:LOC},  and Lemma~\ref{T:prod}, together with the identity \eqref{Dn-1rep}.
\end{proof}

Given $\tau\in[0,1]$,  we will localize the operator~$\Psi(\tau)$ near $x_j^\e$, $1\leq j\leq m(\e)$, by the Fourier multiplier   $A_{j,\tau}^\e$ given by 
\begin{equation}\label{Ajt}
\begin{aligned}
 A_{j,\tau}^\e&:=- \Lambda {\sf A}(x^\e_j)
 + \tau\Lambda {\sf B}(x^\e_j),\\
 {\sf B}(x^\e_j)&:=
 \sum_{k=1}^N\bigg\{- \sfB^{\bar\phi}_{1,0}(f)[\p_k\beta](x_j^\e)+\sum_{i =1}^N \sfB^{\bar\phi}_{0,e_i}(f)[\p_k\beta\p_i  f-\p_i\beta\p_k f](x_j^\e) \\[1ex]
&\hspace{1.55cm}+2a_\mu \big(1+|\nabla f|^2\big)(x_j^\e) \sum_{i=1}^N \p_i\beta(x_j^\e)D_{0,e_k}^{\bar\phi,\nabla f(x_j^\e)}D_{0,e_i}^{\bar\phi,\nabla f(x_j^\e)}\bigg\}\frac{\p}{\p x_k},
\end{aligned}
\end{equation}
and at infinity by the Fourier multiplier
\begin{equation}\label{A0}
A_{0,\tau}:=A_0:=-\Lambda{\sf A}_0,
\end{equation}
with ${\sf A}$ and ${\sf A_0}$ defined in \eqref{ax0}-\eqref{a0}.
Our proof of Theorem~\ref{T:GP} is based on the fact that this is indeed a proper localization:

\begin{prop}\label{P:Local}
It holds that
\be\label{eq:loc}
\Psi(\tau)\locsim(A_0,A_{j,\tau}^\e),\quad\text{uniformly in $\tau\in[0,1]$.}
\ee
More precisely: For any $\theta>0$, there exists an $\eps_0\in(0,1]$ such that for  each~$\e\in(0,\e_0)$ there is a constant $K=K(\theta,\e,s')>0$  such that for all $0\leq j\leq m(\e)$, $ h\in H^s(\R^N)$, and~${\tau\in[0,1]}$
\be\label{eq:locest}\|\pi^\e_j\Psi(\tau)[h]-A^\e_{j,\tau}[\pi^\e_j h]\|_{H^{s-1}}\leq\theta \|\pi^\e_j h\|_{H^s}+K\|h\|_{H^{s'}},
\ee
where $A_{0,\tau}^\e:=A_0$.
\end{prop}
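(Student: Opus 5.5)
By the linearity of $\locsim$, I will localize $\Psi(\tau)$ term by term and check that the sum of the localizations equals $(A_0,A_{j,\tau}^\e)$, with $\tau$ entering only as a scalar coefficient. Since $\Psi(\tau)$ is affine in $\tau$, uniformity in $\tau\in[0,1]$ is then automatic.

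\textbf{Splitting into pieces.} Write
\[
\Psi(\tau)=\tau\,\p\Phi(f)-(1-\tau)\Lambda\big(1-2a_\mu\widetilde\Phi(f)\big)\sum_{k=1}^N\sfB^{\bar\phi}_{0,e_k}(f)\frac{\p}{\p x_k}.
\]
I localize $\p\Phi(f)$ and the second operator separately.

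\textbf{The second operator.} It is a product of a multiplier $m:=\Lambda(1-2a_\mu\widetilde\Phi(f))\in H^{s-1}\hookrightarrow {\rm BUC}^{s-1-N/2}$ with $\sfB^{\bar\phi}_{0,e_k}(f)\circ\p_k$. I use the localization results of Appendix~\ref{Sec:D} (Proposition~\ref{T:LOC}, Lemma~\ref{T:prod}) to localize $\sfB^{\bar\phi}_{0,e_k}(f)$ near $x_j^\e$ by $D^{\bar\phi,\nabla f(x_j^\e)}_{0,e_k}$ and at infinity by $D^{\bar\phi,0}_{0,e_k}$. Freezing $m$ at $x_j^\e$ costs $\|m-m(x_j^\e)\|_{L_\infty(\bB_{2\e})}$, which is small, plus lower-order terms bounded by $K\|h\|_{H^{s'}}$. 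At infinity $m\to\Lambda$, since $\widetilde\Phi(f)$ decays. This yields the localization $\big(-\Lambda{\sf A}_0,\,-\Lambda{\sf A}(x_j^\e)\big)$, which is the $(1-\tau)$ contribution.

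\textbf{The derivative $\p\Phi(f)$.} Use \eqref{derPhi}:
\[
\p\Phi(f)[h]=\Lambda\,\p\bA(f)[h][\nabla\beta]+\Lambda\,\bA(f)\big[\nabla\p\beta(f)[h]\big].
\]
The first term is localized by \eqref{locderforA}. For the second, write $\p\beta(f)=S$ and use Lemma~\ref{le:loccomp}(ii) with $T=\bA(f)\circ\nabla$ localized via \eqref{locforA}. The hypotheses on $S$ follow from Theorem~\ref{T:51} and \eqref{derBeta}, since $S\in\kL(H^s)\cap\kL(H^{s'})$ and $\pi_j^\e S$ is controlled by commutator estimates.

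To localize $\p_kS$ I use \eqref{cder''}:
\[
\nabla\p\beta(f)[h]=\nabla h-2a_\mu\cA(f)[\nabla\p\beta(f)[h]]-2a_\mu\p\cA(f)[h][\nabla\beta].
\]
By \eqref{locsionscA}, the $\cA$ term contributes $(0,0)$ after composition (Lemma~\ref{le:loccomp}(i) together with Theorem~\ref{T:51}). By \eqref{locdersions}, the last term gives explicit Fourier multipliers. Hence $\p_kS$ localizes as
\[
\Big(\p_k,\ \p_k-2a_\mu\sum_i\big(\p_i\beta(x_j^\e)D^{\bar\phi,\nabla f(x_j^\e)}_{0,e_i}\p_k-\sfB^{\bar\phi}_{0,e_i}(f)[\p_i\beta](x_j^\e)\p_k\big)\Big).
\]

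\textbf{Assembling.} Multiplying out with the localization of $\bA(f)\circ\nabla$ and adding $T_j^\e$ from \eqref{locderforA}, I then have to verify algebraically that the result equals $-\Lambda{\sf A}(x_j^\e)+\Lambda{\sf B}(x_j^\e)$. For this I use Lemma~\ref{L:devvor} to rewrite ${\sf A}(x_j^\e)$, and the fact that the Fourier multipliers commute with each other. The terms $(\p_kf\p_i\beta)D_{0,e_i}$ and $D^{\bar\phi'}_{1,e_i+e_k}$ in $T_j^\e$ cancel the corresponding terms in \eqref{devvor}, and the product $D_{0,e_k}D_{0,e_i}$ yields the $2a_\mu$ term in ${\sf B}$. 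At infinity, all $\beta$-dependent terms vanish because $\nabla\beta$ and $\sfB[\cdot]$ decay, which leaves $A_0$. The uniform bound $\|T^\e_{j,k}\|_{\kL(H^{s-1})}\le C_2$ needed in Lemma~\ref{le:loccomp} holds because the multipliers $D^{\phi,A}_{n,\nu}$ are bounded uniformly for $|A|\le\|\nabla f\|_\infty$.

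\textbf{Main obstacle.} I expect two points to be hardest. The first is the bookkeeping of this final algebraic identity, where the signs and the use of \eqref{idAB} must match exactly. The second is verifying the hypothesis $\|\pi_j^\e Sh\|_{H^s}\le C_1\|\pi_j^\e h\|_{H^s}+K\|h\|_{H^{s'}}$. I would derive it from \eqref{derBeta} by commuting $\pi_j^\e$ with $\bD(f)$ (a commutator that is lower order by Appendix~\ref{Sec:D}) and then applying the $H^s$ estimate of Theorem~\ref{T:51}.
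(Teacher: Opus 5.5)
Your proposal is correct and follows the paper's proof essentially step by step. The shared steps are: the reduction to $\Psi(0)$ and $\Psi(1)$ using that $\Psi$ is affine in $\tau$; the localization of $\Psi(0)$ by Proposition~\ref{T:LOC}; Lemma~\ref{le:loccomp}~(ii) with $T=\bA(f)\circ\nabla$ and $S=\p\beta(f)$, with the uniform multiplier bounds from Proposition~\ref{P:D1}; the localization of $\p_k\p\beta(f)$ through \eqref{cder''}; and the final algebra via Lemma~\ref{L:devvor}.

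The only difference is that you re-derive the properties of $\p\beta(f)$, which the paper has already proved as Lemma~\ref{le:estdbeta}, so you can simply cite it. Your re-derivation also needs more than the commutator $\llbracket\pi_j^\e,\bD(f)\rrbracket$: the term $\pi_j^\e\p\bD(f)[h][\beta]$ must be controlled too, via \eqref{cder'} and Lemma~\ref{L:Mix}.
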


We prepare the proof of this proposition by showing that the operator $\p\beta(f)$ satisfies the assumptions on $S$ in Lemma \ref{le:loccomp},  and localizing its spatial derivatives.

\begin{lemma}\label{le:estdbeta}
The following properties hold:
\begin{itemize}
    \item[(i)]   $\p\beta(f)\in\kL(H^s(\R^N))\cap \kL(H^{s'}(\R^N))$.
    \item[(ii)] There is a constant   $C_1>0$ and for each~$\e\in(0,1)$ there is a  constant~${K=K(\e)>0}$ such that for all
 $h\in H^s(\R^N)$  and $0\leq j\leq  m(\e)$  
\begin{equation}\label{estsol}
\|\pi_j^\e \p \beta(f)[h]\|_{H^s}\leq C_1\|\pi_j^\e h\|_{H^s} +K\|h\|_{H^{s'}}.
\end{equation}
\item[(iii)] With $\beta=\beta(f)$ as defined in Lemma~\ref{L:61}~{\rm (i)}, for
$1\leq k\leq N$ we have
\begin{equation*}
\p_k\p\beta(f)
\locsim
\bigg(\frac{\p}{\p x_k},\frac{\p}{\p x_k}-2a_\mu
\sum_{i=1}^N\Big(
\p_i\beta(x_j^\e)D^{\bar\phi,\nabla f(x_j^\e)}_{0,e_i}
\frac{\p}{\p x_k}-\sfB_{0,e_i}^{\bar\phi}(f)[\p_i\beta](x_j^\e)\frac{\p}{\p x_k}\Big)\bigg).
\end{equation*}
\end{itemize}
\end{lemma}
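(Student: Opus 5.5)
The proof rests on the defining relation \eqref{derBeta}: $\p\beta(f)[h]=(1+2a_\mu\bD(f))^{-1}\big[h-2a_\mu\p\bD(f)[h][\beta]\big]$.

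\textbf{Part (i).} Theorem~\ref{T:51} applies with $s$ replaced by $s'$, since $s'>s_c$. Hence $(1+2a_\mu\bD(f))^{-1}$ is bounded both on $H^s$ and on $H^{s'}$. It remains to show that $h\mapsto\p\bD(f)[h][\beta]$ maps $H^{s'}$ into $H^{s'}$ when $f,\beta\in H^s$. Differentiating \eqref{sionsD} via \eqref{difference}, this map is a sum of terms of the form
\[
B^{\bar\phi}_{1,0}(f)[h,\beta],\qquad B^{\bar\phi'}_{3,0}(f)[f^{[2]},h,\beta],\qquad \sfB^{\bar\phi}_{0,e_i}(f)[\beta\,\p_ih],\qquad \ldots
\]
The $L_2$ bound comes from Lemma~\ref{L:MP0}. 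For one derivative, \eqref{cder'} gives $\nabla(\p\bD(f)[h][\beta])=\p\cA(f)[h][\nabla\beta]$. By \eqref{dersions}, this is a sum of operators $B^\phi_{n,\nu}(f)$ applied either to $h$ as a Lipschitz argument with $\nabla\beta$ as density, or to products $\p h\,\p\beta$. Lemma~\ref{Bestgen}, with $\sigma_i$ chosen so that $h$ only needs $H^{s'}$ regularity and $\beta\in H^s$ absorbs the rest, together with Lemma~\ref{L:mult}, then gives an $H^{s'-1}$ bound by $C\|h\|_{H^{s'}}$, and also by $C\|h\|_{H^s}$ at level $s$.

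\textbf{Part (ii).} Write $u:=\p\beta(f)[h]$, so that $u+2a_\mu\bD(f)[u]=g:=h-2a_\mu\p\bD(f)[h][\beta]$. Multiplying by $\pi_j^\e$ gives
\[
\pi_j^\e u+2a_\mu\bD(f)[\pi_j^\e u]=\pi_j^\e g+2a_\mu\llbracket\bD(f),\pi_j^\e\rrbracket[u],
\]
and Theorem~\ref{T:51} yields $\|\pi_j^\e u\|_{H^s}\leq C\|\pi_j^\e g\|_{H^s}+C\|\llbracket\pi_j^\e,\bD(f)\rrbracket u\|_{H^s}$. Two ingredients are needed. First, the commutator of a cutoff with $\bD(f)$ is smoothing by one order; this follows from the commutator results of Appendix~\ref{Sec:D} (Lemma~\ref{commphi}) applied to $\bD$ and, via \eqref{cder}, to $\cA$. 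It gives $\|\llbracket\pi_j^\e,\bD(f)\rrbracket u\|_{H^s}\leq K\|u\|_{H^{s'}}\leq K\|h\|_{H^{s'}}$ by (i). Second, one needs $\|\pi_j^\e\p\bD(f)[h][\beta]\|_{H^s}\leq C\|\pi_j^\e h\|_{H^s}+K\|h\|_{H^{s'}}$. For this, take the gradient, localize the terms in \eqref{dersions} in the same way (pull $\pi_j^\e$ inside up to commutators), and use that the only top-order dependence is through $\nabla h$, which enters linearly with bounded operator coefficients. The constant $C_1$ must be independent of $\e$; this holds because only the commutators carry $\e$-dependence, and those land in $K$.

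\textbf{Part (iii).} Differentiate \eqref{derBeta} and use \eqref{cder} and \eqref{cder'}, which give \eqref{cder''}:
\[
\nabla\p\beta(f)[h]=\nabla h-2a_\mu\p\cA(f)[h][\nabla\beta]-2a_\mu\cA(f)\big[\nabla\p\beta(f)[h]\big].
\]
For the last term, Lemma~\ref{le:loccomp}(i) applies with $T=\cA_k(f)\circ\nabla\locsim(0,0)$ by \eqref{locsionscA} and $S=\p\beta(f)$, which satisfies the hypotheses by (i) and (ii); so this term localizes to $(0,0)$. The term $\p_k$ localizes trivially to $(\p_k,\p_k)$, because $\pi_j^\e\p_kh-\p_k(\pi_j^\e h)=-(\p_k\pi_j^\e)h$ is bounded by $K\|h\|_{H^{s'}}$. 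The middle term is handled by \eqref{locdersions}. By linearity of $\locsim$, summing these three localizations gives exactly the claimed statement.

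The main obstacle is (ii): obtaining an $\e$-uniform constant $C_1$ while controlling $\pi_j^\e\p\bD(f)[h][\beta]$ in $H^s$. I would handle it through the gradient identity \eqref{cder'} and Lemma~\ref{L:Mix}/Proposition~\ref{T:LOC}, so that only $\|\pi_j^\e\nabla h\|_{H^{s-1}}$ appears at top order.
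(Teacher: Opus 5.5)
Your proposal is correct and follows the paper's proof essentially step by step. For (ii), the paper likewise multiplies \eqref{derBeta} by $\pi_j^\e$, applies Theorem~\ref{T:51}, and bounds $\llbracket\pi_j^\e,\bD(f)\rrbracket$ via \eqref{cder} and Lemma~\ref{commphi}. It then treats $\pi_j^\e\p\cA(f)[h][\nabla\beta]$ through \eqref{dersions} using Lemma~\ref{commphi}, Lemma~\ref{L:Mix} and Lemma~\ref{L:MP1}; Proposition~\ref{T:LOC} is not needed there. For (iii), the paper uses the same three-term splitting of \eqref{cder''}, together with Lemma~\ref{le:loccomp}~(i), \eqref{locsionscA} and \eqref{locdersions}. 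The only cosmetic difference is in (i): the paper simply invokes Lemma~\ref{L:61}~(i) at both levels $s$ and $s'$, rather than re-deriving by hand the $H^{s'}$-boundedness of $h\mapsto\p\bD(f)[h][\beta]$.
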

\begin{proof}
The assertion (i) is a straightforward consequence of  Lemma~\ref{L:61}~(i). 
In order to establish~(ii), we multiply \eqref{derBeta} by $\pi_j^\e$ to obtain the operator equation
\[(1+2a_\mu\bD( f))[\pi_j^\e\p \beta(f)[h]]=\pi_j^\e h
    -2a_\mu\llbracket \pi_j^\e,\bD( f)\rrbracket [\p \beta(f)[h]]
    -2a_\mu\pi_j^\e \p\bD(f)[h][\beta].\]
Theorem \ref{T:51} implies 
\be\label{piderbeta1}
\|\pi_j^\e\p \beta(f)[h]]\|_{H^s}
\leq C\big(\|\pi_j^\e h\|_{H^s}
+\|\llbracket \pi_j^\e,\bD( f)\rrbracket [\p \beta(f)[h]]\|_{H^s}
+\|\pi_j^\e \p\bD(f)[h][\beta]\|_{H^s}\big),
\ee
and we estimate the last two terms separately.  

 We  combine  Lemma~\ref{L:51}  and Lemma~\ref{L:61} (both with $s$ replaced by $s'$),   Lemma~\ref{commphi},  and the  relations \eqref{equivgrad} and \eqref{reg:cla} to derive that 
 \begin{equation}\label{piderbeta2}
\begin{aligned}
   \|\llbracket \pi_j^\e,\bD( f)\rrbracket [\p \beta(f)[h]]\|_{H^s}
    &\leq C_0\big(\|\llbracket \pi_j^\e,\bD( f)\rrbracket [\p \beta(f)[h]]\|_2
    +\|\nabla \llbracket \pi_j^\e,\bD( f)\rrbracket [\p \beta(f)[h]]
    \|_{H^{s-1}}\big)\\
    &\leq K\big(\|(\nabla\pi_j^\e)\bD( f)[\p \beta(f)[h]]\|_{H^{s-1}}
    +\|\llbracket \pi_j^\e,\cA( f)\rrbracket [ \nabla \p \beta(f)[h]]\|_{H^{s-1}}\\
    &\hspace{1cm}+\|\p \beta(f)[h]\|_{H^{s'}}
    +\|\cA(f)[(\nabla \pi_j^\e)\p \beta(f)[h]])\|_{H^{s-1}}\big)\\
    &\leq  K\big(\|\p \beta(f)[h]\|_{ H^{s'}}
    +\|\nabla\p \beta(f)[h]\|_{H^{s'-1}}\big)\leq K\|h\|_{H^{s'}}.
\end{aligned}
\end{equation}
Using \eqref{equivgrad},   \eqref{regD} (with  $s$ replaced by $s'$), and \eqref{cder'}, we further have
\begin{align*}
    &\|\pi_j^\e \p\bD(f)[h][\beta]\|_{H^s}\\
&\leq C_0\big(\|\pi_j^\e \p\bD(f)[h][\beta]\|_{2}+\|\nabla\big(\pi_j^\e \p\bD(f)[h][\beta]\big)\|_{H^{s-1}}\big)\\  
&\leq C_0\big(\|\pi_j^\e \p\bD(f)[h][\beta]\|_{2}+\|\pi_j^\e \p\cA(f)[h][\nabla\beta]\|_{H^{s-1}}+\|(\nabla\pi_j^\e) \p\bD(f)[h][\beta]\|_{H^{s-1}}\big)
\\
&\leq K\|h\|_{H^{s'}}+ C_0\|\pi_j^\e \p\cA(f)[h][\nabla\beta]\|_{H^{s-1}}.
\end{align*} 
 To estimate the last term, we expand $\p\cA(f)[h][\nabla\beta]$ according to 
\eqref{dersions}. For all individual terms we use the commutator type estimates of  Lemma~\ref{commphi} and Lemma~\ref{L:Mix} together with Lemma~\ref{L:MP1} to obtain 
\be\label{piderbeta3}
\|\pi_j^\e \p\cA(f)[h][\nabla\beta]\|_{H^{s-1}}\leq C_1\|\pi_j^\e h\|_{H^s} +K\|h\|_{H^{s'}},
\ee
with  $C_1$ depending on $\|f\|_{H^s}$ only. 
Summarizing, we  obtain the desired estimate~\eqref{estsol} from~\eqref{piderbeta1}--\eqref{piderbeta3}.

 It remains to establish the localization property (iii). To this end, we recall from \eqref{cder''} that
\[\p_k\p\beta(f)=\frac{\p}{\p x_k}-2a_\mu\big(\cA_k(f)[\nabla\p\beta(f)[\,\cdot\,]]+\p\cA_k(f)[\,\cdot\,][\nabla\beta]\big).\]
We localize the three terms on the right of this identity separately.
From the product rule of differentiation we immediately get
\[\frac{\p}{\p x_k} \locsim \left(\frac{\p}{\p x_k},\frac{\p}{\p x_k}\right).\]
  For the second term, we combine \eqref{locsionscA}, the assertions (i) and~(ii) established above, and Lemma \ref{le:loccomp} (i) with $T:=\cA_k(f)\circ\nabla$ 
and $S:=\p \beta(f)$,  and arrive at
\[\cA_k(f)[\nabla\p\beta(f)[\,\cdot\,]]\locsim(0,0).\]
 Finally, recalling the localization result~\eqref{locdersions} for the third term
$\p\cA_k(f)[\,\cdot\,][\nabla\beta]$,  we have established the remaining property~(iii) and thus completed the proof.
\end{proof}

 We are now in a position to establish the localization result for $\Psi(\tau)$, $\tau\in[0,1]$ announced above.

\begin{proof}[Proof of Proposition \ref{P:Local}]
In view of $\Psi(\tau)=\tau\Psi(1)+(1-\tau)\Psi(0)$, it is sufficient to show
\begin{align}
    -\Lambda^{-1}\Psi(0)&=(1-2a_\mu\widetilde\Phi(f))\sum_{k=1}^N
    \sfB^{\bar\phi}_{0,e_k}(f)\frac{\p}{\p x_k}
    \locsim\big({\sf A}_0,{\sf A}(x_j^\e)\big)\label{locpsi1-1}\\
    -\Lambda^{-1}\Psi(1)&=-\big(\p\bA(f)[\,\cdot\,][\nabla \beta]
    +\bA(f)[\nabla(\p\beta(f)[\,\cdot\,])]\big)
    \locsim\big({\sf A}_0,{\sf A}(x_j^\e)-{\sf B}(x_j^\e)\big).\label{locpsi1}
\end{align}
 The property~\eqref{locpsi1-1} is immediate from Proposition \ref{T:LOC}.
In order to establish~\eqref{locpsi1}, we localize the terms of $\Lambda^{-1}\Psi(1)$ separately.  In view of~\eqref{locderforA}, it remains to localize 
the linear operator~$\bA(f)[\nabla(\p\beta(f)[\,\cdot\,])]$. For this, we are going to
use Lemma~\ref{le:loccomp}~(ii) with~${T:=\bA(f)\circ\nabla}$ and~$S:=\p\beta(f)$.
 Therefore, we recall \eqref{locforA}
and infer from Proposition~\ref{P:D1} below that, due to~$f\in W^{1,\infty}(\R^N)$, the operators
$(1+|\nabla f(x^\e_j)|^2)D_{0,e_k}^{\bar\phi,\nabla f(x^\e_j)}$
are uniformly bounded  with respect to~${\e\in(0,1)}$,
  $0\leq j\leq m(\e)$, and $1\leq k\leq N$ in~${\kL(H^{s-1}(\R^N))}$.

 Recalling Lemma~\ref{le:estdbeta}, we conclude from \eqref{locforA} and Lemma~\ref{le:loccomp}~(ii) that
\begin{align*}
  & \bA(f)[\nabla(\p\beta(f)[\,\cdot\,])]\\
  &\locsim 
   \bigg(-{\sf A}_0,-(1+|\nabla f(x_j^\e)|^2)\Big(1+2a_\mu\sum_{i=1}^N \sfB_{0.e_i}^{\bar\phi}
    (f)[\p_i\beta](x_j^\e)\Big)
   \sum_{k=1}^ND_{0,e_k}^{\bar\phi,\nabla f(x_j^\e)} \frac{\p}{\p x_k}\bigg)\\
   & \,\qquad +\bigg(0,2a_\mu(1+|\nabla f|^2)(x_j^\e)
   \sum_{i,k=1}^N\p_i\beta(x_j^\e)
   D_{0,e_i}^{\bar\phi,\nabla f(x_j^\e)}D_{0,e_k}^{\bar\phi,\nabla f(x_j^\e)}\frac{\p}{\p x_k}\bigg).
\end{align*}
The property \eqref{locpsi1} follows from this relation and \eqref{locderforA} by adding and applying Lemma~\ref{L:devvor}.
\end{proof}

 We now address the question of invertibility of $\omega-\Psi(0)$ for sufficiently large $\omega\in\R$.
\begin{prop}\label{P:inv}
Let $f\in H^s(\R^N)$ and $a\in H^{s-1}(\R^N)$ such that $1+a>0$, and set
\[
T:=(1+a)\sum_{k=1}^N \sfB^{\bar\phi}_{0,e_k}(f) \frac{\p}{\p x_k}
\]
Then there exists $\omega_0\in\R$ such that $\lambda+T\in\kL(H^s(\R^N),H^{s-1}(\R^N))$ is  an isomorphism for all~${\lambda\in [\omega_0,\infty)}$. 
\end{prop}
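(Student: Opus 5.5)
The approach is to combine the method of continuity with a priori estimates obtained by localization. It is the same strategy that Theorem~\ref{T:GP} will use, but it applies here to an operator of much simpler structure.

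\emph{Step 1: the homotopy.} For $\tau\in[0,1]$ set
\[
T_\tau:=(1+\tau a)\sum_{k=1}^N\sfB^{\bar\phi}_{0,e_k}(\tau f)\frac{\p}{\p x_k}.
\]
By \eqref{regtruero} and Lemma~\ref{L:mult}, $[\tau\mapsto T_\tau]$ is continuous into $\kL(H^s(\R^N),H^{s-1}(\R^N))$. Since $a\in H^{s-1}(\R^N)\hookrightarrow{\rm BUC}(\R^N)$ vanishes at infinity and $1+a>0$, we have $\inf(1+\tau a)\geq c_0>0$ uniformly in $\tau$.

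At $\tau=0$ the operator $T_0=\sum_k B^{\bar\phi}_{0,e_k}(0)\p_k=\tfrac12\sum_k R_k\p_k$ is the Fourier multiplier with symbol $\tfrac12|\xi|$. Hence $\lambda+T_0$ is an isomorphism $H^s(\R^N)\to H^{s-1}(\R^N)$ for every $\lambda>0$.

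By the method of continuity (\cite[Proposition I.1.1.1]{Am95}), it therefore suffices to find $\omega_0$ and $C$ such that
\[
\|h\|_{H^s}\leq C\|(\lambda+T_\tau)h\|_{H^{s-1}}
\]
for all $\lambda\geq\omega_0$, $\tau\in[0,1]$ and $h\in H^s(\R^N)$.

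\emph{Step 2: localization.} Proposition~\ref{T:LOC}, applied with $f$ replaced by $\tau f$ (the constants depend only on $\|f\|_{H^s}$ and $\|a\|_{H^{s-1}}$, hence are uniform in $\tau$), together with a commutator estimate for the multiplier $1+\tau a$, yields
\[
T_\tau\locsim\Big(T_0,\ (1+\tau a(x_j^\e))\sum_{k=1}^N D^{\bar\phi,\tau\nabla f(x_j^\e)}_{0,e_k}\frac{\p}{\p x_k}\Big),
\]
uniformly in $\tau$. This is exactly the argument used for \eqref{locpsi1-1}.

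\emph{Step 3: the frozen multipliers.} This is the key step and the one I expect to be the main obstacle. We must show that for $|A|\leq M:=\|\nabla f\|_\infty$ the symbol $m_A$ of $\sum_k D^{\bar\phi,A}_{0,e_k}\p_k$ (computed in Appendix~\ref{Sec:D}, Proposition~\ref{P:D1}) is real and satisfies
\[
c(M)|\xi|\leq m_A(\xi)\leq C(M)|\xi|.
\]
My first attempt would be direct: write the symbol of $D^{\bar\phi,A}_{0,e_k}$ as an integral over the sphere of $\bar\phi((A\cdot\sigma)^2)\sigma_k$ against $-\tfrac{i\pi}{2}{\rm sign}(\sigma\cdot\xi)$. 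Contracting with $i\xi_k$ then gives
\[
m_A(\xi)=c_N\int_{\s^{N-1}}\bar\phi\big((A\cdot\sigma)^2\big)\,|\sigma\cdot\xi|\,{\rm d}\sigma .
\]
This is positive and comparable to $|\xi|$, since $\bar\phi((A\cdot\sigma)^2)\geq(1+M^2)^{-(N+1)/2}$.

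As a fallback, I would identify the operator as the normal-derivative (Dirichlet–Neumann type) operator of the tilted plane $\{y=A\cdot x\}$ and argue via positivity of that operator.

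With $1+\tau a(x_j^\e)\geq c_0$, the bound on $m_A$ gives, for $\lambda\geq1$,
\[
\lambda\|u\|_{H^{s-1}}+\|u\|_{H^s}\leq C\|(\lambda+A^\e_{j})u\|_{H^{s-1}},
\]
uniformly in $j$, $\e$ and $\tau$. The same bound holds at infinity with the multiplier $T_0$.

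\emph{Step 4: patching.} Apply the localized estimate from Step~3 to $u=\pi_j^\e h$. Choosing $\theta$ small and then $\e$ fixed, we obtain
\[
\lambda\|\pi_j^\e h\|_{H^{s-1}}+\|\pi_j^\e h\|_{H^s}\leq C\|\pi_j^\e(\lambda+T_\tau)h\|_{H^{s-1}}+\tfrac12\|\pi_j^\e h\|_{H^s}+K\|h\|_{H^{s'}}.
\]
Next, bound $\|\pi_j^\e(\lambda+T_\tau)h\|_{H^{s-1}}$ by $C\|(\lambda+T_\tau)h\|_{H^{s-1}}+K\|h\|_{H^{s-1}}$, using a commutator estimate with $\pi_j^\e$. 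Sum over $j$ and use the norm equivalence \eqref{eqnorm}. Finally, absorb the lower-order term via the interpolation inequality
\[
K\|h\|_{H^{s'}}\leq\tfrac14\|h\|_{H^s}+K'\|h\|_{H^{s-1}}
\]
(cf. \eqref{IP}). For $\lambda\geq\omega_0$ large, the term $K'\|h\|_{H^{s-1}}$ is absorbed by $\lambda\|h\|_{H^{s-1}}$. This gives the uniform a priori estimate, and the method of continuity concludes the proof.
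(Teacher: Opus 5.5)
Your proposal is correct and follows essentially the same route as the paper: the homotopy $T(\tau)$, localization via Proposition~\ref{T:LOC}, the symbol bound of Lemma~\ref{L:SGRT} for the frozen multipliers, absorption of the $H^{s'}$-term by interpolation and large $\lambda$, and the method of continuity starting from the multiplier with symbol $|z|/2$. Two minor corrections: first, the smallness threshold for $\e$ in Proposition~\ref{T:LOC} depends on $f$ and $a$ themselves (for instance on their decay at infinity), not only on their norms, but uniformity in $\tau$ still holds because $\tau f$, $\tau\nabla f$, and $\tau a$ are pointwise dominated by $f$, $\nabla f$, and $a$; second, once $\e$ is fixed no commutator is needed to bound $\|\pi_j^\e(\lambda+T_\tau)h\|_{H^{s-1}}$, since $\|\pi_j^\e g\|_{H^{s-1}}\leq K(\e)\|g\|_{H^{s-1}}$ already suffices.
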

\begin{proof}
To start, we define the  path 
$[\tau\mapsto T(\tau)] \in{\rm C}([0,1], \kL(H^s(\R^N), H^{s-1}(\R^N)))$ by
\[
T(\tau):=(1+\tau a)\sum_{k=1}^N \sfB^{\bar\phi}_{0,e_k}(\tau f) \frac{\p}{\p x_k}.
\]
Let $\eta\in(0,1)$ be chosen such that for all  $\tau\in[0,1]$ we have
\[
\|\nabla f\|_\infty\leq \eta^{-1}\qquad\text{and}\qquad \eta\leq 1+\tau a\leq \eta^{-1}.
\] 
Next, we introduce the Fourier multipliers
\[
{\sf T}_{\alpha,A}:=\alpha \sum_{k=1}^N D_{0,e_k}^{\bar\phi,A}\frac{\p}{\p x_k},\qquad \alpha\in[\eta,\eta^{-1}],\quad |A|\leq \eta^{-1}, 
\]
and infer from Lemma~\ref{L:SGRT}, by using standard Fourier analysis, that there is a constant~${\kappa\geq 1}$ such that  
\begin{equation}\label{kappa}
\kappa\|(\lambda+{\sf T}_{\alpha,A})[h]\|_{H^{s-1}}\geq \lambda\cdot \|h\|_{H^{s-1}}+\|h\|_{H^{s}}
\end{equation}
for all $\alpha\in[\eta,\eta^{-1}]$, $|A|\leq \eta^{-1}$, and $\lambda\geq 1$. 

 Applying  Proposition~\ref{T:LOC}, we conclude  that there exist~${\e\in(0,1)}$, a   constant~${K>0}$, and Fourier multipliers $T_{j,\tau}^\e\in\kL(H^s(\R^N), H^{s-1}(\R^N))$ such that for all~$0{\leq j\leq m(\e), h\in H^s(\R^N)}$, and~$\tau\in[0,1]$ it holds 
\begin{equation}\label{eq:locinv}
\|\pi_j^\varepsilon T(\tau)[h]-T_{j,\tau}^\e[\pi_j^\varepsilon h]\|_{H^{s-1}}\leq (2\kappa)^{-1}\|\pi_j^\varepsilon h\|_{H^s}+K\|h\|_{H^{s'}},
\end{equation}
where $\kappa \geq 1$ is the constant in \eqref{kappa}.
Moreover, the Fourier multipliers $T_{j,\tau}^\e$ all belong to the set 
$\{ {\sf T}_{\alpha,A}\,:\, \alpha\in[\eta,\eta^{-1}],\, |A|\leq \eta^{-1}\}$.

 From \eqref{kappa} and \eqref{eq:locinv}, we deduce 
for~$0\leq j\leq m(\e)$,  $h\in H^s(\R^N)$, $\tau\in[0,1]$,  and~$\lambda\geq 1$ that
\begin{equation*}
\begin{aligned}
2\kappa\|\pi_j^\varepsilon (\lambda+T(\tau))[h]\|_{H^{s-1}}&\geq 2\kappa\|\pi_j^\varepsilon (\lambda+T_{j,\tau}^\e)[\pi_j^\varepsilon h]\|_{H^{s-1}}
-2\kappa\|\pi_j^\varepsilon T(\tau)[h]-T_{j,\tau}^\e[\pi_j^\varepsilon h]\|_{H^{s-1}}\\
&\geq 2 \lambda\cdot \| \pi_j^\varepsilon h\|_{H^{s-1}}+\|\pi_j^\varepsilon h\|_{H^{s}}-2\kappa K\|h\|_{H^{s'}}.
\end{aligned}
\end{equation*}
Summing over $ 0\leq  j\leq m(\e)$ and using \eqref{IP}, the equivalence of the norm defined in~\eqref{eqnorm} to the standard~$\|\cdot\|_{H^r}$-norm, $r\geq 0$, and Young's inequality, 
we conclude that there exist constants~${\kappa_0\geq 1}$ and $\omega_0>0$ such that 
for all~$h\in H^s(\R^N)$, $\tau\in[0,1]$, and $\lambda\geq \omega_0$  we have 
\begin{equation*}
\kappa_0\|(\lambda+T(\tau))[h]\|_{H^{s-1}}\geq \lambda\cdot \|h\|_{H^{s-1}}+\|h\|_{H^{s}}\geq \|h\|_{H^{s}}.
\end{equation*}
Since $T(0)$ is the Fourier multiplier with symbol~$[z\mapsto|z|/2]$, $z\in\R^N$  (see the proof of Lemma~\ref{L:SGRT}), the operator $\lambda+ T(0)$ is invertible for all $\lambda>0$, and the method of continuity ensures (cf. \cite[Proposition I.1.1.1]{Am95}) that $\lambda+T(1)$ is invertible for all $\lambda\geq \omega_0$, which completes the proof.
\end{proof}

 By Proposition \ref{P:D1} and Lemma \ref{L:SGRT} we can characterize the symbols of the Fourier multipliers $A_{j,\tau}^\e$, $\e\in(0,1)$, $0\leq j\leq m(\e)$, and~$\tau\in[0,1]$ identified in Proposition~\ref{P:Local}. In connection with Lemma~\ref{L:MP1}, Lemma~\ref{L:61}~(i), and~\eqref{dop}, 
 we find from these results that   these Fourier multipliers have symbols of the form
\begin{equation}\label{gens}
\Big[z\mapsto  -m_0(z)|z|+i\sum_{k=1}^N m_k(z)z_k \Big]:\R^N\to\mathbb{C}, 
\end{equation}
with  real-valued functions $m_k=m_{k,j,\tau}^\e\in  L_\infty(\R^N)$, $0\leq k\leq N$, which satisfy, for $z\in\R^N$,
\begin{equation}\label{gesy}
m_0(z)\in[\eta,\eta^{-1}]\qquad\text{and}\qquad |m_k(z)|\leq \eta^{-1},\quad 1\leq k\leq N,
\end{equation}
for some $\eta\in (0,1)$ depending only on $f\in\cO$.

Using Fourier analysis, it is straightforward to prove that if ${\sf M}$ is a 
Fourier multiplier with a symbol satisfying~\eqref{gens} and~\eqref{gesy}, then there exists a constant $\kappa \geq 1$ (which depends only on $\eta$)
such that 
\begin{equation}\label{estFour1}
\kappa\|(\lambda+{\sf  M})[h]\|_{H^{s-1}}\geq \lambda\cdot \|h\|_{H^{s-1}}+\|h\|_{H^{s}},\qquad \re\lambda\geq 1,\, h\in H^{s}(\R^N).
\end{equation} 
\medskip

We are now in a position to prove Theorem~\ref{T:GP}.
\begin{proof}[Proof of Theorem~\ref{T:GP}]
 The proof of Theorem~\ref{T:GP} follows by combining  Proposition~\ref{P:Local},  Proposition~\ref{P:inv}, \eqref{estFour1}, \eqref{IP}, 
 and the equivalence of the norm defined in \eqref{eqnorm} to the standard norm in $H^r(\R^N)$.
The details are similar to those in the proof of \cite[Theorem 6]{AM22}, and therefore we omit them. 
\end{proof}

\subsection*{Proof of Theorem~\ref{MT}}
 The proof applies the well-posedness theory for fully nonlinear  abstract parabolic problems from \cite[Chapter~8]{L95}. 
 This theory uses weighted H\"older spaces in time~${\rm C}_\beta^\beta((0,T],E)$,  where $E$ is a Banach space, $\beta\in(0,1)$,  and $T>0$.  These spaces consist of the
 bounded functions $u:(0,T]\to E$ such that 
 \[
 [t\mapsto t^\beta u(t)]\in {\rm C}^\beta([0,T],E).
\]

\begin{proof}[Proof of Theorem~\ref{MT}]  \,

\noindent{\bf Well-posedness.} Let $\cO$ be the open subset of $H^s(\R^N)$ introduced in~\eqref{dop} and recall from~\eqref{spphi}
that  $\Phi\in {\rm C}^{\infty}(\cO, H^{s-1}(\R^N))$ with $\p\Phi(f)$ being, according to Theorem~\ref{T:GP}, the generator of a strongly continuous  analytic semigroup on $\kL(H^{s-1}(\R^N))$.
Therefore the assumptions of \cite[Theorem 8.1.1]{L95} are satisfied in the context of the evolution problem~\eqref{NNEP}.
 This result ensures that, given $f_0\in\cO$, there exists a strict solution~$f$ to~\eqref{NNEP}  on some time interval $[0,T]$ which satisfies (since \eqref{NNEP} is autonomous)
\[
f\in{\rm C}([0,T], \cO)\cap {\rm C}^1([0,T], H^{s-1}(\R^N))\cap {\rm C}_\beta^\beta((0,T], H^{s}(\R^N))
\]
for all $\beta\in(0,1)$. Moreover, this solution is unique within the set of functions
\[
 \bigcup_{\beta\in(0,1)}{\rm C}([0,T], \cO)\cap {\rm C}^1([0,T], H^{s-1}(\R^N))\cap {\rm C}_\beta^\beta((0,T], H^{s}(\R^N)).
\]
To improve the uniqueness claim, as stated in Theorem~\ref{MT}, we fix  $s'\in(1+N/2,s)$ and set~${\beta:=s-s'\in(0,1)}$. 
Then merely  $f\in{\rm C}([0,T], \cO)\cap {\rm C}^1([0,T], H^{s-1}(\R^N))$ together with the interpolation property~\eqref{IP} ensures that 
\[
f\in {\rm C}^\beta([0,T], H^{s'}(\R^N))\hookrightarrow {\rm C}^\beta_\beta([0,T], H^{s'}(\R^N)).
\]
The uniqueness result  of \cite[Theorem 8.1.1]{L95} applied in the context of \eqref{NNEP} (with $s$ replaced by $s'$) implies now that $f$ is indeed unique within the set 
${\rm C}([0,T], \cO)\cap {\rm C}^1([0,T], H^{s-1}(\R^N))$.  Arguing as in \cite[Section 8.2]{L95},  we can extend this solution to a maximal solution~${f=f(\cdot;f_0)}$ defined on a maximal time interval 
$[0,T^+(f_0))$ with~$T^+(f_0)\in(0,\infty]$.
Moreover, \cite[Proposition~8.2.3]{L95} ensures that the solution map defines a semiflow on $\cO$.
Recalling Proposition~\eqref{P:1}, we established the well-posedness claim.\medskip

\noindent{\bf Parabolic smoothing.} 
Based on the well-posedness property established above and using a parameter trick  applied also to other problems, cf., e.g., \cite{An90, ES96, PSS15, MBV19}, 
we now establish the  parabolic smoothing property in Theorem~\ref{MT}~(ii).

To start, let $f=f(\cdot;f_0)$ denote the maximal solution to \eqref{NNEP} with maximal existence interval $[0,T^+(f_0))$.
 It is sufficient to show that for each $k\in\N$ we have 
\be\label{fsmooth}
f\in {\rm C}^\infty((0,T^+), H^k(\R^N)).
\ee
To establish~\eqref{fsmooth}, we define 
 for each $\lambda:=(\lambda_1,\lambda_2)\in \cV:=(0,\infty)\times\R^{N}$ the function
\[
f_\lambda(t,x):=f(\lambda_1t,x+\lambda_2 t),\qquad x\in\R^N,\quad 0\leq t<T^+(\lambda,f_0):=T^+(f_0)/\lambda_1.
\]
Then $f_{\lambda}\in {\rm C}([0,T^+(\lambda,f_0)),\mathcal{O})\cap {\rm C}^1([0,T^+(\lambda,f_0)), H^{s-1}(\mathbb{R}^N)) $ is a solution to  the  parameter dependent evolution problem  
\begin{equation}\label{PDNNEP}
\frac{{\rm d}f}{{\rm d}t}= \Psi(f,\lambda) ,\quad t\geq0,\qquad f(0)=f_0,
\end{equation}
where $\Psi:\cO\times \cV\to H^{s-1}( \R^N)$ is defined by
\[
 \Psi(f,\lambda)=\lambda_1\Phi(f)+\lambda_2\cdot\nabla f.
\]
It is straightforward to infer from Lemma~\ref{L:61}~(ii) that $\Psi\in  {\rm C}^{\infty}(\cO\times \cV, H^{s-1}(\R^N))$ has partial Fr\'echet derivative with respect to $f$  given by
\[
\p_f\Psi(f,\lambda)=\lambda_1\p\Phi(f)+\lambda_2\cdot\nabla.
\]
Since $\p_{x_j}$, $1\leq j\leq N$, is the Fourier multiplier with symbol~$[z\mapsto i z_j]$, we may argue as in the proof of Theorem~\ref{T:GP} to deduce that 
$\p_f\Psi(f,\lambda)$ generates a strongly continuous  analytic semigroup on $\kL(H^{s-1}(\R^N))$ for each $(f,\lambda)\in \cO\times \cV$.
The arguments in part (i) of the proof together with  \cite[Theorem 8.1.1 and Corollary 8.3.8]{L95} now ensure that \eqref{PDNNEP} has for each~${(f_0,\lambda)\in \cO\times \cV}$  a unique maximal solution
\[
f=f(\cdot;(f_0,\lambda))\in {\rm C}([0,t^+(\lambda,f_0)),\mathcal{O})\cap {\rm C}^1([0,t^+(\lambda,f_0)), H^{s-1}(\mathbb{R}^N)),
\]
where $t^+(\lambda,f_0)\in(0,\infty]$ is the maximal existence time of the solution.
 Moreover, the set
\[
\Xi:=\{(t,f_0,\lambda)\,:\, (f_0,\lambda)\in\cO\times \cV,\, 0<t<t^+(f_0,\lambda)  )\} 
\]
is  open and 
\[
[(t,f_0,\lambda)\mapsto f(t;(f_0,\lambda))]\in {\rm C}^\infty(\Xi, H^s(\R^N)).
\]
In view of the equivalence of the problems \eqref{NNEP} and \eqref{PDNNEP} we may conclude that 
\[
t^+(\lambda,f_0)=\frac{T^+(f_0)}{\lambda_1}\qquad\text{and}\qquad f(t;(f_0,\lambda))=f_\lambda(t),\quad 0\leq t<  \frac{T^+(f_0)}{\lambda_1}. 
\]

Fix $ t_0\in(0,T_+(f_0))$ and choose $\varepsilon>0$ such that $t_0<T^+(f_0)/\lambda_1$ for all $\lambda\in\bB_\varepsilon(e_1)\subset \cV$.
It then follows that $\{t_0\}\times \{f_0\}\times \bB_\varepsilon(e_1)\subset \Xi$ with
\[
[\lambda\mapsto f(t_0;(f_0,\lambda))=f_\lambda(t_0)]\in {\rm C}^\infty( \bB_\varepsilon(e_1),  H^s(\R^N)).
\]
This property immediately implies \eqref{fsmooth}, and the proof is complete.
 \end{proof}

\appendix
%%%%%%%%%%%%%%%%%%%%%%%%%%%%%%%%%%%%%%%%%%%%%%%%%%
%%%%%%%%%%%%%%%%%%%%%%%%%%%%%%%%%%%%%%%%%%%%%%%%%%
%%%%%%%%%%%%%%%%%%%%%%%%%%%%%%%%%%%%%%%%%%%%%%%%%%
%%%%%%%%%%%%%%%%%%%%%%%%%%%%%%%%%%%%%%%%%%%%%%%%%%
 %%%%%%%%%%%%%%%%%%%%%%%%%%%%%%%%%%%%%%%%%%%%%%%%%%
%%%%%%%%%%%%%%%%%%%%%%%%%%%%%%%%%%%%%%%%%%%%%%%%%%
%%%%%%%%%%%%%%%%%%%%%%%%%%%%%%%%%%%%%%%%%%%%%%%%%%
%%%%%%%%%%%%%%%%%%%%%%%%%%%%%%%%%%%%%%%%%%%%%%%%%%
 \section{Layer potentials generated on unbounded graphs  }\label{Sec:A}
%%%%%%%%%%%%%%%%%%%%%%%%%%%%%%%%%%%%%%%%%%%%%%%%%%
%%%%%%%%%%%%%%%%%%%%%%%%%%%%%%%%%%%%%%%%%%%%%%%%%%
%%%%%%%%%%%%%%%%%%%%%%%%%%%%%%%%%%%%%%%%%%%%%%%%%%
%%%%%%%%%%%%%%%%%%%%%%%%%%%%%%%%%%%%%%%%%%%%%%%%%%
 %%%%%%%%%%%%%%%%%%%%%%%%%%%%%%%%%%%%%%%%%%%%%%%%%%
%%%%%%%%%%%%%%%%%%%%%%%%%%%%%%%%%%%%%%%%%%%%%%%%%%
%%%%%%%%%%%%%%%%%%%%%%%%%%%%%%%%%%%%%%%%%%%%%%%%%%
%%%%%%%%%%%%%%%%%%%%%%%%%%%%%%%%%%%%%%%%%%%%%%%%%%

Let $\alpha\in(0,1)$, $p\in(1,\infty)$, and choose  $f,\, \beta:\R^N\to\R$   such that $ \nabla f\in {\rm BUC}^\alpha(\R^N)^N$ 
and~${\beta\in {\rm BUC}^\alpha(\R^N)\cap  L_p(\R^N)}$. In this  appendix we use the notation introduced in Section~\ref{Sec:2}; see in particular~\eqref{opm}-\eqref{rzx}.

We are interested in the properties of the function $V_i:\R^{N+1}\setminus\G\to\R$, $1\leq i\leq N+1$, defined by
\begin{equation}\label{Vei}
V_i(z):=\frac{1}{|\s^N|}\int_{\G}\frac{(z-\ov z)_i}{|z-\ov z|^{N+1}}\wt \beta(\ov z)\,{\rm d}\G(\ov z)
\end{equation}
for $z=(x,y)\in\R^{N+1}\setminus\G$, where $\wt\beta:\G\to\R$ is given by $ \wt\beta:=\beta\circ\Xi^{-1}.$

Since $\beta\in L_p(\R^N)$, we infer from \eqref{Vei} that $V_i$, $1\leq i\leq N+1$, is well-defined and smooth in $\R^{N+1}\setminus\G$.
We  prove in Lemma~\ref{L:W1} below  that  $V_i$  can be also evaluated at any point~${z^0=(x^0,f(x^0))\in\G}$ if the integral \eqref{Vei} is understood as a principle value integral.
To this end we first introduce some notation.
Given $\eta\in(0,1]$, we define  the hypersurfaces
\begin{equation}\label{notation}
\G_\eta:=\{z_\xi\in\G\,:\, \xi\in \bB_\eta(x^0)\}\qquad\text{and}\qquad \G_{\eta,1}:=\G_1\setminus\G_\eta.
\end{equation}

Furthermore, for fixed (but arbitrary)  $z\in\R^{N+1}\setminus\Gamma$ and  $z^0\in \Gamma$ we define the mappings $R:=R(z,\cdot):\Gamma\to\R$ and $R^0:=R(z^0,\cdot):\Gamma\setminus\{z^0\}\to\R$ by 
 \begin{equation}\label{fettR}
 R(\ov z):=R(z,\ov z):=\frac{1}{|\s^N|}\frac{z-\ov z}{|z-\ov z|^{N+1}}\quad\text{and}\quad R^0(\ov z):=R^0(z^0, \ov z):=\frac{1}{|\s^N|}\frac{z^0-\ov z}{|z^0-\ov z|^{N+1}}.
 \end{equation}

The integral \eqref{Vei} can now be written as
\begin{equation}
V_i(z)=\int_{\G}R_i(z,\ov z)\wt\beta (\ov z)\,{\rm d}\G(\ov z)=\int_{\G}R_i\wt\beta \,{\rm d}\G,\qquad z\in\R^{N+1}\setminus\G,
\end{equation}
 where $R_i$ and $R_i^0$  are the $i$-th components of $R$ and $R^0$, $1\leq i\leq N+1$.
Throughout this section we denote by $C$ positive constants that depend at most on~$N$,~$f$, and~$\beta$.

\begin{lemma}\label{L:W1}
Given $1\leq i\leq N+1,$   the limit
\begin{equation}\label{claim1}
\PV\int_{\G}R_i^0\wt\beta\,{\rm d}\G:=\lim_{\eta\searrow 0}\int_{\G\setminus\G_{\eta}}R_i^0\wt\beta\,{\rm d}\G
\end{equation}
exists in $\R$.
Moreover, there exists a  positive constant~$C$ such that for all $\eta \in(0,1]$ and~${z^0 \in\G}$ we have
\begin{equation}\label{claim2}
\Big|\int_{\G_{\eta,1}}R^0_i\wt\beta\,{\rm d}\G\Big|\leq C.
\end{equation}
\end{lemma}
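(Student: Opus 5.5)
We pull the integrals back to $\R^N$ via $\Xi$, so that ${\rm d}\G=\sqrt{\omega(\xi)}\,{\rm d}\xi$ and $z_x^0:=z^0=z_{x^0}$. The far part $\G\setminus\G_1$ is harmless. There $|R^0_i|\le C|x^0-\xi|^{-N}$ and $|x^0-\xi|\ge1$, the weight $\sqrt\omega$ is bounded, and $\beta\in L_p(\R^N)$. By Hölder's inequality with the $L_{p'}$-norm of $|\xi|^{-N}{\bf 1}_{\{|\xi|\geq1\}}$ (finite, since $p'<\infty$), the integral over $\G\setminus\G_1$ is bounded by $C\|\beta\|_p$. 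This bound is uniform in $x^0$. Hence both claims reduce to the near part, i.e. to showing that $\int_{\{\eta<|x^0-\xi|<1\}}R^0_i(z_\xi)\beta(\xi)\sqrt{\omega(\xi)}\,{\rm d}\xi$ is bounded uniformly in $\eta\in(0,1]$ and $x^0$, and converges as $\eta\searrow0$.

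For the near part we write $\beta\sqrt\omega(\xi)=\beta\sqrt\omega(x^0)+\big(\beta\sqrt\omega(\xi)-\beta\sqrt\omega(x^0)\big)$. Since $\beta,\sqrt\omega\in{\rm BUC}^\alpha$ (note $\sqrt\omega$ is a smooth function of $\nabla f\in {\rm BUC}^\alpha$), the difference is bounded by $C|\xi-x^0|^\alpha$. Multiplied by $|R^0_i|\le C|\xi-x^0|^{-N}$, it gives an absolutely integrable function on $\bB_1(x^0)$, with integral bounded uniformly in $x^0$. Dominated convergence then yields both the bound and the limit for this piece. It remains to treat $\beta\sqrt\omega(x^0)$ times $\int_{\{\eta<|\xi-x^0|<1\}}R^0_i(z_\xi)\,{\rm d}\xi$, and here the coefficient is bounded by $\|\beta\|_\infty\|\sqrt\omega\|_\infty$.

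This last kernel integral is the main step. We set $h:=x^0-\xi$ and expand $f(x^0)-f(\xi)=\nabla f(x^0)\cdot h+r(h)$ with $|r(h)|\le C|h|^{1+\alpha}$, using $\nabla f\in{\rm BUC}^\alpha$ and the mean value theorem. We compare $R^0_i(z_\xi)$ with the ``frozen'' kernel
\[
\frac{1}{|\s^N|}\frac{(h,\nabla f(x^0)\cdot h)_i}{\big(|h|^2+(\nabla f(x^0)\cdot h)^2\big)^{(N+1)/2}}.
\]
The frozen kernel is odd in $h$, so its integral over every annulus $\{\eta<|h|<1\}$ vanishes. By the mean value theorem applied to the map $w\mapsto w_i/|w|^{N+1}$ on the segment between $(h,f(x^0)-f(\xi))$ and $(h,\nabla f(x^0)\cdot h)$, both of which have length comparable to $|h|$ with constants depending only on $\|\nabla f\|_\infty$, the difference between the true and frozen kernels is $O(|r(h)|\,|h|^{-N-1})=O(|h|^{\alpha-N})$. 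This is integrable on $\bB_1$, with a bound independent of $x^0$. Hence this integral is bounded uniformly in $\eta$ and $x^0$, and converges as $\eta\searrow0$ by dominated convergence.

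Combining the three pieces gives \eqref{claim2}, and also existence of the limit \eqref{claim1}. The only delicate point is the uniform two-sided comparability used in the mean value step, which follows from $|w|\ge|h|$ for both endpoints and $|w|\le(1+\|\nabla f\|_\infty)|h|$.
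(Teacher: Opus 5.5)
Your proof is correct, but it handles the near part differently from the paper. The far part is the same as the paper's (H\"older's inequality with $\beta\in L_p$). One small slip there: finiteness of $\big\|\,|\xi|^{-N}\mathbf{1}_{\{|\xi|\geq 1\}}\big\|_{p'}$ needs $p'>1$, i.e.\ $p<\infty$, not $p'<\infty$.

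On the annulus $\bB_1(x^0)\setminus\bB_\eta(x^0)$ the paper freezes nothing. It uses the reflection $\xi\mapsto 2x^0-\xi$, which maps the annulus onto itself, to write $2T(\eta)$ as the integral of the symmetrized integrand. It then splits this into two pieces:
\begin{itemize}
\item a term containing $\wh\beta(\xi)-\wh\beta(2x^0-\xi)$;
\item a term containing the difference of the true kernel at $\xi$ and at the reflected point, controlled by the second-difference bound $|2f(x^0)-f(\xi)-f(2x^0-\xi)|\leq C[\nabla f]_\alpha|x^0-\xi|^{1+\alpha}$.
\end{itemize}
You instead freeze $\wh\beta$ at $x^0$, compare the kernel with the tangent-plane kernel (whose integrals over symmetric annuli vanish by oddness), and control the first-order Taylor remainder $r(h)$. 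In the mean value step, the lower bound $|w|\geq|h|$ is needed along the whole segment, not just at its endpoints. This is immediate, since every point of the segment has first $N$ components equal to $h$.

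Both routes exploit the same cancellation and end with a dominating function $C|x^0-\xi|^{\alpha-N}$, so they are equally elementary. The paper's version needs one piece fewer and uses only symmetric differences of $\wh\beta$ and $f$; no model kernel has to be identified. Yours makes the mechanism more transparent: the singular part is exactly the flat tangent-plane kernel, whose principal value vanishes. It also gives directly an absolutely convergent formula for the principal value integral, from which existence of the limit follows by dominated convergence---a point the paper leaves implicit after bounding $T_a$ and $T_b$.
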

\begin{proof}
We first note that
\[
\int_{\G\setminus\G_{\eta}}R_i^0\wt\beta\,{\rm d}\G=\int_{\G\setminus\G_{1}}R_i^0\wt\beta\,{\rm d}\G
+\int_{\G_{\eta,1}}R_i^0\wt\beta\,{\rm d}\G,
\]
and  H\"older's inequality  together with the assumption $\beta\in L_p(\R^N) $ imply that the first integral exists as
\begin{equation}\label{bou0}
\int_{\G\setminus\G_{1}}\big|R_i^0\wt\beta\big|\,{\rm d}\G\leq   C\int_{\R^n\setminus\bB_1(0)}\frac{|\beta(x^0-\xi)|}{|\xi|^N}\,{\rm d}\xi\leq C\|\beta\|_p\leq C.
\end{equation}
It remains to consider the integral
\[
T(\eta):=|\s^N|\int_{\G_{\eta,1}}R_i^0\wt\beta\,{\rm d}\G=\int_{A_{\eta,1}}\frac{(z^0-{z_\xi})_i}{|z^0-z_\xi|^{N+1}}\wh \beta(\xi)\,{\rm d}\xi,
\]
where  $A_{\eta,1} $ is the annulus $\bB_1(x^0)\setminus \bB_\eta(x^0)$ and $\wh \beta:=\sqrt{1+|\nabla f|^2}\beta\in{\rm BUC}^\alpha(\R^N).$
Observing that  $[\xi\mapsto 2x^0-\xi]:A_{\eta,1}\to A_{\eta,1}$ is a bijection, a change of coordinates leads us to
\begin{equation*}
 T(\eta)=\int_{A_{\eta,1}}\frac{(z^0-{z_{2x^0-\xi}})_i\wh\beta(2x^0-\xi)}{|z^0-z_{2x^0-\xi}|^{N+1}}\,{\rm d}\xi,
\end{equation*}
and therefore
\begin{equation*}
 |2T(\eta)| =\bigg|\int_{A_{\eta,1}} \bigg(\frac{(z^0-{z_\xi})_i\wh\beta(\xi)}{|z^0-z_\xi|^{N+1}}+\frac{(z^0-{z_{2x^0-\xi}})_i\wh\beta(2x^0-\xi)}{|z^0-z_{2x^0-\xi}|^{N+1}}\bigg) {\rm d}\xi\bigg|\leq T_{a}(\eta)+T_{b}(\eta),
\end{equation*}
where
\begin{equation*}
 \begin{aligned}
 T_{a}(\eta)&:=\int_{A_{\eta,1}}\Big|\frac{(z^0-{z_\xi})_i(\wh\beta(\xi)-\wh\beta(2x^0-\xi))}{|z^0-z_\xi|^{N+1}}\Big| \,{\rm d}\xi,\\
T_{b}(\eta)&:=\|\wh\beta\|_\infty\int_{A_{\eta,1}}\Big| \frac{(z^0-{z_\xi})_i }{|z^0-z_\xi|^{N+1}}-\frac{(z_{2x^0-\xi}-z^0)_i }{|z_{2x^0-\xi}-z^0|^{N+1}}\Big|\,{\rm d}\xi.
 \end{aligned}
\end{equation*}
We then estimate
\begin{equation}\label{bou1}
 T_{a}(\eta)\leq 2[\wh\beta]_{\alpha}\int_{\bB_1(0)}|\xi|^{\alpha-N} \,{\rm d}\xi \leq C.
\end{equation}
Moreover, setting $A:=  z^0-z_\xi  $ and $B:= z_{2x^ 0-\xi}-z^0$,  we have that 
\begin{align*}
    A_i-B_i&=0,\qquad1\leq i\leq N,\\
    |A_{N+1}-B_{N+1}|&
    =|f(x^0)-f(\xi) +f(x^0)-f(2x^0-\xi)|\leq C[\nabla f]_\alpha|x^0-\xi|^{1+\alpha},
\end{align*}
and, for $1\leq i\leq N+1$,
\[
\Big|\frac{A_i}{|A|^{N+1}}-\frac{B_i}{|B|^{N+1}}\Big|\leq \frac{|A_i-B_i|}{|A|^{N+1}}+|B|\Big|\frac{1}{|A|^{N+1}}-\frac{1}{|B|^{N+1}}\Big|. 
\]
 The first term is nonzero only if $i=N+1$. In that case
 \[\frac{|A_{N+1}-B_{N+1}|}{|A|^{N+1}}\leq C[\nabla f]_\alpha|x^0-\xi|^{\alpha-N}.\]
For the second term we estimate
\begin{align*}
   |B| \Big|\frac{1}{|A|^{N+1}}-\frac{1}{|B|^{N+1}}\Big|&\leq C|B|\frac{\big||A|-|B|\big|(|A|^N+|B|^N)}{|A|^{N+1}|B|^{N+1}}\\
   &\leq C|A-B||x^0-\xi|^{-N-1}\leq C[\nabla f]_\alpha|x^0-\xi|^{\alpha-N}.
\end{align*}

Gathering these estimates we obtain 
\begin{equation}\label{bou4}
 T_{b}(\eta)\leq  C\|\wh\beta\|_{\infty}[\nabla f]_{\alpha}\int_{\bB_1(0)}|\xi|^{\alpha-N} \,{\rm d}\xi \leq C .
\end{equation}
The claims \eqref{claim1}-\eqref{claim2} follow now from \eqref{bou0}-\eqref{bou4}.
\end{proof}

Having established Lemma~\ref{L:W1}, we now state the main results of this appendix.
\begin{prop}\label{P:W1}
There exists a  constant $C>0$ such that for all $1\leq i\leq N+1$, $z^0\in\G$, and $z\in\0^\pm$ with $|z-z^0|\leq 1/4$ we have
\begin{equation}\label{jumpvelo}
\Big|V_i(z)-\Big(\PV\int_{\G}R_i^0\wt\beta\,{\rm d}\G\pm\frac{\wt\nu^i\wt\beta   }{2}(z^0)\Big)\Big|\leq C|z-z^0|^\alpha.
\end{equation}
Moreover,  $V_i^\pm:=V_i|_{\0^\pm}$ has a continuous extension which belongs to ${\rm C}(\ov {\0^\pm})$.
\end{prop}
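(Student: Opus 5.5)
We follow the classical proof of the jump relations for the gradient of the single layer potential (equivalently, for the double layer potential), adapted to the unbounded graph setting. Fix $z^0=z_{x^0}\in\G$ and $z\in\0^\pm$ with $d:=|z-z^0|\leq 1/4$. We split $\G=(\G\setminus\G_1)\cup\G_1$.

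\emph{Far part.} On $\G\setminus\G_1$ we have $|\ov z-z|\geq |\ov z-z^0|/2$, and the mean value theorem gives $|R(z,\ov z)-R^0(\ov z)|\leq C d\,|\ov z-z^0|^{-N-1}$. Estimating exactly as in \eqref{bou0} via H\"older's inequality, this contributes $O(d)\leq O(d^\alpha)$.

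\emph{Near part, density step.} On $\G_1$ we write $\wt\beta=\wt\beta(z^0)+(\wt\beta-\wt\beta(z^0))$. Let $\rho:=2d$ (if $\rho\ge 1$, take $\G_\rho=\G_1$).
\begin{itemize}
\item On $\G_\rho$ both $\int_{\G_\rho}|R(z,\cdot)||\wt\beta-\wt\beta(z^0)|\,{\rm d}\G$ and the corresponding integral with $R^0$ are bounded by $C\int_{|\xi|<\rho}|\xi|^{\alpha-N}\,{\rm d}\xi\le C\rho^\alpha$. For the $R(z,\cdot)$ integral we use $|z-\ov z|\geq c\,{\rm dist}(z,\G)$ together with $|z-\ov z|\ge c|\ov z - z^0|$ whenever $|\ov z-z^0|\ge 2d$. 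More carefully, we bound $|\wt\beta(\ov z)-\wt\beta(z^0)|\le C(|\ov z-z|^\alpha+d^\alpha)$ and apply the standard estimate $\int_{\G}|z-\ov z|^{-N}\,{\rm d}\G(\ov z)\le C(1+|\log {\rm dist}(z,\G)|)$. To avoid the logarithm, we instead choose $z^0$ to be the point whose $x$-coordinate equals that of $z$: this is allowed, since changing $z^0$ to a point at distance $\le Cd$ costs $O(d^\alpha)$ by the H\"older continuity in $z^0$ of the principal value, which follows from the symmetrization argument of Lemma~\ref{L:W1}. For this vertical configuration, $|z-\ov z|\geq c|\ov z-z^0|$ holds uniformly, by the Lipschitz bound on $f$.
\item On $\G_1\setminus\G_\rho$ we have $|R(z,\cdot)-R^0|\le Cd\,|\ov z-z^0|^{-N-1}$, so this contributes $Cd\int_{\rho}^1 r^{\alpha-2}\,{\rm d}r\le Cd^\alpha$.
\end{itemize}

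\emph{Constant density.} It remains to compute $\wt\beta(z^0)\int_{\G_1}R_i(z,\cdot)\,{\rm d}\G$ against $\wt\beta(z^0)\,\PV\!\int_{\G_1}R_i^0\,{\rm d}\G$. This is the main obstacle, since here the jump $\pm\tfrac12\wt\nu^i$ appears. We compare $\G_1$ with its tangent plane $P$ at $z^0$ over the same parameter ball. For the flat disc, the explicit computation gives the limit $\pm\tfrac12\wt\nu^i(z^0)$: the normal component is the solid-angle term, and the tangential part cancels by symmetry, with error $O(d)$. The difference between the curved and flat kernels, pulled back to $\bB_1(x^0)$, is controlled through $|f(\xi)-f(x^0)-\nabla f(x^0)\cdot(\xi-x^0)|\le C|\xi-x^0|^{1+\alpha}$. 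This yields a kernel difference bounded by $C|\xi-x^0|^{1+\alpha}/(|\xi-x^0|+d)^{N+1}$, and splitting at radius $d$ as before gives $O(d^\alpha)$. The same bound holds for the principal value comparison, by the computation in Lemma~\ref{L:W1}. Summing all contributions yields \eqref{jumpvelo}.

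\emph{Continuity.} The continuous extension of $V_i^\pm$ to $\ov{\0^\pm}$ then follows from \eqref{jumpvelo} combined with continuity in $z^0$ of $\PV\!\int_\G R_i^0\wt\beta\,{\rm d}\G\pm\tfrac12\wt\nu^i\wt\beta(z^0)$, which in turn follows from \eqref{jumpvelo} itself via a triangle inequality through nearby interior points.
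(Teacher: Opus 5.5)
Your far-field estimate is sound, and so is your near-field analysis in the vertical configuration $z=z^0\pm t e_{N+1}$. There $|z-z_\xi|\geq c(|\xi-x^0|+t)$, so subtracting $\wt\beta(z^0)$ produces no logarithm. The tangent-plane comparison also works there, once the Jacobian $\sqrt{\omega}$ is included in the kernel difference and the far-region gain is taken from the mean value theorem in $z$.

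The gap is the reduction of a general $z$ to this configuration. It needs
\[
|F^\pm(z^1)-F^\pm(z^0)|\leq C|z^1-z^0|^\alpha,\qquad F^\pm(z^0):=\PV\int_{\G}R_i^0\wt\beta\,{\rm d}\G\pm\frac{\wt\nu^i\wt\beta}{2}(z^0),
\]
which is a Privalov-type H\"older estimate for the boundary singular integral. This does not follow from the symmetrization in Lemma~\ref{L:W1}. That argument works at one fixed base point and gives only existence of the principal value and the uniform bound \eqref{claim2}. Comparing truncated integrals at two different base points is a separate estimate, of roughly the same difficulty as the proposition itself.

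Your last paragraph cannot supply it either. Deducing regularity of $F^\pm$ from \eqref{jumpvelo} through nearby interior points uses \eqref{jumpvelo} for oblique approach, which is exactly what the reduction was supposed to deliver.

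The reduction is essential, because for oblique approach both of your near-field steps fail. If $z\in\Omega^-$ tends to a point $z^1\neq z^0$ of $\G$, subtracting $\wt\beta(z^0)$ gives the logarithmic divergence you noticed. Moreover, the tangent plane at $z^0$ can deviate from $\G$ by order $|z^1-z^0|^{1+\alpha}$ near $z^1$. So $z$ can lie on the upper side of that plane, and the flat solid-angle term is then close to $+\frac12$ instead of $-\frac12$. As written, you have proved \eqref{jumpvelo} only for vertical approach.

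There are two ways to close the gap.

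\emph{Within your framework.} First prove the interior bound $|\nabla V_i(w)|\leq C\,{\rm dist}(w,\G)^{\alpha-1}$, using the same vertical tangent-plane comparison applied to $\nabla_w R$. Then compare $F^\pm(z^0)$ and $F^\pm(z^1)$ through the points $z^k\pm t e_{N+1}$, $k=0,1$, with $t$ a large multiple of $|x^0-x^1|$. Use the vertical estimate at both ends, and the gradient bound along the connecting segment, which stays at distance $\gtrsim t$ from $\G$.

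\emph{The paper's route.} It never needs regularity in the base point. In Lemma~\ref{L:W6} it keeps $z^0$ fixed and subtracts the density at the point $z^\G\in\G_1$ nearest to $z$. Since $|\ov z-z^\G|\leq 2|z-\ov z|$, this removes the logarithm for every direction of approach. The extra term $(\wt a(z^\G)-\wt a(z^0))\int_{\G_{2|z-z^0|,1}}R_i^0\wt b\,{\rm d}\G$ is $O(|z-z^0|^\alpha)$ by \eqref{claim2}.

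In place of a tangent plane, the paper writes $R_i=\sum_j\wt\gamma_{ij}Q_j$, moves the coefficients into the density, and evaluates the constant-density integrals of $Q_j$ exactly:
\begin{itemize}
\item For the normal part, $\int_{\p\Omega_1}R\cdot\wt\nu\,{\rm d}\G$ equals $-1$ at every point of $\Omega_1$, however close to $\G$, and $-\frac12$ at $z^0$. The contribution of $\p\Omega_1\setminus\G_1$ is Lipschitz in $z$ (Lemmas~\ref{L:W3}--\ref{L:W4}).
\item The tangential combinations are exact $\xi$-derivatives of $|z-z_\xi|^{1-N}$. They therefore reduce to a boundary integral over $\p\bB_1(x^0)$ that is Lipschitz in $z$ (Lemma~\ref{L:W5}).
\end{itemize}
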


According to Proposition~\ref{P:W1}, the continuous extension of $V_i^\pm $ to~$\ov{\0^\pm}$, $1\leq i\leq N+1$, which is denoted again by $V_i^\pm$,  satisfies
\[
V_i^\pm (z^0)=\PV\int_{\G}R_i^0\wt\beta\,{\rm d}\G\pm\frac{ \wt\nu^i\wt\beta}{2}(z^0) \qquad\text{for $z^0\in\G$}.
\] 
As a second important result we prove that $V_i^\pm$ vanishes at infinity.

\begin{prop}\label{P:W2}  We have $V_i^\pm(z)\to0$ as $|z|\to\infty$,  $z\in \Omega^\pm$.
\end{prop}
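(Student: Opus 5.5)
We must show $V_i^\pm(z)\to0$ as $|z|\to\infty$ in $\0^\pm$. Fix $z=(x,y)$ with $|z|$ large and split the integral over $\G$ into a near part and a far part: for $\rho>0$ to be chosen (eventually $\rho\to0$ after $|z|\to\infty$), let $\G^{\rm near}:=\{z_\xi:|\xi-x|<\rho|z|\}$ and $\G^{\rm far}$ its complement. We use throughout that $f$ is Lipschitz, so that $|z-z_\xi|\geq c(|x-\xi|+|y-f(\xi)|)$ with $c=c(\|\nabla f\|_\infty)$. We also use that $\beta\in L_p\cap{\rm BUC}^\alpha$ implies $\beta(\xi)\to0$ as $|\xi|\to\infty$: a uniformly H\"older function in $L_p$ must decay.

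\textbf{Far part.} Here $|z-z_\xi|\geq c\rho|z|$ or $|x-\xi|\geq\rho|z|$, so $|R(z,z_\xi)|\leq C|z-z_\xi|^{-N}$ and Hölder with exponent $p'$ gives
\[
\Big|\int_{\G^{\rm far}}R_i\wt\beta\,{\rm d}\G\Big|\leq C\|\beta\|_p\Big(\int_{\{|\xi-x|\geq\rho|z|\}\cup\ldots}|z-z_\xi|^{-Np'}{\rm d}\xi\Big)^{1/p'}.
\]
Since $Np'>N$, the integral of $(\rho|z|+|x-\xi|)^{-Np'}$ over $\R^N$ is $\leq C(\rho|z|)^{N-Np'}$. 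Hence the far part is $\leq C(\rho|z|)^{-N/p}\to0$ for fixed $\rho$.

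\textbf{Near part.} This is the main obstacle, since $z$ may be close to $\G$ and the kernel is singular. We distinguish two cases.
\begin{itemize}
\item[(a)] If $|x|\geq|z|/2$, then on $\G^{\rm near}$ we have $|\xi|\geq(1/2-\rho)|z|$, so $\|\beta\|_{L_\infty(\bB_{\rho|z|}(x))}=:\epsilon(z)\to0$. We then write $\wt\beta=\wt\beta(z^*)+(\wt\beta-\wt\beta(z^*))$, where $z^*:=z_x$ is the nearby point on $\G$ (or the projection of $z$). The Hölder difference is handled as in Lemma~\ref{L:W1} and Proposition~\ref{P:W1}. The proof of Proposition~\ref{P:W1} in fact bounds the truncated integral by
\[
C\big(\|\beta\|_{L_\infty(\text{patch})}+[\beta]_\alpha\,r^\alpha\big)
\]
over patches of radius $r$, uniformly in the position of $z$ relative to $\G$. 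Rescaling, we bound the near part by $C\epsilon(z)$ plus a term controlled by the mean-zero/oddness cancellation. To avoid the $r^\alpha$ growth, we refine this by cutting at radius $r=\delta$ small, fixed. The piece of the near part with $\delta\leq|\xi-x|\leq\rho|z|$ is nonsingular and is bounded by
\[
\epsilon(z)\int_{\delta\leq|\xi|\leq\rho|z|}|\xi|^{-N}\,{\rm d}\xi\leq C\,\epsilon(z)\log(\rho|z|/\delta).
\]
Because this does not obviously vanish, we instead use Hölder there as well: the bound $C\|\beta\|_{L_p(\bB_{\rho|z|}(x))}\delta^{-N/p}$ works, and $\|\beta\|_{L_p(\bB_{\rho|z|}(x))}\to0$ since these balls escape to infinity. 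The piece with $|\xi-x|<\delta$ is bounded by $C(\epsilon(z)+[\beta]_\alpha\delta^\alpha)$ via the uniform estimate of Proposition~\ref{P:W1}/Lemma~\ref{L:W1}. Letting $|z|\to\infty$ and then $\delta\to0$ closes this case.
\item[(b)] If $|x|<|z|/2$, then $|y|\geq|z|/2$, and since $f$ is bounded (as $f\in H^s\hookrightarrow{\rm BUC}$), $|y-f(\xi)|\geq|z|/4$. Thus $|z-z_\xi|\geq|z|/4$ for all $\xi$, there is no singularity, and the whole integral is treated as in the far part, giving $\leq C|z|^{-N/p}$.
\end{itemize}

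Combining the two parts, first taking $|z|\to\infty$ and then $\delta\to0$, yields $V_i^\pm(z)\to0$. The step requiring the most care is extracting from the proof of Proposition~\ref{P:W1} an estimate for the local singular contribution that is uniform in $z$ and depends only on local sup and Hölder norms of $\beta$.
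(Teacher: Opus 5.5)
Your argument is sound in outline, but it takes a different route from the paper, and its key local estimate is asserted rather than proved.

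\textbf{The paper's route.} The paper argues in two steps.
\begin{itemize}
\item It first shows that the boundary values $V_i^-(z^0)$, $z^0\in\Gamma$, tend to zero. The principal value integral is cut at a fixed radius $M$; the tail is handled by H\"older's inequality, exactly as in your far part. The remaining part is symmetrized under $\xi\mapsto-\xi$, which gives the majorant $C\big(|\widehat\beta(x^0+\xi)||\xi|^{\alpha-N}+|\widehat\beta(x^0-\xi)-\widehat\beta(x^0+\xi)|^{1/2}|\xi|^{\alpha/2-N}\big)$, whose integral over $\{|\xi|<M\}$ vanishes as $|x^0|\to\infty$.
\item It then transfers this into $\Omega^-$. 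If ${\rm dist}(z,\Gamma)\le d_0$, it compares $V_i^-(z)$ with $V_i^-(z^\Gamma)$ via \eqref{jumpvelo} from Proposition~\ref{P:W1} \emph{as stated} (global constant, fixed scale). If ${\rm dist}(z,\Gamma)>d_0$, the kernel on $\{|\xi-x|<M\}$ is nonsingular, so smallness of $\beta$ or largeness of $|y|$ suffices.
\end{itemize}

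\textbf{Your route and what it rests on.} You estimate $V_i(z)$ directly for all $z$, with cuts at $\rho|z|$ and at a small fixed $\delta$, and let $\delta\to0$ at the end. This avoids the boundary step and the nearest-point geometry. However, it rests on your bound $C(\epsilon(z)+[\beta]_\alpha\delta^\alpha)$ for the $\delta$-patch, which must be uniform in $\delta\le1$ and in the distance of $z$ to $\Gamma$.

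That bound is not what Proposition~\ref{P:W1} or Lemma~\ref{L:W1} give: their constants involve $\|\beta\|_{{\rm BUC}^\alpha}$ and $\|\beta\|_p$, at unit scale. So it has to be re-derived. One way is to rescale the proofs of Lemmas~\ref{L:W3}--\ref{L:W6}. Another is to split $\widetilde\beta=\beta(x)+(\widetilde\beta-\beta(x))$ and compare the constant part with the tangent plane at $x$. There the flat kernel integrates over balls to a quantity bounded uniformly in the height of $z$, and the remainder is $O([\nabla f]_\alpha|\xi-x|^{\alpha-N})$. The bound is true, but it is exactly the step you leave open, and the paper's two-step structure avoids needing it.

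\textbf{Case (b) and boundedness of $f$.} In case (b) you use that $f$ is bounded. This is not among the hypotheses of the appendix, which assume only $\nabla f\in{\rm BUC}^\alpha$. For a steep linear $f$, points with $|x|<|z|/2$ can stay at bounded distance from $\Gamma$, so your near part is not nonsingular there. This is harmless for the application in Proposition~\ref{P:1}, where $f\in H^s(\R^N)$. In general it is easily repaired:
\begin{itemize}
\item split according to $|x|\gtrless c|z|$ with $c$ of order $(1+\|\nabla f\|_\infty)^{-1}$;
\item take $\rho<c$;
\item use $|f(\xi)|\le|f(0)|+\|\nabla f\|_\infty|\xi|$ only on $\{|\xi-x|<\rho|z|\}$.
\end{itemize}
This is much as the paper does with \eqref{delta1} and \eqref{delta3}.
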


The proofs of these results are postponed to the end of the section as they require some preparation.
We point out that   it suffices to establish these results for $z\in\0^-$  as the results for $z\in\0^+$ are obtained analogously.
Therefore we restrict our considerations in the following to the case when $z\in\0^-$.

 Our first preliminary result is the following lemma.
\begin{lemma}\label{L:W2}
There is a  constant $C>0$ such that for all $1\leq i\leq N+1$, $z^0\in\G$, and~$z\in\0^-$ with $|z-z^0|\leq 1/4$  we have 
\begin{equation}\label{estlw2}
\Big|\int_{\G\setminus\G_1}R_i\wt\beta\,{\rm d}\G- \int_{\G\setminus\G_1}R_i^0\wt\beta\,{\rm d}\G\Big|\leq C|z-z^0|^\alpha.
\end{equation}
\end{lemma}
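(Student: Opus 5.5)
We estimate the difference of the two far-field integrals directly, using that on $\G\setminus\G_1$ the point $\ov z=z_\xi$ satisfies $|\xi-x^0|\geq 1$ while $|z-z^0|\leq 1/4$. Hence both kernels are nonsingular there. Writing $\ov z=z_\xi$, $\wh\beta:=\sqrt{1+|\nabla f|^2}\,\beta$, and $z=(x,y)$, the quantity to bound is
\[
\frac{1}{|\s^N|}\int_{\R^N\setminus\bB_1(x^0)}\Big(\frac{(z-z_\xi)_i}{|z-z_\xi|^{N+1}}-\frac{(z^0-z_\xi)_i}{|z^0-z_\xi|^{N+1}}\Big)\wh\beta(\xi)\,{\rm d}\xi .
\]
The plan is a mean value estimate for the smooth map $w\mapsto w_i/|w|^{N+1}$ on the segment joining $z-z_\xi$ and $z^0-z_\xi$. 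Its gradient is bounded by $C|w|^{-N-1}$.

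\emph{Lower bound on the segment.} The first step is to show that every point $w$ of this segment satisfies $|w|\geq c\,(1+|\xi-x^0|)$ with $c>0$ independent of $z$, $z^0$, $\xi$. Indeed, $|z^0-z_\xi|\geq|x^0-\xi|\geq 1$. Moving along the segment changes the point by at most $|z-z^0|\leq 1/4$, so
\[
|w|\geq|x^0-\xi|-1/4\geq \tfrac34|x^0-\xi|\geq \tfrac38(1+|x^0-\xi|).
\]
This only uses the horizontal components, so no Lipschitz bound on $f$ is needed at this step. The restriction $z\in\0^-$ plays no role here, since the far region stays away from the singularity for both signs.

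\emph{Integration.} The mean value theorem then gives the pointwise bound
\[
\Big|\frac{(z-z_\xi)_i}{|z-z_\xi|^{N+1}}-\frac{(z^0-z_\xi)_i}{|z^0-z_\xi|^{N+1}}\Big|\leq C\frac{|z-z^0|}{(1+|\xi-x^0|)^{N+1}} .
\]
Applying H\"older's inequality with $\wh\beta\in L_p(\R^N)$ (recall $\|\nabla f\|_\infty<\infty$), the difference is bounded by $C|z-z^0|\,\|\beta\|_p\,\|(1+|\cdot|)^{-N-1}\|_{p'}$. The last factor is finite for every $p'\in[1,\infty]$, because $(N+1)p'>N$.

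\emph{Conclusion.} Since $|z-z^0|\leq 1/4<1$, we have $|z-z^0|\leq|z-z^0|^\alpha$, and \eqref{estlw2} follows. The constant depends only on $N$, $\|\nabla f\|_\infty$, and $\|\beta\|_p$. There is no real obstacle here; the only care needed is the uniform lower bound on the segment, which the horizontal separation $|\xi-x^0|\geq1$ provides directly.
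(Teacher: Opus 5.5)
Your proof is correct and follows essentially the paper's argument: a mean value estimate in the first variable for the kernel $(z-\overline{z})_i/|z-\overline{z}|^{N+1}$, with the derivative bounded by $C|z^0-\overline{z}|^{-N-1}$ on $\Gamma\setminus\Gamma_1$, then H\"older's inequality with $\beta\in L_p(\mathbb{R}^N)$ and $|z-z^0|\leq|z-z^0|^\alpha$. The paper records the separation as $2|z-\overline{z}|\geq|z^0-\overline{z}|$, which plays the role of your segment bound; you additionally make explicit the uniformity in $z^0$ and the weight $(1+|\xi-x^0|)^{-N-1}\in L_{p'}$, which the paper leaves implicit.
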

\begin{proof}
Observe first  that
\be\label{distzz0}
2|z-\bar z|\geq |z^0-\bar z|\quad\text{for all $z\in \bB_{1/4}(z^0)$, $\bar z\in\Gamma\setminus\Gamma_1$.}
\ee

Define furthermore the function $G:\bB_{1/4}(z^0)\times (\Gamma\setminus\Gamma_1)
\to\R^{N+1}$ by
\[G(z,\bar z):=\frac{1}{|\s^N|}\frac{z-\bar z}{|z-\bar z|^{N+1}},\qquad z\in \bB_{1/4}(z^0),\quad\bar z\in \Gamma\setminus\Gamma_1.\]
This function is differentiable with respect to its first argument, and for the derivative we obtain by direct calculation and \eqref{distzz0}  that there exists a constant $C>0$ such that
\[\| \p_zG(z,\bar z)\|_{\kL(\R^{N+1})}\leq C|z-\bar z|^{-N-1}\leq C|z^0-\bar z|^{-N-1},\]
and consequently
\begin{equation}\label{estGzz}
|G(z,\bar z)-G(z^0,\bar z)|\leq C|z^0-z||z^0-\bar z|^{-N-1},\qquad  z\in \bB_{1/4}(z^0),\quad\bar z\in \Gamma\setminus\Gamma_1.
\end{equation}
Using this  and H\"older's inequality we estimate the left side of \eqref{estlw2}  by 
\begin{align*}
    &\int_{\Gamma\setminus\Gamma_1}|G(z,\bar z)-G(z^0,\bar z)||\wt\beta(\bar z)|\,{\rm d}\Gamma(\bar z)\leq C|z^0-z|\int_{\Gamma\setminus\Gamma_1}\frac{|\wt\beta(\bar z)|}{|z^0-\bar z|^{N+1}}\,{\rm d}\Gamma(\bar z)\\
    &\leq  C|z-z^0|^\alpha\int_{\R^N\setminus\bB_1(x^0) }
\frac{|\beta(\xi)|}{|z-z_\xi|^{N+1}} \,{\rm d}\xi\leq C|z-z^0|^\alpha.
\end{align*}
\end{proof}

It remains to  estimate the  contributions to the (singular) integrals  
in \eqref{jumpvelo} from~$\G_1$.
This  requires some additional preparation.
 To this end, we introduce the   Lipschitz domain
\be\label{po1}
\0_1:=\big\{(x,y)\,:\,x\in\bB_1(x^0),\, y\in\big(f(x^0)-1-\|\nabla f\|_\infty,f(x)\big) \big\},
\ee
 and define
\begin{align*}
u(z):= \int_{ \p\0_1}R\cdot\wt \nu\,{\rm d}\G=\int_{\p\0_1}\sum_{i=1}^{N+1}R_i\wt \nu^i\,{\rm d}\G,\qquad  z\in\Omega_1,
\end{align*}
where $ R:=R(z,\cdot)$ is defined in \eqref{fettR}.
Observing that 
\begin{equation}\label{es121}
|(z_{\xi_1}-z_{\xi_2})\cdot \nu(z_{\xi_2})|\leq [\nabla f]_\alpha |\xi_1-\xi_2|^{1+\alpha},\qquad \xi_1, \xi_2\in\R^N,
\end{equation}
 we may evaluate $u$ also at $z^0$.   In Lemma~\ref{L:W3} we show that this function is constant~$-1$  in~$\0_1$, while $u(z^0)=-1/2$: 
\begin{lemma}\label{L:W3}
 We have
\[
u(z)=
\left\{
\arraycolsep=1.4pt
\begin{array}{clll}
-1&,& \text{if $z\in\0_1,$}\\[1ex]
-1/2&,& \text{if $z=z^0$.}
\end{array}
\right.
\] 
\end{lemma}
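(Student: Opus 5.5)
We treat $u$ as the classical Gauss integral (solid-angle formula). The kernel $R(z,\cdot)\cdot\wt\nu$ is, up to sign, the normal derivative of the fundamental solution $E(z-\ov z)=-\frac{1}{(N-1)|\s^N|}|z-\ov z|^{1-N}$ of the Laplacian in $\R^{N+1}$. Indeed, $\nabla_{\ov z}E(z-\ov z)=-R(z,\ov z)$. We fix the orientation so that $\wt\nu$ on $\p\0_1$ is the normal used in the definition of $u$; on the top part $\G_1$ this is $\wt\nu$, which points into $\0^+$, hence \emph{outward} from $\0_1$. On the lateral and bottom faces we likewise use the outward unit normal of the Lipschitz domain $\0_1$. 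With this convention, $u(z)=-\int_{\p\0_1}\p_{\wt\nu}E(z-\cdot)\,{\rm d}\G$.

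\textbf{Case $z\in\0_1$.} For fixed $z$, remove a small ball $\bB_r(z)\subset\0_1$. The map $\ov z\mapsto R(z,\ov z)$ is divergence free in $\ov z\neq z$, since $\Delta E=0$ there. The divergence theorem on the Lipschitz domain $\0_1\setminus\ov{\bB_r(z)}$ then gives
\[\int_{\p\0_1}R\cdot\wt\nu\,{\rm d}\G=\int_{\p\bB_r(z)}R\cdot n\,{\rm d}S,\]
where $n$ is the normal of the sphere pointing away from $z$. On the sphere we have $R(z,\ov z)\cdot n=-\frac{1}{|\s^N|}r^{-N}$, and the sphere has area $|\s^N|r^N$. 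Hence $u(z)=-1$. The divergence theorem on Lipschitz domains with a smooth vector field up to the boundary is standard, so this case is routine.

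\textbf{Case $z=z^0$.} Here the value is defined through the principal value, i.e. as the limit of integrals over $\p\0_1\setminus\G_\eta$. The integrand is integrable near $z^0$ by \eqref{es121}: $|R^0\cdot\wt\nu|\le C|x^0-\xi|^{\alpha-N}$. So the integral exists even absolutely, and the principal value coincides with it. For small $r$, apply the divergence theorem on $\0_1\setminus\ov{\bB_r(z^0)}$. The boundary consists of $\p\0_1\setminus\bB_r(z^0)$ together with $S_r:=\p\bB_r(z^0)\cap\0_1$. This gives
\[\int_{\p\0_1\setminus\bB_r(z^0)}R^0\cdot\wt\nu\,{\rm d}\G=-\frac{1}{|\s^N|}r^{-N}\,\mathcal{H}^N(S_r)=-\frac{\mathcal{H}^N(S_r)}{|\s^N|r^N}.\]
As $r\to0$, the left side tends to $u(z^0)$, using again the integrability bound; the difference between the regions $\bB_r(z^0)$ and $\G_\eta$ is harmless for absolutely integrable kernels.

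The main obstacle is to show that $\mathcal{H}^N(S_r)/(|\s^N|r^N)\to1/2$. Near $z^0$, the domain $\0_1$ is $\{y<f(x)\}$. After rescaling by $r$, the set $\frac{1}{r}(S_r-z^0)$ is the part of the unit sphere below the graph of $\xi\mapsto(f(x^0+r\xi)-f(x^0))/r$. Because $\nabla f\in{\rm BUC}^\alpha$, this graph converges uniformly on the unit ball to the tangent hyperplane $y=\nabla f(x^0)\cdot\xi$, at rate $Cr^\alpha$. The symmetric difference between the rescaled set and the open half-sphere below the tangent plane is therefore contained in a band of width $O(r^\alpha)$ around a great sphere. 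Its measure is $O(r^\alpha)$, so the normalized area converges to $1/2$. Consequently $u(z^0)=-1/2$.

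Finally, we note that the same Hölder estimate \eqref{es121} justifies passing $r\to0$ on the left side, which completes the argument.
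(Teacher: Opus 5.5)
Your proof is correct and follows the same route as the paper. Both apply the divergence theorem to the divergence-free field $R(z,\cdot)$ on $\0_1$ minus a small ball around $z$ (respectively $z^0$), compute the flux through the sphere, and use that $|\p\bB_r(z^0)\cap\0_1|/(|\s^N|r^N)\to 1/2$. Your rescaling and band estimate for this limit spells out the step that the paper attributes directly to $f\in{\rm BUC}^{1+\alpha}$.
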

\begin{proof}
Let first $z\in\0_1$. 
Given $\e>0$ with $\bB_\e(z)\subset\0_1$, we note that  $R=R(z,\cdot)$  belongs to~${{\rm BUC}^1( \R^{N+1}\setminus\bB_\e(z))}$ and satisfies ${\rm div\,} R=0$.
 Applying Stokes' theorem to this vector field on~$\0_1\setminus \bB_\e(z)$ then yields
\[
u(z)= -\frac{1}{|\s^N|}\int_{\p\bB_\e(z)}\frac{1}{|z-\ov z|^{N}}\,{\rm d} S(\ov z)=-1.
\]
To compute  $u(z^0)$ we extend the unit normal vector field $\wt \nu$ to all of $\p\0_1$, keeping the same notation.
 Recalling \eqref{es121},  we have
\[
u(z^0)=\int_{ \p\Omega_1}R^0\cdot\wt \nu\,{\rm d}\G=\lim_{\e\searrow 0}\int_{\p\0_1\setminus \bB_\e(z^0)}R^0\cdot\wt \nu\,{\rm d}\G .
\] 
Moreover, since  $R^0\in{\rm BUC}^1( \0_1\setminus\bB_\e(z^0))$ satisfies ${{\rm div\,} R^0=0}$, Stokes' theorem yields
\[
\int_{\p\0_1\setminus \bB_\e(z^0)}R^0\cdot\wt \nu \,{\rm d}\G+\int_{\p\bB_\e(z^0) \cap \0_1}R^0(\ov z)\cdot \frac{z^0-\ov z}{|z^0-\ov z|}\,{\rm d}\G(\ov z)=0,
\]
hence
\[
u(z^0)=-\lim_{\e\searrow 0}\frac{|\p\bB_\e(z^0) \cap \0_1|}{\e^N|\s^N|} =-\frac{1}{2},
\]
the last equality being a direct consequence of the fact that $f\in{\rm BUC}^{1+\alpha}(\bB_1(x^0)).$
\end{proof}

With the help of Lemma~\ref{L:W3} we establish the following result.

\begin{lemma}\label{L:W4} 
 There exists a  constant $C>0$ such that for all~${z^0\in\G}$ and~${z\in\0^-}$  that satisfy~${|z-z^0|\leq 1/4}$
we have
\[
\Big|\int_{\G_1}R\cdot\wt \nu\,{\rm d}\G- \Big(\int_{\G_1} R ^0 \cdot\wt\nu \,{\rm d}\G-\frac{1}{2}\Big)\Big|\leq C|z-z^0|^\alpha.
\]
\end{lemma}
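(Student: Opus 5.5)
We reduce the statement to Lemma~\ref{L:W3} by moving the flux through $\G_1$ to the flux through the rest of $\p\0_1$. Set $\Sigma:=\p\0_1\setminus\G_1$; this consists of the lateral cylinder part $\{|x-x^0|=1\}$ and the bottom lid $\{y=f(x^0)-1-\|\nabla f\|_\infty\}$. By Lemma~\ref{L:W3},
\[
\int_{\G_1}R\cdot\wt\nu\,{\rm d}\G=-1-\int_{\Sigma}R\cdot\wt\nu\,{\rm d}\G\quad (z\in\0_1),\qquad
\int_{\G_1}R^0\cdot\wt\nu\,{\rm d}\G=-\frac12-\int_{\Sigma}R^0\cdot\wt\nu\,{\rm d}\G .
\]
Here the second identity uses the principal value interpretation on $\G_1$, which is harmless thanks to \eqref{es121}. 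Subtracting the two identities, the claimed estimate becomes
\[
\Big|\int_\Sigma (R-R^0)\cdot\wt\nu\,{\rm d}\G\Big|\leq C|z-z^0|^\alpha .
\]
This holds provided $z\in\0_1$.

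The first step is to check that every $z\in\0^-$ with $|z-z^0|\le1/4$ lies in $\0_1$. Its $x$-coordinate lies in $\bB_{1/4}(x^0)$, and $y<f(x)$. Moreover $y\ge f(x^0)-1/4>f(x^0)-1-\|\nabla f\|_\infty$. Hence $z\in\0_1$.

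The second step is to bound the integral over $\Sigma$. For $\bar z\in\Sigma$ and $z\in\bB_{1/4}(z^0)$, both $z$ and $z^0$ have distance at least a fixed positive amount from $\bar z$.
\begin{itemize}
\item On the lateral part, $|x-\bar x|\ge 3/4$.
\item On the bottom lid, the vertical gap is at least $3/4$.
\end{itemize}
Hence, exactly as in the proof of Lemma~\ref{L:W2}, the mean value theorem gives
\[
|R(z,\bar z)-R(z^0,\bar z)|\le C|z-z^0|.
\]
Since $\Sigma$ is bounded with finite surface measure (controlled by $\|\nabla f\|_\infty$ and $\|f\|_\infty$), we obtain the bound $C|z-z^0|\le C|z-z^0|^\alpha$, using $|z-z^0|\le 1/4<1$.

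The main point requiring care is the justification that the flux identity at $z^0$ holds with the boundary integral over $\G_1$ understood as the limit appearing in Lemma~\ref{L:W3}. That limit excises $\p\0_1\cap\bB_\e(z^0)$, whereas the principal value on $\G_1$ excises the cylinder $\G_\e$ over $\bB_\e(x^0)$, so the two need not agree a priori. The difference is an integral over a thin set between the ball and the cylinder, near $z^0$. On this set \eqref{es121} gives $|R^0\cdot\wt\nu|\le C|\xi-x^0|^{\alpha-N}$, which is integrable, so the difference tends to $0$ as $\e\to0$. Hence both limits coincide and equal the absolutely convergent integral, and the argument above applies.
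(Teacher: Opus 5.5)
Your proposal is correct and is essentially the paper's proof. In both, Lemma~\ref{L:W3} converts the two fluxes through $\G_1$ into fluxes through $\p\0_1\setminus\G_1$, where $|z^0-\ov z|\geq 1$ gives the Lipschitz bound $C|z-z^0|\leq C|z-z^0|^\alpha$; your extra checks (that $z\in\0_1$, and that the ball-excision limit agrees with the absolutely convergent integral by \eqref{es121}) only make explicit what the paper uses tacitly.

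One small correction: the surface measure of $\p\0_1\setminus\G_1$ is controlled by $\|\nabla f\|_\infty$ alone, since the lateral height is at most $1+2\|\nabla f\|_\infty$. This is what you need, because $f$ itself need not be bounded in this appendix and the constant must be uniform in $z^0\in\G$.
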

\begin{proof}
 Because of Lemma~\ref{L:W3}, we have
\[
\int_{\G_1}R\cdot\wt \nu\,{\rm d}\G=-1- \int_{\p\0_1\setminus\G_1} R\cdot\wt \nu \,{\rm d}\G \qquad\text{and}\qquad
\int_{\G_1} R ^0 \cdot\wt\nu \,{\rm d}\G
=-\frac{1}{2}-\int_{\p\0_1\setminus\G_1} R^0\cdot\wt \nu\,{\rm d}\G.
\]
Therefore,  since \eqref{estGzz} is still valid when replacing $\G\setminus\G_1$ by $\p\Omega_1\setminus\G_1$, we have
\begin{align*}
&\Big|\int_{\G_1}R\cdot\wt \nu\,{\rm d}\G- \Big(\int_{\G_1} R ^0 \cdot\wt\nu \,{\rm d}\G-\frac{1}{2}\Big)\Big|
\leq\frac{1}{|\s^N|}\int_{\p\0_1\setminus\G_1}\Big|\frac{z-\ov z}{|z-\ov z|^{N+1}}-\frac{z^0-\ov z}{|z^0-\ov z|^{N+1}}\Big|\,{\rm d}\G(\ov z)\\
&\leq C\int_{\p\0_1\setminus\G_1}|z^0-z||z^0-\bar z|^{-N-1}\,{\rm d}\G(\ov z)\leq C|z-z^0|.
\end{align*} 
 For the last inequality we used that $|z^0-\ov z|\geq 1$  for $\ov z\in\p\0_1\setminus\G_1$    as well as~${|\p\0_1|\leq C(  \|\nabla f\|_{\infty}})$.
\end{proof}

As a further preliminary result we establish the following lemma.

\begin{lemma}\label{L:W5}
There exists a  constant $C>0$ such that  for all $1\leq i\leq N$, ${z\in\0^-}$, and~${z^0\in\G}$  with $|z-z^0|\leq 1/4$ we have  
\begin{equation}\label{hoeld}
\Big|\int_{\G_1}R_i\wt \nu^{N+1}-R_{N+1}\wt \nu^i\,{\rm d}\G-\PV\int_{\G_1}R_i^0\wt \nu^{N+1}-R^0_{N+1}\wt \nu^i\,{\rm d}\G\Big|\leq C|z-z^0|^\alpha.
\end{equation}  
\end{lemma}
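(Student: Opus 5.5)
The plan is to recognize that the combination $R_i\wt\nu^{N+1}-R_{N+1}\wt\nu^i$ has no jump. It is a tangential derivative of the Newtonian potential, so the integrand pairs the kernel with a tangential vector field on $\G$. After parametrizing by $\xi\in\bB_1(x^0)$, note that $\wt\nu\,{\rm d}\G=(-\nabla f(\xi),1)\,{\rm d}\xi$. Hence the integrand becomes
\[
\frac{1}{|\s^N|}\frac{(z-z_\xi)_i+(z-z_\xi)_{N+1}\p_if(\xi)}{|z-z_\xi|^{N+1}},
\]
and the same expression with $z$ replaced by $z^0$. The point is that this kernel is the $\xi_i$-derivative of $\frac{1}{(N-1)|\s^N|}|z-z_\xi|^{1-N}$, up to sign. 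This is exactly where the tangential structure enters.

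I would split $\bB_1(x^0)$ into the ball $\bB_{2d}(x^0)$, with $d:=|z-z^0|\le 1/4$, and the annulus $A_{2d,1}$.

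\textbf{Far region.} On the annulus I estimate the difference of the kernels by the mean value theorem, as in \eqref{estGzz}. For $|\xi-x^0|\ge 2d$ one has $|z-z_\xi|\ge c|x^0-\xi|$, since $|z^0-z_\xi|\ge|x^0-\xi|$. This gives the bound
\[
\int_{A_{2d,1}} d\,|x^0-\xi|^{-N-1}\,{\rm d}\xi\le C d^{\alpha}.
\]
Strictly, this crude bound yields $C\,d\log(1/d)$ for $N$-dimensional integrals only if one is careless. Here it gives $C d\cdot d^{-1}=C$, which is too weak. So I must instead subtract constant-coefficient approximations.

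I will write $\p_if(\xi)=\p_if(x^0)+O(|\xi-x^0|^\alpha)$ in the numerators. The error parts then have kernels of size $|x^0-\xi|^{\alpha-N}$ and derivative size $|x^0-\xi|^{\alpha-N-1}$. For these the mean value bound gives $d\int_{2d}^1 r^{\alpha-2}\,{\rm d}r\le Cd^\alpha$, and the near-ball part gives $\int_{\bB_{2d}}|x^0-\xi|^{\alpha-N}\,{\rm d}\xi\le Cd^\alpha$.

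\textbf{Main term.} This is the main obstacle. The main part is
\[
\int\frac{(z-z_\xi)\cdot(e_i+\p_if(x^0)e_{N+1})}{|z-z_\xi|^{N+1}}\,{\rm d}\xi .
\]
The vector $(e_i,\p_if(x^0))$ is tangent to the tangent plane at $z^0$. I would replace $z_\xi$ by the tangent-plane point $z^0+(\xi-x^0,\nabla f(x^0)\cdot(\xi-x^0))$. The replacement error is $O(|\xi-x^0|^{1+\alpha})$, and it is handled exactly as above with the same $d^\alpha$ bounds. On the flat plane the remaining kernel is odd in $\xi-x^0$ about the projection of $z$ onto the plane. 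Integrating over the ball $\bB_1(x^0)$ rather than a ball centered at that projection gives a boundary-layer contribution. This contribution is bounded by $C d$, by shifting the center a distance $\le d$; the kernel is $O(1)$ near $|\xi-x^0|=1$. For $z^0$, the PV integral of the odd flat kernel over the symmetric ball vanishes, in the spirit of the symmetrization in Lemma~\ref{L:W1}.

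The near region $\bB_{2d}$ for the flat main term needs care, since the kernel there is of size $d^{-N}$. This is why I would treat the flat term exactly via oddness rather than by size estimates. Combining the pieces gives \eqref{hoeld}.
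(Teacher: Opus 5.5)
There is a genuine gap in the near region $\mathbb{B}_{2d}(x^0)$. Your far-region estimates and your treatment of the flat principal term are essentially fine. The trouble is that the error-kernel bounds you quote (size $|x^0-\xi|^{\alpha-N}$) hold only for the kernel at $z^0$, because there $|z^0-z_\xi|\ge|x^0-\xi|$. At $z$ nothing of this kind is true.

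The lemma must hold uniformly for all $z\in\Omega^-$ with $|z-z^0|\le 1/4$. In particular it must hold for $z$ at distance $\delta\ll d=|z-z^0|$ from $\Gamma$ whose nearest point is $z_{x^1}$ with $|x^1-x^0|\sim d$.

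\textbf{Coefficient-freezing error at $z$.} Near $\xi=x^1$ this kernel has size $|\partial_if(\xi)-\partial_if(x^0)|\,|(z-z_\xi)_{N+1}|\,|z-z_\xi|^{-N-1}$. The first factor is generically of order $d^\alpha$. Moreover, $\int_{\mathbb{B}_{2d}(x^0)}|(z-z_\xi)_{N+1}|\,|z-z_\xi|^{-N-1}\,{\rm d}\xi$ grows like $\log(d/\delta)$ as soon as $\nabla f(x^1)\neq 0$. So no size bound of order $d^\alpha$ holds.

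\textbf{Flat-replacement error at $z$.} Near $x^1$, $\Gamma$ and the tangent plane at $z^0$ can be $Cd^{1+\alpha}\gg\delta$ apart. The ``$O(|\xi-x^0|^{1+\alpha})$ displacement'' therefore gives no pointwise control of the kernel difference there.

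Both error terms at $z$ are bounded only because of cancellation around the point of $\Gamma$ nearest to $z$. Exploiting that cancellation would require recentering at $z^\Gamma$ (as in Lemma~\ref{L:W6}), together with uniform bounds on signed integrals of exactly the type you are trying to prove. Your plan contains neither.

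The missing step is to actually use the identity you wrote down. The integrand equals $\frac{1}{(N-1)|\mathbb{S}^N|}\partial_{\xi_i}|z-z_\xi|^{-(N-1)}$, with a plus sign, so no decomposition is needed at all.

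\textbf{The integral at $z$.} Since $z\notin\Gamma$, the divergence theorem on $\mathbb{B}_1(x^0)$ turns it into $\frac{1}{(N-1)|\mathbb{S}^N|}\int_{\partial\mathbb{B}_1(x^0)}\frac{\xi_i-x^0_i}{|\xi-x^0|}|z-z_\xi|^{-(N-1)}\,{\rm d}S(\xi)$.

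\textbf{The principal value at $z^0$.} Apply the divergence theorem on $\mathbb{B}_1(x^0)\setminus\mathbb{B}_\varepsilon(x^0)$. After scaling $\xi=x^0+\varepsilon\omega$, the inner sphere contributes $\int_{\partial\mathbb{B}_1(0)}\omega_i\bigl(1+|\int_0^1\nabla f(x^0+\tau\varepsilon\omega)\cdot\omega\,{\rm d}\tau|^2\bigr)^{-(N-1)/2}\,{\rm d}S(\omega)$. As $\varepsilon\to0$ this tends to $\int_{\partial\mathbb{B}_1(0)}\omega_i\bigl(1+(\nabla f(x^0)\cdot\omega)^2\bigr)^{-(N-1)/2}\,{\rm d}S(\omega)=0$ by oddness. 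So the principal value equals the same boundary integral with $z^0$ in place of $z$.

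\textbf{Conclusion.} For $\zeta\in\mathbb{B}_{1/4}(z^0)$ and $\xi\in\partial\mathbb{B}_1(x^0)$ one has $|\zeta-z_\xi|\ge 3/4$. The difference of the two boundary integrals is therefore bounded by $C|z-z^0|$, which is stronger than the claimed $C|z-z^0|^\alpha$. This is the paper's argument.
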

\begin{proof}
Fix $\e\in(0,1)$ and recall \eqref{notation}. Observe that the map
$[\xi\mapsto |z^0- z_\xi|^{-(N-1)}]$ is smooth on $\overline{\bB_1(x^0)}\setminus\bB_\e(x^0)$. Applying Stokes' theorem on this domain yields
\begin{align*}
     \int_{\G_{\e,1}}R_i^0\wt \nu^{N+1}-R^0_{N+1}\wt \nu^i\,{\rm d}\G
    &=\frac{1}{(N-1)|\s^N|}\int_{\bB_1(x^0)\setminus\bB_\e(x^0)} \p_{\xi_i}|z^0-z_\xi|^{-(N-1)}\,{\rm d}\xi\\
    &=\frac{1}{(N-1)|\s^N|}\bigg(\int_{\p\bB_1(x^0)}\frac{\xi_i-x^0_i}{|\xi-x^0|}|z^0-z_\xi|^{-(N-1)}\,{\rm d}S(\xi)\\
    &\hspace{2.9cm}-\int_{\p\bB_\e(x^0)}\frac{\xi_i-x^0_i}{|\xi-x^0|}|z^0-z_\xi|^{-(N-1)}\,{\rm d}S(\xi)
    \bigg)
\end{align*}
for $1\leq i\leq N$. The second boundary integral
vanishes as $\e\to 0$, since
\begin{align*}
&\int_{\p\bB_\e(x^0)}\frac{\xi_i-x^0_i}{|\xi-x^0|}|z^ 0-z_\xi|^{-(N-1)}\,{\rm d}S(\xi)\\
&=\int_{\p\bB_1(0)} \frac{ \omega_i}{\Big(1+\Big|\displaystyle\int_0^1\nabla f(x^0+\tau \e{\omega})\cdot {\omega}\,{\rm d}\tau\Big|^2\Big)^{(N-1)/2}}\,{\rm d}{S(\omega)}\\
&\underset{\e\to0}\to\int_{\p\bB_1( 0)} \frac{ \omega_i}{\big(1+ | \nabla f(x^0)\cdot {  \omega}|^2\big)^{(N-1)/2}} \,{\rm d}{ S(\omega)}=0.
\end{align*}
Hence,
\be\label{rel1}
 \PV\int_{\G_1}R_i^0\nu^{N+1}-R^0_{N+1}\nu^i\,{\rm d}\G
 =\frac{1}{(N-1)|\s^N|}\int_{\p\bB_1(x^0)}\frac{\xi_i-x^0_i}{|\xi-x^0|}|z^0-z_\xi|^{-(N-1)}\,{\rm d}{S}(\xi)
\ee
and similarly
\be\label{rel2}
 \int_{\G_1}R_i \nu^{N+1}-R_{N+1}\nu^i\,{\rm d}\G=\frac{1}{(N-1)|\s^N|}\int_{\p\bB_1(x^0)}\frac{\xi_i-x^0_i}{|\xi-x^0|}|z-z_\xi|^{-(N-1)}\,{\rm d} S(\xi).
\ee
  Observing that the map  \mbox{$|\,\cdot\, -z_\xi|^{-(N-1)}$} is Lipschitz continuous on the set $\bB_{1/4}(z^0)$, uniformly in $\xi\in\p\bB_1(x^0)$,  we obtain  the  estimate \eqref{hoeld}  directly from  \eqref{rel1} and \eqref{rel2}.
\end{proof}

As  a last  preliminary result we establish the following lemma.

\begin{lemma}\label{L:W6} 
Let   $a,\, b \in{\rm BUC}^\alpha(\R^N)$ and set $\wt a:=a\circ\Xi^{-1} $ and $\wt b:=b\circ\Xi^{-1}.$   Given $z^0\in\Gamma$, 
we assign to~$z\in\0^-\cap \mathbb B_{1/4}(z^0)$    a point $z^\G\in\G_1$  such that 
\[
|z-z^\G|=\min_{\ov z\in \G_1}|z-\ov z|.
\]
 Then there is a  constant~$C>0$ such that
for all   $1\leq i\leq N+1$, $z^0\in\G$,    
and~$z\in\0^-\cap \mathbb B_{1/4}(z^0)$  with $|z-z^0|\leq 1/4$ we have
\begin{align*}
\Big|\int_{\G_1} R_i(\wt a(z^\G)-\wt a)\wt b\,{\rm d}\G-\int_{\G_1}R_i^0(\wt a(z^0)-\wt a )\wt b\,{\rm d}\G\Big|\leq  C|z-z^0|^\alpha.
\end{align*} 
\end{lemma}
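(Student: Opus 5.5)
We prove Lemma~\ref{L:W6} by the standard argument for Hölder continuity of layer potentials with Hölder densities. Set $d:=|z-z^0|\le 1/4$ and $\delta:=|z-z^\G|\le d$, so that $|z^\G-z^0|\le 2d$. Write $z^\G=z_{x^\G}$. The key geometric fact is that for $\ov z\in\G_1$ we have
\[
|z-\ov z|\ge c\,|z^\G-\ov z|,
\]
with $c>0$ depending only on $\|\nabla f\|_\infty$. This follows from the triangle inequality together with $\delta\le|z-\ov z|$. Since $\G$ is a Lipschitz graph, $|z^\G-z_\xi|\simeq|x^\G-\xi|$ and $|z^0-z_\xi|\simeq|x^0-\xi|$.

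\textbf{Step 1: splitting.} We split $\G_1$ into a near part $\G^{\rm n}:=\{z_\xi\in\G_1: |\xi-x^0|<Md\}$ and its complement $\G^{\rm f}:=\G_1\setminus\G^{\rm n}$, with $M$ a large constant. Both integrands are bounded using $|\wt a(z^\G)-\wt a(\ov z)|\le C|z^\G-\ov z|^\alpha$ and $|\wt a(z^0)-\wt a(\ov z)|\le C|z^0-\ov z|^\alpha$, together with $\|\wt b\|_\infty$.

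\textbf{Step 2: the near part.} On $\G^{\rm n}$ each term is estimated separately:
\[
\int_{\G^{\rm n}}|R|\,|\wt a(z^\G)-\wt a|\,|\wt b|\,{\rm d}\G\le C\int_{|\xi-x^\G|\le (M+2)d}|x^\G-\xi|^{\alpha-N}\,{\rm d}\xi\le Cd^\alpha .
\]
Here we used $|R(z,\ov z)|\le C|z^\G-\ov z|^{-N}$. The term with $R^0$ is bounded in the same way. Both integrals are absolutely convergent, so no principal value is needed.

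\textbf{Step 3: the far part.} On $\G^{\rm f}$ we have $|z^0-\ov z|\ge Md$, and hence $|z-\ov z|\simeq|z^\G-\ov z|\simeq|z^0-\ov z|$. We decompose
\[
R(\wt a(z^\G)-\wt a)-R^0(\wt a(z^0)-\wt a)=(R-R^0)(\wt a(z^0)-\wt a)+R\,(\wt a(z^\G)-\wt a(z^0)).
\]
For the first piece, the mean value estimate \eqref{estGzz} gives $|R-R^0|\le Cd\,|z^0-\ov z|^{-N-1}$. Therefore
\[
\int_{\G^{\rm f}}|R-R^0|\,|\wt a(z^0)-\wt a|\,|\wt b|\,{\rm d}\G\le Cd\int_{|\xi-x^0|\ge Md}|\xi-x^0|^{\alpha-N-1}\,{\rm d}\xi\le Cd^\alpha .
\]
For the second piece, $|\wt a(z^\G)-\wt a(z^0)|\le Cd^\alpha$. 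It then remains to show that $\big|\int_{\G^{\rm f}}R_i\wt b\,{\rm d}\G\big|$ is bounded uniformly in $z$ and $d$.

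\textbf{Step 4: the main obstacle.} The bound on $\int_{\G^{\rm f}}R_i\wt b\,{\rm d}\G$ is the hard step, because it is a truncated singular integral evaluated at an off-surface point $z$. We treat it by comparing with the on-surface kernel: we replace $R$ by $R^0$ at the cost of $\int_{\G^{\rm f}}|R-R^0|\le Cd\int_{|\xi-x^0|\ge Md}|\xi-x^0|^{-N-1}\,{\rm d}\xi\le C$. It then suffices to bound $\int_{\G_{Md,1}}R^0_i\wt b\,{\rm d}\G$ uniformly, where $\G_{Md,1}$ is the set from \eqref{notation} with $\eta=Md$ (taking $Md\le1$; otherwise $\G^{\rm f}=\emptyset$). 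This uniform bound is exactly \eqref{claim2} of Lemma~\ref{L:W1}, applied with $\beta$ replaced by $b$. Lemma~\ref{L:W1} only uses $\wh b\in{\rm BUC}^\alpha$ on $\bB_1(x^0)$, so its argument applies to this truncated integral. Summing Steps 2--4 gives the bound $Cd^\alpha$, with $C$ independent of $z^0$. Care is needed only to ensure that the constants depend on $f$ solely through $\|\nabla f\|_\infty$ and $[\nabla f]_\alpha$, which is the case here.
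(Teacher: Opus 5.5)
Your proof is correct and follows the paper's argument. The near part (the paper takes $M=2$, i.e.\ $\G_{2\vartheta}$) is estimated term by term using $|\ov z-z^\G|\le 2|z-\ov z|$, and the far part via the mean value bound $|R-R^0|\le C|z-z^0||z^0-\ov z|^{-N-1}$; the leftover constant $\wt a(z^\G)-\wt a(z^0)$ then multiplies a truncated integral controlled by \eqref{claim2}. The only difference is that the paper splits the far part as $(R-R^0)(\wt a(z^\G)-\wt a)+R^0(\wt a(z^\G)-\wt a(z^0))$, so \eqref{claim2} applies directly and your extra comparison of $R$ with $R^0$ in Step~4 is unnecessary.
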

\begin{proof}
We    set  $\vartheta:=|z-z^0|\leq 1/4$ and estimate
\[
\Big|\int_{\G_1}\frac{(z_i-\ov z_i)(\wt a(z^\G)-\wt a(\ov z))\wt b(\ov z)}{|z-\ov z|^{N+1}} \, {\rm d}\G(\ov z)
-\int_{\G_1}\frac{(z_i^0-\ov z_i)(\wt a(z^0)-\wt a(\ov z))\wt b(\ov z)}{|z^0-\ov z|^{N+1}}  \, {\rm d}\G(\ov z)\Big|\leq I_1+I_2+I_3,
\]
where, using the notation \eqref{notation}, we set
\begin{align*}
I_1&:=\|b\|_{\infty}\int_{\G_{2\vartheta}} \frac{|(z_i-\ov z_i)(\wt a(z^\G)-\wt a(\ov z)) |}{|z-\ov z|^{N+1}} + \frac{|(z_i^0-\ov z_i)(\wt a(z^0)-\wt a(\ov z)) |}{|z^0-\ov z|^{N+1}} \,{\rm d}\G(\ov z),\\
I_2&:=\|b\|_{ \infty}\int_{\G_{2\vartheta,1}}|\wt a(z^\G)-\wt a(\ov z)|\Big|\frac{ z_i-\ov z_i  }{|z-\ov z|^{N+1}}-\frac{ z_i^0-\ov z_i }{|z^0-\ov z|^{N+1}}\Big|\,{\rm d}\G(\ov z)  ,\\
I_3&:=\Big|(\wt a(z^\G)-\wt a(z^0))\int_{\G_{2\vartheta,1}}\frac{(z^0_i-\ov z_i)\wt b(\ov z)}{|z^0-\ov z|^{N+1}}\,{\rm d}\G(\ov z)\Big|. 
\end{align*}
We estimate these terms separately.\medskip

\noindent{\bf Estimate for $I_1$:}
 The definition of $z^\Gamma$ implies 
\be\label{ZGP}
|\ov z-z^\G|\leq |\ov z-z|+|z-z^\G|\leq 2|z-\ov z|,\qquad \ov z\in \G_1,
\ee
and therefore we get
\begin{align*}
I_1&\leq 2^N\|b\|_\infty[a]_\alpha\int_{\G_{2\vartheta}}|z^\G-\bar z|^{\alpha-N}+|z^0-\bar z|^{\alpha-N}\,{\rm d}\G(\ov z)\\
&\leq C\int_{\bB_{2\vartheta}(x^0)}|x^\G- \xi|^{\alpha-N}+|x^0-\xi|^{\alpha-N}\,{\rm d} \xi,
\end{align*}
 where $x^\G:=\Xi^{-1}(z^\G)$. 
To estimate the last integral we observe that due to
\[|x^0 -x^\G|\leq |z^0-z^\G|\leq 2\vartheta\]
we have $\bB_{2\vartheta}(x^0) \subset B_{4\vartheta}(x^\G)$, and hence
\begin{align*}
    I_1&\leq C\bigg(\int_{\bB_{4\vartheta}(x^\G)}|x^\G-\xi|^{\alpha-N}\,{\rm d}\xi+\int_{\bB_{2\vartheta}(x^0)}|x^0-\xi|^{\alpha-N}\,{\rm d}\xi\bigg)\\
    &\leq C\int_0^{4\vartheta}r^{\alpha-1}\,{\rm d}r=C\vartheta^\alpha=C|z-z^0|^\alpha.
\end{align*}

\noindent{\bf Estimate for $I_2$:} 
Let $\ov z\in\G_{2\vartheta,1}$ and $\zeta\in \ov{\bB_\vartheta(z^0)}$. 
Then $|\ov z -z^0|\geq 2\vartheta$ and hence
\[|z^0-\ov z|\leq |z^0-\zeta|+|\zeta-\ov z|\leq 2 |\zeta-\ov z|.\]
Define the smooth function $G:\ov{\bB_\vartheta(z^0)}\longrightarrow\R^{N+1}$ by
\[G(\zeta):=\frac{\zeta-\ov z}{|\zeta-\ov z|^{N+1}},\qquad\zeta \in \ov{\bB_\vartheta(z^0)}.\]
For the  derivative $\p G$ we have 
\[\| \p G(\zeta)\|_{\kL(\R^{N+1})}\leq C|\zeta-\ov z|^{-N-1}
\leq C|z^0-\ov z|^{-N-1},\qquad\zeta \in \bB_\vartheta(z^0), \] 
and therefore 
\[|G_i(z)-G_i(z^0)|\leq C|z-z^0||z^0-\ov z|^{-N-1},\qquad 1\leq i\leq N+1.\]
Further, by \eqref{ZGP},  $|\ov z -z^0|\geq 2\vartheta$, and noticing that
\[
|z-\bar z|\leq |z- z^0|+|z^0-\bar z|=\vartheta+|z^0-\bar z|\leq 2|z^0-\bar z|,
\]
we get
\[|  \wt a(z^\G)- \wt a(\ov z)|\leq [a]_\alpha|z^\G-\ov z|\leq C |z-\ov z|^\alpha\leq C |z^0-\ov z|^\alpha.\]
Combining the last two estimates we obtain
\begin{align*}
    I_2&\leq C|z-z^0|\int_{\bB_1(x^0)\setminus \bB_{2\vartheta}(x^0)}
    |z^0-z_\xi|^{\alpha-1-N}\,{\rm d}\xi\\
    &\leq C|z-z^0|\int_{\bB_1(x^0)\setminus \bB_{2\vartheta}(x^0)}
    |x^0-\xi|^{\alpha-1-N}\,{\rm d}\xi\\
    &=C|z-z^0|\int_{2\vartheta}^1r^{\alpha-2}\,{\rm d}r\leq C|z-z^0|\vartheta^{\alpha-1}=C|z-z^0|^\alpha.
\end{align*}
\noindent{\bf Estimate for $I_3$:}  We  infer from  \eqref{ZGP}  and Lemma~\ref{L:W1}  that
\begin{align*}
I_3&\leq [a]_\alpha|z^\G-z^0|^\alpha\Big|\int_{\G_{2\vartheta,1}}R_i^0\wt b\,{\rm d}\G\Big| \leq C|z^\G-z^0|^\alpha\leq C|z-z^0|^\alpha.
\end{align*}
This completes the proof.
\end{proof}

We are now in a position to establish the proof of Proposition~\ref{P:W1}.

\begin{proof}[Proof of Proposition~\ref{P:W1}]
Set
\begin{align*}
Q_i:=R_i\wt\nu^{N+1}-R_{N+1}\wt\nu^i, \qquad
Q_i^0:=R_i^0\wt\nu^{N+1}-R_{N+1}^0\wt\nu^i,\qquad
\ov Q_i:=0
\end{align*}
 for $1\leq i\leq N$, and
\begin{align*}
Q_{N+1}:=R\cdot\wt\nu,\qquad
Q_{N+1}^0:=R^0\cdot\wt\nu,\qquad
\ov Q_{N+1}:=1/2.
\end{align*}
Then, by Lemma \ref{L:W4} and  Lemma~\ref{L:W5}, 
\be\label{estQ1}
\bigg|\int_{\G_1}Q_i\,{\rm d}\G-\bigg(\PV\int_{\G_1} Q_i^0\,{\rm d}\G-\ov Q_i\bigg)\bigg|\leq C|z-z^0|^\alpha,\qquad 1\leq i\leq N+1,
\ee
and, by Lemma \ref{L:W6}, with the same notation as there, 
\be\label{estQ2}
\bigg|\int_{\G_1}Q_i(\wt a(z^\G)-\wt a)\,{\rm d}\G
-\int_{\G_1}Q_i^0(\wt a(z^0)-\wt a)\,{\rm d}\G\bigg|\leq C|z-z^0|^\alpha
\ee
 for  $1\leq i\leq N+1$ and each $a \in{\rm BUC}^\alpha(\R^N)$, with  $\wt a:=a\circ\Xi^{-1} $. 

For $1\leq i, j\leq N$  we further set 
\[\gamma_{ij}:=\sqrt{1+|\nabla f|^2}(\delta_{ij}-\nu^i\nu^j),\quad\gamma_{i,N+1}:=\nu^i,\quad\gamma_{N+1,j} :=-\nu^j,\quad\gamma_{N+1,N+1}:=\nu^{N+1}.\]
Then   $\gamma_{ij}\in {\rm BUC}^\alpha(\R^N)$ for all $ 1\leq i,j\leq  N+1$. 
Set $\wt\gamma_{ij}:=\gamma_{ij}\circ\Xi^{-1}$.

In view of the identity
\[A_i=\sum_{j=1}^N\gamma_{ij}(A_j \nu^{N+1}-A_{N+1} \nu^j)+\gamma_{i,N+1}(A\cdot\nu),\qquad 1\leq i\leq N+1,\quad A\in\R^{N+1},\]
we have by \eqref{claim2}, \eqref{estQ1}, \eqref{estQ2}, and the estimate $|z^\G-z^0|\leq 2|z-z^0|$ (with summation over $1\leq j\leq N+1$)
\begin{align*}
    &\bigg|\int_{\G_1}R_{i}\wt\beta\,{\rm d}\G-\bigg(\PV\int_{\G_1}R^0_{i}\wt\beta\,{\rm d}\G-\frac{\wt\nu^i\wt\beta}{2}(z^0)\bigg)\bigg|\\
    &=\bigg|\int_{\G_1}\wt\gamma_{ij}Q_j\wt\beta\,{\rm d}\G-\bigg(\PV\int_{\G_1}\wt\gamma_{ij}Q_j^0\wt\beta\,{\rm d}\G-\frac{\wt\nu^i\wt\beta }{2}(z^0)\bigg)\bigg|\\
    &\leq\bigg|\int_{\G_1}(\wt\gamma_{ij}\wt\beta-(\wt\gamma_{ij}\wt\beta)(z^\G))Q_j\,{\rm d}\G-\int_{\G_1}(\wt\gamma_{ij}\wt\beta-(\wt\gamma_{ij}\wt\beta)(z^0)) Q_j^0\,{\rm d}\G\bigg|\\
    &\quad+\bigg|\big((\wt\gamma_{ij}\wt\beta)(z^\G)-
    (\wt\gamma_{ij}\wt\beta)(z^0)\big) \PV\int_{\G_1} Q_j^0\,{\rm d}\G\bigg|\\
    &\quad+\bigg|(\wt\gamma_{ij}\wt\beta) (z^\Gamma)\bigg(\int_{\G_1}Q_j\,{\rm d}\G-\bigg(\PV\int_{\G_1} Q_j^0\,{\rm d}\G-\ov Q_j\bigg)\bigg)\bigg|\\
    &\quad+\bigg|\frac{(\wt\nu^i\wt\beta)(z^0)}{2}-\ov Q_j(\wt\gamma_{ij}\wt\beta)(z^\Gamma)\bigg|\leq C|z-z^0|^\alpha,
\end{align*}
as $\ov Q_j\gamma_{ij}=\nu^i/2.$
 The desired estimate \eqref{jumpvelo} for $z\in\0^-$ follows from this and Lemma~\ref{L:W2}.
The estimate \eqref{jumpvelo} for $z\in\0^+$  may be obtained in an analogous way.
Since the function~$V_i$, $1\leq i\leq N+1$, is smooth in $\0^\pm$, it is a straightforward consequence of~\eqref{jumpvelo} that~${V_i^\pm=V_i|_{\0^\pm}}$ can be continuously extended to $\ov{\0^\pm}$.
\end{proof}

We conclude this section with the proof of Proposition~\ref{P:W2}.
\begin{proof}[Proof of Proposition~\ref{P:W2}]
We prove only the claim for $V_i^-$, $1\leq i\leq N+1$, and divide the proof into two steps.\medskip

\noindent{\bf Step 1:} We prove that $V_i^-(z^0)\to 0$ for $z^0=(x^0,f(x^0))\in\G$ with $|z^0|\to\infty$. 
Taking into account that~$\beta$ vanishes at infinity, it remains to show that  the function $F_i:\G\to\R$ with 
\begin{equation*}
F_i(z^0):=\PV\int_{\G}R_i^0\wt \beta\,{\rm d}\G
\end{equation*}
   has this property.
Let thus $\e>0$ and choose $M>0$ such that
\begin{equation}\label{defM}
\frac{1}{|\s^N|}\|\wh \beta\|_p\bigg(\int_{\{|\xi|>M\}}\frac{1}{|\xi|^{Np'}}\,{\rm d}\xi\bigg)^{1/p'}<\frac{\e}{2},
\end{equation}
where $p'\in(1,\infty)$ is the adjoint exponent to $p$ and $\wh\beta:=\sqrt{1+|\nabla f|^2}\beta.$
After a change of variables  and using H\"older's inequality we get
\begin{equation*}
\begin{aligned}
|F_i(z^0)|<\frac{\e}{2}+ T(z^0),
\end{aligned}
\end{equation*}
where 
\begin{equation*}
T(z^0):=\frac{1}{|\s^N|}\bigg|\PV\int_{\{|\xi|<M\}}\frac{(\xi,f(x^0)-f(x^0-\xi))_i}{\big(|\xi|^2+(f(x^0)-f(x^0-\xi))^2\big)^{(N+1)/2}}\wh \beta(x^0-\xi)\,{\rm d}\xi\bigg|.
\end{equation*}
Changing variables we get
\begin{align*}
2T(z^0)\leq \int_{\{|\xi|<M\}}K(\xi)\,{\rm d}\xi,
\end{align*}
where
\begin{align*}
K(\xi)&:=\frac{1}{|\s^N|}\bigg|\frac{(\xi,f(x^0)-f(x^0-\xi))_i}{\big(|\xi|^2+(f(x^0)-f(x^0-\xi))^2\big)^{(N+1)/2}}\wh\beta(x^0-\xi)\\
&\hspace{1.5cm}+ \frac{(-\xi,f(x^0)-f(x^0+\xi))_i}{\big(|\xi|^2+(f(x^0)-f(x^0+\xi))^2\big)^{(N+1)/2}}\wh\beta(x^0+\xi)\bigg|.
\end{align*}
Since $\beta,\, \nabla f\in{\rm BUC}^\alpha(\R^N)$, the mean value theorem enables us to conclude that there exists a constant $C>0$ such that 
\[
K(\xi)\leq C\bigg(\frac{|\wh\beta(x^0+\xi)|}{|\xi|^{N-\alpha}}+\frac{|\wh \beta(x^0-\xi)-\wh\beta(x^0+\xi)|^{1/2}}{|\xi|^{N-\alpha/2}}\bigg).
\]
 Hence, since $\beta$ vanishes at infinity, there exists  $\eta>0$ such that for all $|x^0|> \eta$ we have
\begin{align*}
2T(z^0)\leq\int_{\{|\xi|<M\}}K(\xi)\,{\rm d}\xi<\e.
\end{align*}
 So, $|F_i(z^0)|<\e$ for all $|x^0|>\eta$, and this proves our first claim.  \medskip

\noindent{\bf Step 2:} We prove that $V_i^-(z)\to 0$ for $z\in\0^-$ as $|z|\to\infty$.  Let thus $\e>0$.
As shown in the previous step, there exists a constant $\eta_1>0$ such that for all $x^0\in\R^N$ with $|x^0|> \eta_1 $ we have~${|V_i^-(x^0,f(x^0))|<\e/2.}$

We next 
set
\[
d_0:=\min\Big\{\frac{1}{4}, \Big(\frac{\e}{2C}\Big)^{1/\alpha}\Big\},
\]
 with $C>0$ the constant from \eqref{jumpvelo}.
Let $z=(x,y)\in\0^-$ be arbitrary such that~$|z|> \eta$, where~$ \eta>0$ satisfies  
\begin{align}
& \eta\geq \max\Big\{  4(1+\|\nabla f\|_\infty)M, \, \frac{1}{2}+2\eta_1,\, \frac{1}{2}+2|f(0)|+2(1+\|\nabla f\|_\infty) \eta_1\Big\}, \label{delta1}\\
&\|\wh\beta\|_{L_\infty(\{|\xi|>\eta/(4(1+\|\nabla f\|_\infty))-M\})}<\frac{\e d_0^N|\s^N| }{2|\bB_M(0)|},\label{delta2}\\
&|f(0)|+ \|\nabla f\|_\infty\bigg(\frac{ \eta}{4(1+\|\nabla f\|_\infty)} +M\bigg)\leq  \frac{\eta}{4},\label{delta3}\\
& \frac{4^N\|\wh\beta\|_\infty|\bB_M(0)|}{\eta^N|\s^N|} <\frac{\e}{2},\label{delta4}
\end{align}
with $\wh\beta:=\sqrt{1+|\nabla f|^2}\beta.$
When estimating $V_i^-(z)$ we distinguish two cases.\medskip

\noindent{\bf Case 1:} We first  assume ${\rm dist} (z,\G)\leq d_0.$

Let  $z^\G=(x^\G,f(x^\G))\in\G$ be chosen such that $|z-z^\G|={\rm dist} (z,\G)$.
Since $|z|>\eta,$ we have that~${|x|>\eta/2}$ or $|y|> \eta/2.$
We  show that in both situations $|x^\G|> \eta_1$.
 Indeed, if  $|x|> \eta/2$ then \eqref{delta1} and the choice of $d_0$ imply 
\[
|x^\G|\geq |x|-|x-x^\G| > \frac{\eta}{2} -|z-z^\G|\geq \frac{\eta}{2} -d_0\geq\frac{\eta}{2} -\frac{1}{4}\geq  \eta_1.
\]
 If $|y|>\eta/2$ then
 \begin{align*}
\frac{1}{4}+|f(0)|+(1+\|\nabla f\|_\infty) \eta_1
& \leq \frac{\eta}{2}<|y| \leq|y-f(x^\G)|+|f(x^\G)-f(0)|+|f(0)|\\
&\leq |z-z^\G|+|f(0)|+\|\nabla f\|_\infty|x^\G|\\
&\leq \frac{1}{4}+|f(0)|+(1+\|\nabla f\|_\infty)|x^\G|,
 \end{align*}
hence again  $|x^\G|>\eta_1$. 
Consequently, we have that $|V_i^-(z^\G)|<\e/2$. 
Proposition~\ref{P:W1}  together with the definition of $d_0$ now yields
\[
|V_i^-(z)|\leq|V_i^-(z^\G)|+|V_i^-(z)-V_i^-(z^\G)|<\frac{\e}{2}+C|z-z^\G|^\alpha\leq\e. 
\]
\noindent{\bf Case 2:} If ${\rm dist} (z,\G)> d_0$, then, using H\"older's inequality 
and \eqref{defM}, we get
\[
|V_i^-(z)|<\frac{\e}{2}+\frac{1}{|\s^N|}\int_{\{|\xi|<M\}}\frac{|\wh \beta(x-\xi)|}{\big(|\xi|^2+(y-f(x-\xi))^2\big)^{N/2}}\,{\rm d}\xi=:\frac{\e}{2}+T(z).
\]
 We distinguish the cases $|x|>\eta/ (4(1+\|\nabla f\|_\infty)$ and $|x|\leq\eta/ (4(1+\|\nabla f\|_\infty)$.

 If $|x|>\eta/(4(1+\|\nabla f\|_\infty)$, we estimate in view of \eqref{delta2}
 \begin{align*}
 T(z)\leq \frac{ \|\wh\beta\|_{L_\infty(\{|\xi|>\eta/(4(1+\|\nabla f\|_\infty))-M\})}|\bB_M(0)| }{d_0^N|\s^N|}<\frac{\e}{2}.
 \end{align*}
 If $|x|\leq\eta/(4(1+\|\nabla f\|_\infty)\leq \eta/2$,  then  $|y|> \eta/2$    and \eqref{delta3} implies for $|\xi|\leq M$
\[
|f(x- \xi)|\leq |f(0)|+\|\nabla f \|_\infty|x-\xi|\leq |f(0)|+ \|\nabla f\|_\infty\bigg(\frac{ \eta}{4(1+\|\nabla f\|_\infty)} +M\bigg)\leq  \frac{\eta}{4},
\] 
hence $|y-f(x-\xi)|\geq  \eta/4.$
The latter estimate together with  \eqref{delta4} leads us to
\begin{align*}
T(z)\leq \frac{4^N\|\wh \beta\|_{ \infty}|\bB_M(0)|}{\eta ^N|\s^N|} <\frac{\e}{2},
\end{align*}
  and this completes the proof.
  
  \end{proof}

  %%%%%%%%%%%%%%%%%%%%%%%%%%%%%%%%%%%%%%%%%%%%%%%%%%
%%%%%%%%%%%%%%%%%%%%%%%%%%%%%%%%%%%%%%%%%%%%%%%%%%
%%%%%%%%%%%%%%%%%%%%%%%%%%%%%%%%%%%%%%%%%%%%%%%%%%
%%%%%%%%%%%%%%%%%%%%%%%%%%%%%%%%%%%%%%%%%%%%%%%%%%
 %%%%%%%%%%%%%%%%%%%%%%%%%%%%%%%%%%%%%%%%%%%%%%%%%%
%%%%%%%%%%%%%%%%%%%%%%%%%%%%%%%%%%%%%%%%%%%%%%%%%%
%%%%%%%%%%%%%%%%%%%%%%%%%%%%%%%%%%%%%%%%%%%%%%%%%%
%%%%%%%%%%%%%%%%%%%%%%%%%%%%%%%%%%%%%%%%%%%%%%%%%%
   \section{An interpolation estimate for multilinear maps}\label{Sec:B}  
  %%%%%%%%%%%%%%%%%%%%%%%%%%%%%%%%%%%%%%%%%%%%%%%%%%
%%%%%%%%%%%%%%%%%%%%%%%%%%%%%%%%%%%%%%%%%%%%%%%%%%
%%%%%%%%%%%%%%%%%%%%%%%%%%%%%%%%%%%%%%%%%%%%%%%%%%
%%%%%%%%%%%%%%%%%%%%%%%%%%%%%%%%%%%%%%%%%%%%%%%%%%
 %%%%%%%%%%%%%%%%%%%%%%%%%%%%%%%%%%%%%%%%%%%%%%%%%%
%%%%%%%%%%%%%%%%%%%%%%%%%%%%%%%%%%%%%%%%%%%%%%%%%%
%%%%%%%%%%%%%%%%%%%%%%%%%%%%%%%%%%%%%%%%%%%%%%%%%%
%%%%%%%%%%%%%%%%%%%%%%%%%%%%%%%%%%%%%%%%%%%%%%%%%%

  Let $X_0$, $X_1$, and $Y$ be  Banach spaces with continuous and dense embedding~${X_1\hookrightarrow X_0}$.
   Let~$[\cdot,\cdot]_\theta$ denote the complex interpolation functor  and set $X_\theta:=[X_0,X_1]_\theta$, $\theta\in (0,1)$. 
   Additionally, we define $[X_0,X_1]_0:=X_0$,  $[X_0,X_1]_1:=X_1$. 
  In the sequel we will use the reiteration theorem
\begin{equation}\label{reit}
  [X_{\theta_0},X_{\theta_1}]_\tau= X_{(1-\tau)\theta_0+\tau\theta_1}, \qquad \theta_0,\,\theta_1,\,\tau\in[0,1];
  \end{equation}
  see e.g. \cite[Section I.2.8]{Am95}. The following multilinear interpolation result  is a convenient tool in our analysis.
  \begin{lemma}\label{le:multint}
  Let $1\leq n\in\N$, $\vartheta\in(0,1]$, $K\geq 0$, and $T\in \kL^{n+1}(X_1,Y)$ such that
  \begin{align*}
  \|T[x_0,\ldots,x_n]\|_Y\leq K\min\big\{\|x_0\|_{X_0}\|x_1\|_{X_1}, \,\|x_0\|_{X_\vartheta}\|x_1\|_{X_{1-\vartheta}} \}\prod_{i=2}^n\|x_i\|_{X_1},\quad x_0,\ldots,x_n\in X_1,
  \end{align*}
and assume that $T$ is symmetric in the arguments $x_1,\ldots,x_n$.
  Let further $\theta_0\in[1-\vartheta,1]$ and~${\theta_1,\ldots\theta_n\in[0,1]}$ satisfy $\theta_0+\ldots+\theta_n=1$.
 We then have
  \[\|T[x_0,\ldots,x_n]\|_Y\leq K\prod_{i=0}^n\|x_i\|_{X_{1-\theta_i}},\qquad x_0,\ldots,x_n\in X_1.\]
  \end{lemma}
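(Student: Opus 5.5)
We argue by multilinear complex interpolation, in two stages: first we move regularity between the arguments $x_0$ and $x_1$, and then we spread it over all arguments using the symmetry in $x_1,\ldots,x_n$.

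\textbf{Step 1 (the pair $x_0,x_1$).} Fix $x_2,\ldots,x_n\in X_1$. Normalising by $\prod_{i\ge2}\|x_i\|_{X_1}$, we may regard $(x_0,x_1)\mapsto T[x_0,x_1,x_2,\ldots,x_n]$ as a bilinear map. By hypothesis it is bounded with constant $K$ on
\[
X_0\times X_1\qquad\text{and on}\qquad X_\vartheta\times X_{1-\vartheta}.
\]
Bilinear complex interpolation (\cite[Theorem 4.4.1]{BL76}) with parameter $\tau\in[0,1]$, combined with the reiteration theorem \eqref{reit}, gives boundedness with constant $K$ on $X_{\tau\vartheta}\times X_{1-\tau\vartheta}$. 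Writing $\sigma:=\tau\vartheta\in[0,\vartheta]$, we therefore obtain
\[
\|T[x_0,\ldots,x_n]\|_Y\le K\,\|x_0\|_{X_\sigma}\,\|x_1\|_{X_{1-\sigma}}\prod_{i\ge2}\|x_i\|_{X_1},\qquad \sigma\in[0,\vartheta].
\]
By symmetry of $T$ in $x_1,\ldots,x_n$, the same bound holds with $x_1$ replaced by any $x_k$, $k\ge1$.

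\textbf{Step 2 (distributing over $x_1,\ldots,x_n$).} Set $\sigma:=1-\theta_0$; the assumption $\theta_0\in[1-\vartheta,1]$ gives $\sigma\in[0,\vartheta]$. Fix $x_0$ and consider the $n$-linear map $S:=T[x_0,\cdot,\ldots,\cdot]$. By Step 1, $S$ is bounded with constant $K\|x_0\|_{X_\sigma}$ on each of the $n$ products in which exactly one factor is $X_{1-\sigma}=X_{\theta_0}$ and all other factors are $X_1$. We need the point $(X_{1-\theta_1},\ldots,X_{1-\theta_n})$ with $\sum_{i\ge1}\theta_i=\sigma$.
\begin{itemize}
\item If $\sigma=0$, then all $\theta_i=0$ and the claim is Step 1 itself.
\item Otherwise, $\lambda_k:=\theta_k/\sigma$ are convex weights, and the target exponent vector is the convex combination $\sum_k\lambda_k(\sigma e_k)$ of the vertices $\sigma e_k$ (measured as smoothness deficits).
\end{itemize}
Multilinear complex interpolation between finitely many endpoint spaces is not a single application of the two-point functor. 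We therefore iterate it, either by induction on $n$ or by peeling off one vertex at a time: interpolate between the vertex $\sigma e_1$ and the normalised combination of the remaining vertices, which lies in a face of lower dimension. At each stage the exponents compose correctly by the reiteration theorem \eqref{reit}, and the constant stays $K$ because the norm of the interpolated multilinear map is at most the geometric mean of the endpoint norms.

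\textbf{Main obstacle.} The delicate point is justifying this iterated multilinear interpolation, since it interpolates in all arguments simultaneously. We plan to prove by induction a general statement: for any $(n{+}1)$-linear map bounded with constant $K$ at each point of a finite set of exponent vectors (each entry in $[0,1]$), the same bound holds at every point of their convex hull. The inductive step is one application of \cite[Theorem 4.4.1]{BL76} between two exponent vectors, together with \eqref{reit}. The standing assumptions that $X_1\hookrightarrow X_0$ densely and that $T$ is defined on $X_1^{n+1}$ make the estimates meaningful by density.
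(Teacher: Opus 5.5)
Your proposal is correct and uses the same mechanism as the paper's proof. Your Step 1 is exactly the paper's bilinear interpolation in $(x_0,x_1)$ (its Step 1 and Step 2a), and your vertex-by-vertex peeling in Step 2 is what the paper's induction on $n$ amounts to when unrolled. The only difference is packaging: where you isolate a general convex-hull principle, the paper applies the lemma itself (with $\hat\vartheta=1$) to $T[x_0,\cdot,\ldots,\cdot]$, viewed as a map into $\kL(X_{1-\theta_0},Y)$ on the rescaled couple $(X_{\theta_0},X_1)$, and recovers the exponents $1-\theta_i$ by reiteration.
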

  \begin{proof} The proof is by induction over $n$. \medskip
  
 \noindent{\bf Step 1.} To show the result for $n=1$, fix $ \vartheta\in(0,1],\,K\geq 0$, $T\in\kL^2(X_1,Y)$, and $\theta_0,\theta_1$ according to the assumptions. Then $T$ extends to bounded 
  bilinear maps 
  \begin{align*}
 T:&\;X_0\times X_1\to Y,\\
 T:&\;X_\vartheta\times X_{1-\vartheta}\to Y
 \end{align*}
 with corresponding estimates. 
 Set $\tau:=(1-\theta_0)/\vartheta$. Then $\tau\in[0,1]$ and by the multilinear interpolation  result~\cite[Theorem 4.4.1]{BL76},
  the operator~$T$ also extends to a bounded  bilinear map
 ~$T: [X_0,X_\vartheta]_\tau\times[X_1,X_{1-\vartheta}]_\tau\to Y$ and
 \[
\|T\|_{\kL([X_0,X_\vartheta]_\tau\times[X_1,X_{1-\vartheta}]_\tau, Y)} \leq K.
 \]
 The result for $n=1$ follows as
 \[[X_0,X_\vartheta]_\tau=X_{1-\theta_0} \qquad\text{ and}\qquad [X_1,X_{1- \vartheta}]_\tau=X_{1-\theta_1}\]
 by \eqref{reit}. \medskip
 
  \noindent{\bf Step 2.} Assume that, for some arbitrary $n\geq1$, the result is true for 
  all~$\wt n\in\{1,\ldots, n\}$ (and all Banach spaces $X_0,X_1,Y$ satisfying the assumptions). 
 In order to establish the result for~${n+1}$, fix $\vartheta\in(0,1],K\geq0$, $T\in\kL^{n+2}(X_1,Y)$, and $\theta_0,\ldots ,\theta_{n+1}$ according to the assumptions. We can assume $\theta_0<1$ without loss of generality.\medskip

 \noindent{\bf Step 2a.} Define $\wt Y:=\kL^n(X_1,Y)$ and $\wt T\in\kL^2(X_1,\wt Y)$ by
 \[\wt T[x_0,x_1][z_1,\ldots,z_n]:=T[x_0,x_1,z_1,\ldots,z_n],\qquad x_0,x_1,z_1,\ldots,z_n\in X_1.\]
 Then
 \begin{align*}
 \|\wt T[x_0,x_1]\|_{\wt Y}&\leq K\|x_0\|_{X_0}\|x_1\|_{X_1},\\
 \|\wt T[x_0,x_1]\|_{\wt Y}&\leq K\|x_0\|_{X_\vartheta}\|x_1\|_{X_{1- \vartheta}}.
 \end{align*}
 Set further
 \[(\wt X_0,\wt X_1,\wt\vartheta,\wt K):=(X_0,X_1,\vartheta,K),\quad \wt n:=1,\quad\wt \theta_0:=\theta_0,\quad\wt \theta_1:=1-\theta_0.\]
 Application of the induction assumption to $\wt T$ with the variables and spaces denoted with a tilde yields
 \[\|\wt T[x_0,x_1]\|_{\wt Y}\leq K\|x_0\|_{X_{1- \theta_0}}\|x_1\|_{X_{1-\wt \theta_1}},\]
hence
 \begin{equation}\label{Tintest1}
 \|T[x_0,\ldots,x_{n+1}]\|_Y\leq K\|x_0\|_{X_{1-\theta_0}}\|x_1\|_{X_{1-\wt\theta_1}}\|x_2\|_{X_1}\prod_{i=3}^{n+1}\|x_i\|_{X_1}
   \end{equation}
   for $ x_0,\ldots,x_{n+1}\in X_1,$ and by symmetry of $T$ we get
\begin{equation}\label{Tintest2}
\|T[x_0,\ldots,x_{n+1}]\|_Y\leq K\|x_0\|_{X_{1-\theta_0}}\|x_1\|_{X_1}\|x_2\|_{X_{1-\wt \theta_1}}\prod_{i=3}^{n+1}\|x_i\|_{X_1}.
  \end{equation}
 
 \noindent{\bf Step 2b.} Define now $\wh X_0:=X_{1-\wt \theta_1}=X_{\theta_0}$, $\wh Y:=\kL(X_{1-\theta_0},Y)$, and 
 $\wh T \in\kL^{n+1}(X_1,\wh Y)$ by 
 \[\wh T [z_0,\ldots,z_n][x_0]=T[x_0,z_0,\ldots,z_n],\qquad x_0\in \wh X_0,\quad z_0,\ldots,  z_n\in X_1.\]
 Then, by \eqref{Tintest1}  and \eqref{Tintest2}, 
 \begin{align*}
 \|\wh T [z_0,\ldots,z_n]\|_{\wh Y }&\leq  K\min\big\{\|z_0\|_{\wh X_0}\|z_1\|_{X_1},\, \|z_0\|_{X_1}\|z_1\|_{\wh X_0 }\big\}\prod_{i=2}^n\|z_i\|_{X_1}
 \end{align*}
 and $\wh T$ is also symmetric in $z_0,\ldots, z_n.$
 Set further 
 \[\wh X_1:=X_1,\quad\wh\vartheta:=1,\quad\wh n:=n,\quad\wh \theta_i:=\theta_{i+1}/(1-\theta_0),\;0\leq i\leq n.\]
 Note that $\wh \theta_i\in[0,1]=[1-\wh\vartheta,1]$ and $\wh \theta_0+\ldots+\wh \theta_{n+1}=1$. 
 Application of the induction assumption to $\wh T$ with the variables and spaces denoted with a hat yields
 \[\|\wh T[z_0,\ldots,z_n]\|_{\wh Y}\leq K\prod_{i=0}^n\|z_i\|_{[\wh X_0,X_1]_{1-\wh \theta_i}}.\]
 Now, by \eqref{reit},
 \[[\wh X_0,X_1]_{1-\wh \theta_i}=[X_{\theta_0},X_1]_{1-\theta_{i+1}/(1-\theta_0)}=X_{1-\theta_{i+1}},\qquad 0\leq i\leq n,\]
 and the statement follows.
 \end{proof}

%%%%%%%%%%%%%%%%%%%%%%%%%%%%%%%%%%%%%%%%%%%%%%%%%%
%%%%%%%%%%%%%%%%%%%%%%%%%%%%%%%%%%%%%%%%%%%%%%%%%%
%%%%%%%%%%%%%%%%%%%%%%%%%%%%%%%%%%%%%%%%%%%%%%%%%%
%%%%%%%%%%%%%%%%%%%%%%%%%%%%%%%%%%%%%%%%%%%%%%%%%%
 %%%%%%%%%%%%%%%%%%%%%%%%%%%%%%%%%%%%%%%%%%%%%%%%%%
%%%%%%%%%%%%%%%%%%%%%%%%%%%%%%%%%%%%%%%%%%%%%%%%%%
%%%%%%%%%%%%%%%%%%%%%%%%%%%%%%%%%%%%%%%%%%%%%%%%%%
%%%%%%%%%%%%%%%%%%%%%%%%%%%%%%%%%%%%%%%%%%%%%%%%%%
 \section{Mapping properties for a family of generalized Riesz transforms}\label{Sec:C}
%%%%%%%%%%%%%%%%%%%%%%%%%%%%%%%%%%%%%%%%%%%%%%%%%%
%%%%%%%%%%%%%%%%%%%%%%%%%%%%%%%%%%%%%%%%%%%%%%%%%%
%%%%%%%%%%%%%%%%%%%%%%%%%%%%%%%%%%%%%%%%%%%%%%%%%%
%%%%%%%%%%%%%%%%%%%%%%%%%%%%%%%%%%%%%%%%%%%%%%%%%%
 %%%%%%%%%%%%%%%%%%%%%%%%%%%%%%%%%%%%%%%%%%%%%%%%%%
%%%%%%%%%%%%%%%%%%%%%%%%%%%%%%%%%%%%%%%%%%%%%%%%%%
%%%%%%%%%%%%%%%%%%%%%%%%%%%%%%%%%%%%%%%%%%%%%%%%%%
%%%%%%%%%%%%%%%%%%%%%%%%%%%%%%%%%%%%%%%%%%%%%%%%%%

The main aim of this appendix is to show the following result, implying in particular 
Lemma~\ref{L:MP1}. We assume \eqref{eq:s} and recall the definition \eqref{operatorsB} of the generalized Riesz transforms $B^\phi_{n,\nu}$ introduced in Section~\ref{Sec:3}.

\begin{lemma}\label{Bestgen}
Let  $M>0$, $p, n\in\N$, $\phi\in {\rm C}^\infty([0,\infty)^p)$,   and $\nu\in\N^N$ with $p\geq 1$ and   $n+|\nu|$ odd. Let further 
\[
\sigma,\,\sigma_0,\,\ldots,\sigma_n\in[0,s-1]\qquad \text{with} \qquad \sigma_0+\ldots+\sigma_n\leq \sigma.
\]

\hspace{3mm} Then there exists a constant  $C>0$   such  that  for all  
~$a\in H^s(\R^N)^p$  with~${\|a\|_{H^s}\leq M}$, ${\beta\in H^{s-1-\sigma_0}(\R^N)}$, and 
$b_i\in H^{s-\sigma_i}(\R^N)$, $1\leq i\leq n,$
we have~$B_{n,\nu}^\phi(a)[b,\beta]\in H^{s-1-\sigma}(\R^N)$ and
\be\label{eq:Bestgen}
\|B_{n,\nu}^\phi(a)[b,\beta]\|_{H^{s-1-\sigma}}\leq C\|\beta\|_{H^{s-1-\sigma_0}}
\prod_{i=1}^n\|b_i\|_{H^{s-\sigma_i}}.
\ee
\end{lemma}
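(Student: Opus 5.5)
The natural route is an induction on the order of differentiation, combined with the multilinear interpolation result Lemma~\ref{le:multint}. Lemma~\ref{L:MP0} is the base case. It gives the $L_2$-bound $\|B^\phi_{n,\nu}(a)[b,\beta]\|_2\leq C\|\beta\|_2\prod_i\|\nabla b_i\|_\infty$. Since $s>s_c$, we have $H^{s-\sigma_i}\hookrightarrow W^1_\infty$ whenever $\sigma_i< s-s_c$, so the estimate already holds for the target $H^0$ with $\sigma_0=s-1$. The remaining difficulty is to redistribute the regularity among the arguments, and to handle the cases where some $b_i$ is not Lipschitz.

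The first step is the ``chain rule'' \eqref{F:MP2a}, proved for smooth, compactly supported $a,b,\beta$. One differentiates under the principal value integral, using
\[
\p_{x_j}\delta_{[x,\xi]}u=\delta_{[x,\xi]}\p_ju,
\]
and notes that the derivative of $\phi((D_{[x,\xi]}a)^{\ov 2})$ produces $2\p_k\phi\, D_{[x,\xi]}\p_ja_k\, D_{[x,\xi]}a_k$, i.e. two new linear arguments. The parity condition $n+|\nu|$ odd is preserved in every term, because each term carries $n$ or $n+2$ difference quotients. Iterating this formula expresses $\p^\gamma B$ as a sum of operators of the same class, with derivatives distributed over $\beta$, the $b_i$, and extra copies of $\p^{\gamma'}a_k$ and $a_k$.

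The second step is to prove an $L_2$ estimate that tolerates non-Lipschitz $b_i$. The kernel bound
\[
|D_{[x,\xi]}b|\leq \|\nabla b\|_\infty \quad\text{or}\quad |D_{[x,\xi]}b|\leq |\xi|^{-1}|\delta_{[x,\xi]}b|
\]
should let us trade one factor: for $\beta\in L_\infty$ (or in $H^{r}$ with $r>N/2$) and a single $b_1\in H^1$, we want $\|B[b,\beta]\|_2\leq C\|b_1\|_{H^1}\|\beta\|_{H^{s-1}}\prod_{i\geq2}\|\nabla b_i\|_\infty$. I would obtain this by Minkowski's inequality in the $\xi$ variable on $|\xi|>1$, and by a Hardy-type/T(1) argument (again via Lemma~\ref{L:MP0}, with the roles of $b_1$ and $\beta$ exchanged by a symmetry trick writing $\delta b_1\cdot\beta(x-\xi)$) on $|\xi|<1$. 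Precisely this estimate supplies the second hypothesis in Lemma~\ref{le:multint}: the factor pair $(\beta,b_1)$ in $(X_\vartheta,X_{1-\vartheta})$, as opposed to $(X_0,X_1)$.

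The third step combines the two. Work with integer orders $k$ in the scale $X_\theta$ interpolating $H^{s-1-k}$-type spaces, apply the chain rule to reduce $H^{s-1-\sigma}$ bounds on $B$ to $L_2$ bounds on its derivatives, and use Lemma~\ref{L:mult} for products such as $\p_ja_k$ appearing as new arguments. The fractional cases then follow from Lemma~\ref{le:multint}, using the symmetry of $B$ in the $b_i$, and from the endpoint estimates via \eqref{IP}. Density and Corollary~\ref{C:MP0}(ii) extend everything from smooth data to the stated spaces.

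The main obstacle is the second step: the estimate with one $b_i$ only in $H^{1}$-type regularity, near the critical threshold $s\downarrow s_c$, where the $L_\infty$ embeddings just fail. I expect to need the condition $\sigma_0+\dots+\sigma_n\leq\sigma\leq s-1$ carefully there. If the direct kernel estimate is not available, I would first try to prove it for $\beta\in{\rm BUC}^\alpha$ using the symmetrization of the kernel, as in Lemma~\ref{L:W1}.
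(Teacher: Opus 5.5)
Your overall architecture matches the paper's: the $L_2$ bound of Lemma~\ref{L:MP0}, a second $L_2$ bound with $\beta\in H^{s-1}(\R^N)$ and $b_1\in H^1(\R^N)$, Lemma~\ref{le:multint} with $X_0=Y=L_2(\R^N)$, $X_1=H^s(\R^N)$, $\vartheta=(s-1)/s$ to redistribute regularity, and a derivative formula to climb in the Sobolev scale.

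The gap is the step you yourself flag. Estimate \eqref{es:MPI} of Lemma~\ref{L:MP1a} is the analytic core of the whole lemma, and none of the mechanisms you sketch proves it.
\begin{itemize}
\item Exchanging roles fails. Lemma~\ref{L:MP0} needs every linear argument except the last to be Lipschitz, so swapping $b_1$ and $\beta$ would require $\nabla\beta\in L_\infty$. But $\beta\in H^{s-1}\hookrightarrow{\rm BUC}^{s-s_c}$ is only H\"older continuous for $s<s_c+1$, which is exactly the relevant regime.
\item A $T(1)$ argument is unavailable, because the kernel containing $D_{[x,\xi]}b_1$ violates the Calder\'on--Zygmund size bound when $b_1$ is not Lipschitz.
\item The natural absolute-value majorant of the near field diverges: for smooth $b_1$, $\int_{\{|\xi|<1\}}|\delta_{[x,\xi]}b_1||\xi|^{-N-1}\,{\rm d}\xi=\infty$ wherever $\nabla b_1(x)\neq0$. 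The cancellation of the kernel must be used.
\item Symmetrizing as in Lemma~\ref{L:W1} only produces second differences of $b_1$. Their Minkowski bound needs $b_1\in B^1_{2,1}(\R^N)\subsetneq H^1(\R^N)$, so it misses the endpoint $\|b_1\|_{H^1}$ that Lemma~\ref{le:multint} requires.
\end{itemize}
Without \eqref{es:MPI} your climb already breaks at the first derivative. The term $B^{\phi'}_{n+2,\nu}(a)[\p_ja,a,b,\beta]$ in \eqref{F:MP2a} has the argument $\p_ja\in H^{s-1}$, which need not be Lipschitz when $s\leq s_c+1$.

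The missing idea is a splitting followed by an integration by parts along rays. After reducing, as one may, to $s<s_c+1$, write $\beta(x-\xi)=\beta(x)-\delta_{[x,\xi]}\beta$.
\begin{itemize}
\item The part with $\delta_{[x,\xi]}\beta$ is weakly singular, since $|\delta_{[x,\xi]}\beta|\leq C\|\beta\|_{H^{s-1}}|\xi|^{s-s_c}$. It is dominated by $I(x)=\int_{\R^N}|\delta_{[x,\xi]}b_1||\xi|^{-(2-s+3N/2)}\,{\rm d}\xi$, and $\|I\|_2\leq C\|b_1\|_{H^1}$ by Minkowski and Plancherel.
\item In the part with $\beta(x)$ factored out, integrate by parts in $\xi$ using ${\rm div}_\xi\big(\xi^\nu|\xi|^{-|\nu|}\,\xi|\xi|^{-N}\big)=0$ and \eqref{gradD}. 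This produces $|\s^N|\beta\sum_{j}B^\phi_{n-1,\nu+e_j}(a)[b_2,\ldots,b_n,\p_jb_1]$, which Lemma~\ref{L:MP0} bounds by $C\|\beta\|_\infty\|\nabla b_1\|_2\prod_{i\geq2}\|\nabla b_i\|_\infty$.
\item The integration by parts also produces terms containing the radial derivatives $\frac{\xi}{|\xi|}\cdot\nabla_\xi D_{[x,\xi]}a_k$ and $\frac{\xi}{|\xi|}\cdot\nabla_\xi D_{[x,\xi]}b_i$, $i\geq2$. By \eqref{gradDest} these are $O(|\xi|^{s-s_c-1})$, with constants given by the H\"older seminorms of $\nabla a$ and $\nabla b_i$. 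So these terms are again dominated by $I$, and this is where $s>s_c$ enters.
\end{itemize}

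A further caution about your third step. If you obtain fractional targets by interpolating between integer-order targets $H^k$ and $H^{k+1}$, you can never reach $H^{s-1-\sigma}$ with $s-1-\sigma\in(\lfloor s-1\rfloor,s-1]$. This includes the case $\sigma=0$ behind Lemma~\ref{L:MP1} when $s\notin\N$. You need a fractional base case $H^\alpha$ first, which the paper obtains via \eqref{shiftdiff} and Lemma~\ref{L:prep}. Then you climb in integer steps with the fractional part fixed, as in \eqref{dqrep}.
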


We will prove this result in three steps, first assuming $\sigma=s-1$, i.e. starting with $L_2$- estimates for the generalized Riesz potentials; see~Lemma~\ref{BestL2},  then for~${\alpha:=s-1-\sigma\in(0,1)}$; see Lemma~\ref{BestHa}, and finally in the general case. In the second and third step, we use shift equivariance of the generalized Riesz potentials and corresponding difference quotients to reduce the estimates in higher norms to the basic case.

\subsection*{Estimates in $L_2(\R^N)$}
We start by proving Lemma~\ref{L:MP0}, which implies the statement of
Lemma~\ref{Bestgen} for $\sigma=\sigma_0=s-1$ and $\sigma_1=\ldots=\sigma_n=0$. The proof uses the method of rotations from harmonic analysis, cf. \cite[Theorem 9.10]{CM97}.

  \begin{proof}[Proof of Lemma~\ref{L:MP0}]
  It suffices to establish the estimate \eqref{E:MP0} under the  additional  assumption~$\|\nabla b_i\|_\infty\leq 1$ for~${1\leq i\leq n}$.
  Given $x, \xi\in\R^N $ with $x\neq \xi$ we set
  \[
K(x,\xi):=\frac{1}{|\s^N|} \phi\left((D_{[x,x-\xi]}a)^{\ov 2}\right)
\bigg[\prod_{i=1}^nD_{[x,x-\xi]}b_i\bigg]\frac{(x-\xi)^\nu}{|x-\xi|^{|\nu|}}\frac{1}{|x-\xi|^N}.
  \]
 Then $B^\phi_{n,\nu}:=B^\phi_{n,\nu}(a)[b,\cdot]$ satisfies
  \[
B^\phi_{n,\nu}[\beta](x)= \PV\int_{\R^N}K(x,\xi)\beta(\xi)\, {\rm d}\xi.
  \]
  Observe that 
  \[K(x,\xi)=F\left(\frac{A(x)-A(\xi)}{|x-\xi|}\right)|x-\xi|^{-N},\]
  where $A:\R^N\to \R^{p+n+|\nu|}$ and $F:\R^{p+n+|\nu|}\to\R$ are given by
  \begin{align*}
  A(x)&=\begin{pmatrix}
      a(x)\\
      b(x)\\
      (\underbrace{x_1,
      \ldots,x_1}_{\nu_1\text{\ components}},\ldots,\underbrace{x_N,
      \ldots,x_N}_{\nu_N\text{\ components}})
  \end{pmatrix},\qquad x\in\R^N,
  \end{align*}
   and
  \begin{align*}
  F(z_1,z_2,z_3)&=\frac{1}{|\s^N|}\phi(z_1^{\overline 2})
 \bigg(\prod_{j=1}^n z_{2,j} \bigg)\prod_{j=1}^{|\nu|}z_{3,j},\qquad z_1\in\R^p,\;z_2\in\R^n,\;z_3\in\R^{|\nu|}.
  \end{align*}
  The map $A$ is Lipschitz  continuous because  $a$ and $b$ are Lipschitz continuous, and $F$ is smooth and odd because~${n+|\nu|}$ is odd. By \cite[Theorem 9.11]{CM97}, this implies 
  $B^\phi_{n,\nu}\in\kL(L_2(\R^N))$, and the proof of this theorem also shows that 
  \[\|B^\phi_{n,\nu}\|_{\kL(L_2(\R^N))}\leq C,\]
  with $C$ depending on $\|\nabla A\|_\infty$ only.  
  As in our application $\|\nabla A\|_\infty\leq C(\phi,N,n,\nu,M)$, 
  this implies \eqref{E:MP0}.
  \end{proof}

 The next result implies Lemma~\ref{Bestgen} in the case $\sigma=\sigma_1=s-1$ and $\sigma_0=\sigma_2=\ldots=\sigma_n=0$. 
 In addition, it provides an auxiliary estimate that compares $B_{n,\nu}^\phi$ with a suitable pointwise multiplication operator. 
 The assumptions on $a$ are slightly more general, allowing also for unbounded Lipschitz functions $a$. 

  \begin{lemma}\label{L:MP1a}
  Let $M>0$, $p, n\in\N$, $\phi\in {\rm C}^\infty([0,\infty)^p)$,  $s'\in (\max\{s_c,s-1\},s)$,
  and~$\nu\in\N^N$ with~$p\geq 1$ and $n+|\nu|$ odd.
Then there exists a constant  $C>0$ such that for all 
 $a\in {\rm C}^1(\R^N)^p$ with~$\|\nabla a\|_{{\rm BUC}^{s-s_c}}\leq M$, 
$b_1\in H^1(\R^N),$ $   b_2,\ldots,b_n\in H^s(\R^N),$ and~${\beta\in H^{s-1}(\R^N)}$
we have
\begin{equation}\label{es:MPI}
\|B^\phi_{n,\nu}(a)[b,\beta]\|_2\leq C\|\beta\|_{H^{s-1}}\|b_1\|_{H^1}\prod_{i=2}^n\|b_i\|_{H^s}
\end{equation}
 and
\begin{equation}\label{es:MPIb}
\begin{aligned}
\Big\|B^\phi_{n,\nu}(a)[b,\beta]-\beta \sum_{j=1}^N B^\phi_{n-1,\nu+e_j}(a)[b_2,\ldots,b_n,\p_j b_1]\Big\|_2\leq C\|\beta\|_{H^{s-1}}\|b_1\|_{H^{s'-s+1}}\prod_{i=2}^n\|b_i\|_{H^s}.
\end{aligned}
\end{equation}
\end{lemma}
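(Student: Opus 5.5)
The approach is to expand the difference quotient $D_{[x,\xi]}b_1$ to first order and to control the remainder using only the low-regularity norm of $b_1$. The basic observation is
\[
\delta_{[x,\xi]}b_1=\int_0^1\xi\cdot\nabla b_1(x-t\xi)\,{\rm d}t,
\qquad\text{so}\qquad
D_{[x,\xi]}b_1=\sum_{j=1}^N\frac{\xi_j}{|\xi|}\int_0^1\p_jb_1(x-t\xi)\,{\rm d}t .
\]
Inserting this into \eqref{operatorsB} turns $B^\phi_{n,\nu}(a)[b,\beta](x)$ into
\[
\sum_{j=1}^N\int_0^1\PV\int_{\R^N}\phi\big((D_{[x,\xi]}a)^{\ov 2}\big)\prod_{i\ge2}D_{[x,\xi]}b_i\,\frac{\xi^{\nu+e_j}}{|\xi|^{|\nu|+1}}\,\frac{\p_jb_1(x-t\xi)\,\beta(x-\xi)}{|\xi|^N}\,{\rm d}\xi\,{\rm d}t .
\]
The factor $\xi^{\nu+e_j}/|\xi|^{|\nu|+1}$ makes the kernel of type $(n-1,\nu+e_j)$, so the parity condition ($n-1+|\nu|+1$ odd) is preserved.

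For \eqref{es:MPI}, I would use that $H^{s-1}\hookrightarrow L_\infty$ is not available in general (since $s-1>N/2$ fails when $N/2<s-1$ is not guaranteed; actually $s-1>N/2$ does hold because $s>1+N/2$). So $\beta\in L_\infty$, and also $\beta\in{\rm BUC}^{s-s_c}$. Rather than pulling $\beta$ out, I would split the integral into $|\xi|\le1$ and $|\xi|>1$.
\begin{itemize}
\item[$\bullet$] On $|\xi|>1$, the kernel is bounded by $C|\xi|^{-N}\,|b_1(x)-b_1(x-\xi)|/|\xi|$, which is integrable at infinity. This gives an $L_2$ bound by $\|\beta\|_\infty\|b_1\|_2$ (via Young's inequality), using $\|\nabla b_i\|_\infty\le C\|b_i\|_{H^s}$ for $i\ge2$.
\item[$\bullet$] On $|\xi|\le1$, I would write $\beta(x-\xi)=\beta(x)+(\beta(x-\xi)-\beta(x))$. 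The H\"older piece yields the kernel $|\xi|^{-N+\alpha}|\p_jb_1(x-t\xi)|$ with $\alpha=s-s_c$ (capped below $1$). Minkowski's inequality in $(t,\xi)$ then gives $C\|\beta\|_{{\rm BUC}^\alpha}\|\nabla b_1\|_2$.
\item[$\bullet$] The remaining piece is $\beta(x)$ times a truncated version of $B^\phi_{n,\nu}(a)[b,\cdot]$ applied to the constant $1$ restricted to the unit ball. Undoing the expansion, this is $\beta(x)\,\PV\!\int_{|\xi|\le1}k(x,\xi)\,{\rm d}\xi$. I would bound it in $L_2$ by reusing the rotation-method argument of Lemma~\ref{L:MP0}, now applied with $b_1$ Lipschitz-approximated, or more simply by comparing it with the main term of \eqref{es:MPIb} below.
\end{itemize}

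So the cleaner route is to prove \eqref{es:MPIb} first and deduce \eqref{es:MPI} from it. The main term $\beta\sum_jB^\phi_{n-1,\nu+e_j}(a)[b_2,\dots,b_n,\p_jb_1]$ is bounded in $L_2$ by $\|\beta\|_\infty\|\nabla b_1\|_2$ via Lemma~\ref{L:MP0}. That lemma applies because the remaining arguments $a,b_2,\dots,b_n$ are Lipschitz and its proof (Calder\'on--Meyer) allows unbounded Lipschitz $a$.

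For \eqref{es:MPIb}, the difference equals
\[
\sum_j\int_0^1\PV\int k_j(x,\xi)\big[\p_jb_1(x-t\xi)\beta(x-\xi)-\p_jb_1(x-\xi)\beta(x)\big]\,{\rm d}\xi\,{\rm d}t ,
\]
where $|k_j|\le C|\xi|^{-N}$. I would split the bracket as
\[
\p_jb_1(x-t\xi)\big(\beta(x-\xi)-\beta(x)\big)\;+\;\beta(x)\big(\p_jb_1(x-t\xi)-\p_jb_1(x-\xi)\big).
\]
\begin{itemize}
\item[$\bullet$] The first part is controlled near the origin by H\"older continuity of $\beta$ and far away by boundedness of $\beta$. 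It is then estimated in $L_2$ by $\|\beta\|_{H^{s-1}}$ times an $L_2$-type norm of $\nabla b_1$ with a small gain. I would prefer to avoid using $\nabla b_1$ here and instead undo the expansion, writing this term as $\int k\,D_{[x,\xi]}b_1\,(\beta(x-\xi)-\beta(x))$. Then $|D_{[x,\xi]}b_1|$ is measured in $L_2$ by the $H^{r}$ seminorm with $r=s'-s+1\in(0,1)$, via $|\xi|^{-N-r}$-weighted integrals and \eqref{semiHs}. Pairing $|\xi|^{r-1}$ with $|\beta(x-\xi)-\beta(x)|\le C|\xi|^{\alpha}$ requires $1-r<\alpha$, i.e.\ $s-s'<s-s_c$, which holds since $s'>s_c$.
\item[$\bullet$] The second part is the translation-difference term for $\nabla b_1$. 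It is handled the same way: recombine it as $\beta(x)$ times the difference of $D_{[x,\xi]}b_1$ and $\frac{\xi}{|\xi|}\cdot\nabla b_1(x-\xi)$.
\end{itemize}

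The main obstacle is this last piece, because it contains $\nabla b_1$ while only $b_1\in H^{s'-s+1}$ is available. The expected fix is that the true cancellation is $\PV$-structural: the combination $D_{[x,\xi]}b_1-\frac{\xi}{|\xi|}\cdot\nabla b_1(x-\xi)$ is, up to sign, the second-order Taylor remainder. One integrates by parts in $\xi$ (as in the proof of Lemma~\ref{L:devvor}) to move the derivative onto the kernel. This produces kernels of size $|\xi|^{-N-1}$ acting on $\delta_{[x,\xi]}b_1$, and the derivatives of $a$ and $b_i$ entering through $\nabla_\xi k_j$ are controlled by $\|\nabla a\|_{{\rm BUC}^{s-s_c}}$ and $H^s$ norms. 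The result is again bounded by the $H^{r}$ seminorm of $b_1$ combined with H\"older moduli of $\nabla a$ and $\nabla b_i$ (exponent $s-s_c$), closing with the same exponent condition $s'>s_c$.
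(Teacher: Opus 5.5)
Your proposal is correct and, once you undo the Taylor expansion in both pieces (so that expansion is only a detour), it coincides with the paper's proof. Your first piece is, up to sign and the factor $|\s^N|$, the paper's $F_2$, bounded the same way via the H\"older continuity of $\beta$ and the $L_2$-modulus of continuity of $b_1$ (this is where $s'>s_c$ enters); your second piece is $F_1-F_4=F_3$, treated by the same integration by parts in $\xi$; and the main term is $F_4$, bounded by Lemma~\ref{L:MP0}. When you write out the last step, make explicit that $D_{[x,\xi]}b_1-\frac{\xi}{|\xi|}\cdot\nabla b_1(x-\xi)=-\xi\cdot\nabla_\xi D_{[x,\xi]}b_1$ and that $\frac{\xi^\nu}{|\xi|^{|\nu|}}\frac{\xi}{|\xi|^N}$ is divergence free, so that only the radial derivatives $\frac{\xi}{|\xi|}\cdot\nabla_\xi D_{[x,\xi]}a_k$ and $\frac{\xi}{|\xi|}\cdot\nabla_\xi D_{[x,\xi]}b_i$ survive; these are $O(|\xi|^{\alpha-1})$ with $\alpha$ the H\"older exponent of $\nabla a$ and $\nabla b_i$, whereas the full gradient $\nabla_\xi k_j$ you mention would give no gain.
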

\begin{proof} 
It suffices to establish \eqref{es:MPI}  and~\eqref{es:MPIb} for $b_1\in{\rm C}_0^\infty(\R^N)$  under the additional assumption that  $s<s_c+1$. 
Indeed, otherwise we fix
$\theta\in(s-s_c-1,s'-s_c)$,
define $\tilde s:=s-\theta$ and~$\tilde s':=s'-\theta$, so that $\tilde s\in(s_c,s_c+1)$, $\tilde s'\in(\max\{s_c,\tilde s-1\},\tilde s)$, and observe that the estimates~\eqref{es:MPI} and~\eqref{es:MPIb} with $(s,s')$ replaced by $(\tilde s,\tilde s')$ imply the original ones.

Observe first that 
\[|S^N|B^\phi_{n,\nu}(a)[b,\beta]=F_1-F_2,\]
where, given $x\in\R^N$,
\begin{align*}
F_1(x)&:=\beta(x) \PV\int_{\R^N}\phi\left( (D_{[x,\xi]}a )^{\ov 2}\right)\bigg[\prod_{i=2}^n D_{[x,\xi]} b_i\bigg]\frac{\xi^\nu}{|\xi|^{|\nu|}}
\frac{\delta_{[x,\xi]} b_1}{|\xi|^{N+1}}\,{\rm d}\xi,\\
F_2(x)&:=\int_{\R^N}\phi\left( (D_{[x,\xi]}a )^{\ov 2}\right)\bigg[\prod_{i=1}^n D_{[x,\xi]} b_i\bigg]\frac{\xi^\nu}{|\xi|^{|\nu|}}
\frac{\delta_{[x,\xi]} \beta}{|\xi|^N}\,{\rm d}\xi.
\end{align*}
The map $\xi\mapsto  \xi^\nu/|\xi|^{|\nu|}$ is homogeneous of order $0$, therefore its gradient has no radial component and
\[{\rm div}_\xi \Big(\frac{\xi^\nu}{|\xi|^{|\nu|}}\frac{\xi}{|\xi|^N}\Big)
=\frac{\xi^\nu}{|\xi|^{|\nu|}}{\rm div}_\xi\Big(\frac{\xi}{|\xi|^N}\Big)
+\nabla_\xi\Big(\frac{\xi^\nu}{|\xi|^{|\nu|}}\Big)\cdot\frac{\xi}{|\xi|^N}=0, \qquad\xi\neq 0.\]
Using this relation and the identity
\begin{equation}\label{gradD}
\frac{\xi}{|\xi|}\cdot\nabla_\xi (D_{[x,\xi]}  g)=\frac{\xi}{|\xi|^2}\cdot\nabla g(x-\xi)-\frac{\delta_{[x,\xi]} g}{|\xi|^2},\qquad \text{$\xi\neq 0$,}
\end{equation}
  with $g=b_1$, we find via integration by parts that
\[F_3(x):=\beta(x) \PV\int_{\R^N}\nabla_\xi\bigg[\phi\left( (D_{[x,\xi]}a )^{\ov 2}\right)\prod_{i=2}^nD_{[x,\xi]} b_i\bigg]
\cdot \frac{\xi}{|\xi|^N}\frac{\xi^\nu}{|\xi|^{|\nu|}}D_{[x,\xi]} b_1\,{\rm d}\xi=F_1(x)-F_4(x),\]
for $x\in\R^N$, where
\[F_4:=|S^N|\beta\sum_{j=1}^N B^\phi_{n-1,\nu+e_j}(a)[b_2,\ldots,b_n,\p_j b_1].\]
Hence 
\[|S^N|B^\phi_{n,\nu}(a)[b,\beta]=-F_2+F_3+F_4,\]
and to obtain \eqref{es:MPI} and \eqref{es:MPIb}  it remains  to show
\begin{align*}
    \|F_k\|_2&\leq C\|\beta\|_{H^{s-1}}\|b_1\|_{H^{s'-s+1}}\prod_{i=2}^n\|b_i\|_{H^s},\quad k=2,\,3,\\
    \|F_4\|_2&\leq C\|\beta\|_{H^{s-1}}
    \|b_1\|_{H^1}\prod_{i=2}^n\|b_i\|_{H^s}.
\end{align*}

The estimate for $F_4$ is immediate from Lemma \ref{L:MP0} and $\|\beta\|_\infty\leq C\|\beta\|_{H^{s-1}}$. To show the estimates for $F_2$ and $F_3$, observe first that for any 
$ g\in {\rm C}^{1}(\R^N)$ with $\nabla g\in {\rm BUC}^{s-s_c}( \R^N)$ we have
\begin{equation}\label{gradDest}
\left|\frac{\xi}{|\xi|}\cdot\nabla_\xi (D_{[x,\xi]} g)\right|
=\frac{|\delta_{[x,\xi]}g-\xi\cdot\nabla g(x-\xi)|}{|\xi|^2}
\leq\frac{[\nabla g]_{s-1-N/2}}{|\xi|^{2-s+N/2}},\qquad \ \xi\neq 0,
\end{equation}
where  $[\cdot]_\alpha$, $\alpha\in(0,1)$, is the usual H\"older seminorm.
Since $H^{s-1}(\R^N)\hookrightarrow {\rm BUC}^{s-s_c}(\R^N)$, our assumptions ensure that the estimate \eqref{gradDest} is satisfied for $g=a_k$ with $1\leq k\leq p$  and for~$g=b_j$ with $2\leq j\leq n$.

As a further preparation, we consider the weakly singular integral $I$ given by
\[I(x):=\int_{\R^N}\frac{|\delta_{[x,\xi]}b_1|}{|\xi|^{2-s+3N/2}}\,{\rm d}\xi,\qquad x\in\R^N,\]
and  show that
 \begin{equation}\label{estI}
\|I\|_2\leq C\|b_1\|_{H^{s'-s+1}}.
\end{equation}
Indeed, by Minkowski's inequality and Plancherel's theorem 
\begin{align*}
\|I\|_2&=\left(\int_{\R^N}\left(\int_{\R^N}\frac{|\delta_{[x,\xi]}b_1|}{|\xi|^{2-s+3N/2}}\,{\rm d}\xi\right)^2\,{\rm d}x\right)^{1/2}
\leq \int_{\R^N}\left(\int_{\R^N}\left(\frac{\delta_{[x,\xi]}b_1}{|\xi|^{2-s+3N/2}}\right)^2\,{\rm d}x\right)^{1/2}\,{\rm d}\xi\\
&=\int_{\R^N}\frac{1}{|\xi|^{2-s+3N/2}}\left(\int_{\R^N}|\mathcal{F} b_1|^2(\eta)
|e^{i\xi\cdot\eta}-1|^2  \, { \rm d}\eta\right)^{1/2}\,{\rm d}\xi,
\end{align*}
 with $\mathcal{F}$ denoting the Fourier transform.
Estimating
\begin{alignat*}{2}
&|e^{i\xi\cdot\eta}-1|\leq 2&&\text{ for $|\xi|>1$,}\\
&|e^{i\xi\cdot\eta}-1|\leq  C |\xi\cdot\eta|^{s'-s+1}\leq C|\xi|^{s'-s+1} |\eta|^{s'-s+1}\ &&\text{ for $|\xi|<1$,}
\end{alignat*}
  with some fixed $C>0$, we obtain 
\[\|I\|_2\leq 2\|b_1\|_2\int_{\{|\xi|> 1\}}\frac{1}{|\xi|^{2-s+3N/2}}  \, {\rm d}\xi + \|b_1\|_{H^{s'-s+1}} \int_{\{|\xi|<1\}}\frac{1}{|\xi|^{1-s'+3N/2}}  \, {\rm d}\xi.
\]
Both integrals converge since $s<2+N/2$ and $1+N/2<s'$, hence \eqref{estI} is proved.

To estimate $F_2$ we observe  for $x\in \R^N$  and $\xi\in\R^N\setminus\{0\}$ that 
\begin{align*}
|\delta_{[x,\xi]}\beta|&\leq[\beta]_{s-1-N/2}|\xi|^{s-1-N/2}
\leq C \|\beta\|_{H^{s-1}}|\xi|^{s-1-N/2},\\
|D_{[x,\xi]}b_i|&\leq \|\nabla b_i\|_\infty \leq C \|b_i\|_{H^s},\qquad  2\leq i\leq n,\\
\left|\phi\left((D_{[x,\xi]}a)^{\ov 2}\right)\right|&\leq C,
\end{align*}
so that by \eqref{estI} 
\[\|F_2\|_2\leq C\|\beta\|_{H^{s-1}}\|I\|_2\prod_{i=2}^n\|b_i\|_{H^s}
\leq C \|b_1\|_{H^{s'-s+1}}\prod_{i=2}^n\|b_i\|_{H^s}\|\beta\|_{H^{s-1}}.\]

To estimate $F_3$ we carry out the differentiation in the integrand which yields terms of the form
\begin{align*}
F_5(x)&:=\beta(x)\int_{\R^N}\p_k\phi\left((D_{[x,\xi]}a)^{\ov 2}\right)\big(D_{[x,\xi]}a_k\big)
\frac{\xi}{|\xi|}\cdot \nabla_\xi\big(D_{[x,\xi]}a_k\big)
\bigg[\prod_{i=2}^nD_{[x,\xi]} b_i\bigg]\frac{\xi^\nu}{|\xi|^{|\nu|}}
\frac{\delta_{[x,\xi]}b_1}{|\xi|^N}\,{\rm d}\xi,\\
F_6(x)&:=\beta(x)\int_{\R^N}\phi\left((D_{[x,\xi]}a)^{\ov 2}\right)
\bigg[\prod_{i=2,\,i\neq j}^n D_{[x,\xi]}b_i \bigg]
\frac{\xi}{|\xi|}\cdot \nabla_\xi\big(D_{[x,\xi]}b_j\big)
\frac{\xi^\nu}{|\xi|^{|\nu|}}
\frac{\delta_{[x,\xi]}b_1}{|\xi|^N}\,{\rm d}\xi
\end{align*}
with $2\leq j\leq n$ and $1\leq k\leq p$.
To estimate these, we proceed as for~$F_2$, using additionally~\eqref{gradDest}, the  estimates
\[\|\beta\|_\infty\leq C\|\beta\|_{H^{s-1}},\quad [\nabla a_k]_{s-1-N/2}\leq M,\quad
[\nabla b_j]_{s-1-N/2}\leq C\|b_j\|_{H^s},\]
as well as the boundedness  (uniformly  for fixed $M$) of the terms $\p_k\phi\big((D_{[x,\xi]}a)^{\ov 2}\big)$ and
$D_{[x,\xi]}a_k$.  This completes the proof.
\end{proof}

We next obtain more flexible $L_2$-estimates via interpolation, proving Lemma~\ref{Bestgen} in the case $\sigma=s-1$:

\begin{lemma}\label{BestL2}
Under the assumptions of Lemma~{\rm \ref{Bestgen}} with $\sigma=s-1$, there exists a 
constant~$ C>0$ such that for all 
$a\in {\rm C}^1(\R^N)^p$ with~${\|\nabla a\|_{{\rm BUC}^{s-s_c}\leq M}}$,  $\beta\in H^{s-1-\sigma_0}(\R^N)$, and $b_i\in H^{s-\sigma_i}(\R^N)$,  $1\leq i\leq n$, we have
\[\|B^\phi_{n,\nu}(a)[b,\beta]\|_2\leq C\|\beta\|_{H^{s-1-\sigma_0}}\prod_{i=1}^n\|b_i\|_{H^{s-\sigma_i}}.\]
\end{lemma}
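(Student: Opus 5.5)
We would obtain Lemma~\ref{BestL2} by multilinear complex interpolation, namely Lemma~\ref{le:multint}, between the two endpoint $L_2$-estimates that are already available. Take $X_0:=H^1(\R^N)$, $X_1:=H^s(\R^N)$ and $Y:=L_2(\R^N)$, so that by \eqref{IP} $X_\theta=H^{1+\theta(s-1)}(\R^N)$. The unknown $\beta$ lives on the shifted scale $H^{s-1-\sigma_0}$. To put everything on one scale we write $\beta=(1-\Delta)^{1/2}\gamma$ with $\gamma\in H^{s-\sigma_0}$. We then consider the $(n+1)$-linear map
\[
T[x_0,x_1,\ldots,x_n]:=B^\phi_{n,\nu}(a)[x_1,\ldots,x_n,(1-\Delta)^{1/2}x_0],
\]
which is symmetric in $x_1,\ldots,x_n$ by definition. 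Its slot $x_0$ carries $\gamma$.

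\textbf{Endpoint estimates.}
\begin{itemize}
\item \emph{First endpoint.} Lemma~\ref{L:MP0}, combined with $\|\nabla b_i\|_\infty\le C\|b_i\|_{H^s}$, gives
\[
\|T[x_0,\ldots,x_n]\|_2\le C\|x_0\|_{H^1}\prod_{i\ge1}\|x_i\|_{H^s}.
\]
Here $\|\nabla a\|_\infty\le CM$ makes the constant uniform, and this is the bound $\|x_0\|_{X_0}\|x_1\|_{X_1}\cdots$.
\item \emph{Second endpoint.} Lemma~\ref{L:MP1a}, estimate \eqref{es:MPI}, gives
\[
\|T\|_2\le C\|x_0\|_{H^s}\|x_1\|_{H^1}\prod_{i\ge2}\|x_i\|_{H^s},
\]
since $\|(1-\Delta)^{1/2}x_0\|_{H^{s-1}}=\|x_0\|_{H^s}$. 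This is the bound $\|x_0\|_{X_\vartheta}\|x_1\|_{X_{1-\vartheta}}$ with $\vartheta=1$.
\end{itemize}
Taking the minimum of the two, the hypothesis of Lemma~\ref{le:multint} holds with $\vartheta=1$ for $x_i\in X_1$. Both lemmas allow general Lipschitz $a$ with $\|\nabla a\|_{{\rm BUC}^{s-s_c}}\le M$, which is the hypothesis class in Lemma~\ref{BestL2}.

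\textbf{Choice of interpolation parameters.} Set $\theta_i:=\sigma_i/(s-1)\in[0,1]$ for $0\le i\le n$, using that $\sigma=s-1$.
\begin{itemize}
\item If $\sum\sigma_i=s-1$, then $\sum\theta_i=1$. Lemma~\ref{le:multint} (with $1-\vartheta=0\le\theta_0$) yields
\[
\|T\|_2\le C\prod_i\|x_i\|_{X_{1-\theta_i}} ,
\]
and $X_{1-\theta_i}=H^{s-\sigma_i}$. This is exactly the claim, since $\|x_0\|_{H^{s-\sigma_0}}=\|\beta\|_{H^{s-1-\sigma_0}}$.
\item If $\sum\sigma_i<s-1$, we increase, say, $\sigma_0$ to $\sigma_0':=s-1-\sum_{i\ge1}\sigma_i\le s-1$ and use $\|\beta\|_{H^{s-1-\sigma_0'}}\le\|\beta\|_{H^{s-1-\sigma_0}}$.
\end{itemize}

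\textbf{Density.} The final extension from smooth arguments to general ones is a density argument. For $\beta\in L_2$, that is $\sigma_0=s-1$, the operator is already bounded by Lemma~\ref{L:MP0}. For $b_i$ merely in $H^{s-\sigma_i}$ with $\sigma_i>0$ (possibly non-Lipschitz), $B^\phi_{n,\nu}$ is \emph{defined} by the continuous extension of the multilinear estimate. If it must agree with the principal value integral, we would check a.e.\ convergence along a subsequence.

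\textbf{Main obstacle.} The delicate point is the operator-theoretic bookkeeping around $\beta$: making sure the endpoint estimates are stated on the dense space $X_1^{n+1}$ as Lemma~\ref{le:multint} requires, and that the complex interpolation constant from \cite[Theorem 4.4.1]{BL76} is uniform in $a$. The latter follows because both endpoint constants depend only on $M$.
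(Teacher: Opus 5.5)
Your proof is correct and is essentially the paper's: both apply Lemma~\ref{le:multint} to the endpoint bounds from Lemma~\ref{L:MP0} and \eqref{es:MPI}. The only difference is bookkeeping: the paper keeps $\beta$ itself in the slot $x_0$, with $X_0=L_2(\R^N)$, $X_1=H^s(\R^N)$, $\vartheta=(s-1)/s$, $\theta_0=(1+\sigma_0)/s$ and $\theta_i=\sigma_i/s$, whereas your lift $\beta=(1-\Delta)^{1/2}\gamma$ puts all arguments on the common scale between $H^1(\R^N)$ and $H^s(\R^N)$ and lets you take $\vartheta=1$ (the case $n=0$, which Lemma~\ref{le:multint} does not cover, is immediate from Lemma~\ref{L:MP0}).
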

\begin{proof}
    We apply Lemma \ref{le:multint} with $X_0:=Y:=L_2(\R^N)$, $ X_1:=H^s(\R^N)$, and
    \[ \vartheta:=(s-1)/s,\qquad\theta_0:=(1+\sigma_0)/s,\qquad\theta_i:=\sigma_i/s,\quad  1\leq i\leq n\] 
    to the operator~$T:=B^\phi_{n,\nu}(a)$, observing that the assumptions of this lemma are satisfied due to Lemma~\ref{L:MP0} and Lemma~\ref{L:MP1a}; see estimate~\eqref{es:MPI}. The claim follows  since, up to norm equivalence, we have~${X_{1-\theta_0}=H^{s-1-\sigma_0}(\R^N)}$ and~$X_{1-\theta_i}=H^{s-\sigma_i}(\R^N)$,~$1\leq i\leq n$.
\end{proof}

\subsection*{Estimates in $H^\alpha(\R^N)$, $\alpha\in (0,1)$} 
We recall \eqref{equivHs} and \eqref{semiHs} and provide the following preparatory result for reference (see \cite[Lemma 7]{AM22} and \cite[Lemma 2.3]{MM21} for the case $N=1$):
  
\begin{lemma}\label{L:prep} Let  $r\geq0$ and $\alpha\in(0,1)$. 
Then there exists a constant~${C>0}$ such that for all~$\beta\in H^{r+\alpha}(\R^N)$ we have
\begin{equation}\label{deed}
\int_{\R^N}\frac{\|\tau_\zeta\beta-\beta\|_{H^r}^2}{|\zeta|^{N+2\alpha}}\,{\rm d}\zeta\leq C\|\beta\|_{H^{r+\alpha}}^2.
\end{equation}
\end{lemma}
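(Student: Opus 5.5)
The plan is to pass to the Fourier side and reduce \eqref{deed} to a pointwise bound on an explicit multiplier. By Plancherel, with $\mathcal F$ the Fourier transform and $\langle\eta\rangle:=(1+|\eta|^2)^{1/2}$, we have
\[
\|\tau_\zeta\beta-\beta\|_{H^r}^2=c\int_{\R^N}\langle\eta\rangle^{2r}|e^{i\zeta\cdot\eta}-1|^2|\mathcal F\beta(\eta)|^2\,{\rm d}\eta .
\]
Inserting this into the left side of \eqref{deed} and using Tonelli's theorem (all integrands are nonnegative) gives
\[
\int_{\R^N}\frac{\|\tau_\zeta\beta-\beta\|_{H^r}^2}{|\zeta|^{N+2\alpha}}\,{\rm d}\zeta
=c\int_{\R^N}\langle\eta\rangle^{2r}|\mathcal F\beta(\eta)|^2\, m(\eta)\,{\rm d}\eta,
\qquad
m(\eta):=\int_{\R^N}\frac{|e^{i\zeta\cdot\eta}-1|^2}{|\zeta|^{N+2\alpha}}\,{\rm d}\zeta .
\]

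The key step is then to show that $m(\eta)=c_{N,\alpha}|\eta|^{2\alpha}$ with a finite constant. For $\eta\neq0$, the substitution $\zeta=\rho\,\zeta'/|\eta|$, where $\rho$ is the rotation taking $e_1$ to $\eta/|\eta|$, reduces $m(\eta)$ to $|\eta|^{2\alpha}m(e_1)$. To see that $m(e_1)<\infty$, split the integral at $|\zeta|=1$. Near the origin, $|e^{i\zeta_1}-1|^2\le|\zeta|^2$, so that part is bounded by $\int_{|\zeta|<1}|\zeta|^{2-N-2\alpha}\,{\rm d}\zeta<\infty$ because $\alpha<1$. At infinity, $|e^{i\zeta_1}-1|^2\le4$, so that part is bounded by $4\int_{|\zeta|>1}|\zeta|^{-N-2\alpha}\,{\rm d}\zeta<\infty$ because $\alpha>0$.

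Combining the two steps and using $|\eta|^{2\alpha}\le\langle\eta\rangle^{2\alpha}$ bounds the left side of \eqref{deed} by
\[
C\int_{\R^N}\langle\eta\rangle^{2(r+\alpha)}|\mathcal F\beta(\eta)|^2\,{\rm d}\eta=C\|\beta\|_{H^{r+\alpha}}^2,
\]
which is the claim. Nothing here requires an earlier result from the paper beyond the Fourier characterization of $H^r(\R^N)$. The only point needing care is the convergence of $m(e_1)$, and this is exactly where the restriction $\alpha\in(0,1)$ enters.
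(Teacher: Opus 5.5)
Your proof is correct and is essentially the paper's argument. The paper reduces to $r=0$ by commuting the Bessel potential $L^r$ (symbol $(1+|z|^2)^{r/2}$) with the translations $\tau_\zeta$, and then cites the Gagliardo-seminorm bound $[u]_{H^\alpha}\le C\|u\|_{H^\alpha}$ from \eqref{equivHs}. Your computation $m(\eta)=m(e_1)|\eta|^{2\alpha}$, with $m(e_1)<\infty$ precisely because $\alpha\in(0,1)$, is the proof of that cited equivalence, carried out with the weight $\langle\eta\rangle^{2r}$ already in place, so your version is simply self-contained.
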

\begin{proof} 
 Let $L^r$ be the Fourier multiplier on $\R^N$ with symbol $z\mapsto (1+|z|^2)^{r/2}$. For all~${\rho\geq r}$, this operator is an isomorphism from $H^\rho(\R^N)$ to $H^{\rho-r}(\R^N)$, and it commutes with translations. Therefore, for $\beta\in H^{r+\alpha}(\R^N)$,
\[\int_{\R^N}\frac{\|\tau_\zeta\beta-\beta\|_{H^r}^2}{|\zeta|^{N+2\alpha}}\,{\rm d}\zeta
\leq C\int_{\R^N}\frac{\|\tau_\zeta L^r\beta-L^r\beta\|_2^2}{|\zeta|^{N+2\alpha}}\,{\rm d}\zeta
=C[L^r\beta]_{H^\alpha}^2\leq C\|\beta\|_{H^{r+\alpha}}^2.\]
\end{proof}

 It is straightforward to verify, under the general assumptions of Lemma~\ref{L:MP0}, the identity
\be\label{shiftid}
\tau_\zeta B^\phi_{n,\nu}(a)[b,\beta]
=B^\phi_{n,\nu}(\tau_\zeta a)[\tau_\zeta b,\tau_\zeta\beta],\qquad \beta\in L_2(\R^N),\quad \zeta\in\R^N.
\ee
Hence, by \eqref{difference}, we have
\begin{equation}  \label{shiftdiff}
\begin{aligned}
    (\tau_\zeta-1) B^\phi_{n,\nu}(a)[b,\beta]&=B^\phi_{n,\nu}(\tau_\zeta a)[\tau_\zeta b,\tau_\zeta\beta-\beta]\\
    &\quad\,+\sum_{j=1}^nB^\phi_{n,\nu}(\tau_\zeta a)[b_1,\ldots,  b_{j-1},
    \tau_\zeta b_j-b_j,\tau_\zeta  b_{j+1},\ldots,\tau_\zeta  b_n,\beta]\\
    &\quad\,+\sum_{i=1}^p B^{\phi^i}_{n+2,\nu}(\tau_\zeta a,a)[\tau_\zeta a-a,\tau_\zeta a+a,b,\beta]
\end{aligned}
\end{equation}
with $\phi^i$,  $1\leq i\leq p$, given by \eqref{gis}.    
 Lemma~\ref{L:prep} and~\eqref{shiftdiff} are used in the proof of Lemma~\ref{Bestgen} in the case~$s-1-\sigma\in(0,1)$, which is provided below.

\begin{lemma}\label{BestHa} 
Under the assumptions of Lemma {\rm\ref{Bestgen}} with $ s-1-\sigma=:\alpha\in(0,1)$, there exists a constant $C>0$ such that  for all  
$a\in H^s(\R^N)^p$  with~$\|a\|_{H^s}\leq M$,  $\beta\in H^{s-1-\sigma_0}(\R^N)$,  and~$b_i\in H^{s-\sigma_i}(\R^N)$, $1\leq i\leq n,$ we have
\[\|B^\phi_{n,\nu}(a)[b,\beta]\|_{H^\alpha}\leq C\|\beta\|_{H^{s-1-\sigma_0}}\prod_{i=1}^n\|b_i\|_{H^{s-\sigma_i}}.\]
\end{lemma}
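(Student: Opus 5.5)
By \eqref{equivHs} with $k=0$, it suffices to bound $\|B^\phi_{n,\nu}(a)[b,\beta]\|_2$ and the seminorm $[B^\phi_{n,\nu}(a)[b,\beta]]_{H^\alpha}$, defined via \eqref{semiHs}. For the $L_2$ part, note that $\sigma\le s-1$ and $\sigma_0+\ldots+\sigma_n\le\sigma$. Lemma~\ref{BestL2} then applies directly: the $\sigma=s-1$ version only requires $\sum\sigma_i\le s-1$. This gives $\|B^\phi_{n,\nu}(a)[b,\beta]\|_2\le C\|\beta\|_{H^{s-1-\sigma_0}}\prod\|b_i\|_{H^{s-\sigma_i}}$, because $\|\nabla a\|_{{\rm BUC}^{s-s_c}}\le C\|a\|_{H^s}\le CM$.

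For the seminorm, expand $(\tau_\zeta-1)B^\phi_{n,\nu}(a)[b,\beta]$ using the shift identity \eqref{shiftdiff}. This yields three groups of terms: the difference falls on $\beta$, on some $b_j$, or on $a$. Each term is estimated in $L_2$ with Lemma~\ref{BestL2}. The translated functions $\tau_\zeta a$, $\tau_\zeta b_i$, $\tau_\zeta\beta$ have the same norms as the originals, so the constants are uniform in $\zeta$. In each term, the factor carrying the difference is measured in a norm lowered by $\alpha$, and the lost $\alpha$ is assigned to it.
\begin{itemize}
\item For the $\beta$-difference, take $\sigma_0':=\sigma_0+\alpha$ and leave the other $\sigma_i$ unchanged. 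This gives the bound $C\|\tau_\zeta\beta-\beta\|_{H^{s-1-\sigma_0-\alpha}}\prod\|b_i\|_{H^{s-\sigma_i}}$.
\item For a $b_j$-difference, take $\sigma_j':=\sigma_j+\alpha$. This gives $\|\tau_\zeta b_j-b_j\|_{H^{s-\sigma_j-\alpha}}$.
\item For the $a$-difference term $B^{\phi^i}_{n+2,\nu}(\tau_\zeta a,a)[\tau_\zeta a-a,\tau_\zeta a+a,b,\beta]$, with $2p$ nonlinear arguments, assign $\sigma=s-1$ to the new slot $\tau_\zeta a-a$, measured in $H^{1}$, and $0$ to $\tau_\zeta a+a$. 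The admissibility condition $\sum\le s-1$ then fails, so this allocation has to be changed: give the slot $\tau_\zeta a-a$ the index $\alpha$ and measure it in $H^{s-\alpha}$. The total becomes $\sigma_0+\ldots+\sigma_n+\alpha\le\sigma+\alpha=s-1$, which is admissible.
\end{itemize}
The same reasoning shows that the other two cases are also admissible, since in each the indices sum to at most $\sigma+\alpha=s-1$. One must also check the constraint $\sigma_i'\le s-1$. If $\sigma_0+\alpha>s-1$, then the exponent $s-1-\sigma_0-\alpha$ would be negative; this does not occur, because $\sigma_0\le\sigma=s-1-\alpha$, and similarly for the $b_j$.

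Squaring, dividing by $|\zeta|^{N+2\alpha}$ and integrating over $\zeta$, Lemma~\ref{L:prep} with $r=s-1-\sigma_0-\alpha$, $r=s-\sigma_j-\alpha$, and $r=s-\alpha$ respectively bounds each contribution. For example, $\int\|\tau_\zeta\beta-\beta\|^2_{H^{s-1-\sigma_0-\alpha}}|\zeta|^{-N-2\alpha}\,{\rm d}\zeta\le C\|\beta\|^2_{H^{s-1-\sigma_0}}$, and analogously $\|b_j\|^2_{H^{s-\sigma_j}}$ and $\|a\|_{H^s}^2\le M^2$. The remaining factors $\tau_\zeta a+a$ and $\tau_\zeta b_i$ are bounded uniformly in $H^s$ resp. $H^{s-\sigma_i}$.

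The main obstacle is the third group. Lemma~\ref{BestL2} must be applied to $B^{\phi^i}_{n+2,\nu}$ with $p$ replaced by $2p$, with nonlinear argument $(\tau_\zeta a,a)$, and with $\phi^i$ smooth on $[0,\infty)^{2p}$. The uniform ${\rm BUC}^{s-s_c}$ bound on $\nabla(\tau_\zeta a,a)$ comes from Sobolev embedding. The point requiring care is the admissibility bookkeeping for the extra arguments, specifically that $\sigma_i\in[0,s-1]$ remains true after the $\alpha$ shift. As noted above, this holds precisely because the difference factor receives exactly the missing $\alpha$.
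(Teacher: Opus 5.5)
Your proposal is correct and follows the paper's proof essentially step for step: the $L_2$ part comes from Lemma~\ref{BestL2}, and the seminorm part expands via \eqref{shiftdiff}. In each of the three groups the lost $\alpha$ is assigned to the difference factor, exactly as in the paper ($\tilde\sigma_0=\sigma_0+\alpha$, $\tilde\sigma_j=\sigma_j+\alpha$, and $H^{s-\alpha}$ for $\tau_\zeta a-a$), and Lemma~\ref{L:prep} finishes the argument. The abandoned first allocation in your third bullet should simply be deleted, since only the final one ($\alpha$ on $\tau_\zeta a-a$, $0$ on $\tau_\zeta a+a$) is used.
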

\begin{proof}
    In view of \eqref{equivHs}, we have to show
    \begin{equation}\label{ccxxx0}
    \|B^\phi_{n,\nu}(a)[b,\beta]\|_2^2+\int_{\R^N}\frac{\|(\tau_\zeta-1)B^\phi_{n,\nu}(a)[b,\beta]\|_2^2}{|\zeta|^{N+2\alpha}}\,{\rm d}\zeta
    \leq C\|\beta\|_{H^{s-1-\sigma_0}}^2\prod_{i=1}^n\|b_i\|_{H^{s-\sigma_i}}^2.
    \end{equation}
    For the first term, the estimate is immediate from Lemma~\ref{BestL2}.  
    For the second term in~\eqref{ccxxx0}, we recall~\eqref{shiftdiff} and estimate the terms in this representation separately.
  More precisely,    Lemma~\ref{BestL2} with $\sigma_i$ replaced by $\tilde \sigma_i$, where
    \[\tilde \sigma_0:=\sigma_0+\alpha\qquad\text{and}\qquad\tilde \sigma_i:=\sigma_i,\quad 1\leq i\leq n,\]
    yields
    \begin{equation}\label{ccxxx1}
    \|B^\phi_{n,\nu}(\tau_\zeta a)[\tau_\zeta b,\tau_\zeta\beta-\beta]\|_2^2
    \leq C \|\tau_\zeta\beta-\beta\|^2_{H^{s-1-\sigma_0-\alpha}}
    \prod_{i=1}^n\|b_i\|_{H^{s-\sigma_i}}^2.
      \end{equation}

    Analogously,   for $1\leq j\leq n$, we obtain in view of Lemma ~\ref{BestL2} that
    \begin{equation}\label{ccxxx2}
    \begin{aligned}
        &\|B^\phi_{n,\nu}(\tau_\zeta a) [b_1,\ldots,  b_{j-1},
    \tau_\zeta b_j-b_j,\tau_\zeta  b_{j+1},\ldots,\tau_\zeta  b_n,\beta]\|_2^2\\
        &\leq  C \|\beta\|_{H^{s-1-\sigma_0}}^2
        \|\tau_\zeta b_j-b_j\|^2_{H^{s-\sigma_j-\alpha}}
        \prod_{i=1, i\neq j}^n\|b_i\|^2_{H^{s-\sigma_i}},
    \end{aligned}
      \end{equation}
      and, for $1\leq i\leq p$,
      \begin{equation}\label{ccxxx3}
    \begin{aligned}
    & \|B^{\phi^i}_{n+2,\nu}(\tau_\zeta a,a)[\tau_\zeta a-a,\tau_\zeta a+a,b,\beta]\|_2^2\\
    &\leq  C \|\beta\|_{H^{s-1-\sigma_0}}^2\|\tau_\zeta a-a\|^2_{H^{s-\alpha}}
    \|\tau_\zeta a+a\|^2_{H^s}\prod_{i=1}^n\|b_i\|_{H^{s-\sigma_i}}^2\\
    &\leq C \|\beta\|_{H^{s-1-\sigma_0}}^2\|\tau_\zeta a-a\|^2_{H^{s-\alpha}}
    \prod_{i=1}^n\|b_i\|_{H^{s-\sigma_i}}^2.
    \end{aligned}
      \end{equation}
     Using \eqref{ccxxx1}-\eqref{ccxxx3} and applying Lemma~\ref{L:prep}, 
     we find that the integral term in \eqref{ccxxx0} can be estimated by the right side of \eqref{ccxxx0}, and the proof is complete.
\end{proof}
\subsection*{Estimates in higher norms}
As a preparation for the proof of Lemma~\ref{Bestgen} in generality, we introduce the divided difference operators on $H^r(\R^N)$, $r\geq 0$, by
\[D_\eps^jf:=\frac{\tau^j_\eps f-f}{\eps},\qquad f\in H^r(\R^N),\quad\eps\in\R\setminus\{0\},\quad 1\leq j\leq N,\]
 where  $\tau_\eps^j:=\tau_{\eps e_j}$; see~\eqref{semiHs}.
We recall that $f\in H^{r+1}(\R^N)$ iff $\lim_{\eps\to 0} D^j_\eps f$ exists in $H^r(\R^N)$ for all $1\leq j\leq N$. In this case, $\lim_{\eps\to 0} D^j_\eps f=\p_jf$.

  As a straightforward consequence of \eqref{shiftdiff}, we obtain for  $\e\neq0$ and $1\leq j\leq N$ the representation
\begin{equation}\label{dqrep}
\begin{aligned}
    D_\eps^j B^\phi_{n,\nu}(a)[b,\beta]&=B^\phi_{n,\nu}(\tau^j_\eps a)[\tau_\eps^j b,D_\eps^j\beta]\\
    &\quad\,+\sum_{i=1}^nB^\phi_{n,\nu}(\tau_\eps^j a)[ b_1,\ldots, b_{i-1},
    D_\eps^j b_i,\tau_\eps^j b_{i+1},\ldots,\tau_\eps^j b_n,\beta] \\
    &\quad\,+\sum_{i=1}^p B^{\phi^i}_{n+2,\nu}(\tau_\eps^j a,a)[D_\eps^j a,\tau_\eps^j a+a,b,\beta].
\end{aligned}
\end{equation}

\begin{proof}[Proof of Lemma~\ref{Bestgen}] 
For $k\in\N$ with $k\leq   s-1$, let (H)$_k$ be the following statement:
\[
\left.\text{\begin{minipage}{0.85\textwidth}
For any  $M>0$, $p,n\in\N$ with $p\geq 1$, $\phi\in {\rm C}^\infty([0,\infty)^p)$, $\nu\in\N^N$ with~~$n+|\nu|$ odd,  and 
$\sigma,\sigma_0,\ldots,\sigma_n\in[0,s-1]$ such that 
\[ \sigma_0+\ldots+\sigma_n\leq \sigma,\quad s-1-\sigma-k=:\alpha\in[0,1)\]
there is a constant  $C>0$ such that for all $a\in H^s(\R^N)^p$  with~$\|a\|_{H^s}\leq M$, 
$\beta\in H^{s-1-\sigma_0}(\R^N),$ and~$ b_i\in H^{s-\sigma_i}(\R^N)$, $1\leq i\leq n$,
the function~$B_{n,\nu}^\phi(a)[b,\beta]$ belongs to $ H^{s-1-\sigma}(\R^N)$ and
\[
\|B_{n,\nu}^\phi(a)[b,\beta]\|_{H^{k+\alpha}}=\|B_{n,\nu}^\phi(a)[b,\beta]\|_{H^{s-1-\sigma}}\leq C\|\beta\|_{H^{s-1-\sigma_0}}
\prod_{i=1}^n\|b_i\|_{H^{s-\sigma_i}}.
\]
\end{minipage}}\;\right\}\eqno{\textnormal{(H)$_k$}}
\]

We prove Lemma~\ref{Bestgen} by showing (H)$_k$ for all  $k\leq  s-1$. 
We proceed by induction over~$k$. 

Statement (H)$_0$ holds by Lemma \ref{BestL2} and Lemma~\ref{BestHa}.

Assume  now  (H)$_k$ for  some $k\leq  s-2$. From \eqref{difference} we conclude (for any $p$,  $n$, $\phi$, $\nu$, $\sigma$, $\sigma_i$ satisfying the assumptions of (H)$_k$) that the mapping $[a\mapsto B_{n,\nu}^\phi(a)]$ belongs to 
\begin{align}
& {\rm C}^{1-}\big(H^s(\R^N)^p,
\kL^n(H^{s-\sigma_1}(\R^N),\ldots,H^{s-\sigma_n}(\R^N),\kL(H^{s-1-\sigma_0}(\R^N), H^{s-1-\sigma}(\R^N)))\big).
\label{eq:acont}
\end{align}

Fix now  $M$, $p$, $n$, $\phi$,  $\nu$, $\sigma$, $\sigma_i$ according to the assumptions of (H)$_{k+1}$. In view of~\eqref{dqrep} and the remarks about  the divided difference operators $D^j_\eps$ it suffices to show for~$1\leq j\leq N$, $a\in H^s(\R^N)^p$  with~$\|a\|_{H^s}\leq M$, 
$ b_i\in H^{s-\sigma_i}(\R^N)$, $1\leq i\leq n$, and~$\beta\in H^{s-1-\sigma_0}(\R^N)$  
that, as~$\e\to 0$,  we have the following convergences in $H^{s-2-\sigma}(\R^N)$:
\begin{align}
  &B^\phi_{n,\nu}(\tau^j_\eps a)[\tau_\eps^j b,D_\eps^j\beta]\to B^\phi_{n,\nu}(a)[b,\p_j\beta] \label{conv1}\\
  &B^\phi_{n,\nu}(\tau_\eps^j a) [ b_1,\ldots, b_{i-1},
    D_\eps^j b_i,\tau_\eps^j b_{i+1},\ldots,\tau_\eps^j b_n,\beta] \nonumber\\
  &\to B^\phi_{n,\nu}(a)[b_1,\ldots,b_{i-1}, \p_j b_i,b_{i+1},\ldots,b_n,\beta],\qquad 1\leq i\leq n,
 \label{conv2} \\
  &B^{\phi^i}_{n+2,\nu}(\tau_\eps^j a,a)[D_\eps^j a,\tau_\eps^j a+a,b,\beta]\to
    2B^{\phi^i}_{n+2,\nu}(a,a)[\p_j a,a,b,\beta],\qquad 1\leq i\leq p,\label{conv3}
\end{align}
and that there is a constant $C>0$ such that, uniformly in $a$, $\beta$, and  $ b_i$, $1\leq i\leq n$,
\begin{equation}\label{extconv}
    \begin{aligned}
    &\|B^\phi_{n,\nu}(a)[b,\p_j\beta]\|_{H^{s-2-\sigma}}   +\sum_{i=1}^n\|B^\phi_{n,\nu}(a)[b_1,\ldots,b_{i-1}, \p_j b_i,b_{i+1},\ldots,b_n,\beta]\|_{H^{s-2-\sigma}} \\
    &+\|B^\phi_{n,\nu}(a)[b,\beta]\|_{H^{s-2-\sigma}} +\sum_{i=1}^p  \|B^{\p_i\phi}_{n+2,\nu}(a)[\p_j a,a,b,\beta]\|_{H^{s-2-\sigma}}\leq C\|\beta\|_{H^{s-1-\sigma_0}}\prod_{i=1}^n\|b_i\|_{H^{s-\sigma_i}},
    \end{aligned}
\end{equation}
as $ B^{\phi^i}_{n+2,\nu}(a,a)=B^{\p_i\phi}_{ n+2,\nu}(a)$, $1\leq i\leq p$, by  \eqref{gis}.

  To start, we note that (H)$_{k}$ with $\sigma$ replaced $\tilde \sigma=\sigma+1$ (and $\sigma_i$, $0\leq i\leq n$, unchanged)  ensures that 
$B^\phi_{n,\nu}(a)[b,\beta]\in H^{s-2-\sigma}(\R^N)$  can be estimated according to \eqref{extconv}.

To show the convergence~\eqref{conv1} and the corresponding estimate for the limit function in~\eqref{extconv}, we  note  that for~${1\leq j\leq N}$ 
\be\label{epsconv}
\left.\begin{array}{lcl}
    \tau_\eps^j a\to a&&\text{in $H^{s}(\R^N)^p$,}\\
    \tau_\eps^j b_i\to b_i&&\text{in $H^{s-\sigma_i}(\R^N)$,}\quad 1\leq i\leq n,\\
    D^j_\eps \beta\to \p_j \beta &&\text{in $H^{s-2-\sigma_0}(\R^N)$.}
    \end{array}\right\}
\ee
 The induction assumption (H)$_k$ with $\sigma$ replaced by $\tilde\sigma=\sigma+1$ and $\sigma_0$ replaced by~${\tilde\sigma_0=\sigma_0+1}$ (and all other variables unchanged) then immediately provides \eqref{conv1}; see~\eqref{eq:acont}, together with the desired estimate for the limit in \eqref{extconv}.

The proofs for \eqref{conv2} and \eqref{conv3}  and the estimates for the corresponding limits in \eqref{extconv} are similar. More precisely, for \eqref{conv2} we use for each $1\leq i\leq n$, the assumption~(H)$_k$  with~$\sigma$ replaced by $\tilde\sigma=\sigma+1$  and $\sigma_i$ replaced by $\tilde \sigma_i:=\sigma_i+1$ (and the other variables unchanged), while  for~\eqref{conv3} we use for each~$1\leq i\leq p$ the assumption (H)$_k$ with the variables replaced by
\[
\wt p:=2p,\quad\wt\phi:=\phi^i,  \quad \wt M:=\sqrt{2}M,\quad \wt n:=n+2,\quad\tilde\sigma_0=\sigma_0,\]
\[
 \; \wt \sigma_1:=1,\quad\wt \sigma_2:=0,\quad \wt \sigma_{i+2}:=\sigma_i, \;1\leq i\leq n,
\quad \wt \sigma:=\sigma+1.\]
\end{proof}

\begin{proof}[Proof of Lemma \ref{L:MP1b}]
The proof of Lemma \ref{L:MP1b}  is contained in the proof of Lemma~\ref{Bestgen}.
\end{proof}

 We conclude this section with the proof of Lemma~\ref{L:MP1d}.
\begin{proof}[Proof of Lemma~\ref{L:MP1d}] It suffices to show that the map $[ a\mapsto B^\phi_{n,\nu}(a)]$ is Fr\'echet differentiable with 
 \begin{equation}\label{derB}
    \partial B^\phi_{n,\nu}(a)[h][b,\cdot]=2B^{\phi'}_{n+2,\nu}(a)[b,a,h,\cdot].
  \end{equation}
  for $a,\, h\in H^s(\R^N)$ and $b=(b_1,\ldots, b_n)\in H^s(\R^N)^n$.
  The infinite differentiability result will follow then from an induction argument. 
  
  In order to establish \eqref{derB} we infer from \eqref{difference} and the fundamental theorem of calculus that  for each~$\beta\in H^{s-1}(\R^N)$ we have 
 \begin{align*}
&\big(B^\phi_{n,\nu}(a+h)-B^\phi_{n,\nu}(a)\big)[b,\beta]-2B^{\phi'}_{n+2,\nu}(a)[b,a,h,\beta]\\
&=B^{\wt \phi}_{n+2,\nu}(a+h,a) [b, 2a+h,h,\beta]-2B^{\phi'}_{n+2,\nu}(a)[b,a,h,\beta]\\
&=B^{\wt \phi}_{n+2,\nu}(a+h,a) [b, h^{[2]},\beta]+2\big(B^{\wt \phi}_{n+2,\nu}(a+h,a)  -2B^{\phi'}_{n+2,\nu}(a)\big)[b,a,h,\beta]\\
&=B^{\wt \phi}_{n+2,\nu}(a+h,a) [b, h^{[2]},\beta]+2B^{\wh \phi}_{n+4,\nu}(a+h,a) [b,a,2a+h, h^{[2]},\beta],
 \end{align*}
 where $\wt\phi,\, \wh\phi \in {\rm C}^\infty( [0,\infty)^2)$ are given by
  \[
\wt\phi(x,y)=\int_0^1\phi'(sx+(1-s)y)\,{\rm d}s\qquad \text{and}\qquad \wh\phi(x,y)=\int_0^1\int_0^1s\phi''(\tau sx+(1-\tau s)y)\,{\rm d}s\,{\rm d}\tau
  \]
for $(x,y)\in [0,\infty)^2$.  Hence, for $\|h\|_{H^s}\leq 1$, we infer from Lemma~\ref{L:MP1} that 
 \begin{align*}
\big\|\big(B^\phi_{n,\nu}(a+h)-B^\phi_{n,\nu}(a)\big)[b,\beta]-2B^{\phi'}_{n+2,\nu}(a)[b,a,h,\beta]\big\|_{H^{s-1}}
 \leq C\|\beta\|_{H^{s-1}}  \|h\|_{H^s}^2\prod_{i=1}^n \|b_i\|_{H^{s}},
 \end{align*}
 and the claim follows.
\end{proof}

%%%%%%%%%%%%%%%%%%%%%%%%%%%%%%%%%%%%%%%%%%%%%%%%%%
%%%%%%%%%%%%%%%%%%%%%%%%%%%%%%%%%%%%%%%%%%%%%%%%%%
%%%%%%%%%%%%%%%%%%%%%%%%%%%%%%%%%%%%%%%%%%%%%%%%%%
%%%%%%%%%%%%%%%%%%%%%%%%%%%%%%%%%%%%%%%%%%%%%%%%%%
 \section{ Localization of singular integral operators}\label{Sec:D}
%%%%%%%%%%%%%%%%%%%%%%%%%%%%%%%%%%%%%%%%%%%%%%%%%%
%%%%%%%%%%%%%%%%%%%%%%%%%%%%%%%%%%%%%%%%%%%%%%%%%%
%%%%%%%%%%%%%%%%%%%%%%%%%%%%%%%%%%%%%%%%%%%%%%%%%%
%%%%%%%%%%%%%%%%%%%%%%%%%%%%%%%%%%%%%%%%%%%%%%%%%%
The primary objective of this appendix is to localize the singular integral  operators~\( \sfB^{\phi}_{n,\nu}(f) \), introduced in \eqref{trueRO}.  

The central result established in Proposition~\ref{T:LOC} is a crucial tool in the analysis carried out in Section~\ref{Sec:5}.
 We first derive several commutator estimates for the operators~\( \sfB^{\phi}_{n,\nu}(f) \), cf.  Lemma~\ref{commphi} and Lemma~\ref{L:Mix},  
which  are essential for the localization result presented in Proposition~\ref{P:Local}.
We then investigate the operators \( D^{\phi,A}_{n,\nu} \), defined in \eqref{dfnb}, and prove that they are Fourier multipliers, providing suitable estimates for their symbols. 
 Finally, in Proposition~\ref{T:LOC} and Lemma~\ref{T:prod}  we prove the announced localization results.
In this  appendix, we assume again that $s$ satisfies \eqref{eq:s}.
 
 \subsection*{Commutator type properties}
 
We establish several commutator properties that are crucial in the analysis. 
We start  by  estimating  the commutator $\llbracket\varphi, \sfB^{\phi}_{n,\nu}(f)\rrbracket$ in  suitable norms.

\begin{lemma}\label{commphi} 
Let  $M>0$, $n\in\N$,   $\phi \in {\rm C}^\infty([0,\infty))$, and $\nu\in\N^N$  with $n+|\nu|$ odd.
\begin{itemize}
    \item [\rm (i)] 
    There exists a constant $C>0$  such that for all  $\varphi\in {\rm BUC}^1(\R^N)$, $\beta\in L_2(\R^N)$, and~${f\in{\rm C}^1(\R^n)}$  
    with~${\|\nabla f\|_{{\rm BUC}^{s-s_c}}\leq M}$,  we have $\llbracket\varphi, \sfB^{\phi}_{n,\nu}(f)\rrbracket[\beta]\in H^1(\R^N)$ and 
    \begin{equation}\label{comestalpha}
\| \llbracket\varphi, \sfB^{\phi}_{n,\nu}(f)\rrbracket[\beta]\|_{H^1}
\leq C \|\varphi\|_{{\rm BUC}^1}\|\beta\|_{2}.
\end{equation}
    \item[\rm (ii)] 
    Let  $s'\in(\max\{ s_c,s-1\},s)$. Then there exists a constant~${C>0}$ such that for all~${f\in H^s(\R^N)}$ with $\|f\|_{H^s}\leq M$   and~$\varphi,\beta\in H^{s-1}(\R^N)$ the function~$\llbracket\varphi, \sfB^{\phi}_{n,\nu}(f)\rrbracket[\beta]$ 
     belongs to $ H^{s-1}(\R^N)$  and 
    \begin{equation}\label{comests'}
\| \llbracket\varphi,  \sfB^{\phi}_{n,\nu}(f)\rrbracket[\beta]\|_{H^{s-1}}
\leq C\|\varphi\|_{H^{s-1}}\|\beta\|_{H^{s'-1}}.
\end{equation}
\end{itemize}
    
\end{lemma}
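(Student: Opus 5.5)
The starting point is the explicit kernel of the commutator. By \eqref{operatorsB} and \eqref{trueRO},
\[
\llbracket\varphi, \sfB^{\phi}_{n,\nu}(f)\rrbracket[\beta](x)=\frac{1}{|\s^N|}\int_{\R^N}\phi\big((D_{[x,\xi]}f)^2\big)\big(D_{[x,\xi]}f\big)^n\frac{\xi^\nu}{|\xi|^{|\nu|}}\,\frac{\delta_{[x,\xi]}\varphi}{|\xi|}\,\frac{\beta(x-\xi)}{|\xi|^{N-1}}\,{\rm d}\xi .
\]
This is $|\xi|\,B^\phi_{n+1,\nu}(f)[f^{[n]},\varphi,\cdot]$ at the level of kernels. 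The extra factor $\delta_{[x,\xi]}\varphi=|\xi|D_{[x,\xi]}\varphi$ removes one order of singularity, so the kernel is only weakly singular near $\xi=0$; near infinity it behaves like $|\xi|^{-N}$ times the bounded factor $\delta\varphi$.

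\textbf{Part (i).} The $L_2$ bound follows at once from \eqref{difference}-type rewriting:
\[
\llbracket\varphi,\sfB\rrbracket[\beta]=\varphi\,\sfB^\phi_{n,\nu}(f)[\beta]-\sfB^\phi_{n,\nu}(f)[\varphi\beta],
\]
and Lemma~\ref{L:MP0} bounds both terms by $C\|\varphi\|_\infty\|\beta\|_2$.

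For the gradient, I work first with smooth compactly supported data and then pass to the limit by density. Differentiating in $x$ under the integral, the $x$-derivative falling on $\beta(x-\xi)$ is converted into a $\xi$-derivative and integrated by parts, as in the proof of Lemma~\ref{L:51} and of Lemma~\ref{L:MP1a}. Using $\p_{x_j}\delta_{[x,\xi]}\varphi+\p_{\xi_j}\delta_{[x,\xi]}\varphi=\p_j\varphi(x)$, the derivative of the commutator equals a sum of three kinds of terms:
\begin{itemize}
\item $\p_j\varphi\,\sfB^\phi_{n,\nu}(f)[\beta]$;
\item terms $B^{\tilde\phi}_{\tilde n,\tilde\nu}(f)[\ldots,\varphi,\beta]$ with $D_{[x,\xi]}\varphi$ inside, which are bounded by Lemma~\ref{L:MP0} since $\|\nabla\varphi\|_\infty\le\|\varphi\|_{{\rm BUC}^1}$;
\item terms in which $\p_{\xi_j}$ hits $\phi$, $D_{[x,\xi]}f$, or $\xi^\nu|\xi|^{-|\nu|-N+1}$.
\end{itemize}
The derivatives of the homogeneous factor produce kernels of $B$-type again, one order more singular but multiplied by $D\varphi$, so they remain bounded. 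The derivatives landing on $D_{[x,\xi]}f$ give $\nabla_\xi D_{[x,\xi]}f$. By \eqref{gradDest} this is $O(|\xi|^{s-s_c-1})$, which is not of $B$-type but is integrable against the weakly singular structure. These terms I would control exactly as $F_5$ and $F_6$ in Lemma~\ref{L:MP1a}: pointwise kernel bounds $\lesssim |\xi|^{s-s_c-N}$ for $|\xi|<1$ and $|\xi|^{-N-1}$ for $|\xi|>1$, followed by Young's (Minkowski's) inequality. This is the \textbf{main technical obstacle}, namely making sure that no term is left with a nonintegrable kernel that lacks Calder\'on--Zygmund structure. I would organise it by expanding $\nabla_\xi\big(\phi(\cdot)(D f)^n\xi^\nu|\xi|^{-|\nu|-N}\big)\delta\varphi$ and splitting into a radial part, handled via \eqref{gradD}, and a tangential part, which has $B$-structure.

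\textbf{Part (ii).} Here I use the splitting of part (i) together with the mapping properties in the $H^r$ scale. Write the commutator as
\[
\varphi\,\sfB[\beta]-\sfB[\varphi\beta].
\]
I estimate it first in $L_2$ and then its gradient in $H^{s-2}$, via \eqref{equivgrad}. The $L_2$ part is bounded by $C\|\varphi\|_\infty\|\beta\|_2$. For the gradient, \eqref{commsfb} gives
\[
\nabla\llbracket\varphi,\sfB\rrbracket[\beta]=\nabla\varphi\,\sfB[\beta]-\sfB[\nabla\varphi\,\beta]+\llbracket\varphi,\sfB\rrbracket[\nabla\beta]+\llbracket\varphi,\llbracket\nabla,\sfB\rrbracket\rrbracket[\beta].
\]
The first two terms are bounded in $H^{s-2}$ by Lemma~\ref{L:mult} and Lemma~\ref{Bestgen}, with $\nabla\varphi\in H^{s-2}$ and $\beta\in H^{s'-1}$. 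The commutator terms are handled by interpolation between the $L_2$-type estimate obtained from part (i) and the crude estimate $\|\cdot\|_{H^{s-1}}\le C\|\varphi\|_{H^{s-1}}\|\beta\|_{H^{s-1}}$, which comes from Lemma~\ref{L:MP1} and Lemma~\ref{L:mult}. The multilinear interpolation of Lemma~\ref{le:multint} then moves regularity from $\beta$ to $\varphi$, with $s'$ chosen so that $s-s'<\min\{1,s-s_c\}$.

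If the gain needed is larger than one interpolation step provides, I would iterate the gradient argument inductively, exactly in the pattern of the proof of Lemma~\ref{Bestgen}, using difference quotients \eqref{dqrep} applied to $B^\phi_{n+1,\nu}(f)[f^{[n]},\varphi,\cdot]$.
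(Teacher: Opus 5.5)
Your route for (i), differentiating the kernel directly instead of using the paper's difference quotients, can be made to work. Part (ii), however, has a genuine gap: the interpolation step cannot give $\|\llbracket\varphi,\sfB^\phi_{n,\nu}(f)\rrbracket[\beta]\|_{H^{s-1}}\le C\|\varphi\|_{H^{s-1}}\|\beta\|_{H^{s'-1}}$.

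\textbf{Why the interpolation fails.}
\begin{itemize}
\item The bound from (i) gains one derivative only from $L_2$ to $H^1$, and only for $\varphi\in{\rm BUC}^1$. For $s\le s_c+1$ the space $H^{s-1}$ does not embed into ${\rm BUC}^1$.
\item If you interpolate (i) with the crude bound $H^{s-1}\times H^{s-1}\to H^{s-1}$, the output index is $(1-\theta)\cdot 1+\theta(s-1)$. Since $s>2$, this equals $s-1$ only at $\theta=1$. The same happens for the term with $\p_j\beta$, whose output index $\theta(s-2)$ equals $s-2$ only at $\theta=1$. So you only recover the crude endpoint, where $\beta$ must lie in $H^{s-1}$ rather than $H^{s'-1}$.
\item Lemma~\ref{le:multint} interpolates bounds into one fixed target $Y$, so it does not apply to two bounds with different targets.
\item Part (i) says nothing about $\llbracket\varphi,\llbracket\p_j,\sfB^\phi_{n,\nu}(f)\rrbracket\rrbracket$, because $\p_jf$ is not Lipschitz.
\item Your fallback fails as well. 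The commutator is not $B^\phi_{n+1,\nu}(f)[f^{[n]},\varphi,\cdot]$; its kernel carries an extra factor $|\xi|$. So neither \eqref{dqrep} nor Lemma~\ref{Bestgen} applies to it.
\end{itemize}

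\textbf{The missing idea.} Write $\p_j\llbracket\varphi,\sfB^\phi_{n,\nu}(f)\rrbracket[\beta]$ as a combination of generalized Riesz transforms in which $\varphi$ sits in a Lipschitz slot $b_i$. Then use the regularity trading built into Lemma~\ref{Bestgen}: with $s$ replaced by $s'$, $\sigma=\sigma_1=1-(s-s')$ and $\sigma_0=0$, such a term lies in $H^{s-2}$ with norm at most $C\|\varphi\|_{H^{s-1}}\|\beta\|_{H^{s'-1}}$. Products with $\p_jf$ or $\p_j\varphi$ are handled by Lemma~\ref{L:mult}.

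Your own computation in (i) yields this representation, provided you keep $\p_{x_j}+\p_{\xi_j}$ together on \emph{all} factors, as you did for $\delta_{[x,\xi]}\varphi$. Use $(\p_{x_j}+\p_{\xi_j})D_{[x,\xi]}f=|\xi|^{-1}\big(\p_jf(x)-D_{[x,\xi]}f\,\xi_j/|\xi|\big)$ together with $\delta_{[x,\xi]}\varphi=|\xi|D_{[x,\xi]}\varphi$. This gives
\begin{align*}
\p_j\llbracket\varphi,\sfB^\phi_{n,\nu}(f)\rrbracket[\beta]&=\p_j\varphi\,\sfB^\phi_{n,\nu}(f)[\beta]+n\,\p_jf\,B^\phi_{n,\nu}(f)[\varphi,f^{[n-1]},\beta]+2\,\p_jf\,B^{\phi'}_{n+2,\nu}(f)[\varphi,f^{[n+1]},\beta]\\
&\quad+\nu_jB^\phi_{n+1,\nu-e_j}(f)[\varphi,f^{[n]},\beta]-(n+|\nu|+N)B^\phi_{n+1,\nu+e_j}(f)[\varphi,f^{[n]},\beta]\\
&\quad-2B^{\phi'}_{n+3,\nu+e_j}(f)[\varphi,f^{[n+2]},\beta].
\end{align*}
This is what \eqref{comprep1a}--\eqref{commrep4} reduce to. The paper reaches it via difference quotients, the symmetry $\llbracket\varphi,B[\p_jf,\ldots]\rrbracket=\llbracket\p_jf,B[\varphi,\ldots]\rrbracket$, and one integration by parts.

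In particular, no $F_5$/$F_6$-type terms occur. Your claim that $\nabla_\xi D_{[x,\xi]}f=O(|\xi|^{s-s_c-1})$ is false as stated: \eqref{gradDest} controls only the radial derivative, and the tangential part is of order $|\xi|^{-1}$. With this identity, (i) follows from Lemma~\ref{L:MP0}, and (ii) follows from the $H^{s-2}$ estimates described above.
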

\begin{proof}
     It suffices to prove \eqref{comestalpha}-\eqref{comests'} for $\beta\in {\rm C}_0^\infty(\R^N)$, assuming in the case of (ii) additionally that $\varphi\in {\rm C}_0^\infty(\R^N)$.
    Observing that  Lemma \ref{L:MP0},  Lemma \ref{L:mult}, and Lemma~\ref{Bestgen} imply 
    \begin{align*}
        \|\llbracket\varphi, \sfB^{\phi}_{n,\nu}(f)\rrbracket[\beta]\|_2&\leq 
        C \|\varphi\|_\infty\|\beta\|_2,\\
        \|\llbracket\varphi, \sfB^{\phi}_{n,\nu}(f)\rrbracket[\beta]\|_{H^{s-2}}&\leq C \|\varphi\|_{H^{s-1}}\|\beta\|_{H^{s-2}},
    \end{align*}
 in view of \eqref{equivgrad}, it remains to estimate the partial derivatives~$\p_j\llbracket\varphi, B^{\phi}_{n,\nu}(f)\rrbracket[\beta]$, $1\leq j\leq N$, in~$L_2(\R^N)$ and in~$H^{s-2}(\R^N)$, respectively.
 In order to show that these derivatives actually exist, we compute, using \eqref{dqrep}, for $1\leq j\leq N$ and $\varepsilon\neq 0$, 
\begin{equation}\label{dqrep0}
\begin{aligned}
    D_\eps^j \big(\llbracket\varphi, \sfB^{\phi}_{n,\nu}(f)\rrbracket[\beta]\big)
    &=\llbracket  D_\eps^j\varphi, \sfB^{\phi}_{n,\nu}(\tau_\eps^j f)\rrbracket[\tau_\eps^j\beta]
  + \llbracket  \varphi, \sfB^{\phi}_{n,\nu}(\tau_\eps^j f)\rrbracket[D_\eps^j\beta]\\
    &\quad\,+\sum_{i=1}^n\llbracket\varphi,B^\phi_{n,\nu}(\tau_\eps^j f)
    [D_\eps^j f, f^{[i-1]},  \tau_\eps^j f^{[n-i]},\cdot]\rrbracket[\beta] \\
    &\quad\,+\llbracket\varphi,B^{\phi^1}_{n+2,\nu}(\tau_\eps^j f,f)[ D_\eps^j f,\tau_\eps^j f+f, f^{[n]},\cdot]\rrbracket[\beta],
\end{aligned}
\end{equation}
with $\phi^1$ defined in \eqref{gis}.
As for $1\leq i\leq n$   we have
\begin{align*}
    \llbracket\varphi,B^\phi_{n,\nu}(\tau_\eps^j f)[D_\eps^j f, f^{[i-1]}, \tau_\eps^j f^{[n-i]},\cdot]\rrbracket=\llbracket D_\eps^j f,B^\phi_{n,\nu}(\tau_\eps^j f)[\varphi,f^{[i-1]},   \tau_\eps^j f^{[n-i]},\cdot]\rrbracket
\end{align*}
and
\begin{align*}
    &\llbracket\varphi,B^{\phi^1}_{n+2,\nu}(\tau_\eps^j f,f)
    [ D_\eps^j f,\tau_\eps^j f+f,f^{[n]},\cdot]\rrbracket=\llbracket D_\eps^j f ,B^{\phi^1}_{n+2,\nu}(\tau_\eps^j f,f)
    [ \varphi,\tau_\eps^j f+f,f^{[n]},\cdot]\rrbracket,
\end{align*}
we may pass  to $\eps\to 0$  in \eqref{dqrep0}, using   \eqref{Lipclasses} and the convergences $D_\eps^j \varphi\to \p_j\varphi$ and~$D_\eps^j f\to \p_jf$ in $L_\infty(\R^N)$, to obtain that 
$\llbracket\varphi, \sfB^{\phi}_{n,\nu}(f)\rrbracket[\beta]\in H^1(\R^N)$ and, for $1\leq j\leq N$,
\begin{equation}\label{comprep1a}
\begin{aligned}
     \p_j( \llbracket\varphi, \sfB^{\phi}_{n,\nu}(f)\rrbracket[\beta]\big)
    =&\llbracket \p_j\varphi, \sfB^{\phi}_{n,\nu}(f)\rrbracket[\beta]
    +n\llbracket \p_j f,B^\phi_{n,\nu}(f)[\varphi,f^{[n-1]},\cdot]\rrbracket[\beta] \\
    &+\llbracket \varphi, \sfB^{\phi}_{n,\nu}(f)\rrbracket[\p_j\beta]+2\llbracket \p_jf ,B^{\phi'}_{n+2,\nu}(f,f)[ \varphi,f^{[n+1]},\cdot]\rrbracket[\beta].
\end{aligned}
\end{equation}
Further, for $x\in\R^N$ we have
 \[\llbracket\varphi, \sfB^{\phi}_{n,\nu}(f)\rrbracket[\p_j\beta] (x)
 =\frac{1}{|\s^N|}\PV\int_{\R^N}\phi\big((D_{[x,\xi]}f)^2\big)
 \big(D_{[x,\xi]}f\big)^n\frac{\xi^\nu}{|\xi|^{|\nu|}}D_{[x,\xi]}\varphi\frac{\p_j\beta(x-\xi)}{|\xi|^{N-1}}\,{\rm d}\xi,\]
 and we apply integration by parts to the PV integral to  rewrite
 \begin{equation} \label{commrep4}
 \begin{aligned}
  \llbracket\varphi, B^{\phi}_{n,\nu}(f)\rrbracket[ \p_j\beta]
 &=2B^{\phi'}_{n+2,\nu}(f)[\varphi, f^{[n+1]},\beta\p_jf]
 -2B^{\phi'}_{n+3,\nu+e_j}(f)[\varphi, f^{[n+2]},\beta]\\
 &\quad+nB^{\phi}_{n,\nu}(f)[\varphi,f^{[n-1]},\beta\p_jf ]
 -nB^{\phi}_{n+1,\nu+e_j}(f)[\varphi, f^{[n]},\beta]\\
  &\quad+\nu_jB^{\phi}_{n+1,\nu-e_j}(f)[\varphi, f^{[n]},\beta]
  -B^{\phi}_{n+1,\nu+e_j}(f)[\varphi, f^{[n]},\beta] \\
  & \quad+ \sfB^{\phi}_{n,\nu}(f)[\beta\p_j\varphi ]
  -(|\nu|+N-1)B^{\phi}_{n+1,\nu+e_j}(f)[\varphi, f^{[n]},\beta].
 \end{aligned}
 \end{equation}
 The estimate \eqref{comestalpha} is now a straightforward consequence of \eqref{comprep1a}-\eqref{commrep4}  and Lemma~\ref{L:MP0}.

 For the estimate~\eqref{comests'}  we use Lemma \ref{L:mult} and Lemma~\ref{Bestgen} to obtain
 \begin{align*}
     \|\p_j\varphi \sfB^{\phi}_{n,\nu}(f)[\beta]\|_{H^{s-2}}&\leq C\|\varphi\|_{H^{s-1}}\|B^{\phi}_{n,\nu}(f)[\beta]\|_{H^{s'-1}}\leq C \|\varphi\|_{H^{s-1}}\|\beta\|_{H^{s'-1}},\\
     \|\sfB^{\phi}_{n,\nu}(f)[\beta\p_j\varphi]\|_{H^{s-2}}&\leq C\|\beta\p_j\varphi \|_{H^{s-2}}\leq C \|\varphi\|_{H^{s-1}}\|\beta\|_{H^{s'-1}},\\
     \|\p_jfB^\phi_{n,\nu}(f)[\varphi, f^{[n-1]},\beta]\|_{H^{s-2}}&\leq C\|\varphi\|_{H^{s-1}}\|\beta\|_{H^{s'-1}}
     \leq C\|\varphi\|_{H^{s-1}}\|\beta\|_{H^{s'-1}},\\
     \|B^\phi_{n,\nu}(f)[\varphi,f^{[n-1]},\beta\p_jf]\|_{H^{s-2}}&\leq C\|\varphi\|_{H^{s-1}}\|\beta\p_jf\|_{H^{s'-1}}\leq C\|\varphi\|_{H^{s-1}}\|\beta\|_{H^{s'-1}}.
\end{align*}
 In particular, to handle the last two terms above, Lemma~\ref{Bestgen} is applied with~$s$
replaced by~$s'$ and with~$\sigma=\sigma_1:=1-(s-s')$.
 The estimates for the remaining terms in \eqref{comprep1a}-\eqref{commrep4} are analogous. 
 
 %The result for general $\beta$ follows by a standard density argument. 
\end{proof}
 
We now provide a product estimate for fractional norms which will be useful in the proof of the main localization result.
\begin{lemma}\label{L:alg}
 Let $\alpha\in(0,1],$ $\alpha'\in(0,\alpha)$,  and $r>N/2+\alpha-\alpha'$.
Then there exists a constant~$C>0$ such that 
\begin{equation*} 
\|fg\|_{H^{\alpha}}\leq C\big( \|f\|_\infty\|g\|_{H^{\alpha}}+\|f\|_{H^r}\|g\|_{H^{\alpha'}}\big),\qquad f\in H^r(\R^N),\quad g\in H^{\alpha}(\R^N).
\end{equation*}
\end{lemma}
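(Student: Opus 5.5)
We use the characterization \eqref{equivHs}--\eqref{semiHs} with $k=0$ when $\alpha\in(0,1)$, i.e. $\|u\|_{H^\alpha}^2\simeq\|u\|_2^2+[u]_{H^\alpha}^2$. The case $\alpha=1$ is handled separately and is elementary. There we use \eqref{equivgrad}, the identity $\nabla(fg)=f\nabla g+g\nabla f$, the bound $\|f\nabla g\|_2\le\|f\|_\infty\|g\|_{H^1}$, and
\[
\|g\nabla f\|_2\le C\|\nabla f\|_{H^{r-1}}\|g\|_{H^{\alpha'}}.
\]
The last bound is Lemma~\ref{L:mult} with $(r_1,r_2,r)=(r-1,\alpha',0)$, which is admissible because $r-1+\alpha'>N/2$ is exactly the hypothesis $r>N/2+1-\alpha'$. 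If $r<1$ this cannot happen for $N\ge2$, so $r-1\ge0$ is automatic. The $L_2$-part is trivial in all cases: $\|fg\|_2\le\|f\|_\infty\|g\|_2$.

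For $\alpha\in(0,1)$ we split the translation difference as
\[
\tau_\zeta(fg)-fg=(\tau_\zeta f)(\tau_\zeta g-g)+g(\tau_\zeta f-f).
\]
The first term contributes at most $\|f\|_\infty^2[g]_{H^\alpha}^2$ to the seminorm integral. It therefore remains to bound
\[
J:=\int_{\R^N}\frac{\|g(\tau_\zeta f-f)\|_2^2}{|\zeta|^{N+2\alpha}}\,{\rm d}\zeta\le C\|f\|_{H^r}^2\|g\|_{H^{\alpha'}}^2 .
\]
The natural tool is Lemma~\ref{L:mult} in the form $H^{\rho}\cdot H^{\alpha'}\hookrightarrow L_2$ for $\rho+\alpha'>N/2$. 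This gives $\|g(\tau_\zeta f-f)\|_2\le C\|g\|_{H^{\alpha'}}\|\tau_\zeta f-f\|_{H^{\rho}}$. Choose $\rho:=r-\alpha$ if $r-\alpha\ge0$. The hypothesis $r>N/2+\alpha-\alpha'$ is precisely $\rho+\alpha'>N/2$, and we need $\rho\ge0$. Then Lemma~\ref{L:prep} with $r$ replaced by $\rho$ yields
\[
J\le C\|g\|_{H^{\alpha'}}^2\int_{\R^N}\frac{\|\tau_\zeta f-f\|_{H^{r-\alpha}}^2}{|\zeta|^{N+2\alpha}}\,{\rm d}\zeta\le C\|g\|_{H^{\alpha'}}^2\|f\|_{H^r}^2.
\]
This closes the estimate.

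The main obstacle is making sure the multiplication estimate is applicable. Lemma~\ref{L:mult} requires $r_1,r_2\ge0$, and here $r_1=r-\alpha$, $r_2=\alpha'$, with target $0$. Since $N\ge2$ and $\alpha\le1$, we have $r>N/2+\alpha-\alpha'>\alpha$, so $r-\alpha>0$ and everything is fine. If one wanted $N=1$, one would instead split the $\zeta$-integral into $|\zeta|\le1$ and $|\zeta|>1$. For $|\zeta|>1$ one uses the bound $\|\tau_\zeta f-f\|_\infty\le2\|f\|_\infty$. This is not needed here.

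Combining the $L_2$-part, the first difference term and $J$ gives the asserted inequality for all $\alpha\in(0,1]$.
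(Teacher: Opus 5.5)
Your proposal is correct and follows essentially the same route as the paper. For $\alpha\in(0,1)$ you use the same splitting of $\tau_\zeta(fg)-fg$, then Lemma~\ref{L:mult} in the form $H^{r-\alpha}\cdot H^{\alpha'}\hookrightarrow L_2$, then Lemma~\ref{L:prep} with $r$ replaced by $r-\alpha$. For $\alpha=1$ you use the product rule together with Lemma~\ref{L:mult} applied to $g\nabla f$.
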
 
\begin{proof}
Let $\alpha\in(0,1)$ first.
Since $\|fg\|_2\leq \|f\|_\infty\|g\|_2$, it remains to estimate the seminorm~$[fg]_{H^{ \alpha}}$.
 Recalling the definition~\eqref{semiHs}, we infer  from Lemma \ref{L:mult}  and  Lemma \ref{L:prep} that 
\begin{align*}
 [fg]_{H^{\alpha}}^2& =\int_{\R^N}\frac{\|\tau_\zeta(fg)-fg\|_2^2}{|\zeta|^{N+2{\alpha}}}\,{\rm d\zeta}\\
& \leq C\|f\|_\infty^2\int_{\R^N}\frac{\|\tau_\zeta g-g\|_2^2}{|\zeta|^{N+2{\alpha}}}\,{\rm d\zeta}
+C\int_{\R^N}\frac{\|(\tau_\zeta f-f)g\|_2^2}{|\zeta|^{N+2{\alpha}}}\,{\rm d\zeta}\\
& \leq C\|f\|_\infty^2\|g\|_{H^{\alpha}}^2+C\|g\|_{H^{\alpha'}}^2\int_{\R^N}\frac{\|\tau_\zeta f-f\|_{H^{r-{\alpha}}}^2}{|\zeta|^{N+2{\alpha}}}\,{\rm d\zeta}\\
&\leq C \|f\|_\infty^2\|g\|_{H^{\alpha}}^2+C\|g\|_{H^{\alpha'}}^2\|f\|_{H^r}^2,
\end{align*}
which proves the claim. 

For $\alpha=1$ we have from Lemma \ref{L:mult}
\[\|fg\|_{H^1}\leq C\big(\|f\|_\infty \|g\|_{H^1}+\|g\nabla f\|_2\big)
\leq C\big(\|f\|_\infty \|g\|_{H^1}+\|\nabla f\|_{H^{r-1}} \|g\|_{H^{\alpha'}}\big),
\]
which proves the claim as well.
\end{proof}

As a further result, which we use in Section~\ref{Sec:5}, we establish the following commutator type property.

  \begin{lemma}\label{L:Mix}
Given  $M>0$,   $ s'\in(\max\{s-1, s_c\},s)$,  $\phi \in {\rm C}^\infty([0,\infty))$,  $\nu\in\N^N$,  and $n\in\N$ with $n\geq 1$ and $n+|\nu|$ odd, there exists a  constant~$C>0$ such that for  all $f\in H^s(\R^N)$ with   $\|f\|_{H^{s}}\leq M$, 
$h\in H^s(\R^N)$, and~${\beta\in H^{s-1}(\R^N)}$ we have
\begin{equation}\label{es:Mix} 
\bigg\|B^\phi_{n,\nu}(f)[ f^{[n-1]},h,\beta]-\beta\sum_{j=1}^N \sfB^\phi_{n-1,\nu+e_j}(f)[\p_j h]\bigg\|_{H^{s-1}}\leq C\|\beta\|_{H^{s-1}}\|h\|_{H^{s'}}.
\end{equation}
\end{lemma}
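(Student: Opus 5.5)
We read the operator difference
\[
R[h,\beta]:=B^\phi_{n,\nu}(f)[f^{[n-1]},h,\beta]-\beta\sum_{j=1}^N \sfB^\phi_{n-1,\nu+e_j}(f)[\p_j h]
\]
as the $H^{s-1}$-analogue of estimate \eqref{es:MPIb} in Lemma~\ref{L:MP1a}. By symmetry of $B^\phi_{n,\nu}(f)$ in its linear arguments we may place $h$ in the first slot. Estimate \eqref{es:MPIb}, applied with $a=f$ (note $\|\nabla f\|_{{\rm BUC}^{s-s_c}}\le CM$), $b_1=h$ and $b_2=\ldots=b_n=f$, then gives the $L_2$ part of the claim:
\[
\|R[h,\beta]\|_2\le C\|\beta\|_{H^{s-1}}\|h\|_{H^{s'-s+1}}\le C\|\beta\|_{H^{s-1}}\|h\|_{H^{s'}}.
\]

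To reach $H^{s-1}$ we proceed as in the proof of Lemma~\ref{Bestgen}. We write $s-1=k+\alpha$ with $k\in\N$ and $\alpha\in[0,1)$. First we handle the fractional seminorm via \eqref{equivHs}. Then we induct on derivatives using \eqref{dqrep}, or equivalently the chain rule \eqref{F:MP2a} of Lemma~\ref{L:MP1b}.

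Differentiating $R[h,\beta]$ in $x_j$ produces several families of terms:
\begin{itemize}
\item[(a)] $R[h,\p_j\beta]$. This has the same structure as $R$ but with $\beta$ one order rougher.
\item[(b)] $R[\p_jh,\beta]$. This has the same structure but with $h$ one order rougher.
\item[(c)] Terms where the derivative falls on an $f$-argument or on the $\phi$-argument. In these the structure $B_{m,\mu}(f)[\ldots,h,\ldots,\beta]-\beta\sum_i B_{m-1,\mu+e_i}(f)[\ldots,\p_i h]$ reappears with $\p_jf$ in place of one $f$, with $\phi'$, and with $m=n+2$ (see \eqref{commsfb}).
\item[(d)] The product-rule term $\p_j\beta\sum_i\sfB^\phi_{n-1,\nu+e_i}(f)[\p_ih]$. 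It cancels against the corresponding piece of (a) produced by \eqref{es:MPIb}.
\end{itemize}

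So we formulate the induction hypothesis for the whole family of remainders $R^{\phi}_{m,\mu}(f)[b',h,\beta]$, allowing general $b'$ arguments, and prove a graded estimate of the form
\[
\|R\|_{H^{s-1-\sigma}}\le C\|\beta\|_{H^{s-1-\sigma_0}}\|h\|_{H^{s'-\sigma_1}}\prod\|b_i\|_{H^{s-\sigma_i}}.
\]
Lemma~\ref{le:multint} supplies interpolation between the endpoint cases, exactly as in the proof of Lemma~\ref{BestL2}. For the fractional step $\alpha\in(0,1)$ we use the shift identity \eqref{shiftdiff}: $(\tau_\zeta-1)R$ splits into remainders with one argument replaced by a difference $\tau_\zeta g-g$, plus products such as $(\tau_\zeta\beta-\beta)\,\sfB[\p_ih]$. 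The products are controlled by Lemma~\ref{L:alg} and Lemma~\ref{L:mult}, and integration in $\zeta$ is handled by Lemma~\ref{L:prep}.

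The hard point is the fractional or low-regularity base estimate, which must put \emph{all} the smoothness on $\beta$ ($H^{s-1}$) while using only $H^{s'}$ for $h$. If we differentiate $R$ naively, the term $B^\phi_{n,\nu}(f)[f^{[n-1]},\p_jh,\beta]$ needs $\p_jh$ in $H^{s}$-type norms. This must be avoided by keeping it paired with $\beta\,\sfB[\p_i\p_jh]$, i.e.\ by always working with full remainders (term (b)) and never with the individual operators.

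To close the argument we would mimic the proof of Lemma~\ref{L:MP1a}. We decompose $R=(-F_2+F_3)/|\s^N|$, and the $H^{s-1}$-regularity of $\beta$ then enters only through $\delta_{[x,\xi]}\beta$ and $\beta(x)$. Derivatives can be moved onto $\beta$ and onto the $f$, $a$ factors at the cost of the Hölder bound \eqref{gradDest}, while the weakly singular integral $I$ in \eqref{estI} absorbs $h$ in $H^{s'-s+1}$. Repeating this after each differentiation, with $s$ lowered to $s'$ in Lemma~\ref{Bestgen} where needed as in the proof of Lemma~\ref{commphi}, should give \eqref{es:Mix}.
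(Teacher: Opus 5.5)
Your outline takes a genuinely different and much heavier route than the paper. The architecture can be completed, but not quite as written, and the closing step is left open.

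\textbf{Comparison with the paper.} You and the paper share the base estimate \eqref{es:MPIb} and the decision to keep the top-order term (your (b)) as a full remainder. The paper then fixes $\beta\in H^{s-1}(\R^N)$ and treats $T[h]:=B^\phi_{n,\nu}(f)[f^{[n-1]},h,\beta]-\beta\sum_j\sfB^\phi_{n-1,\nu+e_j}(f)[\p_jh]$ as a linear operator in $h$ alone. It computes $\llbracket\p_\ell,T\rrbracket$ from \eqref{F:MP2a}, so that $\p_\ell T[h]=T[\p_\ell h]+\llbracket\p_\ell,T\rrbracket[h]$. It then bounds every commutator term \emph{individually} in $H^{\theta-1}$, $1\le\theta\le s-1$, by $C\|\beta\|_{H^{s-1}}\|h\|_{H^{\theta+1-(s-s')}}$, using Lemma~\ref{Bestgen} with $s$ replaced by $s'$ and Lemma~\ref{L:mult}. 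This includes $B^\phi_{n,\nu}(f)[f^{[n-1]},h,\p_\ell\beta]$ and $\p_\ell\beta\,\sfB^\phi_{n-1,\nu+e_j}(f)[\p_jh]$ taken separately, and the terms carrying $\p_\ell f$ or $\phi'$. This works because these terms sit one order below the top, so the loss $s-s'<1$ from passing to the $s'$-scale is affordable; no cancellation is needed there. Induction on $k$ for $\|T[h]\|_{H^{k+\alpha}}\le C\|\beta\|_{H^{s-1}}\|h\|_{H^{k+\alpha+1-(s-s')}}$, with plain linear interpolation in $h$ for fractional orders, then finishes the proof. There are no shift identities and no graded family. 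Your rule of never splitting remainders is what forces a family in which $\beta$, $h$ and all $b'$ vary at once. That would give a more general estimate (for instance with $\beta$ below $H^{s-1}$), but it needs new base estimates that the paper never uses.

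\textbf{What must be repaired on your route.}

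(1) Lemma~\ref{le:multint} does not apply ``exactly as in Lemma~\ref{BestL2}''. The remainder is not symmetric in $h$ and the $b'$, and $h$ is measured on the $H^{s'}$-scale. Use instead multilinear complex interpolation \cite[Theorem 4.4.1]{BL76}, applied pairwise, between the vertices of the simplex $\sigma_0+\sigma_1+\sum_i\sigma_i=s-1$. The vertex with all loss on $h$ is \eqref{es:MPIb}. The vertices with all loss on $\beta$, or on a single $b'_i$, follow by bounding the two pieces of $R$ separately, using Lemma~\ref{L:MP0}, \eqref{es:MPI} and Lemma~\ref{Bestgen} with $s$ replaced by $s'$, together with $H^{s'-1}(\R^N)\hookrightarrow L_\infty(\R^N)$.

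(2) In the fractional step, the product $(\tau_\zeta\beta-\beta)\,\tau_\zeta\sfB^\phi_{n-1,\nu+e_i}(f)[\p_ih]$ cannot be controlled by Lemmas~\ref{L:alg} and~\ref{L:mult}. At the base level, with all $k$ derivatives on $h$, you only have $h\in H^{s'-s+1+\alpha}$. This lies below $H^1$ when $0<\alpha<s-s'$, so $\p_ih$ has negative regularity. Then neither this product nor $B^\phi_{n,\nu}(\tau_\zeta f)[\tau_\zeta f^{[n-1]},\tau_\zeta h,\tau_\zeta\beta-\beta]$ is bounded on its own. You must telescope \eqref{shiftdiff} so that $(\tau_\zeta-1)R$ splits \emph{exactly} into remainders.

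(3) Your item (d) is not an extra family. It is already part of $R[h,\p_j\beta]$, and nothing cancels.

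(4) The closing plan of re-running the $F_2,F_3$ decomposition of Lemma~\ref{L:MP1a} after each differentiation is unnecessary and, as stated, not an argument. Once the graded $L_2$ bounds hold, \eqref{shiftdiff}, Lemma~\ref{L:prep} and the chain rule suffice.
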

\begin{proof} 
We fix  the functions~$f$ and $\beta$ according to the assumptions and consider the 
linear operator~$ T\in \mathcal{L}(H^s(\R^N),H^{s-1}(\R^N))$ defined by
\[
T [h]:=B^\phi_{n,\nu}(f)[ f^{[n-1]},h,\beta]-\beta\sum_{j =1}^N \sfB^\phi_{n-1,\nu+e_j}(f)[\p_j h],\qquad h\in H^s(\R^N).\]
For $1 \leq \ell\leq N$, we use the chain rule \eqref{F:MP2a} to represent   the commutator of~$T$ with the spatial {derivative~$\p_\ell$} as
\begin{align*}
    \llbracket\p_\ell,T\rrbracket [h]&=(n-1)B_{n,\nu}^\phi(f)[\p_\ell f, f^{[n-2]},h,\beta]+2B_{n+2,\nu}^{\phi'}(f)[\p_\ell f, f^{[n]},h,\beta]\\
    &\quad\,+B_{n,\nu}^\phi(f)[ f^{[n-1]},h,\p_\ell\beta]
    -\sum_{j=1}^N\Big(
    \p_\ell\beta \sfB_{n-1,\nu+e_j}^\phi(f)[\p_jh]\\
    &\quad\,+(n-1)\beta B_{n-1,\nu+e_j}^\phi(f)[\p_\ell f, f^{[n-2]},\p_jh]
    +2\beta B^{\phi'}_{n+1,\nu+e_j}[\p_\ell f, f^{[n]},\p_jh]\Big).
\end{align*}

We fix $\theta\in[1,s-1]$ and estimate all terms on the right in the norm of $H^{\theta-1}(\R^N)$, using Lemma  \ref{L:mult} and  Lemma~\ref{Bestgen} with $s$ replaced by $s'$ and appropriate choices of $\sigma$ and $\sigma_i$. We obtain
\begin{align*}
    \|B_{n,\nu}^\phi(f)[\p_\ell f, f^{[n-2]},h,\beta]\|_{H^{\theta-1}}
    &\leq C\|\p_\ell f\|_{H^{s-1}}\|h\|_{H^{\theta+1-(s-s')}}
    \|\beta\|_{H^{s'-1}}\\
    &\leq C\|h\|_{H^{\theta+1-(s-s')}} \|\beta\|_{H^{s-1}},\\
    \|B_{n,\nu}^\phi(f)[  f^{[n-1]},h,\p_\ell\beta]\|_{H^{\theta-1}}
    &\leq C\|h\|_{H^{\theta+1-(s-s')}}\|\p_\ell\beta\|_{H^{s-2}},\\
    \|\p_\ell\beta  \sfB_{n-1,\nu+e_j}^\phi(f)[\p_jh]\|_{H^{\theta-1}}
    &\leq C\|\p_\ell\beta\|_{H^{s-2}}\| \sfB_{n-1,\nu+e_j}^\phi(f)[\p_jh]\|_{H^{\theta-(s-s')}}\\
    &\leq C\|h\|_{H^{\theta+1-(s-s')}}\|\beta\|_{H^{s-1}},\\
    \|\beta B_{n-1,\nu+e_j}^\phi(f)[\p_\ell f,f^{[n-2]},\p_jh]\|_{H^{\theta-1}}
    &\leq C\|\beta\|_{H^{s-1}}\|B_{n-1,\nu+e_j}^\phi(f)[\p_\ell f, f^{[n-2]},\p_jh]\|_{H^{\theta-1}}\\
    &\leq C\|h\|_{H^{\theta+1-(s-s')}}\|\beta\|_{H^{s-1}}.
\end{align*}
The remaining terms can be estimated analogously, and we obtain
\be\label{commdlT}
\|\llbracket\p_\ell,T\rrbracket[h]\|_{H^{\theta-1}}
\leq C\|h\|_{H^{\theta+1-(s-s')}}\|\beta\|_{H^{s-1}}.
\ee

We are going to show 
\be\label{indTh}
\|T[h]\|_{H^{k+\alpha}}\leq C\|\beta\|_{H^{s-1}}\|h\|_{H^{k+\alpha+1-(s-s')}}
\ee
for all $k\in\N$ and $\alpha\in[0,1)$ with $k+\alpha\leq s-1$,  the desired estimate \eqref{es:Mix} being the special case $k+\alpha=s-1$.

For $k=\alpha=0$, the result is given by \eqref{es:MPIb}.
To obtain it for $k=1$  and~$\alpha=0$ we use~\eqref{indTh}  with $k=\alpha=0$ together with \eqref{commdlT} with $\theta=1$ to obtain
\begin{align*}
\|T[h]\|_{H^1}&\leq C\Big(\|Th\|_{2}+\sum_{\ell=1}^N\|\p_\ell T[h]\|_2\Big)
\leq C\Big(\|T[h]\|_2+\sum_{\ell=1}^N\big(\|T[\p_\ell  h]\|_{L^2}+\|\llbracket\p_\ell,T\rrbracket [h]\|_2\big)\Big)\\
&\leq C\|\beta\|_{H^{s-1}}\|h\|_{H^{2-(s-s')}}.
\end{align*}
From this estimate and \eqref{indTh}  with $k=\alpha=0$ we obtain by interpolation that \eqref{indTh} holds with $k=0$ and $\alpha\in[0,1)$. Now the general result follows by induction over $k$. Indeed, let~$k\in\N$ and~$\alpha\in[0,1)$ be such that $k+1+\alpha\leq s-1$ and assume that \eqref{indTh} holds for this~$k$ and~$\alpha$. Then, by the induction assumption and \eqref{commdlT} with $\theta=k+\alpha+1$,  we conclude
\begin{align*}
\|T[h]\|_{H^{k+\alpha+1}}&\leq C\Big(\|T[h]\|_{H^{k+\alpha}}+\sum_{\ell=1}^N\|\p_\ell T[h]\|_{H^{k+\alpha}}\Big)\\[-0.75ex]
&\leq C\Big(\|T[h]\|_{H^{k+\alpha}}+\sum_{\ell=1}^N\big(\|T[\p_\ell  h]\|_{H^{k+\alpha}}+\|\llbracket\p_\ell,T\rrbracket [h]\|_{H^{k+\alpha}}\big)\Big)\\
&\leq C\|\beta\|_{H^{s-1}}\|h\|_{H^{k+\alpha+2-(s-s')}}.
\end{align*}
 \end{proof}
 \subsection*{The Fourier multipliers $D^{\phi,A}_{n,\nu}$} 
 
 In Proposition~\ref{T:LOC}, we prove that the operator~$\sfB^{\phi}_{n,\nu}(f)$ 
 can be locally approximated by the singular integral operator~\( D^{\phi,A}_{n,\nu} \),  
defined in \eqref{dfnb} (with suitable $A\in\R^N$).  
The properties of the latter are investigated in Proposition~\ref{P:D1}, where we show that \( D^{\phi,A}_{n,\nu} \),  assuming $\phi \in {\rm C}^\infty([0,\infty))$,    $n\in\N$, and $\nu\in\N^N$ with   $n+|\nu|$ odd, is a Fourier multiplier with a purely imaginary bounded symbol.  
Moreover, in Lemma~\ref{L:SGRT}, we provide estimates for a Fourier multiplier that appears in Proposition~\ref{P:Local} and involves certain operators \( D^{\phi,A}_{n,\nu}\).

\begin{prop}\label{P:D1}  
  The operator $D:=D^{\phi,A}_{n,\nu}$ is a Fourier multiplier with symbol
  \[
  [z\mapsto im(z)] \qquad \text{with} \qquad m\in L_\infty(\R^N,\R).
  \] 
  Moreover,  for any constant $L>0$ there is a constant $C=C(L)>0$ 
  such that $|A|\leq L$ implies $\|m\|_\infty\leq C$.
  \end{prop}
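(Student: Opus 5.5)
The plan is to view $D:=D^{\phi,A}_{n,\nu}$ as a classical Calderón–Zygmund convolution operator with a kernel that is homogeneous of degree $-N$ and odd. Then we compute its symbol through the standard formula for such kernels. Write the kernel as
\[
K(\xi)=\frac{\Omega(\xi/|\xi|)}{|\xi|^N},\qquad \Omega(\omega):=\frac{1}{|\s^N|}\,\phi\big((A\cdot\omega)^2\big)(A\cdot\omega)^n\omega^\nu,\quad \omega\in\s^{N-1}.
\]
Since $\phi$ is smooth, $\Omega$ is smooth on the unit sphere $\s^{N-1}\subset\R^N$. Because $n+|\nu|$ is odd, $\Omega(-\omega)=-\Omega(\omega)$, so in particular $\int_{\s^{N-1}}\Omega\,{\rm d}S=0$. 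The principal value integral $D[\beta]=\PV K*\beta$ is therefore well defined for Schwartz functions. By the classical theory (e.g. \cite[Theorem 9.10]{CM97}, or Stein's treatment of homogeneous singular integrals), it is the Fourier multiplier with symbol $\widehat{K}$, which is homogeneous of degree $0$.

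For the symbol itself I will use the formula valid for odd $\Omega$:
\[
\widehat K(z)=-\frac{i\pi}{2}\int_{\s^{N-1}}\Omega(\omega)\,{\rm sgn}(\omega\cdot z)\,{\rm d}S(\omega),\qquad z\neq0
\]
(up to the normalization convention of the Fourier transform). I derive it by truncating the kernel to $\varepsilon<|\xi|<R$, passing to polar coordinates, and using oddness to replace $e^{-ir\omega\cdot z}$ by $-i\sin(r\omega\cdot z)$. The radial integral $\int_\varepsilon^R \sin(r t)\,{\rm d}r/r$ is bounded uniformly in $\varepsilon,R,t$ and converges to $\frac{\pi}{2}{\rm sgn}(t)$. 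Dominated convergence on the sphere, with $\Omega$ bounded, then gives the formula. It shows that the symbol equals $i m(z)$ with $m$ real-valued, and also $m(-z)=-m(z)$.

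The bound is then immediate:
\[
|m(z)|\le \frac{\pi}{2}\int_{\s^{N-1}}|\Omega|\,{\rm d}S\le C\sup_{t\in[0,L^2]}|\phi(t)|\,L^n .
\]
This holds because $|A\cdot\omega|\le |A|\le L$ and $|\omega^\nu|\le1$, so it depends only on $L$ (and on $N,n,\nu,\phi$). Finally, the convolution operator with bounded symbol $im$ agrees with $D$ on the dense set ${\rm C}_0^\infty(\R^N)$. Since $D\in\kL(L_2(\R^N))$ by \eqref{identif} and Lemma~\ref{L:MP0}, this identification extends to all of $L_2(\R^N)$.

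The only delicate step is the interchange of the principal value limit and the Fourier transform, i.e. justifying $\widehat{\PV K}=\lim \widehat{K\mathbf 1_{\{\varepsilon<|\xi|<R\}}}$ pointwise with uniform bounds. I would handle it by testing against Schwartz functions: the truncated kernels converge in $\mathcal S'$ to $\PV K$ because $\Omega$ has mean zero. Their Fourier transforms are bounded uniformly by the estimate above, which lets me pass to the limit.
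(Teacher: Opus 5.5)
Your proposal is correct and follows essentially the same route as the paper: truncate the kernel, use oddness and polar coordinates to reduce to the uniformly bounded sine integral $\int \sin\tau/\tau\,{\rm d}\tau$, obtain the formula $m(z)=-\frac{\pi}{2}\int_{\s^{N-1}}{\rm sgn}(\omega\cdot z)\,\Omega(\omega)\,{\rm d}S(\omega)$ by dominated convergence, bound it via $\sup|\Omega|$ for $|A|\le L$, and then identify the multiplier with $D$ on ${\rm C}_0^\infty(\R^N)$ and extend by density. The only difference is technical. You justify passing to the limit through convergence of the truncated kernels in $\mathcal S'$. The paper instead uses $L_2$-convergence for the outer truncation and pointwise a.e.\ convergence along subsequences, both for the symbols and for $D_\delta[\beta]\to D[\beta]$.
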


\begin{proof} Fix $L>0$ and $A\in\R^N$  with $|A|\leq L$.
We  define the kernel
\begin{equation}\label{polar}
\begin{aligned}
    K(\xi)&:=\frac{\mK(\omega)}{r^N},\qquad\xi\in\R^N\setminus\{0\},
    \quad r:=|\xi|,\quad\omega:=\xi/|\xi|\in\s^{N-1},\\
    \mK(\omega)&:=\frac{1}{|\s^N|}\phi((A\cdot\omega)^2)(A\cdot\omega)^n\,\omega^\nu,
\end{aligned}
\end{equation}
  and note that $K$ is odd because $n+|\nu|$ is odd. Moreover,   there is a constant~$C=C(L)>0$ such that
  \[|\mK(\omega)|\leq C,\qquad \omega\in\s^{N-1}.\]
For $\delta>0$, we introduce the truncated kernel
\[K_\delta:=\mathbf{1}_{\{ \delta<|\xi|\}} K\in L_2(\R^N),\]
where $\mathbf{1}_{\{ \delta<|\xi|\}}$ is the characteristic function of the set 
$\{\xi\in\R^N\,:\, \delta<|\xi|\}$, and the corresponding convolution operator~$D_\delta$ given by
\[D_\delta[\beta]:=K_\delta\ast\beta,\qquad\beta\in L_2(\R^N).\]
By the definition of PV integrals, for $\delta\to 0$ we have
\be\label{DdeltoD}
D_\delta[\beta](x)\to D [\beta](x),\qquad  x\in\R^N,\quad \beta\in {\rm C}^\infty_0(\R^N).
\ee

For $\beta\in L_2(\R^N)\cap L_1(\R^N)$, we have by the properties of the Fourier transform 
\be\label{Foumult}
\clf[D_\delta[\beta]]= (2 \pi)^{N/2}\clf [K_\delta]\cdot\clf[\beta].
\ee
We will show that, in fact,
\be\label{FKdelta}
im_\delta:=(2 \pi)^{N/2}\clf [K_\delta]\in L_\infty(\R^N),
\ee
so that \eqref{Foumult} extends by a standard density argument  to   $ L_2(\R^N)$, and hence~$D_\delta\in\mathcal{L}(L_2(\R^N))$ is the Fourier multiplier with symbol $im_\delta$.

To show $\eqref{FKdelta}$, we introduce for $\eta>\delta$
\[K_{\delta,\eta}:=\mathbf{1}_{\{\delta<|\xi|<\eta\}} K\in L_2(\R^N)\cap L_1(\R^N).\]
By dominated convergence we have
\[K_{\delta,\eta}\to K_\delta
\quad\text{in $L_2(\R^N)$   as $\eta\to \infty$}\]
and hence
\be\label{L2convsym}
i m_{\delta,\eta}:= (2 \pi)^{N/2}\clf  [K_{\delta,\eta}]\to (2 \pi)^{N/2}\clf  [K_{\delta}]=im_\delta
\qquad\text{in $ L_2(\R^N)$ as $\eta\to \infty$}.
\ee

As $K_{\delta,\eta}$ is odd, we obtain for $z\in\R^N$ by introducing polar coordinates $(r,\omega)$ (cf. \eqref{polar}) and the substitution $\tau:= r|\omega\cdot z|$
\begin{align*}
   (2 \pi)^{N/2} \clf [K_{\delta,\eta}](z)&=-i\int_{\{\delta<|\xi|<\eta\}}
    K(\xi)\sin(\xi\cdot z)\,{\rm d}\xi\\
    &    =-i\int_{\s^{N-1}}
    \Big(\int_\delta^\eta\frac{\sin(r\omega\cdot z)}{r}\,{\rm d}r\Big)
    \mK(\omega)\,{\rm d}S(\omega)\\
    &=-i\int_{\s^{N-1}}\Big(\int_{\delta |\xi\cdot z|}^{\eta |\xi\cdot z|}
    \frac{\sin\tau}{\tau}\,{\rm d}\tau\Big)\mathop{\rm sgn}(\omega\cdot z)\mK(\omega)\,{\rm d} S(\omega).
\end{align*}
As the integral over $\tau$ is bounded independently of the integration limits we find that
\[m_{\delta,\eta}:=-i(2 \pi)^{N/2}\clf  [K_{\delta,\eta}]\in L_\infty(\R^N)\cap  L_2(\R^N)\qquad\text{and}\qquad
\quad\|m_{\delta,\eta}\|_\infty\leq C,\]
with a constant  $C=C(L)>0$ independent of $\delta$ and $\eta$. By dominated convergence, 
\[m_{\delta,\eta}(z)\to
-\int_{\s^{N-1}}\Big(\int_{\delta |\xi\cdot z|}^{\infty}
\frac{\sin\tau}{\tau}\,{\rm d}\tau\Big)\mathop{\rm sgn}(\omega\cdot z)\mK(\omega)\,{\rm d} S(\omega)=:\wt m_\delta(z),\qquad z\in\R^N,\]
 for~$\eta\to \infty$.
 From \eqref{L2convsym} we have that also $m_{\delta,\eta_k}\to m_\delta$ pointwise almost everywhere (a.e) for some sequence $\eta_k\to\infty$. 
Thus, $m_\delta=\wt m_\delta$ a.e., and \eqref{FKdelta} is shown. Moreover,
\be\label{mdelunif}
\|m_\delta\|_\infty\leq C
\ee
with  $C=C(L)>0$ independent of $\delta>0$.

Further, again by dominated convergence,  we have for $z\in\R^N$,  as~$\delta\to 0$,
\[m_\delta(z)\to
-\frac{\pi}{2}\int_{\s^{N-1}}\mathop{\rm sgn}(\omega\cdot z)\mK(\omega)\,{\rm d} S(\omega)
=:m(z)\qquad\text{as}\qquad 
\int_0^\infty \frac{\sin\tau}{\tau}\,{\rm d}\tau=\pi/2.
\]
Let $\wt D\in\mathcal{L}(L_2(\R^N))$ denote the Fourier multiplier with symbol $im$. Then, for  $\beta\in{\rm C}^\infty_0(\R^N)$, dominated convergence  and \eqref{mdelunif} lead us to
\[
\|D_{\delta}[\beta]- \wt D[\beta]\|_2=\|(m_{\delta}-m)\mathcal{F}[\beta]\|_2 \to 0 \qquad\text{ for~$\delta\to \infty$}. 
\]
Therefore,  $D_{\delta_k}[\beta]\to\wt D[\beta]$ pointwise a.e. along some sequence $\delta_k\to 0$,
so that $ \wt D[\beta]= D[\beta]$ a.e. by \eqref{DdeltoD}.
This completes the proof. 
\end{proof}

 We recall the definition of $\bar\phi\in {\rm C}^\infty([0,\infty))$ from \eqref{deefphi} and establish now some conclusions on the Fourier multipliers that occur as localizations of the operators $\Psi(\tau)$, cf. \eqref{Psi}, Lemma~\ref{le:locAop}, and Proposition \ref{P:Local}. They are crucial for proving Theorem~\ref{T:GP} and Proposition~\ref{P:inv}.

\begin{lemma}\label{L:SGRT}
Let $L>0$ be given. For all $A\in\R^N$ such that $|A|\leq L$, the operator 
\[
T:=\sum_{k=1}^N D_{0,e_k}^{\bar\phi,A} \frac{\p}{\p x_k}
\]
is a Fourier multiplier with symbol  $m_T\in L_\infty(\R^N,\R)$   and there is a constant~${\eta=\eta(L)\in(0,1)}$ such that
\[\eta|z|\leq m_T(z)\leq \eta^{-1}|z|,\qquad z\in\R^N.\]
\end{lemma}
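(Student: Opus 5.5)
By Proposition~\ref{P:D1}, each $D^{\bar\phi,A}_{0,e_k}$ is a Fourier multiplier with symbol $i m_k$, where
\[
m_k(z)=-\frac{\pi}{2}\int_{\s^{N-1}}\mathop{\rm sgn}(\omega\cdot z)\,\mK_k(\omega)\,{\rm d}S(\omega),\qquad \mK_k(\omega)=\frac{1}{|\s^N|}\bar\phi((A\cdot\omega)^2)\,\omega_k .
\]
Since $\p/\p x_k$ has symbol $iz_k$, the operator $T$ has symbol
\[
m_T(z)=-\sum_k m_k(z)z_k=\frac{\pi}{2|\s^N|}\int_{\s^{N-1}}|\omega\cdot z|\,\bar\phi((A\cdot\omega)^2)\,{\rm d}S(\omega).
\]
This is real and nonnegative, and it is positively homogeneous of degree one in $z$. (Strictly, $m_T$ is not in $L_\infty$ but satisfies $m_T(z)/|z|\in L_\infty$; I read the statement in that sense.) The plan is therefore to compute this formula carefully and then bound it above and below.

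For the upper bound, note that $0<\bar\phi\le1$ on $[0,\infty)$. Hence
\[
m_T(z)\le \frac{\pi}{2|\s^N|}\,|\s^{N-1}|\,|z|.
\]
For the lower bound, $|A|\le L$ gives $(A\cdot\omega)^2\le L^2$, so $\bar\phi((A\cdot\omega)^2)\ge(1+L^2)^{-(N+1)/2}$. By rotation invariance,
\[
m_T(z)\ge \frac{\pi}{2|\s^N|}(1+L^2)^{-(N+1)/2}\,|z|\int_{\s^{N-1}}|\omega_1|\,{\rm d}S(\omega).
\]
The last integral is a positive dimensional constant, so both bounds hold and $\eta=\eta(L)$ can be chosen from them.

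As a consistency check, for $A=0$ the formula should give $m_T(z)=|z|/2$, as claimed in the proof of Proposition~\ref{P:inv}. I would verify this through the identity $\int_{\s^{N-1}}|\omega_1|\,{\rm d}S=2|\mathbb{B}^{N-1}|$ together with the standard relation between $|\s^N|$ and $|\s^{N-1}|$. This check also guards against sign errors.

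The main obstacle is justifying the symbol formula with the correct sign, since products of multipliers must be handled on a common domain. For $h\in H^1(\R^N)$, $\p_k h\in L_2$ and $D_{0,e_k}^{\bar\phi,A}$ acts on $L_2$ with symbol $im_k$, so the composition has symbol $im_k\cdot iz_k=-m_kz_k$. The sign convention $\mathcal F[K_\delta]$ versus convolution has to be tracked exactly as in Proposition~\ref{P:D1}: there the symbol came out as $-(\pi/2)\int \mathop{\rm sgn}(\omega\cdot z)\mK$, which yields the $+|\omega\cdot z|$ above. Once that sign is confirmed, the rest is elementary.
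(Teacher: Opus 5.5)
Your proposal is correct and follows the paper's proof essentially verbatim. Like the paper, you read off the symbols of $D^{\bar\phi,A}_{0,e_k}$ from the proof of Proposition~\ref{P:D1} and combine them with the symbol $iz_k$ of $\p/\p x_k$ to get $m_T(z)=\frac{\pi}{2|\s^N|}\int_{\s^{N-1}}|\omega\cdot z|\,\bar\phi((A\cdot\omega)^2)\,{\rm d}S(\omega)$. You then bound it using $\bar\phi(L^2)\le\bar\phi((A\cdot\omega)^2)\le 1$ and rotation invariance. Your remark that $m_T$ cannot lie in $L_\infty$ (only $m_T(z)/|z|$ is bounded) correctly identifies a slip in the statement. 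Your $A=0$ check giving $|z|/2$ is consistent with the use of $T(0)$ in Proposition~\ref{P:inv}.
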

\begin{proof}
    As shown  in the proof of Proposition \ref{P:D1},  the operator $D_{0,e_k}^{\bar\phi,A}$ is a Fourier multiplier with symbol
    \[z\mapsto-\frac{i\pi}{2|\s^N|}\int_{\s^{N-1}}\omega_k\mathop{\rm sgn}(\omega\cdot z)\bar\phi((A\cdot\omega)^2)\,{\rm d}\omega,\qquad 1\leq k\leq N.\]
Since $(\mathcal{F}[\p_k h])(z)=iz_k(\mathcal{F}[h])(z)$ for $z\in\R^N$, one straightforwardly calculates
\[m_T(z)=\frac{\pi}{2|\s^N|}\int_{\s^{N-1}}|\omega\cdot z|\bar\phi((A\cdot\omega)^2)
\,{\rm d}\omega,\]
and the estimates follow immediately from $0<\bar\phi(L^2)\leq\bar\phi(|A|^2)\leq\bar\phi((A\cdot\omega)^2)\leq 1$ and
\[\int_{\s^{N-1}}|\omega\cdot z|\,{\rm d}\omega=|z|\int_{\s^{N-1}}|\omega_1|\,{\rm d}\omega.\]
\end{proof}

\subsection*{Localization results for the operators $\sfB^{\phi}_{n,\nu}(f)$. }
 We recall the  definition of the $\eps$-local\-i\-za\-tion family  from Section \ref{Sec:5} and   first provide a  localization result for the  operators  $\sfB_{n,\nu}^\phi(f)$ in lower order 
  Sobolev (semi)norms; see  Lemma~\ref{L:locHa}. It allows to control the error incurred by replacing an argument $f$ by    a linear function~$\bar f$.  More precisely,  for~$f\in H^s(\R^N)$, $\eps\in(0,1)$,  and~$0\leq j\leq m(\eps)$, we define $\bar f:=\bar f_j^\eps\in{\rm Lip}(\R^N,\R)$
by
\begin{equation}\label{barf}
\bar f(x)=
\left\{\begin{array}{ll}
    \nabla f(x_j^\eps)\cdot x,&\;1\leq j\leq m(\eps),\\
    0&\;j=0.
    \end{array}
\right.
\end{equation}

The core of our localization results is the following:

\begin{lemma}\label{L:L2loc}\ 
  Let $p, n\in\N$ with $n\geq 1$, $\phi\in {\rm C}^\infty([0,\infty)^p)$,  $f\in H^s(\R^N)$,
 $\nu\in \N^N$ such that~$n+|\nu|$ is odd, $M>0 $,  and~$\eta\in(0,1)$. Then, for all sufficiently small $\e>0$  and all   Lipschitz continuous functions~~${a:\R^N\to\R^p}$, $b:\R^N\to\R^{n-1}$ with $\|\nabla a\|_\infty\leq M$,  ${0\leq j\leq m(\e)}$, and~${\beta\in L_2(\R^N)}$ with $\supp \beta\subset \supp\chi^\e_j$ (i.e. $\beta=0$ a.e. outside $\supp\chi^\e_j$), we have
\[\|\chi^\e_j  B_{n,\nu}^\phi(a) [b,f-\bar f,\beta]\|_2\leq \eta\|\beta\|_2 \prod_{i=1}^{n-1}\|\nabla b_i\|_\infty.\]
\end{lemma}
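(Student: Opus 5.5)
The core idea is that on the support of $\chi_j^\e$ the function $g:=f-\bar f$ has small Lipschitz constant, so I would replace $g$ by a globally Lipschitz function with small gradient and then apply Lemma~\ref{L:MP0}. Since $B^\phi_{n,\nu}(a)[b,g,\beta]$ is linear in $g$, this smallness appears directly as a prefactor.

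\emph{Case $1\le j\le m(\e)$.} Since $f\in H^s(\R^N)\hookrightarrow {\rm BUC}^{1+s-s_c}(\R^N)$, we have
\[
|\nabla g(x)|=|\nabla f(x)-\nabla f(x_j^\e)|\le C(2\e)^{s-s_c}, \qquad x\in \overline{\bB}_{2\e}(x_j^\e).
\]
Let $\tilde g$ be a Lipschitz function that coincides with $g$ on $\overline{\bB}_{4\e}(x_j^\e)$, for instance $\tilde g:=\psi\,(g-g(x_j^\e))$ with a cutoff $\psi$ equal to $1$ on $\bB_{4\e}(x_j^\e)$, supported in $\bB_{8\e}(x_j^\e)$, and satisfying $|\nabla\psi|\le C/\e$. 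Then $\|\nabla\tilde g\|_\infty\le C\e^{s-s_c}$, because $|g-g(x_j^\e)|\le C\e^{1+s-s_c}$ on the support of $\psi$, and this $\e$ cancels the $1/\e$ from $\nabla\psi$. Constants do not matter, since $B^\phi_{n,\nu}$ only involves differences of $g$.

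Now split the kernel integral for $x\in\supp\chi_j^\e$ according to whether $x-\xi$ lies in $\supp\beta\subset\overline{\bB}_{2\e}(x_j^\e)$. Only such $\xi$ contribute, and for these both $x$ and $x-\xi$ lie in $\overline{\bB}_{2\e}(x_j^\e)$. Hence $D_{[x,\xi]}g=D_{[x,\xi]}\tilde g$ there, which gives
\[
\chi_j^\e B^\phi_{n,\nu}(a)[b,g,\beta]=\chi_j^\e B^\phi_{n,\nu}(a)[b,\tilde g,\beta].
\]
Lemma~\ref{L:MP0} then yields the bound $C(M)\,C\e^{s-s_c}\|\beta\|_2\prod_i\|\nabla b_i\|_\infty$, which is at most $\eta\|\beta\|_2\prod_i\|\nabla b_i\|_\infty$ once $\e$ is small. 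This requires that the constant in Lemma~\ref{L:MP0} depends only on $M$ (and on $\phi,n,\nu$), not on $b$. That holds by the way it is stated.

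\emph{Case $j=0$.} Here $\bar f=0$ and $g=f$, with $\supp\chi_0^\e\subset\R^N\setminus\bB_{\e^{-1}-\e}(0)$. I would use that $\nabla f\in H^{s-1}(\R^N)\hookrightarrow{\rm BUC}(\R^N)$ and that $\nabla f$ vanishes at infinity, which holds because $H^{s-1}$ functions are uniform limits of ${\rm C}_0^\infty$ functions. The only difficulty is that the complement of a ball is not convex, so a small gradient there does not by itself give a small Lipschitz constant for difference quotients. I would avoid this by extending differently. Set $\tilde f:=(1-\psi_R)f$, where $\psi_R$ equals $1$ on $\bB_{R}(0)$, is supported in $\bB_{2R}(0)$, and satisfies $|\nabla\psi_R|\le C/R$, with $R=(\e^{-1}-\e)/2$. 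Then $\tilde f=f$ on $\supp\chi_0^\e$, and
\[
\|\nabla\tilde f\|_\infty\le \sup_{|x|\ge R}|\nabla f(x)|+\frac{C}{R}\sup_{|x|\ge R}|f(x)|.
\]
Both terms tend to $0$ as $R\to\infty$, since $f$ also vanishes at infinity. Since $D_{[x,\xi]}f=D_{[x,\xi]}\tilde f$ whenever both $x$ and $x-\xi$ lie in $\supp\chi_0^\e$, the same restriction argument and Lemma~\ref{L:MP0} conclude this case.

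The main subtlety is justifying the replacement of $g$ by its extension inside the principal value integral. This needs both endpoints of every contributing pair $(x,x-\xi)$ to lie in the region where $g=\tilde g$, and that is ensured by the support hypothesis on $\beta$ together with the multiplication by $\chi_j^\e$. Beyond this, one only has to make sure that the chosen $\e$ depends solely on $f$, $M$, $\eta$ and the fixed data, which the construction gives.
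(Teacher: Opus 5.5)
Your proposal is correct and follows the paper's proof. In both, $f-\bar f$ (resp.\ $f$ for $j=0$) is replaced by a globally Lipschitz function with small gradient that agrees with it on $\supp\chi_j^\e$; then one uses that $\chi_j^\e B^\phi_{n,\nu}(a)[b,\cdot,\beta]$ only involves difference quotients at pairs $x,\,x-\xi\in\supp\chi_j^\e$, and concludes with Lemma~\ref{L:MP0}. The only difference is the extension: the paper composes $f-\bar f$ with the projection onto $\overline{\bB}_{2\e}(x_j^\e)$ (resp.\ uses a radial rescaling of $f$ inside $\bB_{\e^{-1}-\e}(0)$), whereas your cutoff-function extensions work equally well.
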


\begin{proof}
Define for $\e\in(0,1)$  and $1\leq j\leq m(\e)$
the function 
$ F_j^\e:\R^N\to\R$ by
%\begin{equation}\label{eq:Fje}
\[ F_j^\e(x):=\left\{
\arraycolsep=1.4pt
\begin{array}{llll}
f(x)&,&x\in\overline{\bB}_{2\e}(x_j^\e),\\
f\Big(x_j^\e+2\e\frac{x-x_j^\e}{|x-x_j^\e|}\Big)+\nabla f(x_j^\e)\cdot\Big(x-x_j^\e-2\e\frac{x-x_j^\e}{|x-x_j^\e|}\Big)&,&x\not\in\overline{\bB}_{2\e}(x_j^\e).
\end{array}
\right.\]
%\end{equation}
For $j=0$, corresponding to localization near infinity, we define $F_0^\e:\R^N\to\R$ by
\[F_0^\e(x):=\left\{
\arraycolsep=1.4pt
\begin{array}{llll}
f(x)&,&|x|\geq \e^{-1}-\e,\\[1ex]
\frac{|x|}{\e^{-1}-\e}f\Big(\frac{x}{|x|}(\e^{-1}-\e)\Big) &,&|x|\leq \e^{-1}-\e.
\end{array}
\right.
\]
Then, for  $0\leq j\leq m(\e)$, $ F_j^\e$ is Lipschitz continuous, and,  recalling~\eqref{barf}, we get for $\e\to0$
 \be\label{T2fa}
 \begin{aligned}
     \|\nabla F_j^\eps-\nabla \bar f_j^\eps\|_\infty&=
     \| \nabla  f-\nabla f(x_j^\eps)\|_{ L_\infty(\bB_{2\e}(x_j^\e))}
     \leq C\eps^{ s-s_c}\to0,\qquad  1\leq j\leq m(\e),\\
     \|\nabla F_0^\eps-\nabla \bar f_0^\eps\|_\infty&=
     \|\nabla F_0^\eps\|_\infty\leq C\|f\|_{W^{1,\infty}(\{|x|\geq \eps^{-1}-\eps\})}\to0.
 \end{aligned}
 \ee
 For  $\beta\in L_2(\R^N)$ with $ \supp\beta\subset\supp\chi_j^\e$,  we observe that
\[\chi^\e_j  B_{n,\nu}^\phi(a)[b,f-\bar f,\beta]=\chi^\e_j   B_{n,\nu}^\phi(a)[b,F_j^\e-\bar f,\beta].\]
Indeed, (up to zero sets) the integrands defining both terms (including the cutoff $\chi_j^\eps$) are  nonzero only if~$x\in \supp \chi_j^\eps$ and $x-\xi\in \supp \chi_j^\eps$.
In that case, however, $\delta_{[x,\xi]}f=\delta_{[x,\xi]}F_j^\e$.
Consequently, by Lemma \ref{L:MP0},
\begin{equation*}
\|\chi^\e_j  B_{n,\nu}^\phi(a)[b,f-\bar f,\beta]|_2=\|\chi^\e_j B_{n,\nu}^\phi(a)[b,F_j^\e-\bar f,\beta]\|_2
\leq C \|\nabla F_j^\eps-\nabla \bar f_j^\eps\|_\infty\|\beta\|_2 \prod_{i=1}^{n-1}\|\nabla b_i\|_\infty,
\end{equation*}
and the result follows from \eqref{T2fa}. 
\end{proof}

 The estimate given in Lemma \ref{L:D6x} below is an intermediate result used in Lemma~\ref{L:locHa} to treat differences of the form $\chi_j^\e\big(\sfB_{n,\nu}^\phi(f)-\sfB_{n,\nu}^\phi(\bar f)\big)[\pi_j^\e\beta]$.

 In the arguments that follow we will use  the algebraic identity
\begin{equation}\label{idbaf}B_{n,\nu}^\phi(a)[b,\bar f,\beta]=\sum_{k=1}^N\p_kf(x_j^\eps)
B_{n-1,\nu+e_k}^\phi(a)[b,\beta]
\end{equation}
valid for any Lipschitz continuous functions $a:\R^N\to\R^p$ and $b:\R^N\to\R^{n-1}$ and 
~${\beta\in L_2(\R^N)}$ (again with $n+|\nu|$ being odd and  $\phi\in {\rm C}^\infty([0,\infty)^p)$).

\begin{lemma}\label{L:D6x} Let  $\alpha\in(0,1)$, $\alpha'\in\big(\max\{0,\alpha-(s- s_c)\},\alpha\big)$, $n\geq 1$, $f\in H^s(\R^N)$, $\nu\in \N^N$ such that $n+|\nu|$ is odd,    $0\leq k\leq n-1$,   and~$\eta\in(0,1)$.

 Given   $\beta\in L_2(\R^N)$, $\e\in(0,1)$, and~$0\leq j\leq m(\e)$ define further
\begin{align*}&{\rm (i)}\quad T^\eps_j (f)[\beta]:=\chi_j^\eps B_{n,\nu}^\phi(f,\bar f)[\bar f^{[k]}, f^{[n-1-k]},f-\bar f,\pi_j^\eps\beta]\qquad\text{for $\phi\in  {\rm C}^\infty([0,\infty)^2)$, or}\\
&{\rm (ii)}\quad T^\eps_j (f)[\beta]:=\chi_j^\eps B_{n,\nu}^\phi(f)[ \bar f^{[k]}, f^{[n-1-k]},f-\bar f,\pi_j^\eps\beta]\qquad\text{for $\phi\in {\rm C}^\infty([0,\infty))$}.
\end{align*}

 In both cases, for each sufficiently small $\eps\in(0,1)$, there  is a constant  $K=K(\eps)>0$ such that  for all~$0\leq j\leq m(\e)$  and $\beta\in H^{\alpha}(\R^N)$ we have
\be\label{Tjesta}
    [T^\eps_j (f)[\beta]]_{H^{\alpha}}\leq \eta\|\pi_j^\eps\beta\|_{H^\alpha}+ K\|\beta\|_{H^{\alpha'}}.
\ee
\end{lemma}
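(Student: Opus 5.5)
Since $\alpha\in(0,1)$, the plan is to work directly with the Gagliardo seminorm \eqref{semiHs}. We write $\tau_\zeta T_j^\eps(f)[\beta]-T_j^\eps(f)[\beta]$ using the shift identity \eqref{shiftid} and the difference formula \eqref{shiftdiff}, taking into account the cutoffs $\chi_j^\eps$ and $\pi_j^\eps$. Because $\bar f$ is linear, $\tau_\zeta\bar f-\bar f$ is constant, and constants drop out of all difference quotients. Hence
\[
(\tau_\zeta-1)T_j^\eps(f)[\beta]=\chi_j^\eps B(\ldots)[\ldots,f-\bar f,\pi_j^\eps(\tau_\zeta\beta-\beta)]+R_\zeta ,
\]
where $B$ has $a$ replaced by $\tau_\zeta a$. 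The remainder $R_\zeta$ collects three kinds of terms: (a) those with $\tau_\zeta\chi_j^\eps-\chi_j^\eps$ or $\tau_\zeta\pi_j^\eps-\pi_j^\eps$ as a factor; (b) those in which one linear argument $f$ (or $f-\bar f$) is replaced by $\tau_\zeta f-f$; and (c) the $B^{\phi^i}_{n+2,\nu}$ terms coming from the shift of $a$, which carry $\tau_\zeta f-f$ and $\tau_\zeta f+f$ as linear arguments.

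The leading term is handled by Lemma~\ref{L:L2loc}. Write $\tau_\zeta(\pi_j^\eps\beta)-\pi_j^\eps\beta=\pi_j^\eps(\tau_\zeta\beta-\beta)+(\tau_\zeta\pi_j^\eps-\pi_j^\eps)\tau_\zeta\beta$. To keep supports inside $\supp\chi_j^\eps$ for $|\zeta|\le\eps$, we insert an intermediate cutoff. For small $\eps$, Lemma~\ref{L:L2loc} then gives an $L_2$-bound $\eta'\|\tau_\zeta(\pi_j^\eps\beta)-\pi_j^\eps\beta\|_2$, uniformly in $\zeta$. The arguments $\bar f^{[k]},f^{[n-1-k]}$ have gradients bounded by $\|\nabla f\|_\infty$, and the nonlinear argument ($f$, or $(f,\bar f)$ in case (i)) has Lipschitz constant bounded independently of $\eps$ and $\zeta$, so Lemma~\ref{L:L2loc} applies with a fixed $M$. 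Integrating against $|\zeta|^{-N-2\alpha}$ over $|\zeta|\le\eps$ yields $\eta'[\pi_j^\eps\beta]_{H^\alpha}$ plus lower-order contributions. For $|\zeta|>\eps$ we simply use the crude bound from Lemma~\ref{L:MP0}, $\|T_j^\eps[\beta]\|_2\le C\|\beta\|_2$, and the integrability of $|\zeta|^{-N-2\alpha}$ away from the origin; this is absorbed into $K(\eps)\|\beta\|_2$.

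The remainder terms are all of lower order. The cutoff differences in (a) are smooth with $\|\tau_\zeta\chi-\chi\|_{{\rm BUC}^1}\le K(\eps)|\zeta|$, so Lemma~\ref{L:MP0} gives $L_2$-bounds $K(\eps)|\zeta|\|\beta\|_2$, and these are integrable against $|\zeta|^{-N-2\alpha}$ near $0$. For (b) and (c) we apply Lemma~\ref{BestL2} (through the interpolated estimate of Lemma~\ref{le:multint}), which moves regularity onto the differenced argument. This gives bounds of the form $C\|\tau_\zeta f-f\|_{H^{s-\alpha+\alpha'}}\|\beta\|_{H^{\alpha'}}$, after placing $\beta$ in $H^{s-1-\sigma_0}$ with $s-1-\sigma_0=\alpha'$ and $\sigma_1=\alpha-\alpha'$ on the $\tau_\zeta f-f$ slot. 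Lemma~\ref{L:prep} then turns $\int\|\tau_\zeta f-f\|^2_{H^{s-\alpha}}|\zeta|^{-N-2\alpha}\,{\rm d}\zeta$ into $\|f\|_{H^s}^2$. The constraint $\alpha-\alpha'<s-s_c$ is what makes this exponent bookkeeping admissible. Since $f$ is fixed, all such constants go into $K(\eps)$.

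The main obstacle is the leading term. We need the small constant $\eta$ from Lemma~\ref{L:L2loc} to survive the shift, so that the localization estimate holds uniformly for the shifted nonlinear argument $\tau_\zeta f$ while the linear slot still contains the small factor $f-\bar f$ restricted to $\supp\chi_j^\eps$. We also need the support bookkeeping to stay within enlarged balls. If needed, we will rerun the proof of Lemma~\ref{L:L2loc} with $F_j^\eps$ constructed on $\bB_{3\eps}(x_j^\eps)$ to accommodate shifts with $|\zeta|\le\eps$.
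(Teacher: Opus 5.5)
Your overall architecture matches the paper's. You estimate the Gagliardo seminorm of $\tau_\zeta T-T$ after splitting it into three kinds of terms: cutoff-difference terms; a principal term, treated by Lemma~\ref{L:L2loc} for $|\zeta|<\eps$ and crudely by Lemma~\ref{L:MP0} for $|\zeta|\geq\eps$; and terms carrying $\tau_\zeta f-f$, treated by Lemma~\ref{BestL2} and Lemma~\ref{L:prep}.

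\textbf{The gap is in the last step, as you wrote it.}
\begin{itemize}
\item With $s-1-\sigma_0=\alpha'$ and $\sigma_1=\alpha-\alpha'$ you get $\sigma_0+\sigma_1=s-1+\alpha-2\alpha'$. This exceeds the admissible total $s-1$ whenever $\alpha>2\alpha'$, which is the relevant case, since small $\alpha'$ is the hard one.
\item Even where it is admissible, the factor $\|\tau_\zeta f-f\|_{H^{s-\alpha+\alpha'}}$ can be integrated against $|\zeta|^{-N-2\alpha}$ by Lemma~\ref{L:prep} only if $f\in H^{s+\alpha'}$.
\item You then switch to $H^{s-\alpha}$. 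But the bound $C\|\tau_\zeta f-f\|_{H^{s-\alpha}}\|\beta\|_{H^{\alpha'}}$ that you actually need overdraws the budget of Lemma~\ref{BestL2} at level $s$ by exactly $\alpha-\alpha'$.
\end{itemize}

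The missing idea is to pay for this with the surplus regularity in the other slots. Apply Lemma~\ref{BestL2} with $s$ replaced by $s':=s-(\alpha-\alpha')$, $\sigma_0:=s'-1-\alpha'$ and $\sigma_1:=\alpha'$. Then $\beta$ is measured in $H^{\alpha'}$ and $\tau_\zeta f-f$ in $H^{s'-\alpha'}=H^{s-\alpha}$, while the other $f$-slots and the nonlinear argument are only needed at level $s'$. This is legitimate precisely because $\alpha'>\alpha-(s-s_c)$ means $s'>s_c$; that is where the hypothesis on $\alpha'$ really enters.

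You also omitted a preliminary step. Lemma~\ref{BestL2} does not accept $\bar f$ as a linear argument, since $\bar f\notin L_2$. You must first remove $\bar f^{[k]}$, and the $\bar f$ in the slot $f-\bar f$, via \eqref{idbaf}. The paper reduces to $k=0$ this way and then splits $f-\bar f$ in each remainder term.

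\textbf{The principal term is easier than you think.} Lemma~\ref{L:L2loc} is uniform over all nonlinear arguments with $\|\nabla a\|_\infty\leq M$ and all linear arguments $b$. Smallness only requires the slot $f-\bar f$ to be unshifted and the density to be supported in $\supp\chi_j^\eps$. Order the telescoping as the paper does: take the principal term to be $\chi_j^\eps B^\phi_{n,\nu}(f,\bar f)[f^{[n-1]},f-\bar f,\tau_\zeta(\pi_j^\eps\beta)-\pi_j^\eps\beta]$, with nothing shifted. Push all shifts of $f$ into the remainder, together with $\tau_\zeta(\pi_j^\eps\beta)$. Then Lemma~\ref{L:L2loc} applies verbatim for $|\zeta|<\eps$, and no enlarged balls are needed.
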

\begin{proof} In this proof, constants denoted by $C$ are independent of $\e\in(0,1)$. 
    We give the proof in Case (i) as Case (ii) is similar and simpler.  It is sufficient to prove the estimate for $k=0$, as the general case will follow from that by repeated application of~\eqref{idbaf} and~$\|\nabla f\|_\infty\leq C$. Set
    \[T:=\chi_j^\eps B_{n,\nu}^\phi(f,\bar f)[{f^{[n-1]}},f-\bar f,\pi_j^\eps\beta].\]
    We recall from \eqref{semiHs} the definition of the translation operators  
    $\tau_\zeta$, $\zeta\in\R^N$, and observe that $\tau_\zeta\bar f-\bar f$ 
    does not depend on $x$.
Hence, by \eqref{difference}  and \eqref{shiftid}, we have 
    \[\tau_\zeta T-T=T_1+T_2+T_3,\]
    where
    \begin{align*}
T_1&:=(\tau_\zeta \chi_j^\e-\chi_j^\e)\tau_\zeta  B^{\phi}_{n,\nu}(f,\bar f)[f^{[n-1]},f-\bar f, \pi_j^\e \beta],\\
T_2&:= \chi_j^\e  B^{\phi}_{n,\nu}(f,\bar f)[ f^{[n-1]},f-\bar f,\tau_\zeta(\pi_j^\e \beta)- \pi_j^\e \beta],\\
T_3&:=\chi_j^\e B^{\phi}_{n,\nu}(\tau_\zeta f,\bar f )[\tau_\zeta f^{[n-1]},\tau_\zeta f- f,\tau_\zeta(\pi_j^\e \beta)]\\
&\,\,\quad+ \sum_{i=0}^{n-2}\chi_j^\e B^{\phi}_{n,\nu}(\tau_\zeta f,\bar f )
[\tau_\zeta f-f,\tau_\zeta f^{[i]},f^{[n-2-i]}, f- \bar f,\tau_\zeta(\pi_j^\e \beta)]\\
&\,\,\quad+\chi_j^\e B^{\phi^1}_{n+2,\nu}(\tau_\zeta f,\bar f,f,\bar f )[ \tau_\zeta f-f,\tau_\zeta f+f, f^{[n-1]}, f- \bar f,\tau_\zeta(\pi_j^\e \beta)],
\end{align*}
with $\phi^1\in {\rm C}^\infty([0,\infty)^4)$ defined in \eqref{gis}.

 We recall  from \eqref{semiHs} that for any $\e\in(0,1)$
\be\label{Hasplit}
[T_{j}^\e(f) [\beta]]_{H^{\alpha}}^2 \leq C_0\bigg(\int_{\R^N}\frac{\|T_1+T_3\|_2^2}{|\zeta|^{N+2\alpha}}\,{\rm d\zeta}+ \int_{\{|\zeta|< \e\}}\frac{\|T_2\|_2^2}{|\zeta|^{N+2\alpha}}\,{\rm d\zeta}
+ \int_{\{|\zeta|\geq \e\}}\frac{\|T_2\|_2^2}{|\zeta|^{N+2\alpha}}\, {\rm d}\zeta\bigg).
\ee

To estimate $T_1$, we apply Lemma \ref{L:MP0} and obtain
\begin{equation}\label{T1}
\|T_1\|_2\leq C\|\tau_\zeta \chi_j^\e-\chi_j^\e\|_\infty\|\beta\|_2\leq C\|\tau_\zeta \chi_j^\e-\chi_j^\e\|_{H^{s-1}}\|\beta\|_{H^{\alpha'}},\qquad\zeta\in\R^N.
\end{equation}

 We estimate the terms of $T_3$ separately. Using the identity~\eqref{idbaf} and 
Lemma \ref{BestL2} with~$s$ replaced by~$s':=s-(\alpha-\alpha')$, $\sigma_0:=s'-1-\alpha'$, and $\sigma_1:=\alpha-(s-s')$ we obtain
\begin{align*}
    &\|\chi_j^\e B^{\phi}_{n,\nu}(\tau_\zeta f,\bar f )
[\tau_\zeta f-f, \tau_\zeta f^{[i]},f^{[n-2-i]}, f- \bar f,\tau_\zeta(\pi_j^\e \beta)]\|_2\\
&\leq \|B^{\phi}_{n,\nu}(\tau_\zeta f,\bar f )
[\tau_\zeta f-f,  \tau_\zeta f^{[i]},f^{[n-1-i]},\tau_\zeta(\pi_j^\e \beta)]\|_2
\\
&\quad\,\,+\sum_{k=1}^N|\p_kf(x_j^\e)|\|B^{\phi}_{n-1,\nu+e_k}(\tau_\zeta f,\bar f )
[\tau_\zeta f-f,\tau_\zeta f^{[i]},f^{[n-2-i]},\tau_\zeta(\pi_j^\e \beta)]\|_2
\\
&\leq  C\|\tau_\zeta f-f\|_{H^{s-\alpha}}\|\beta\|_{H^{\alpha'}}.
\end{align*}
All other terms in $T_3$ can be estimated  in a similar or simpler way. Hence,
\begin{equation}\label{T3}
\|T_3\|_2\leq C\|\tau_\zeta f-f\|_{H^{s-\alpha}}\|\beta\|_{H^{\alpha'}},\qquad\zeta\in\R^N.
\end{equation}

To estimate $T_2$ we distinguish the cases  $|\zeta|\geq\eps$ and $|\zeta|<\eps$. 

 If $|\zeta|\geq\eps$, we apply Lemma \ref{L:MP0} and obtain
\begin{equation}\label{T2g}
\|T_2\|_2\leq C\|\beta\|_{2}.
\end{equation}

 If $|\zeta|<\eps$,  then $\supp\big(\tau_\zeta(\pi_j^\e\beta)-\pi_j^\e\beta\big)\subset\supp\chi_j^\e$, and, by Lemma \ref{L:L2loc},
 we obtain
 \be\label{T2f}
 \|T_2\|_2\leq (\eta/\sqrt{C_0})\|\tau_\zeta(\pi_j^\e\beta)-\pi_j^\e\beta\|_2,
 \ee
 with $C_0$ from \eqref{Hasplit}, provided  that~$\eps$ is chosen small enough.
 
The  desired estimate  \eqref{Tjesta} follows from \eqref{Hasplit}--\eqref{T2f} 
and Lemma~\ref{L:prep}.
\end{proof}

We are ready now to estimate the errors incurred by localizing terms of the form
\[g\sfB_{n,\nu}^\phi(f)[h\beta]\]
 with respect to our $\e$-localization family,   assuming that
 \begin{equation}\label{assloc}
 \text{$\phi\in {\rm C}^\infty([0,\infty))$,\, $n\in\N$,\, $\nu\in\N^N$ with $n+|\nu|$ odd,\,  $f\in H^s(\R^N)$,\, $g,h\in H^{s-1}(\R^N)\cup\{1\}$.}
\end{equation}
Distinguishing the three cases
\begin{itemize}
    \item[(i)] $1\leq  j\leq m(\e)$ (localization in small balls),
    \item[(ii)] $j=0$ and ($g\in H^{s-1}(\R^N)$ or $h\in H^{s-1}(\R^N)$ or $ n\geq 1$) (terms vanishing near infinity),
    \item[(iii)] $j=n=0$, $ g\equiv h\equiv 1$ (principal terms near infinity),
\end{itemize} 
we define the error terms
\begin{equation}\label{eq:Rje}
\cR_j^\e[\beta]:=\cR_{n,\nu,j}^{\phi,g,h,\e}(f)[\beta]:=
\left\{
\begin{array}{ll}
    \pi_j^\e g\sfB_{n,\nu}^{\phi}(f)[h\beta]
    -(gh)(x_j^\e)D_{n,\nu}^{\phi,\nabla f(x_j^\e)}[\pi_j^\e \beta]&\text{in Case (i),}\\
    \pi_0^\e g\sfB_{n,\nu}^{\phi}(f)[h\beta]&\text{in Case (ii),}\\
    \pi_0^\e \sfB_{0,\nu}^{\phi}(f)[\beta]
    -D_{0,\nu}^{\phi,0}[\pi_0^\e \beta]&\text{in Case (iii),}
\end{array}\right.
\end{equation}
 and start by estimating them in lower order  Sobolev (semi)norms.

\begin{lemma}\label{L:locHa} 
 Assume~\eqref{assloc}. Let $\eta\in(0,1)$,  $\alpha\in(0,1)$,  $\alpha'\in\big(\max\{0,\alpha-( s-s_c)\},\alpha\big)$, and~$\theta\in\big(\max\{0,1-(s-s_c)\},1\big)$.
 Then, for each sufficiently small~${\e\in(0,1)}$, there exists a constant~$K=K(\e)>0$ such that
 for all~$0\leq j\leq m(\e)$  and~${\beta\in H^{\alpha}(\R^N)}$ we have
\begin{equation}\label{locHa}
[\cR_j^\e [\beta]]_{H^\alpha}\leq \eta\|\pi_j^\e\beta\|_{H^{\alpha}}+K\|\beta\|_{H^{\alpha'}},
\end{equation}
 and for all $\beta\in H^1(\R^N)$ 
\begin{equation}\label{locH1}
\|\cR_j^\e [\beta]\|_{H^1}\leq \eta\|\pi_j^\e\beta\|_{H^1}+K\|\beta\|_{H^\theta}.
\end{equation}
\end{lemma}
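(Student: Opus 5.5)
We prove \eqref{locHa} first; \eqref{locH1} then follows by the same ideas combined with the derivative formula \eqref{F:MP2a}. In all three cases we split $\cR_j^\e[\beta]$ into a sum of terms, each either controlled by Lemma~\ref{L:D6x} (the ``small'' contribution $\eta\|\pi_j^\e\beta\|_{H^\alpha}$) or by lower-order norms, using Lemma~\ref{commphi} and Lemma~\ref{L:alg}.

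\emph{Case (i).} Since $\chi_j^\e=1$ on $\supp\pi_j^\e$, we have $\pi_j^\e=\pi_j^\e\chi_j^\e$. We write
\begin{align*}
\cR_j^\e[\beta]&=\pi_j^\e (g-g(x_j^\e))\sfB^\phi_{n,\nu}(f)[h\beta]+g(x_j^\e)\llbracket\pi_j^\e,\sfB^\phi_{n,\nu}(f)\rrbracket[h\beta]\\
&\quad+g(x_j^\e)\sfB^\phi_{n,\nu}(f)[(h-h(x_j^\e))\pi_j^\e\beta]+(gh)(x_j^\e)\chi_j^\e\big(\sfB^\phi_{n,\nu}(f)-\sfB^\phi_{n,\nu}(\bar f)\big)[\pi_j^\e\beta]\\
&\quad-(gh)(x_j^\e)(1-\chi_j^\e)D^{\phi,\nabla f(x_j^\e)}_{n,\nu}[\pi_j^\e\beta],
\end{align*}
using \eqref{identif}. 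We estimate the terms one at a time.
\begin{itemize}
\item The commutator term is bounded in $H^1$ by $K\|\beta\|_2$ via Lemma~\ref{commphi}~(i).
\item The last term has a kernel that is smooth on the separated supports of $1-\chi_j^\e$ and $\pi_j^\e$, so it is bounded by $K\|\beta\|_2$.
\item For the two terms involving $g-g(x_j^\e)$ and $h-h(x_j^\e)$ we use Lemma~\ref{L:alg}. The sup norm of these differences on $\supp\chi_j^\e$ is at most $C\e^{s-s_c}$ by the H\"older embedding $H^{s-1}\hookrightarrow{\rm BUC}^{s-s_c}$. To make Lemma~\ref{L:alg} applicable we insert a cutoff $\chi_j^\e$ in front of the difference, so that it becomes a small $L_\infty$ multiplier plus a lower-order term. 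The factor $\sfB^\phi_{n,\nu}(f)$ is bounded on $H^\alpha$ by Lemma~\ref{Bestgen}, which yields $\eta\|\pi_j^\e\beta\|_{H^\alpha}+K\|\beta\|_{H^{\alpha'}}$.
\item For the difference $\sfB^\phi_{n,\nu}(f)-\sfB^\phi_{n,\nu}(\bar f)$ we expand via \eqref{difference} and multilinearity in $b$. The result is a telescoping sum of terms of exactly the types~(i) and~(ii) in Lemma~\ref{L:D6x}, each containing one factor $f-\bar f$. For the $\phi$-difference part, note that $(f-\bar f)+(f+\bar f)$ is split so that one factor $f-\bar f$ remains. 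Lemma~\ref{L:D6x} gives the required bound.
\end{itemize}

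\emph{Case (ii).} Here $\pi_0^\e$ is supported near infinity. If $g\in H^{s-1}$, then $\|\pi_0^\e g\|_\infty\to0$, and Lemma~\ref{L:alg} together with Lemma~\ref{Bestgen} gives smallness. If $h\in H^{s-1}$, we commute $\pi_0^\e$ through $\sfB^\phi_{n,\nu}(f)$ (Lemma~\ref{commphi}) and argue in the same way. If $n\ge1$, we use the identity $\pi_0^\e\sfB^\phi_{n,\nu}(f)[\cdot]=\pi_0^\e B^\phi_{n,\nu}(f)[f^{[n-1]},f-\bar f_0^\e,\cdot]$ with $\bar f_0^\e=0$ and proceed as in the $f-\bar f$ step of Case~(i), via Lemma~\ref{L:D6x}.

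\emph{Case (iii).} We write $\sfB_{0,\nu}^\phi(f)-D_{0,\nu}^{\phi,0}$ through \eqref{difference} as a $B_{2,\nu}$-type operator with arguments $(f,f,\cdot)$. This reduces to Case~(ii) with $n\ge1$ after a commutator with $\pi_0^\e$.

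\emph{The $H^1$ estimate.} For \eqref{locH1} we differentiate each term using \eqref{F:MP2a} and \eqref{commsfb}. The derivative falling on $\beta$ reproduces the same structure as above with $\p_k\beta$ in place of $\beta$, so it is controlled by the $L_2$ analogue of the previous estimates; for this we use Lemma~\ref{L:L2loc} directly in place of Lemma~\ref{L:D6x}. Derivatives falling on $f$, $g$, $h$ or the cutoffs produce terms that are bounded by $K\|\beta\|_{H^\theta}$ through Lemma~\ref{Bestgen} with suitably chosen indices.

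The main obstacle is the Case (i) term containing $g-g(x_j^\e)$ when $g\in H^{s-1}$ is only H\"older continuous. Obtaining the smallness factor $\eta$ in the $H^\alpha$ seminorm, rather than only in $L_\infty$, requires the precise split supplied by Lemma~\ref{L:alg}, and the admissible range $\alpha'>\alpha-(s-s_c)$ is needed at exactly this point. I would handle it first, by inserting the cutoff $\chi_j^\e$ as described.
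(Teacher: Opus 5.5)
Your proposal follows the paper's route. You freeze $g,h$ at $x_j^\e$ and control commutators with the cutoffs in $H^1$ by Lemma~\ref{commphi}~(i). You extract smallness from the H\"older coefficients via Lemma~\ref{L:alg}. You expand $\chi_j^\e\big(\sfB^\phi_{n,\nu}(f)-\sfB^\phi_{n,\nu}(\bar f)\big)[\pi_j^\e\beta]$ with \eqref{difference} and \eqref{idbaf} and apply Lemma~\ref{L:D6x}, using Lemma~\ref{L:L2loc} for the derivative falling on $\pi_j^\e\beta$ in \eqref{locH1}. Cases~(ii) and~(iii) are treated as in the paper. Two bookkeeping points in Case~(i) need repair, though neither requires a new idea.

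\textbf{The Case~(i) identity.} Your five-term identity is not exact: the terms sum to $\cR_j^\e[\beta]-(gh)(x_j^\e)(1-\chi_j^\e)\sfB^\phi_{n,\nu}(f)[\pi_j^\e\beta]$. The last term should be $(gh)(x_j^\e)(1-\chi_j^\e)\big(\sfB^\phi_{n,\nu}(f)-D^{\phi,\nabla f(x_j^\e)}_{n,\nu}\big)[\pi_j^\e\beta]$. Since $\chi_j^\e\pi_j^\e=\pi_j^\e$, this equals $(gh)(x_j^\e)\big(\llbracket\chi_j^\e,\sfB^\phi_{n,\nu}(\bar f)\rrbracket-\llbracket\chi_j^\e,\sfB^\phi_{n,\nu}(f)\rrbracket\big)[\pi_j^\e\beta]$, which is the paper's $T_{d,1}$. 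It is bounded in $H^1$ by $K\|\beta\|_2$ through Lemma~\ref{commphi}~(i), which applies to the linear function $\bar f$ as well. Use this in place of your ``separated supports'' argument, which is not available. The construction only requires $\chi_j^\e=1$ on $\supp\pi_j^\e$, not on a neighborhood of it, so $\supp(1-\chi_j^\e)$ may touch $\supp\pi_j^\e$.

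\textbf{The cutoff outside the operator.} In your term $\pi_j^\e(g-g(x_j^\e))\sfB^\phi_{n,\nu}(f)[h\beta]$, and likewise in Case~(ii) with $g\in H^{s-1}$, the cutoff sits outside the operator. Lemma~\ref{L:alg} with Lemma~\ref{Bestgen} would then give only a small multiple of $\|\beta\|_{H^\alpha}$, not of $\|\pi_j^\e\beta\|_{H^\alpha}$. You must first write $\pi_j^\e\sfB^\phi_{n,\nu}(f)[h\beta]=\llbracket\pi_j^\e,\sfB^\phi_{n,\nu}(f)\rrbracket[h\beta]+\sfB^\phi_{n,\nu}(f)[(\chi_j^\e h)\,\pi_j^\e\beta]$ and then estimate the two pieces. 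This is why the paper arranges its splitting into $T_a,T_b,T_{c,1},T_{c,2},T_{c,3},T_d$ so that $\pi_j^\e$ always acts on $\beta$ inside $\sfB^\phi_{n,\nu}(f)$.

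With these corrections your argument coincides with the paper's.
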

 \begin{proof}

    In this proof, constants which are independent of $\e\in(0,1)$ are denoted by~$C$  and constants that may depend on $\e$ are denoted by $K$.\smallskip

  \noindent{\bf Case (i):} We give the proof of \eqref{locHa}  and \eqref{locH1} for $g,\,h\in H^{s-1}(\R^N)$. (If $g\equiv 1$ or $h\equiv 1$, the proof can be given in a similar or simpler way).
 
Fix $ 1\leq j\leq m(\e)$. We rewrite 
\begin{equation*} 
 \cR_j^\e [\beta]=g(T_a+T_b) +h(x_j^\e)(T_c+g(x_j^\e)T_d),
\end{equation*}
 where 
 \begin{equation*} 
 \begin{aligned}
&T_a:=  \llbracket\pi_j^\e, \sfB^{\phi}_{n,\nu}(f)\rrbracket [(h-h(x_j^\e))\beta],
&&\quad T_b:= \sfB^{\phi}_{n,\nu}(f)[\pi_j^\e (h-h(x_j^\e))\beta],\\
&T_c:=\pi_j^\e g \sfB^{\phi}_{n,\nu}(f)[\beta]- g(x_j^\e)\sfB^{\phi}_{n,\nu}(f)[\pi_j^\e \beta],
&&\quad T_d:=\sfB^{\phi}_{n,\nu}(f)[\pi_j^\e \beta]-D^{\phi, \nabla f(x_j^\e)}_{n,\nu}[\pi_j^\e\beta].
\end{aligned}
\end{equation*}
We estimate these terms separately.

From Lemma~\ref{L:mult} and  Lemma~\ref{commphi}~(i) we have
\begin{align}\label{TES4}
[gT_a]_{H^{\alpha}}\leq C\|gT_a\|_{H^1}\leq C\|g\|_{H^{s-1}}\|T_a\|_{H^1}\leq K\|(h-h(x_j^\e))\beta\|_2\leq K \|\beta\|_{2}.
\end{align}

 For  the term $gT_b$, using Lemma \ref{L:mult}, Lemma~\ref{Bestgen}, Lemma~\ref{L:alg},  the H\" older continuity of $h$, and the 
identity~$\chi_j^\e\pi_j^\e=\pi_j^\e$  we get, for  sufficiently small~$\e\in(0,1)$,
\begin{subequations}\label{TES5}
\begin{equation}\label{TES5a}
  \begin{aligned}
\relax[gT_b]_{H^{\alpha}}&\leq C\|gT_b\|_{H^{\alpha}}\leq C\|g\|_{H^{s-1}}\|T_b\|_{H^{\alpha}}\leq C\|\pi_j^\e (h-h(x_j^\e))\beta\|_{H^{\alpha}}\\
&\leq C\|\chi_j^\e (h-h(x_j^\e))\|_\infty\|\pi_j^\e\beta\|_{H^{\alpha}}+K\|\pi_j^\e\beta\|_{H^{\alpha'}}\\
&\leq  (\eta/3)\|\pi_j^\e\beta\|_{H^{\alpha}}+K\|\beta\|_{H^{\alpha'}}
\end{aligned}
\end{equation}
 and
\begin{equation}\label{TES5b}
  \begin{aligned}
 \|gT_b\|_{H^1}&\leq C\|g\|_{H^{s-1}}\|T_b\|_{H^1}\\
 &\leq C\big(
\|\chi_j^\e (h-h(x_j^\e))\|_\infty\|\pi_j^\e\beta\|_{H^1}+
\|\pi_j^\e(h-h(x_j^\e))\|_{H^{s-1}}\|\beta\|_{H^\theta}\big)\\
&\leq (\eta/3)\|\pi_j^\e\beta\|_{H^1}+K\|\beta\|_{H^\theta}.
\end{aligned}
\end{equation}
\end{subequations}

To estimate $T_c$, we  split
\[
T_{c}=T_{c,1}+T_{c,2}+T_{c,3},
\]
where
\begin{align*}
T_{c,1}&:=\chi_j^\e g  \llbracket\pi_j^\e, \sfB^{\phi}_{n,\nu}( f)\rrbracket [\beta],\\
T_{c,2}&:=\chi_j^\e (g -g(x_j^\e)) \sfB^{\phi}_{n,\nu}(f)[\pi_j^\e \beta] ,\\
T_{c,3}&:=  g(x_j^\e) \llbracket\chi_j^\e, \sfB^{\phi}_{n,\nu}( f)\rrbracket [\pi_j^\e\beta].
\end{align*}
Lemma~\ref{commphi} together with Lemma~\ref{L:mult} ensures that 
\begin{align*}
[h(x_j^\e)(T_{c,1}+T_{c,3})]_{H^{\alpha}}\leq C \|h(x_j^\e)(T_{c,1}+T_{c,3})\|_{H^1}
\leq C\|T_{c,1}+T_{c,3} \|_{H^{1}}\leq  K\|\beta\|_2.
\end{align*}
Moreover, using Lemma~\ref{L:alg},  Lemma~\ref{Bestgen}, the H\" older continuity of $g$,  we have, as in \eqref{TES5}
\begin{subequations}\label{TES6}
\begin{equation}\label{TES6a}
  \begin{aligned}
\relax[h(x_j^\e)T_{c,2}]_{H^{\alpha}}&\leq C\|T_{c,2}\|_{H^{\alpha}}\leq C\|\chi_j^\e (g-g(x_j^\e))\|_\infty\|\pi_j^\e\beta\|_{H^{\alpha}}+K\|\pi_j^\e\beta\|_{H^{\alpha'}}\\
&\leq  (\eta/3)\|\pi_j^\e\beta\|_{H^{\alpha}}+K\|\beta\|_{H^{\alpha'}}
\end{aligned}
\end{equation} 
 and
\begin{equation}\label{TES6b}
  \begin{aligned}
 \|h(x_j^\e)T_{c,2}\|_{H^1}
&\leq  C \|\chi_j^\e (g-g(x_j^\e))\sfB^{\phi}_{n,\nu}(f)[\pi_j^\e \beta]\|_{H^1}\\
&\leq C\big(\|\chi_j^\e (g-g(x_j^\e))\|_\infty
\|\sfB^{\phi}_{n,\nu}(f)[\pi_j^\e \beta]\|_{H^1}\\
&\quad+\|[\chi_j^\e (g-g(x_j^\e))]\|_{H^{s-1}}
\|\sfB^{\phi}_{n,\nu}(f)[\pi_j^\e \beta]\|_{H^\theta}\big)\\
&\leq (\eta/3)\|\pi_j^\e\beta\|_{H^1}+K\|\beta\|_{H^\theta},
\end{aligned}
\end{equation} 
\end{subequations}
provided that $\e\in(0,1)$ is sufficiently small.

It remains to consider the term $T_d$. 
Recalling \eqref{identif} and \eqref{barf},  we write
\[T_d=\sfB^{\phi}_{n,\nu}(f)[\pi_j^\e \beta]-\sfB^{\phi}_{n,\nu}(\bar f)[\pi_j^\e \beta]=T_{d,1}+ S[\pi_j^\e\beta],\] 
where
\begin{align*}
T_{d,1}&:= \llbracket\chi_j^\e, \sfB^{\phi}_{n,\nu}(\bar f)\rrbracket [\pi_j^\e\beta] -\llbracket\chi_j^\e, \sfB^{\phi}_{n,\nu}(f)\rrbracket [\pi_j^\e\beta],\\
S&:=\chi_j^\e\big( \sfB^{\phi}_{n,\nu}(f)- \sfB^{\phi}_{n,\nu}(\bar f)\big).
\end{align*}
Invoking Lemma~\ref{commphi} again, we have
\be\label{TES71}
[(gh)(x_j^\e)T_{d,1}]_{H^{\alpha}}
\leq C\|(gh)(x_j^\e)T_{d,1}\|_{H^1}
\leq C\| T_{d,1}\|_{H^{1}}\leq K \|\beta\|_{2}.
\ee
 To estimate $[S[\pi_j^\e\beta]]_{H^{\alpha}}$, we infer from \eqref{difference} that 
\be\label{splitS}
S=\chi_j^\e B_{n+2,\nu}^{\phi^1}(f,\bar f)[f+\bar f,\bar f^{[n]},f-\bar f,\cdot]
+\sum_{i=0}^{n-1}\chi_j^\e B_{n,\nu}^{\phi}( f)[ f^{[i]},\bar f^{[n-1-i]},f-\bar f,\cdot],
\ee
where $\phi^1\in{\rm C}^\infty([0,\infty)^2)$ is defined in \eqref{gis}.
Applying  \eqref{idbaf}  and Lemma~\ref{L:D6x}, we conclude that for  sufficiently small $\e\in(0,1)$
\begin{subequations}\label{TES72}
\begin{equation}\label{TES72a}
[ (gh)(x_j^\e) S[\pi^e_j\beta]]_{H^{\alpha}}\leq  (\eta/3)\|\pi_j^\e\beta\|_{H^{\alpha}}+K\|\beta\|_{H^{\alpha'}}.
\end{equation}

 The estimate~\eqref{locHa} in Case (i) follows now  from  \eqref{TES4}, \eqref{TES5a}, \eqref{TES6a}, \eqref{TES71}, and~\eqref{TES72a}.

It remains to estimate $\|S[\pi_j^\e\beta]\|_{H^1}$. In view of 
\eqref{equivgrad} and Lemma \ref{L:MP0}, it suffices to consider  the term~$\|\p_iS[\pi_j^\e\beta]\|_2$  for~$1\leq i\leq N$. We have
\[\p_iS[\pi_j^\e\beta]=\p_i\chi_j^\e\big( \sfB^{\phi}_{n,\nu}(f)- \sfB^{\phi}_{n,\nu}(\bar f)\big)[\pi_j^\e\beta]
+\chi_j^\e\llbracket \p_i,\sfB_{n,\nu}^\phi(f)\rrbracket[\pi_j^\e\beta]
+S[\p_i(\pi_j^\e\beta)].\]

Using \eqref{commsfb}, Lemma \ref{L:MP0}, Lemma \ref{Bestgen} with $s$ replaced by $s-(1-\theta)$, and, for the last term,~\eqref{splitS} and Lemma \ref{L:L2loc}, we obtain
\begin{equation}\label{TES72b}
\|(gh)(x_j^\e)\p_iS[\pi_j^\e\beta]\|_2\leq (\eta/3)\|\p_i(\pi_j^\e\beta)\|_2+K\|\beta\|_{H^\theta}
\end{equation}
\end{subequations}
for  sufficiently small $\e\in(0,1)$.

The estimate~\eqref{locH1} in Case (i) follows from \eqref{TES4}, \eqref{TES5b}, \eqref{TES6b}, \eqref{TES71}, and~\eqref{TES72b}.

\smallskip

\noindent{\bf Case (ii):} We rewrite, using   the identity $\pi^\e_0=\chi^\e_0\pi^\e_0$,
\[\pi_0^\e g \sfB^{\phi}_{n,\nu}(f)[h\beta]=
 \chi_0^\e g \llbracket\pi_0^\e, \sfB^{\phi}_{n,\nu}( f)\rrbracket [ h\beta]
 +\chi_0^\e g \sfB^{\phi}_{n,\nu}(f)[\pi_0^\e h\beta].\]
 Analogously to \eqref{TES4}, we obtain for the first term
 \begin{equation*}
 [\chi_0^\e g\llbracket\pi_0^\e, \sfB^{\phi}_{n,\nu}( f)\rrbracket [h\beta]]_{H^{\alpha}}
 \leq C\|\chi_0^\e g\llbracket\pi_0^\e, \sfB^{\phi}_{n,\nu}( f)\rrbracket [h\beta]\|_{H^1} 
 \leq K  \|\beta\|_{2}.
 \end{equation*}
 It remains to estimate the second term, for which we distinguish three cases:
 
If $g\in H^{s-1}(\R^N)$, then $\|\chi_0^\e g\|_\infty\to 0$ as  $\e\to 0$,  and  we deduce, from Lemma~\ref{L:mult},   Lemma~\ref{Bestgen}, and Lemma  \ref{L:alg},  for $\e\in(0,1)$ sufficiently small, that
\begin{equation}  
\begin{aligned}
 &[\chi_0^\e g \sfB^{\phi}_{n,\nu}(f)[\pi_0^\e h\beta]]_{H^{\alpha}}\\
&\leq  C\big(\|\chi_0^\e g\|_\infty\| \sfB^{\phi}_{n,\nu}(f)[\pi_0^\e h\beta]\|_{H^{\alpha}}
  +\|\chi_0^\e g\|_{H^{s-1}}\| \sfB^{\phi}_{n,\nu}(f)[\pi_0^\e h\beta]\|_{H^{\alpha'}}\big)\\
  &\leq  C\|\chi_0^\e g\|_\infty\| \sfB^{\phi}_{n,\nu}(f)[\pi_0^\e h\beta]\|_{H^{\alpha}}
  +K\| \pi_0^\e h\beta\|_{H^{\alpha'}}\\
 & \leq \eta\| \pi_0^\e \beta \|_{H^{\alpha}} +K\|  \beta \|_{H^{\alpha'}}
  \end{aligned}
  \end{equation}
   and
  \begin{equation}  
\begin{aligned}
  & \|\chi_0^\e g \sfB^{\phi}_{n,\nu}(f)[\pi_0^\e h\beta]\|_{H^1}\\
  &\leq  C\big(\|\chi_0^\e g\|_\infty\| \sfB^{\phi}_{n,\nu}(f)[\pi_0^\e h\beta]\|_{H^1}
  +\|\chi_0^\e g\|_{H^{s-1}}\| \sfB^{\phi}_{n,\nu}(f)[\pi_0^\e h\beta]\|_{H^\theta}\big)\\
  &\leq  C\|\chi_0^\e g\|_\infty\| \sfB^{\phi}_{n,\nu}(f)[\pi_0^\e h\beta]\|_{H^1}
  +K\| \pi_0^\e h\beta\|_{H^\theta}\\
&  \leq \eta\| \pi_0^\e \beta \|_{H^1} +K\|  \beta \|_{H^\theta}.
  \end{aligned}
  \end{equation}

If $g\equiv 1$ and $h\in H^{s-1}(\R^N)$, 
then, arguing as above, since  $\chi_0^\e-1\in  H^{s-1}(\R^N)$,  we obtain for sufficiently small~$\e\in(0,1)$,  
\begin{equation*}  
\begin{aligned}
 &[\chi_0^\e g \sfB^{\phi}_{n,\nu}(f)[\pi_0^\e h\beta]]_{H^{\alpha}}
 \leq C\|(\chi_0^\e-1+1)  \sfB^{\phi}_{n,\nu}(f)[\pi_0^\e h\beta]\|_{H^{\alpha}}\\
 & \leq  C\|\chi_0^\e-1 \|_\infty\| \sfB^{\phi}_{n,\nu}(f)[\pi_0^\e h\beta]\|_{H^{\alpha}}+K\| \sfB^{\phi}_{n,\nu}(f)[\pi_0^\e h\beta]\|_{H^{\alpha'}}
  +C\| \sfB^{\phi}_{n,\nu}(f)[\pi_0^\e h\beta]\|_{H^{\alpha}}\\
  &\leq C \| (\chi_0^\e h) (\pi_0^\e\beta) \|_{H^{\alpha}}
  +K\|  \beta \|_{H^{\alpha'}}
   \leq  C \| \chi_0^\e h\|_\infty \|\pi_0^\e\beta  \|_{H^{\alpha}}
  +K\|  \beta \|_{H^{\alpha'}}\\ 
  &\leq \eta \|  \pi_0^\e \beta \|_{H^{\alpha}}
  +K\|  \beta \|_{H^{\alpha'}}
\end{aligned}
\end{equation*}
 and analogously
\[\|\chi_0^\e g \sfB^{\phi}_{n,\nu}(f)[\pi_0^\e h\beta]\|_{H^1}
\leq \eta \|  \pi_0^\e \beta \|_{H^1}
  +K\|  \beta \|_{H^\theta}.\]

Finally, if $g\equiv h\equiv 1$ and $n\geq 1$,  the estimate \eqref{locHa} is  established in  Lemma \ref{L:D6x}~(ii). To establish \eqref{locH1}, it is sufficient to estimate $\|\p_i\big(\chi_0^\e \sfB^{\phi}_{n,\nu}(f)[\pi_0^\e \beta]\big)\|_2$  for $1\leq i\leq N$. We have 
\[\p_i\big(\chi_0^\e \sfB^{\phi}_{n,\nu}(f)[\pi_0^\e \beta]\big)
=\p_i\chi_0^\e \sfB^{\phi}_{n,\nu}(f)[\pi_0^\e \beta]
+\chi_0^\e\llbracket\p_i,\sfB^{\phi}_{n,\nu}(f)\rrbracket[\pi_0^\e \beta]
+\chi_0^\e \sfB^{\phi}_{n,\nu}(f)[\p_i(\pi_0^\e \beta)].
\]
Using \eqref{commsfb}, Lemma \ref{L:MP0}, Lemma \ref{Bestgen} with $s$ replaced by 
$s-(1-\theta)$, and Lemma \ref{L:L2loc} we obtain
\[\|\p_i\big(\chi_0^\e \sfB^{\phi}_{n,\nu}(f)[\pi_0^\e \beta]\big) \|_2
\leq \eta\|\p_i (\pi_0^\e\beta)\|_2+K\|\beta\|_{H^\theta}\]
for $\eps\in(0,1)$ sufficiently small. This implies \eqref{locH1} in Case (ii).
\smallskip 

\noindent{\bf Case (iii):} We rewrite
\[\pi_0^\e \sfB^{\phi}_{0,\nu}(f)[\beta]
-D_{0,\nu}^{\phi,0}[\pi_0^\e\beta]=\chi_0^\e\llbracket\pi_0^\e,\sfB^{\phi}_{0,\nu}(f)\rrbracket
[\beta]+\llbracket\chi_0^\e,D_{0,\nu}^{\phi,0}\rrbracket[\pi_0^\e\beta]
+\chi_0^\e\big(\sfB^{\phi}_{0,\nu}(f)[\pi_0^\e\beta]-D_{0,\nu}^{\phi,0}[\pi_0^\e\beta]\big).
\]
By Lemma \ref{commphi}, we have for the commutator terms
\begin{align*}
\big[\chi_0^\e\llbracket\pi_0^\e,\sfB^{\phi}_{0,\nu}(f)\rrbracket
[\beta]+\llbracket\chi_0^\e,D_{0,\nu}^{\phi,0}\rrbracket[\pi_0^\e\beta]\big]_{H^\alpha}&\leq  C\| \chi_0^\e\llbracket\pi_0^\e,\sfB^{\phi}_{0,\nu}(f)\rrbracket
[\beta]+\llbracket\chi_0^\e,D_{0,\nu}^{\phi,0}\rrbracket[\pi_0^\e\beta]\big\|_{H^1}\\
&\leq  K\|\beta\|_2.
\end{align*}
For the  difference term, we  recall from \eqref{identif} that  $D^{\phi, 0}_{0,\nu}=B^{\phi}_{0,\nu}(0)$ and infer from~\eqref{difference} 
that
\[\chi_0^\e\big( \sfB^{\phi}_{0,\nu}(f)-  D^{\phi, 0}_{0,\nu}\big)[ \pi_0^\e\beta]
=\chi_0^\e\sfB^{\tilde\phi}_{2,\nu}(f)[ \pi_0^\e\beta],
\]
where $\tilde\phi\in {\rm C}^\infty([0,\infty))$ is given by
\[\tilde\phi(x)=(\phi(x)-\phi(0))/x\quad\text{ if $x>0$,}\qquad \wt\phi(0)=\phi'(0).\]
 Such terms have been estimated in Case (ii).
Thus, \eqref{locHa}  and \eqref{locH1} hold in  Case (iii) as well, and the proof is complete.
\end{proof}

 As a further preparation, we estimate the commutators of  partial derivatives with the localization error terms defined in \eqref{eq:Rje}.
\begin{lemma}\label{L:commR}
 Assume~\eqref{assloc}. Let $\theta\in [0,s-2]$  and~$\theta'\in\big(\max\{\theta-1,\theta-(s-s_c)\},\theta\big)$.
Then, for each ~$\e\in(0,1)$, there exist a constant $K=K(\e)>0$ such that for all~$0\leq j\leq m(\e)$, $1\leq i\leq N$, and $\beta\in  H^{\theta}(\R^N)$ it holds that
\[\|\llbracket\p_i,\cR_j^\e\rrbracket[\beta]\|_{H^\theta}
\leq K\|\beta\|_{H^{1+\theta'}}.\]
\end{lemma}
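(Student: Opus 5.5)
The plan is to compute the commutator $\llbracket\p_i,\cR_j^\e\rrbracket$ explicitly in each of the three cases of \eqref{eq:Rje}. Every resulting term will turn out to be smoothing by at least one derivative, with a small loss measured by $\theta-\theta'$. Two facts drive this. First, the Fourier multipliers $D_{n,\nu}^{\phi,A}$ commute with $\p_i$. Second, the commutator of $\p_i$ with $\sfB^\phi_{n,\nu}(f)$ is given by \eqref{commsfb}, and it only involves operators of the class $B^\phi_{n,\nu}$ in which one linear slot carries $\p_if\in H^{s-1}$ instead of $f\in H^s$.

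Take Case (i). Write
\[
\llbracket\p_i,\cR_j^\e\rrbracket[\beta]=\p_i(\pi_j^\e g)\sfB^\phi_{n,\nu}(f)[h\beta]+\pi_j^\e g\,\llbracket\p_i,\sfB^\phi_{n,\nu}(f)\rrbracket[h\beta]+\pi_j^\e g\,\sfB^\phi_{n,\nu}(f)[(\p_ih)\beta]-(gh)(x_j^\e)D^{\phi,\nabla f(x_j^\e)}_{n,\nu}[(\p_i\pi_j^\e)\beta].
\]
Here the last summand comes from $\p_i(\pi_j^\e\beta)-\pi_j^\e\p_i\beta$. The plan is to estimate each term in $H^\theta$ by $K\|\beta\|_{H^{1+\theta'}}$, or even by a weaker norm of $\beta$.
\begin{itemize}
\item The first and last terms involve only zeroth-order operators applied to $h\beta$ or $(\p_i\pi_j^\e)\beta$. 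Lemma~\ref{Bestgen}, with $\sigma=\sigma_0=s-1-\theta$ (admissible since $\theta\le s-2$), gives $H^\theta$ bounds in terms of $\|h\beta\|_{H^\theta}$. For $D^{\phi,A}_{n,\nu}$ the same bound follows from Proposition~\ref{P:D1}. Lemma~\ref{L:mult} then controls these by $K\|\beta\|_{H^\theta}\le K\|\beta\|_{H^{1+\theta'}}$, using $\theta'>\theta-1$. The $\e$-dependence enters through $\|\pi_j^\e\|_{{\rm BUC}^k}$ and is harmless, because $K=K(\e)$ is allowed.
\item For the commutator term I use \eqref{commsfb}, giving summands $B^\phi_{n,\nu}(f)[\p_if,f^{[n-1]},h\beta]$ and $B^{\phi'}_{n+2,\nu}(f)[\p_if,f^{[n+1]},h\beta]$. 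These are bounded via Lemma~\ref{Bestgen}, applied with $s$ replaced by some $s''\in(s_c,s)$ close to $s$. The slot $\p_if\in H^{s-1}$ is treated as $H^{s''-\sigma_1}$ with $\sigma_1=1-(s-s'')$, exactly as in the proof of Lemma~\ref{ADcomm}. This produces $C\|h\beta\|_{H^{\theta+1-(s-s'')}}$, and choosing $s''$ so that $1-(s-s'')\le 1+\theta'-\theta$ (possible since $\theta-\theta'<\min\{1,s-s_c\}$) yields $\|\beta\|_{H^{1+\theta'}}$.
\item The third term contains $\p_ih\in H^{s-2}$, which is only in $H^{s-2}$. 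I handle it with Lemma~\ref{L:mult}: take $r=\theta$, $r_1=s-2$, $r_2=1+\theta'$, so that $r_1+r_2=s-1+\theta'>N/2+\theta$ because $\theta-\theta'<s-s_c$. This gives $\|(\p_ih)\beta\|_{H^\theta}\le C\|\beta\|_{H^{1+\theta'}}$, after which Lemma~\ref{Bestgen} applies. When $g\equiv1$ or $h\equiv1$ the corresponding terms simply disappear.
\end{itemize}

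Cases (ii) and (iii) go the same way. In Case (ii) there is no multiplier. In Case (iii) the commutator of $\p_i$ with $D^{\phi,0}_{0,\nu}$ vanishes, and \eqref{commsfb} with $n=0$ leaves only the $B^{\phi'}_{2,\nu}$ term, which is estimated as above.

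I expect the main obstacle to be the bookkeeping of regularity indices when applying Lemma~\ref{Bestgen} with reduced $s$. The difficulty is at the endpoint $\theta=s-2$, where the commutator terms must gain exactly $1-(\theta-\theta')$ derivatives. There the admissible ranges $\sigma_i\in[0,s''-1]$ and $\sum\sigma_i\le\sigma$ need checking; I would verify them separately for the $H^{s-1}$ slot $\p_if$ and for the density $h\beta$, mirroring the choices in Lemma~\ref{ADcomm} and Lemma~\ref{L:Mix}.
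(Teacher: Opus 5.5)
Your decomposition of $\llbracket\p_i,\cR_j^\e\rrbracket[\beta]$ is exactly the paper's. Your treatment of three of the four pieces also matches the paper.
\begin{itemize}
\item The commutator term via \eqref{commsfb} and Lemma~\ref{Bestgen} with $s$ replaced by $s-(\theta-\theta')$. In the paper's bookkeeping this is $\sigma_0=s-\theta-2$, $\sigma_1=1-\theta+\theta'$, $\sigma=s-2\theta+\theta'-1$, so $\sigma_0+\sigma_1=\sigma$ holds with equality, which settles your endpoint worry.
\item The $\p_ih$ term, via Lemma~\ref{L:mult} with $(r,r_1,r_2)=(\theta,s-2,1+\theta')$.
\item The Fourier multiplier term.
\end{itemize}

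There is an error in your first bullet, for the term $\p_i(\pi_j^\e g)\,\sfB^\phi_{n,\nu}(f)[h\beta]$. You treat it as a zeroth-order operator applied to $h\beta$ and bound it by $K\|\beta\|_{H^\theta}$, with the $\e$-dependence coming only from derivatives of $\pi_j^\e$. However,
\[
\p_i(\pi_j^\e g)=(\p_i\pi_j^\e)g+\pi_j^\e\p_ig,
\]
and for $g\in H^{s-1}(\R^N)$ the second summand lies only in $H^{s-2}(\R^N)$. This is the same loss you correctly noticed for $\p_ih$ in the third term.

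By Lemma~\ref{L:mult}, multiplication by an $H^{s-2}$ function maps $H^{r_2}$ into $H^\theta$ only if $r_2>\theta+1-(s-s_c)$. This exceeds $\theta$ whenever $s<s_c+1$. For instance, at $\theta=0$ one has $H^{s-2}\not\hookrightarrow L_\infty$ in that regime. So knowing $\sfB^\phi_{n,\nu}(f)[h\beta]\in H^\theta$ is not enough, and the bound by $\|\beta\|_{H^\theta}$ is unjustified.

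The repair is what the paper does. Since $1+\theta'<s-1$, Lemma~\ref{Bestgen} can be applied in $H^{1+\theta'}$, and then
\[
\|\p_i(\pi_j^\e g)\sfB^\phi_{n,\nu}(f)[h\beta]\|_{H^\theta}\leq C\|\p_i(\pi_j^\e g)\|_{H^{s-2}}\|\sfB^\phi_{n,\nu}(f)[h\beta]\|_{H^{1+\theta'}}\leq K\|h\beta\|_{H^{1+\theta'}}\leq K\|\beta\|_{H^{1+\theta'}}.
\]
The first inequality uses Lemma~\ref{L:mult} with $(r,r_1,r_2)=(\theta,s-2,1+\theta')$, which is admissible precisely because $\theta-\theta'<s-s_c$.

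So this term needs the full $H^{1+\theta'}$-norm of $\beta$ and the hypothesis $\theta'>\theta-(s-s_c)$, not a weaker norm. The same correction applies to the analogous term $\p_i(\pi_0^\e g)\sfB^\phi_{n,\nu}(f)[h\beta]$ in Case~(ii).
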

\begin{proof}
    We give the details for Case~(i) with~$g,\, h\in H^{s-1}(\R^N)$ and $n\geq 1$, the proof in the other cases being similar and simpler.  Below, we use the same convention regarding the notation for constants as in the proof of Lemma \ref{L:locHa}.

    For the commutators $\llbracket\p_i,\cR_j^\e\rrbracket$ we have the representation 
   \begin{align*}
    \llbracket\p_i,\cR_j^\e\rrbracket[\beta]&= 
    \p_i(\pi_j^\e g) \sfB_{n,\nu}^\phi(f)[h\beta]
    +\pi_j^\e g\llbracket\p_i,\sfB_{n,\nu}^\phi(f)\rrbracket[h\beta]
    +\pi_j^\e g\sfB_{n,\nu}^\phi(f)[\beta\p_ih]\\
    &\quad \,\,  -(gh)(x_j^\e)D_{n,\nu}^{\phi,\nabla f(x_j^\e)}[\beta\p_i\pi_j^\e].
\end{align*}
   Using  Lemma~\ref{L:mult}, Lemma \ref{Bestgen}, and Proposition~\ref{P:D1},  we  get
\begin{align*}
    \|\p_i(\pi_j^\e g)\sfB_{n,\nu}^\phi(f)[h\beta]  \|_{H^\theta}&\leq C\|\p_i(\pi_j^\e g)\|_{H^{s-2}}\|\sfB_{n,\nu}^\phi(f)[h\beta]\|_{H^{1+\theta'}}\\
    &\leq K\|h\beta\|_{H^{1+\theta'}}\leq K \|h\|_{H^{s-1}}\|\beta\|_{H^{1+\theta'}}
    \leq K \|\beta\|_{H^{1+\theta'}},\\
    \|\pi_j^\e g\sfB_{n,\nu}^\phi(f) [\beta\p_ih]\|_{H^\theta}
    &\leq C\|\pi_j^\e g\|_{H^{s-1}}\|\sfB_{n,\nu}^\phi(f) [\beta\p_ih]\|_{H^\theta}\\
    &\leq  K \|\beta\p_ih\|_{H^\theta}
    \leq K\|\p_i h\|_{H^{s-2}}\|\beta]\|_{H^{1+\theta'}}
    \leq  K \|\beta\|_{H^{1+\theta'}},\\
    \| (gh)(x_j^\e)D_{n,\nu}^{\phi,\nabla f(x_j^\e)}[\beta\p_i\pi_j^\e]\|_{H^\theta}
       &\leq C\|\beta\p_i\pi_j^\e\|_{H^{\theta}}\leq K \|\beta\|_{H^{1+\theta'}}.
\end{align*}
 To estimate the remaining term 
$\pi_j^\e g\llbracket\p_i,\sfB_{n,\nu}^\phi(f)\rrbracket[h\beta]$ we recall \eqref{commsfb} and use Lemma \ref{L:mult} and Lemma~\ref{Bestgen} with $s$ replaced by $s':=s-\theta+\theta'$,  $\sigma_0:=s-\theta-2$,  $\sigma_1=1-\theta+\theta'$, and~$\sigma:=s-2\theta+\theta'-1$.   Thus we obtain
\begin{align*}
   &\|\pi_j^\e g B_{n,\nu}^\phi[\p_if,f^{[n-1]},h\beta]\|_{H^\theta}\leq C
   \|\pi_j^\e g\|_{H^{s-1}}
   \|B_{n,\nu}^\phi[\p_if,f^{[n-1]},h\beta]\|_{H^\theta}\\
   &\leq K\|\p_if\|_{H^{s-1}}
   \|h\beta\|_{H^{1+\theta'}}\leq K \|\beta\|_{H^{1+\theta'}},
\end{align*}
and analogously for the other term  originating from \eqref{commsfb}. 
\end{proof}

We are now ready to state and prove the main localization result for the singular integral operators~$\sfB_{n,\nu}^\phi(f)$ by estimating the error terms $\cR_j^\e[\beta]$ defined in~\eqref{eq:Rje} in   $H^{s-1}(\R^N)$.
\begin{prop}\label{T:LOC}
Assume~\eqref{assloc}. 
 Let  $\eta\in(0,1)$ and  $s'\in (\max\{s_c,s-1\},s)$.
 Then, for each sufficiently small $\e\in(0,1)$, there exists a constant~$K=K(\e)>0$ such that
 for all~$0\leq j\leq m(\e)$ and~${\beta\in H^{ s-1}(\R^N)}$ we have
\begin{equation}\label{es:desrje}
\|\cR_j^\e[\beta]\|_{H^{s-1}}\leq \eta\|\pi_j^\e\beta\|_{H^{s-1}}
+K\|\beta\|_{H^{s'-1}}.
\end{equation}
\end{prop}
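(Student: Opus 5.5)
We argue by induction on the order of smoothness, in the same way as in the proofs of Theorem~\ref{T:51} and Lemma~\ref{L:Mix}. The base case is Lemma~\ref{L:locHa}, which handles low-order (semi)norms. Lemma~\ref{L:commR} lifts those estimates by one derivative at a time.

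Write $s-1=k+\alpha$ with $k\in\N$ and $\alpha\in[0,1)$. We prove, for every $r=\ell+\alpha$ with $0\le\ell\le k$ (and also $r=1$ when $\alpha=0$), the statement
\[
\|\cR_j^\e[\beta]\|_{H^{r}}\le\eta\|\pi_j^\e\beta\|_{H^{r}}+K\|\beta\|_{H^{r-(s-s')}}.
\]
Here $\eta$ is arbitrary and $\e$ is small depending on $\eta$. By the monotonicity of Sobolev norms, the case $r=s-1$ is \eqref{es:desrje}.

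\textbf{Base cases.} For $r=\alpha\in(0,1)$, combine \eqref{locHa}, taken with $\alpha':=\alpha-(s-s')$ after shrinking $s'$ if necessary, with the $L_2$ bound on $\cR_j^\e[\beta]$. This $L_2$ bound follows as in the proof of Lemma~\ref{L:locHa}: the commutator terms are controlled through Lemma~\ref{commphi}, and the difference term $S$ through Lemma~\ref{L:L2loc}, which gives $\eta\|\pi_j^\e\beta\|_2+K\|\beta\|_{H^{-\infty}}$-type bounds. For $r=1$ (respectively for $\alpha=0$, $r=0,1$), we use \eqref{locH1} with $\theta=1-(s-s')$.

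\textbf{Induction step $r\mapsto r+1$ with $r+1\le s-1$.} By \eqref{equivgrad},
\[
\|\cR_j^\e[\beta]\|_{H^{r+1}}\le C\Big(\|\cR_j^\e[\beta]\|_{H^r}+\sum_{i=1}^N\|\cR_j^\e[\p_i\beta]\|_{H^r}+\|\llbracket\p_i,\cR_j^\e\rrbracket[\beta]\|_{H^r}\Big).
\]
We treat the three groups of terms as follows.
\begin{itemize}
\item The first term is bounded by the induction hypothesis.
\item The middle terms, by the induction hypothesis, are at most $\eta\|\pi_j^\e\p_i\beta\|_{H^r}+K\|\beta\|_{H^{r+1-(s-s')}}$. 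We then write $\pi_j^\e\p_i\beta=\p_i(\pi_j^\e\beta)-(\p_i\pi_j^\e)\beta$. The error $(\p_i\pi_j^\e)\beta$ has $H^r$-norm at most $K\|\beta\|_{H^{r}}$. This is lower order, since $r\le r+1-(s-s')$.
\item The commutator term is handled by Lemma~\ref{L:commR} with $\theta=r\le s-2$ and $\theta'=r-(s-s')$, which gives $K\|\beta\|_{H^{1+\theta'}}$.
\end{itemize}
The constant $C$ from the norm equivalence is absorbed by running the induction hypothesis with $\eta/C$ in place of $\eta$. This is legitimate because $\e$ only needs to be small enough, and finitely many steps are involved.

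\textbf{Main obstacle.} The delicate part is ensuring that only the top-order quantity $\|\pi_j^\e\beta\|$ carries the small factor $\eta$, uniformly in $j$. Every other contribution must be of strictly lower order with an $\e$-dependent constant. This is exactly what Lemmas~\ref{L:locHa} and~\ref{L:commR} provide. The remaining care concerns the non-integer part: we must choose the admissible exponents $\alpha'$ and $\theta'$ in those lemmas consistently with $s'$. If $s-s'$ is too large, we first prove the claim for some $s''\in(s',s)$ close to $s$ and then use monotonicity of Sobolev norms.
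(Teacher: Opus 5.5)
Your argument is the paper's own. The base cases come from Lemma~\ref{L:locHa}: the seminorm bound \eqref{locHa} plus an $L_2$ bound at level $\alpha$, and \eqref{locH1} for the integer chain. The step $r\mapsto r+1$ then uses \eqref{equivgrad}, the induction hypothesis applied to $\p_i\beta$, and Lemma~\ref{L:commR}.

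Two minor points. First, the $L_2$ part needs no localization. Lemma~\ref{L:MP0} and Proposition~\ref{P:D1} give $\|\cR_j^\e[\beta]\|_2\le C\|\beta\|_2\le C\|\beta\|_{H^{\alpha'}}$ because $\alpha'>0$, which is what the paper uses. Lemmas~\ref{commphi} and~\ref{L:L2loc} would only give a lower-order term $K\|\beta\|_2$, not anything of ``$H^{-\infty}$'' type. Second, the case $r=0$ for $s-1\in\N$ is not needed, since the chain starts at $r=1$. It is also not justified, since it would require a lower-order term $\|\beta\|_{H^{-(s-s')}}$.

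The one real error is your handling of the range of $s'$. The base case $r=\alpha\in(0,1)$ requires $\alpha'=\alpha-(s-s')>0$, i.e.\ $s-s'<\alpha$. Your remedy is to prove the claim for some $s''\in(s',s)$ and conclude by monotonicity, but this runs the wrong way. Since $\|\beta\|_{H^{s'-1}}\le\|\beta\|_{H^{s''-1}}$, the estimate with $K\|\beta\|_{H^{s''-1}}$ is \emph{weaker} than \eqref{es:desrje} and does not imply it. Interpolating $\|\beta\|_{H^{s''-1}}$ between $H^{s'-1}$ and $H^{s-1}$ does not help either: it produces the unlocalized norm $\|\beta\|_{H^{s-1}}$ instead of $\|\pi_j^\e\beta\|_{H^{s-1}}$. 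Whether you ``shrink'' $s'$ or move it toward $s$, you never get back to the original $s'$.

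So, when $s-1\notin\N$ has fractional part $\alpha$, your argument proves \eqref{es:desrje} only for $s'\in(\max\{s_c,s-\alpha\},s)$. The paper's proof has exactly the same limitation and does not comment on it: its statement (H)$_{k,\alpha}$ carries the gap $\alpha-\alpha'<\min\{\alpha,s-s_c\}$. This is harmless for Section~\ref{Sec:5}. There $s'$ is an auxiliary parameter that can be fixed as close to $s$ as desired from the outset, because after summing over $j$ the terms $K\|h\|_{H^{s'}}$ are absorbed by interpolation for any $s'<s$. You should state your result in that restricted form rather than claim the full range via monotonicity.
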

\begin{proof}
      % We give the details for Case~(i) with~$g,\, h\in H^{s-1}(\R^N)$ and $n\geq 1$, the proof in the other cases being similar and simpler. 

Let $\alpha\in(0,1]$ and $\alpha'\in\big(\max\{0,\alpha-(s-s_c)\},\alpha\big)$.
We are going to show the following more general statement for $k\in\N$ satisfying
      $0<k+\alpha\leq s-1$:
\[\left.\text{
\begin{minipage}{0.86\textwidth}  
For any $\eta \in(0,1)$ and $f\in H^s(\R^N)$, there exists an $\eps_0\in(0,1)$ such that for  each~$\eps\in(0,\eps_0)$  there is a constant ${K=K(\e)>0}$ such that 
for all~${0\leq j\leq m(\e)}$ and~${\beta\in H^{ k+\alpha}(\R^N)}$: 
\[\|\cR_j^\e[\beta]\|_{H^{k+\alpha}}\leq \eta\|\pi_j^\e\beta\|_{H^{k+\alpha}}
+K\|\beta\|_{H^{k+\alpha'}}.\]
\end{minipage}}\right\}\eqno{\textnormal{(H)$_{k,\alpha}$}}\]

Let first $\alpha\in(0,1)$. To show (H)$_{0,\alpha}$ we fix $\eta>0$ and infer from Lemma \ref{L:MP0} and Lemma~\ref{L:locHa} (with $\eta$ replaced by   $\eta_0:=\eta/C_1$ with~$C_1$ from~\eqref{equivHs})   that for  sufficiently small $\e\in(0,1)$
\begin{align*}
    \|\cR_j^\e[\beta]\|_{H^{\alpha}}
&\leq {C_1}\big(\|\cR_j^\e[\beta]\|_{L^2}+\big[\cR_j^\e[\beta]\big]_{H^{\alpha}}\big)
\leq  C_1\eta_0\|\pi_j^\e\beta\|_{H^{\alpha}}+K\|\beta\|_{H^{\alpha'}}\\
&\leq \eta\|\pi_j^\e\beta\|_{H^{\alpha}}+K\|\beta\|_{H^{\alpha'}}. 
\end{align*}

The statement (H)$_{1,0}$ has been shown in Lemma \ref{L:locHa}.

To prove the complete result it is sufficient now to show the implication
\[\text{(H)$_{k-1,\alpha}$}\;\Longrightarrow\;\text{(H)$_{k,\alpha}$},
\qquad 0< k-1+\alpha\leq s-2.\]
For this, assume $0< k-1+\alpha\leq s-2$ and (H)$_{k-1,\alpha}$.
Then, using \eqref{equivgrad}, 
\be\label{splitRje}
\|\cR_j^\e[\beta]\|_{H^{k+\alpha}}\leq  C_0\bigg(\|\cR_j^\e[\beta]\|_{H^{k-1+\alpha}}
+\sum_{i=1}^N\Big(\|\cR_j^\e[\p_i\beta]\|_{H^{k-1+\alpha}}
+\|\llbracket\p_i,\cR_j^\e\rrbracket[\beta]\|_{H^{k-1+\alpha}}\Big)\bigg).
\ee
We estimate the terms on the right separately.  
By the induction assumption  (with $\eta$ replaced by   $\eta_0:=\eta/(N C_0)$  with $C_0$ from~\eqref{equivgrad})  and Lemma \ref{L:commR}, for sufficiently small $\e\in(0,1)$ we have
\begin{align*}
 \|\cR_j^\e[\beta]\|_{H^{k-1+\alpha}}&\leq \eta_0\|\pi_j^\e\beta\|_{H^{k-1+\alpha}}
+K\|\beta\|_{H^{k-1+\alpha'}}\leq K\|\beta\|_{H^{k+\alpha'}},\\
\|\cR_j^\e[\p_i\beta]\|_{H^{k-1+\alpha}}&\leq \eta_0\|\pi_j^\e\p_i\beta\|_{H^{k-1+\alpha}}+K\|\p_i\beta\|_{H^{k-1+\alpha'}}\\&
\leq \eta_0\|\p_i(\pi_j^\e\beta)\|_{H^{k-1+\alpha}}+K\|\beta\|_{H^{k+\alpha'}},\\
\|\llbracket\p_i,\cR_j^\e\rrbracket[\beta]\|_{H^{k-1+\alpha}}& \leq K\|\beta\|_{H^{k+\alpha'}}.
\end{align*}
 The statement (H)$_{k,\alpha}$ follows from these estimates together with 
\eqref{splitRje}.
  \end{proof}

We conclude this section with a result concerning the localization of a product of two functions.

\begin{lemma}\label{T:prod}
Let  $g\in H^{s-1}(\R^N)$ and  $s'\in(s_c,s)$.
Then, given~${\eta>0}$, for each sufficiently small $\e\in(0,1)$, there is a constant $K=K(\e)>0$ such that for all~${\beta\in H^{s-1}(\R^N)}$ we have
\begin{equation}\label{Prod1}
\|\pi_0^\e g\beta\|_{H^{s-1}}\leq \eta\|\pi_0^\e\beta\|_{H^{s-1}}+K\|\beta\|_{H^{s'-1}} 
\end{equation}
and
\begin{equation}\label{Prod2}
\|\pi_j^\e (g-g(x_j^\e))\beta\|_{H^{s-1}}\leq \eta\|\pi_j^\e\beta\|_{H^{s-1}}+K\|\beta\|_{H^{s'-1}}, \qquad 1\leq j\leq  m(\e). 
\end{equation}
\end{lemma}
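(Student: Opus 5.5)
Both estimates have the same structure: a multiplier that is small on $\supp\pi_j^\e$, multiplied by a function in $H^{s-1}(\R^N)$. The plan is to use a fractional Leibniz-type bound that splits $\|\varphi\beta\|_{H^{s-1}}$ into a principal part $\|\varphi\|_\infty\|\beta\|_{H^{s-1}}$ and a lower order part $\|\varphi\|_{H^{r}}\|\beta\|_{H^{s'-1}}$. Since $\chi_j^\e=1$ on $\supp\pi_j^\e$, we have $\pi_j^\e(g-g(x_j^\e))\beta=\big(\chi_j^\e(g-g(x_j^\e))\big)(\pi_j^\e\beta)$ for $1\le j\le m(\e)$, and similarly $\pi_0^\e g\beta=(\chi_0^\e g)(\pi_0^\e\beta)$. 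We therefore set $\varphi:=\chi_j^\e(g-g(x_j^\e))$, respectively $\varphi:=\chi_0^\e g$. Because $g\in H^{s-1}(\R^N)\hookrightarrow{\rm BUC}^{s-s_c}(\R^N)$ and $g$ vanishes at infinity, $\|\varphi\|_\infty\le C\e^{s-s_c}$ in the first case and $\|\varphi\|_\infty\to0$ in the second. Moreover $\varphi\in H^{s-1}(\R^N)$ with a norm that depends on $\e$ but is finite (for $j\ge1$ use that $\chi_j^\e$ has compact support).

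The key step is a version of Lemma~\ref{L:alg} at regularity $s-1$, namely
\[\|\varphi u\|_{H^{s-1}}\le C\big(\|\varphi\|_\infty\|u\|_{H^{s-1}}+\|\varphi\|_{H^{s-1}}\|u\|_{H^{s'-1}}\big),\qquad u\in H^{s-1}(\R^N),\]
which we obtain by induction on the integer part of $s-1$. Write $s-1=k+\alpha$ with $\alpha\in(0,1]$. For $k=0$ this is Lemma~\ref{L:alg} with $\alpha'=s'-1$ and $r=s-1$; the condition $r>N/2+\alpha-\alpha'$ reads $s-1>N/2+s-s'$, i.e.\ $s'>s_c$. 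This is fine once we shrink $s'$ if necessary so that $s'-1\in(0,\alpha)$. Such a shrinking only weakens the right-hand side after raising $s'$ back, since the lower order norm increases with $s'$, so we may always assume $s'<s$ close to $s_c$. For $k\ge1$ we use \eqref{equivgrad} and $\p_i(\varphi u)=\varphi\p_iu+(\p_i\varphi)u$. The first term is handled by the induction hypothesis at level $s-2$ applied to $u$ replaced by $\p_iu$. The second term is of lower order and is bounded via Lemma~\ref{L:mult}: $\|(\p_i\varphi)u\|_{H^{s-2}}\le C\|\p_i\varphi\|_{H^{s-2}}\|u\|_{H^{s'-1}}$, using $(s-2)+(s'-1)>N/2+(s-2)$.

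With this inequality we take $u:=\pi_j^\e\beta$ and use $\|\pi_j^\e\beta\|_{H^{s'-1}}\le K\|\beta\|_{H^{s'-1}}$. This gives
\[\|\pi_j^\e(g-g(x_j^\e))\beta\|_{H^{s-1}}\le C\|\varphi\|_\infty\|\pi_j^\e\beta\|_{H^{s-1}}+K(\e)\|\beta\|_{H^{s'-1}},\]
and choosing $\e$ small so that $C\|\varphi\|_\infty\le\eta$ yields \eqref{Prod2}. Estimate \eqref{Prod1} follows in the same way. The constant $C$ in the Leibniz bound must be independent of $\e$; this holds because it depends only on $s,s',N$.

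The main obstacle is the induction step for non-integer $s$. There the lower order term requires the index constraints of Lemma~\ref{L:mult} at each level ($r_1=s-2$, $r_2=s'-1\ge s-2$ when $s'$ is close to $s$), so the choice of $s'$ has to be made carefully. If this becomes delicate, I would instead prove the statement for $s'$ close to $s$ and then interpolate to reach a general $s'$ via \eqref{IP}.
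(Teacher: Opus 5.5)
Your reduction matches the paper's. You write the product as $\big(\chi_j^\e(g-g(x_j^\e))\big)(\pi_j^\e\beta)$, resp.\ $(\chi_0^\e g)(\pi_0^\e\beta)$, use that the first factor is small in $L_\infty$, and apply a Leibniz bound whose lower order part involves $\|\pi_j^\e\beta\|_{H^{s'-1}}$. The paper gets that bound from the Kato--Ponce inequality $\|\varphi u\|_{H^{s-1}}\le C(\|\varphi\|_\infty\|u\|_{H^{s-1}}+\|\varphi\|_{H^{s-1}}\|u\|_\infty)$ combined with $H^{s'-1}\hookrightarrow L_\infty$, which covers all $s'>s_c$ at once.

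Your derivation of the Leibniz bound from Lemmas~\ref{L:alg} and~\ref{L:mult} has a gap. The case $k=0$ never occurs, since $s-1>N/2\ge1$. So the induction has to run with $\varphi\in H^{s-1}$ fixed, over the levels $\sigma=\alpha,\alpha+1,\dots,s-1$ with $\alpha:=s-1-k\in(0,1]$, and with lower order index $\sigma-(s-s')$. At the bottom level, Lemma~\ref{L:alg} needs $\alpha'=\alpha-(s-s')>0$, i.e.\ $s'>s-\alpha$. In your scheme the induction step cannot enlarge the gap $s-s'$: the term $\varphi\,\p_iv$ inherits it from the previous level, and the term $(\p_i\varphi)v$ additionally needs $s'\ge s-1$, as you noticed. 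So you only obtain the bound for $s'$ close to $s$. Your remark that one may assume $s'$ close to $s_c$ is logically right but points the wrong way: small $s'$ is the stronger statement, and it is exactly what the induction cannot deliver.

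The interpolation fallback you mention does repair this, provided it is applied to the \emph{localized} lower order term before you pass to $\|\beta\|_{H^{s'-1}}$. Fix $s_1'\in(\max\{s_c,s-\alpha\},s)$, for which your induction works, and let $s'\in(s_c,s_1')$. With $K(\e):=C\|\varphi\|_{H^{s-1}}$, \eqref{IP} and Young's inequality give
\[K(\e)\|\pi_j^\e\beta\|_{H^{s_1'-1}}\le\frac{\eta}{2}\|\pi_j^\e\beta\|_{H^{s-1}}+K'(\e)\|\pi_j^\e\beta\|_{H^{s'-1}}\le\frac{\eta}{2}\|\pi_j^\e\beta\|_{H^{s-1}}+K''(\e)\|\beta\|_{H^{s'-1}}.\]
The principal term $C\|\varphi\|_\infty\|\pi_j^\e\beta\|_{H^{s-1}}$ is made $\le\frac{\eta}{2}\|\pi_j^\e\beta\|_{H^{s-1}}$ by choosing $\e$ small.

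If you instead interpolated the final $\|\beta\|_{H^{s'-1}}$, you would produce the non-localized quantity $\|\beta\|_{H^{s-1}}$, which the lemma does not permit. With this correction, or by simply quoting Kato--Ponce and the Sobolev embedding as the paper does, the rest of your argument goes through.
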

\begin{proof}
Fix $\eta>0$ and $1\leq j\leq  m(\e)$. 
Using the Kato-Ponce estimate from \cite[ Lemma X.4]{KP88} and the fact that $\|\chi_j^\e(g-g(x_j^\e))\|_\infty\leq  C|\e|^{s-s_c}$, with a constant $C>0$ depending only on the H\"older seminorm $[g]_{s-s_c}$, we get
\begin{align*}
    &\|\pi_j^\e (g-g(x_j^\e))\beta\|_{H^{s-1}}=
    \|\chi_j^\e (g-g(x_j^\e))\pi_j^\e\beta\|_{H^{s-1}}\\
    &\leq C(\|\chi_j^\e (g-g(x_j^\e))\|_\infty\|\pi_j^\e\beta\|_{H^{s-1}}
    +\|\chi_j^\e (g-g(x_j^\e))\|_{H^{s-1}}\|\pi_j^\e\beta\|_\infty)\\
    &\leq \eta\|\pi_j^\e\beta\|_{H^{s-1}}+K\|\beta\|_{H^{s'-1}}
\end{align*}
for $\e$ sufficiently small. The estimate \eqref{Prod1} is obtained analogously, using~$\|\chi_0^\e g\|_\infty\to 0$  as~$\e\to 0$.
\end{proof}

\bibliographystyle{siam}
\bibliography{Literature}
\end{document}